\pgfplotsset{compat=1.15}
\newtheorem{conj}{Conjecture}[section]
\newtheorem{thm}[conj]{\bf Theorem}
\newtheorem{cor}[conj]{\bf Corollary}
\newtheorem{prop}[conj]{\bf Proposition}
\newtheorem{lemma}[conj]{\bf Lemma}
\newtheorem{example}[conj]{\bf Example}
\def\eps{\varepsilon}
\def\bar{\overline}
\def\sign{{\; \text{sign}}}
\def\to{\rightarrow}
\def\simiid{\buildrel {\text{i.i.d.}} \over \sim}
\def\Bias{\operatorname{Bias}}
\def\IB{\operatorname{IBias}^2}
\def\Cov{\operatorname{Cov}}
\def\Var{\operatorname{Var}}
\def\IVar{\operatorname{IVar}}
\def\MSE{\operatorname{MSE}}
\def\MISE{\operatorname{MISE}}
\def\supp{\operatorname{supp}}
\def\JS{\operatorname{JS}}
\def\Tr{\operatorname{Tr}}
\def\TV{\operatorname{TV}}
\def\KL{\operatorname{KL}}
\def\Jac{\operatorname{Jac}}
\def\Det{\operatorname{Det}}
\def\mc{\operatorname{mc}}
\def\om{\omega}
\def\Ac{\mbox{$\mathcal A$}}
\def\Cc{{\mathscr C}}
\def\Fc{{\mathscr F}}
\def\Gc{{\mathscr G}}
\def\Mc{{\mathcal M}}
\def\Nc{{\mathcal N}}
\def\Sc{{\mathcal S}}
\def\Tc{{\mathcal T}}
\def\Xc{{\mathcal X}}
\def\Rb{{\mathbb R}}
\def\v{ {\bf v}}
\def\1{\mathbbm{1}}
\def\floorbeta{{\lfloor \beta \rfloor}}
\def\Mk{{\mathfrak{M}}}
\def\Om{\mathcal{O}_m}
\def\wh{\widehat}
\def\wt{\widetilde}
\def\ol{\overline}
\begin{document}

\begin{frontmatter}

% "Title of the paper"
\title{On lower bounds for the bias-variance trade-off}
\runtitle{On lower bounds for the bias-variance trade-off}

% indicate corresponding author with \corref{}
% \author{\fnms{John} \snm{Smith}\corref{}\ead[label=e1]{smith@foo.com}\thanksref{t1}}
% \thankstext{t1}{Thanks to somebody} 
% \address{line 1\\ line 2\\ printead{e1}}
% \affiliation{Some University}

\author{\fnms{Alexis} \snm{Derumigny}\ead[label=e1]{a.f.f.derumigny@tudelft.nl}
\and 
\fnms{Johannes} \snm{Schmidt-Hieber}\ead[label=e2]{a.j.schmidt-hieber@utwente.nl}
}
\address{Delft University of Technology, \\
Mekelweg 4, \\ 2628 CD Delft \\ The Netherlands.\\ \printead{e1}}
\address{University of Twente,\\ P.O. Box 217, \\ 7500 AE Enschede \\ The Netherlands\\ \printead{e2}}
\affiliation{Delft University of Technology \and University of Twente}

\runauthor{A. Derumigny and J. Schmidt-Hieber}

\begin{abstract}
It is a common phenomenon that for high-dimensional and nonparametric statistical models, rate-optimal estimators balance squared bias and variance. Although this balancing is widely observed, little is known whether methods exist that could avoid the trade-off between bias and variance. We propose a general strategy to obtain lower bounds on the variance of any estimator with bias smaller than a prespecified bound. This shows to which extent the bias-variance trade-off is unavoidable and allows to quantify the loss of performance for methods that do not obey it. The approach is based on a number of abstract lower bounds for the variance involving the change of expectation with respect to different probability measures as well as information measures such as the Kullback-Leibler or $\chi^2$-divergence. Some of these inequalities rely on a new concept of information matrices. In a second part of the article, the abstract lower bounds are applied to several statistical models including the Gaussian white noise model, a boundary estimation problem, the Gaussian sequence model and the high-dimensional linear regression model. For these specific statistical applications, different types of bias-variance trade-offs occur that vary considerably in their strength. For the trade-off between integrated squared bias and integrated variance in the Gaussian white noise model, we propose to combine the general strategy for lower bounds with a reduction technique. This allows us to reduce the original problem to a lower bound on the bias-variance trade-off for estimators with additional symmetry properties in a simpler statistical model. In the Gaussian sequence model, different phase transitions of the bias-variance trade-off occur. Although there is a non-trivial interplay between bias and variance, the rate of the squared bias and the variance do not have to be balanced in order to achieve the minimax estimation rate.
% To highlight possible extensions of the proposed framework, we moreover briefly discuss the trade-off between bias and mean absolute deviation. 
\end{abstract}

\begin{keyword}[class=MSC]
\kwd{62G05, 62C05, 62C20}
\end{keyword}

\begin{keyword}
\kwd{bias-variance decomposition}
\kwd{Cramér-Rao inequality}
\kwd{high-dimensional statistics}
\kwd{minimax estimation}
% \kwd{mean absolute deviation}
\kwd{nonparametric estimation}
\end{keyword}

\end{frontmatter}

\section{Introduction}
Can the bias-variance trade-off be avoided, for instance by using machine learning methods in the overparametrized regime? This is currently debated in machine learning. While older work on neural networks mention that ``the fundamental limitations resulting from the bias-variance dilemma apply to all nonparametric inference methods, including neural networks'' (\cite{Geman1992}, p.45), the very recent work on overparametrization in machine learning has cast some doubt on the necessity to balance squared bias and variance \cite{belkin2018reconciling, neal2018modern}. While for fixed and moderate growth, the number of parameters in the method (e.g. the number of network parameters in a neural network) can be associated to the bias and the variance of the procedure, resulting in the well-known U-shaped curves for the statistical risk (see e.g. Figure 2.11 in \cite{MR2722294}), such a link cannot be made in the overparametrized regime. But this does not mean that the bias-variance trade-off disappears. In this work we prove that for standard estimation problems in nonparametric and high-dimensional statistics, there are universal bias-variance trade-offs that cannot be circumvented by any method.

Besides the debate about overparametrization, there are many other good reasons why a better understanding of the bias-variance trade-off is relevant for statistical practice. Even in non-adaptive settings, confidence sets in nonparametric statistics require control on the bias of the centering estimator and often use a slight undersmoothing to make the bias negligible compared to the variance. If rate-optimal estimators with negligible bias would exist, such troubles could be overcome. In some instances, small bias is possible. An important example is the rather subtle de-biasing of the LASSO for a class of functionals in the high-dimensional regression model \cite{MR3153940, MR3224285, MR3650395}. This shows that the occurrence of the bias-variance trade-off is a highly non-trivial phenomenon.

Finite-dimensional parametric models do typically not exhibit a bias-variance trade-off and there may exist unbiased estimators with finite variance. On the contrary, our results shows that for high-dimensional and infinite-dimensional statistical models, unbiased estimators with finite variance are in almost all of the considered settings impossible. The fundamental difference lies in the amount of information per parameter: for parametric models of dimension $p$, the sample size $n$ is, by definition, of a larger order than $p$, and the statistician has a budget of $n/p$ observations per parameter; on the contrary, for nonparametric models, we have $p>n$ or even $p=+\infty$ and there is simply not enough data to estimate each parameter well using a $n/p$\,-fraction of the observations. For example, in the Gaussian white noise model, we observe the process $(Y_x)_x$ satisfying $dY_x = f(x) \, dx + n^{-1/2} \, dW_x$ for an unknown function $f$. If the regression function $f$ lies in a nonparametric class, it is impossible to transform the data into the form $f(x_0)+$'noise'. Instead one has to rely here on the similarity of the regression function in a small vicinity around $x_0,$ which leads to an unavoidable bias. 
 
%However, in the general case and for a given point $x_0 \in (0,1)$, estimators of $f(x_0)$ of the form $\int_{x_0-h}^{x_0+h} Y_t dt$ are necessarily biased upwards if $f$ is convex and biased downwards if $f$ is concave. By reducing $h$ we can decrease the bias at the price of an increased variance. Our results show that there does not exists any possibility of escaping this trade-off with the given information: we do need some form of smoothing and this necessarily add some kind of bias.

Only few theoretical articles exist on lower bounds for the interplay between bias and variance. The major contribution is due to Mark Low \cite{low1995bias} proving that the bias-variance trade-off is unavoidable for estimation of functionals in the Gaussian white noise model. The approach relies on a complete characterization of the bias-variance trade-off phenomenon in a parametric Gaussian model via the Cram\'er-Rao lower bound, see also Section \ref{sec:gauss_wn} for a more in-depth discussion. Another related result is \cite{MR1836577}, also considering estimation of functionals but not necessarily in the Gaussian white noise model. It is shown that for any functional $\kappa$, a lower bound on the asymptotic deviation probability $\lim_{u \to 0} \liminf_{n \to \infty} P_0^n \big(c_n |\wh \kappa - \kappa(P_0)| \leq u \big)$ implies an asymptotic lower bound on variance-like measures of the estimator $\wh \kappa_n$ of $\kappa(P_0)$. In this article, we do not consider such deviation probability and establish direct and non-asymptotic trade-offs between bias and variance. \cite{liu1993nonexistence} introduces a notion of singular functional estimation problems and proves that for such singular problems, no unbiased estimators with finite variance exist. In the same spirit, \cite{chen2004notes} shows that the supremum of the variance of an unbiased estimator is infinite if a singular point belongs to the closure of the parameter set. Moreover, it is shown that the difference between biases is lower-bounded if the worst-case variance is upper-bounded.

In this article, we propose a general strategy to derive lower bounds for the bias-variance trade-off. The key ingredient are general inequalities bounding the change of expectation with respect to different distributions by the variance and information measures such as the total variation, Hellinger distance, Kullback-Leibler divergence and the $\chi^2$-divergence. 

%It should also be mentioned that the widely available minimax lower bounds are of little help. Having a lower bound on the mean squared error implies by the well-known bias-variance decomposition a lower bound for the sum of squared bias and variance. This does not exclude of course the possibility that the bias is always small. 

%The aim of this paper is to establish a general approach to prove lower bounds for the bias-variance trade-off. 

%The concept of minimax lower bounds holds for parametric and non-parametric problems alike. In many situations such as regression problems, unbiased estimation is possible for parametric problems and the bias-variance trade-off only kicks-in if either additional constraints on the parameters are imposed or if the parameter space becomes high/infinite-dimensional. The bias-variance trade-off is in this sense a purely nonparametric phenomenon. 

As examples, we consider nonparametric estimation in the Gaussian white noise model as well as sparse recovery in the sequence model and the high-dimensional linear regression model. By applying the lower bounds to different statistical models, it is surprising to see different types of bias-variance trade-offs occurring. The weakest type are worst-case scenarios stating that if the bias is small for all parameters, then there exists a potentially different parameter in the parameter space with a large variance and vice versa. For the pointwise estimation in the Gaussian white noise model, the derived lower bounds imply also a stronger version proving that small bias for all parameters will necessarily inflate the variance for all parameters that are in a suitable sense separated away from the boundary of the parameter space.

We also study lower bounds for the trade-off between integrated squared bias and integrated variance in the Gaussian white noise model. In this case a direct application of the multiple parameter lower bound is rather tricky and we propose instead a two-fold reduction first. The first reduction shows that it is sufficient to prove a lower bound on the bias-variance trade-off in a related sequence model. The second reduction states that it is enough to consider estimators that are constrained by some additional symmetry property. After the reductions, a few lines argument applying the information matrix lower bound is enough to derive a matching lower bound for the trade-off between integrated squared bias and integrated variance.

For function estimation in the Gaussian white noise model, the variance blows up if the estimator is constrained to have a bias decreasing faster than the minimax rate. In the sparse sequence model and the high-dimensional regression model with sparsity $\ll \sqrt{n},$ a different phenomenon occurs. For estimators with bias bounded by constant$\times$minimax rate, the derived lower bounds show that a sufficiently small constant already enforces that the variance must be larger than the minimax rate by a polynomial factor in the sample size. Interestingly, for an estimator achieving the minimax estimation rate, the rate of the variance can be of a smaller order than the rate of the squared bias and therefore, variance and squared bias do not need to be balanced.

Summarizing the results, for all of the considered models a non-trivial bias-variance trade-off could be established. For some estimation problems, the bias-variance trade-off only holds in a worst-case sense and, on subsets of the parameter space, rate-optimal methods with negligible bias exist. It should also be emphasized that for this work only non-adaptive setups are considered. Adaptation to either smoothness or sparsity induces additional bias. 
% 
% 
% 
%As mentioned above, the main motivation for this work is to test whether the new regimes found in modern machine learning could avoid the classical bias-variance trade-off.  
% 
The bias-variance trade-off problem can also be rephrased by asking for the optimal estimation rate if only estimators with, for instance, small bias are allowed. In this sense, the work contributes to the growing literature on optimal estimation rates under constraints on the estimators. So far, major theoretical work has been done for polynomial time computable estimators \cite{MR3127849, pmlr-v30-Berthet13}, lower and upper bounds for estimation under privacy constraints \cite{6686179, 2018arXiv180501422R, MR4227314}, and parallelizable estimators under communication constraints \cite{NIPS2013_4902, MR4134798}.

The paper is organized as follows. In Section \ref{sec.general_low_bds}, we provide a number of new abstract lower bounds, where we distinguish between inequalities bounding the change of expectation for two distributions and inequalities involving an arbitrary number of expectations. The subsequent sections of the article study lower and upper bounds for the bias-variance trade-off based on these inequalities. The considered setups range from pointwise estimation in the Gaussian white noise model (Section \ref{sec:gauss_wn} and Section \ref{sec.reduction}) and a boundary estimation problem (Section \ref{sec.boundary}) to high-dimensional models in Section \ref{sec.highdimensional}.
% Section \ref{sec.mean_deviation} serves as an outlook to generalizations of the bias-variance trade-off. Specifically, we study the mean absolute deviation and derive a lower bound for the trade-off between bias and mean absolute deviation considering again pointwise estimation in the Gaussian white noise model.
Section \ref{sec.discussion} discusses some aspects underlying a formal definition of the bias-variance trade-off and the connection between the approach in this work and minimax lower bounds. All proofs are deferred to the Supplement. 

\medskip

\noindent
{\it Notation:} Whenever the domain $D$ is clear from the context, we write $\|\cdot \|_p$ for the $L^p(D)$-norm. Moreover, $\|\cdot\|_2$ denotes also the Euclidean norm for vectors.
We denote by $A^\top$ the transpose of a matrix $A$.
For mathematical expressions involving several probability measures, it is assumed that those are defined on the same measurable space. If $P$ is a probability measure, we write $E_P$ and $\Var_P$ for the expectation and variance with respect to $P,$ respectively. For probability measures $P_\theta$ depending on a parameter $\theta,$ $E_\theta$ and $\Var_\theta$ denote the corresponding expectation and variance. Throughout the article, we consider estimators $\wh \theta$ for which the expectation $E_\theta[\wh \theta]$ exists and is finite for all parameters $\theta$ in the parameter space. This guarantees that the bias is always well-defined. If a random variable $X$ is not square integrable with respect to $P$, we assign the value $+\infty$ to $\Var_P(X).$ For any finite number of measures $P_1,\dots,P_M,$ defined on the same measurable space, we can find a measure $\nu$ dominating all of them (e.g. $\nu := \tfrac1M \sum_{j=1}^M P_j$). Henceforth, $\nu$ will always denote a dominating measure and $p_j$ stands for the $\nu$-density of $P_j.$ The total variation is $\TV(P,Q) := \tfrac 12 \int |p(\om)-q(\om)| \, d\nu(\om).$ The squared Hellinger distance is defined as $H(P,Q)^2 := \tfrac12 \int (\sqrt{p(\om)}-\sqrt{q (\om)})^2 \, d\nu(\om)$ (in the literature sometimes also defined without the factor $1/2$). If $P$ is dominated by $Q$, the Kullback-Leibler divergence is defined as $\KL(P,Q) := \int \log(p(\om)/q(\om)) p(\om) \, d\nu(\om)$ and the $\chi^2$-divergence is defined as $\chi^2(P,Q) := \int (p(\om)/q(\om) -1)^2 q(\om) \, d\nu(\om).$  If $P$ is not dominated by $Q,$ both Kullback-Leibler and $\chi^2$-divergence are assigned the value $+\infty.$

\section{General lower bounds on the variance}
\label{sec.general_low_bds}

\subsection{Lower bounds based on two distributions}

Given an upper bound on the bias, the goal is to find a lower bound on the variance. For parametric models, the natural candidate is the Cram\'er-Rao lower bound. Given a statistical model with real parameter $\theta \in \Theta \subseteq \Rb,$ and an estimator $\wh \theta$ with bias $B(\theta):=E_\theta[\wh \theta]-\theta,$ variance $V(\theta):=\Var_\theta(\wh \theta),$ and Fisher information $F(\theta),$ the Cram\'er-Rao lower bound states that 
$V(\theta) \geq \frac{(1+B'(\theta))^2}{F(\theta)},$
where $B'(\theta)$ denotes the derivative of the bias with respect to $\theta.$ The basic idea is that if the bias is small, we cannot have $B'(\theta)\leq -1/2$ everywhere, so there must be a parameter $\theta^*$ such that $V(\theta^*)\geq 1/(4F(\theta^*)).$ The constant $-1/2$ could be replaced of course by any other number in $(-1,0)$. There are various extensions of the Cram\'er-Rao lower bound to multivariate and semi-parametric settings \cite{MR1836577}. Although the Cram\'er-Rao lower bound seems to provide a straightforward path to lower bounds on the bias-variance trade-off, the imposed regularity conditions make this approach problematic for nonparametric and high-dimensional models. For example, when the parameter space is the set of $s$-sparse vectors, this is not an open set and it is unclear how to define the gradient of the bias function or the Fisher information.
% A major obstacle is the proper definition of a nonparametric Fisher information. It is moreover unclear how to interpret the Fisher information for parameter spaces that are not open sets such as for instance the space of all sparse vectors. 

Instead of trying to fix the shortcomings of the Cram\'er-Rao lower bound for complex statistical models, we derive a number of inequalities that bound the change of expectation with respect to two different distributions by the variance and one of the four standard divergence measures: total variation, Hellinger distance, Kullback-Leibler divergence and the $\chi^2$-divergence. As we will see later, these inequalities are much better suited for nonparametric problems as no notion of differentiability of the distribution with respect to the parameter is required. Moreover, the Cram\'er-Rao lower bound reappears by taking a suitable limit.

\begin{lemma}\label{lem.general_lb}
Let $P$ and $Q$ be two probability distributions on the same measurable space. Denote by $E_P$ and $\Var_P$ the expectation and variance with respect to $P$ and let $E_Q$ and $\Var_Q$ be the expectation and variance with respect to $Q.$ Then, for any random variable $X,$
\begin{align}
    \frac{( E_P[X]-E_Q[X])^2}2
    \Big( \frac1{\TV(P,Q)}-1\Big) 
    &\leq \Var_P(X)+\Var_Q(X),
    \hspace{-0.3cm}
    \label{eq.lem1_1} \\
    %
    % \frac{( E_P[X]-E_Q[X])^2}{4}
    % \Big( \frac{1}{H(P,Q)}-H(P,Q)\Big)^2
    \frac{( E_P[X]-E_Q[X])^2}{4-2H^2(P,Q)}
    \Big( \frac{1}{H(P,Q)}-H(P,Q)\Big)^2
    &\leq \Var_P(X)+ \Var_Q(X), 
    \hspace{-0.3cm}
    \label{eq.lem1_2} 
    \\
    \hspace{-0.1cm}
    ( E_P[X]-E_Q[X])^2
    \Big( \frac1{\KL(P,Q)+\KL(Q,P)}-\frac 14\Big)
    &\leq \Var_P(X) \vee \Var_Q(X), 
    \hspace{-0.3cm}
    \label{eq.lem1_3} \\
    ( E_P[X]-E_Q[X])^2
    \leq \chi^2(Q,P)\Var_P(X)
    \label{eq.lem1_4} 
    & \wedge  \chi^2(P,Q)\Var_Q(X).
\end{align}
\end{lemma}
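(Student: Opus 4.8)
The plan is to prove all four estimates with one common device. Since $\int(p-q)\,d\nu=0$, for any constant $c$ one has $E_P[X]-E_Q[X]=\int(X-c)(p-q)\,d\nu$; we then factor the signed density $p-q$ in a way tailored to each divergence, apply the Cauchy--Schwarz inequality, and pick the centering constant(s) so that the second moments become variances---the midpoint $\tfrac12(E_P[X]+E_Q[X])$ for \eqref{eq.lem1_1} and \eqref{eq.lem1_3}, the mean $E_Q[X]$ (or $E_P[X]$) for \eqref{eq.lem1_4}, and a tuned pair of constants for \eqref{eq.lem1_2}. For \eqref{eq.lem1_1} and \eqref{eq.lem1_3}, the leftover terms $\tfrac14(E_P[X]-E_Q[X])^2$ produced by the midpoint, once moved to the left-hand side, account for the subtracted constants $1$ and $\tfrac14$. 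The degenerate cases---$P=Q$, $P$ and $Q$ mutually singular, $X$ not square-integrable under $P$ or $Q$---render the relevant inequality trivial and are handled first.

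For \eqref{eq.lem1_4}, assume $P\ll Q$ (otherwise $\chi^2(P,Q)=+\infty$ and there is nothing to prove for that term). Then $E_P[X]-E_Q[X]=E_Q\big[(X-E_Q[X])(\tfrac{dP}{dQ}-1)\big]$ because $E_Q[\tfrac{dP}{dQ}-1]=0$, and Cauchy--Schwarz under $Q$ gives $(E_P[X]-E_Q[X])^2\le\Var_Q(X)\,\chi^2(P,Q)$; the symmetric argument under $P$ yields the other term. For \eqref{eq.lem1_1}, decompose $p-q=(p-q)_+-(p-q)_-$, each part having $\nu$-mass $\TV(P,Q)$. Cauchy--Schwarz applied to $\int(X-c)(p-q)_{\pm}\,d\nu$ together with the pointwise bounds $(p-q)_+\le p$ and $(p-q)_-\le q$ gives $|E_P[X]-E_Q[X]|\le\TV(P,Q)^{1/2}\big(E_P[(X-c)^2]^{1/2}+E_Q[(X-c)^2]^{1/2}\big)$; squaring with $(a+b)^2\le 2(a^2+b^2)$ and centering at the midpoint produces \eqref{eq.lem1_1} after rearrangement.

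For \eqref{eq.lem1_3}, I would first bound the symmetrized divergence below by the triangular discrimination. The inequality between logarithmic and arithmetic means, $(a-b)(\log a-\log b)\ge 2(a-b)^2/(a+b)$, gives $\KL(P,Q)+\KL(Q,P)=\int(p-q)\log(p/q)\,d\nu\ge 2\int(p-q)^2/(p+q)\,d\nu$. Cauchy--Schwarz with the splitting $p-q=\tfrac{p-q}{\sqrt{p+q}}\cdot\sqrt{p+q}$ yields $(E_P[X]-E_Q[X])^2\le\big(E_P[(X-c)^2]+E_Q[(X-c)^2]\big)\int(p-q)^2/(p+q)\,d\nu$; combining the two estimates, centering at the midpoint, rearranging, and using $\tfrac12(\Var_P(X)+\Var_Q(X))\le\Var_P(X)\vee\Var_Q(X)$ gives \eqref{eq.lem1_3}.

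The Hellinger bound \eqref{eq.lem1_2} is the delicate one, since a single-center Cauchy--Schwarz only produces a strictly weaker constant; the idea is to re-center the two halves of the factorization $p-q=\sqrt p(\sqrt p-\sqrt q)+\sqrt q(\sqrt p-\sqrt q)$ independently. Writing $H:=H(P,Q)$ and using $\int\sqrt p(\sqrt p-\sqrt q)\,d\nu=H^2$, $\int\sqrt q(\sqrt p-\sqrt q)\,d\nu=-H^2$ and $\int(\sqrt p-\sqrt q)^2\,d\nu=2H^2$, Cauchy--Schwarz on the two halves gives, for all constants $c_1,c_2$, $\big(E_P[X]-E_Q[X]-(c_1-c_2)H^2\big)^2\le 4H^2\big(E_P[(X-c_1)^2]+E_Q[(X-c_2)^2]\big)$. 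Choosing $c_1=E_P[X]-\delta$ and $c_2=E_Q[X]+\delta$ turns this into $\big((E_P[X]-E_Q[X])(1-H^2)+2\delta H^2\big)^2\le 4H^2\big(\Var_P(X)+\Var_Q(X)+2\delta^2\big)$ for every $\delta\in\Rb$; expanding and minimizing the right-hand side over $\delta$ (the optimal value is $\delta=(E_P[X]-E_Q[X])(1-H^2)/(2(2-H^2))$) collapses everything to $(E_P[X]-E_Q[X])^2(1-H^2)^2\le H^2(4-2H^2)(\Var_P(X)+\Var_Q(X))$, which is \eqref{eq.lem1_2} after dividing by $H^2(4-2H^2)$ and noting $(1-H^2)^2/H^2=(1/H-H)^2$. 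The main obstacle is exactly this point: recognizing that the two centering constants must be decoupled and then tuned, rather than chosen equal.
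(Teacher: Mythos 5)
Your proof is correct, and for two of the four inequalities it takes a genuinely different route than the paper. For \eqref{eq.lem1_4} your argument (change of measure plus Cauchy--Schwarz under $Q$, then under $P$) is exactly the paper's, and for \eqref{eq.lem1_1} your splitting of $p-q$ into positive and negative parts, combined with $(a+b)^2\le 2(a^2+b^2)$ and midpoint centering, is only a cosmetic variant of the paper's single Cauchy--Schwarz with weight $|p-q|\le p+q$ and gives the same constant. For \eqref{eq.lem1_3} the paper integrates along the geometric interpolation $h(t,\cdot)=p^tq^{1-t}$, using $p-q=\int_0^1\log(p/q)\,h(t,\cdot)\,dt$ and $h(t,\cdot)\le tp+(1-t)q$, whereas you first bound $\KL(P,Q)+\KL(Q,P)\ge 2\int (p-q)^2/(p+q)\,d\nu$ via the logarithmic-mean inequality and then apply one weighted Cauchy--Schwarz with weight $p+q$; both land exactly on \eqref{eq.lem1_3}, and your route avoids the interpolation family and the interchange of integrals at the price of passing through an intermediate $f$-divergence (triangular discrimination). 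The most substantial difference concerns \eqref{eq.lem1_2}: the paper obtains the stated constant by citing Nishiyama's inequality and only proves directly a weaker variant with denominator $4$ in place of $4-2H^2(P,Q)$, while you give a self-contained derivation of the full-strength bound by decoupling the two centering constants and optimizing over the shift $\delta$. I verified your algebra: the two-center Cauchy--Schwarz step, the choice $c_1=E_P[X]-\delta$, $c_2=E_Q[X]+\delta$, the optimal $\delta=(E_P[X]-E_Q[X])(1-H^2)/(2(2-H^2))$ with $H:=H(P,Q)$, and the collapsed inequality $(E_P[X]-E_Q[X])^2(1-H^2)^2\le H^2(4-2H^2)\big(\Var_P(X)+\Var_Q(X)\big)$ all check out, so your treatment of \eqref{eq.lem1_2} is in fact more self-contained than the paper's, with the degenerate cases ($P=Q$, mutual singularity, infinite variance or divergence) handled trivially as you indicate.
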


The inequality in \eqref{eq.lem1_4} is known \cite[Lemma 2]{nishiyama2019newKL} and can also be viewed as a consequence of the Hammersley-Chapman-Robbins inequality~\cite[Example 5.2]{lehmann2006theory}. \cite[Lemma 5.3]{MR1836577} derives analogous formulas for \eqref{eq.lem1_2} and \eqref{eq.lem1_4} with the variance replaced by the second moment. \eqref{eq.lem1_2} is derived from \cite[Theorem 1]{nishiyama2020tightHellinger}.
To the best of our knowledge, the inequalities in~\eqref{eq.lem1_1} and~\eqref{eq.lem1_3} have not been stated yet in the literature. A proof is provided in Supplement \ref{sec.proofs_gen_lower_bd}. 

If one of the information measures is zero, the left-hand side of the corresponding inequality should be assigned the value zero as well. The inequalities are based on different decompositions for $E_P[X]-E_Q[X]=\int X(\om)(dP(\om)-dQ(\om)).$ All of them involve an application of the Cauchy-Schwarz inequality. For deterministic $X$, both sides of the inequalities are zero and hence we have equality. For \eqref{eq.lem1_4}, the choice $X=dQ/dP$ yields equality and in this case, both sides are $(\chi^2(Q,P))^2.$
Another line of related inequalities bound the change of expectations in terms of $f$-divergences, without involving the variance, see for instance \cite{chen2016bayes,Gerchinovitz2020Fano}.

To obtain lower bounds for the variance, our inequalities can be applied similarly as the Cram\'er-Rao inequality. Indeed, small bias implies that $E_{\theta}[\wh \theta]$ is close to $\theta$ and $E_{\theta'}[\wh \theta]$ is close to $\theta'.$ If $\theta$ and $\theta'$ are sufficiently far from each other, we obtain a lower bound for $|E_{\theta}[\wh \theta]-E_{\theta'}[\wh \theta]|$ and a fortiori a lower bound for the variance. 
This argument suggests that the lower bound becomes stronger by picking parameters $\theta$ and $\theta'$ that are as far as possible away from each other. But then, also the information measures of the distributions $P_\theta$ and $P_{\theta'}$ are typically larger, making the lower bounds worse. This shows that an optimal application of the inequalities should balance these two aspects. 

%For the examples studied in later sections, it turns out that the distributions $P_\theta$ and $P_{\theta'}$ should be chosen such that these information measures stay bounded with increasing sample size, and are bounded away from one for the total variation and the Hellinger distance. 

Example~\ref{ex.bds_for_normal} in the supplementary material illustrates these inequalities in the case of the Gaussian distribution.
For other distributions, one of these four divergence measures might be easier to compute and the four inequalities can lead to substantially different lower bounds. For instance, if the measures $P$ and $Q$ are not dominated by each other, the Kullback-Leibler and  $\chi^2$-divergence are both infinite but the Hellinger distance and total variation version still produce non-trivial lower bounds. This justifies deriving for each divergence measure a separate inequality. It is also in line with the formulation of the theory on minimax lower bounds (see for instance Theorem 2.2 in \cite{tsybakov2009introduction}).

Except for the total variation version, all derived inequalities in Lemma \ref{lem.general_lb} are generalizations of the Cram\'er-Rao lower bound. The Cram\'er-Rao lower bound appears by taking $P$ and $Q$ to be $P_\theta$ and $P_{\theta+\Delta}$ and letting $\Delta$ tend to zero. A proof and a variation of Lemma \ref{lem.general_lb} for a family of distributions $(P_t)_{t \in [0,1]}$ (Lemma \ref{lem.lb_limit}) can be found in Supplement \ref{sec.proofs_gen_lower_bd}.

\subsection{Information matrices and lower bound based on multiple distributions}

For minimax lower bounds based on hypotheses tests, it has been observed that lower bounds based on two hypotheses are only rate-optimal in specific settings such as for some functional estimation problems. If the local alternatives surrounding a parameter $\theta$ spread over many different directions, estimation of $\theta$ becomes much harder. To capture this in the minimax lower bounds, we need instead to reduce the problem to a multiple testing problem involving potentially a large number of tests. 

A similar phenomenon occurs also for bias-variance trade-off lower bounds. Given $M+1$ probability measures $P_0, P_1, \dots, P_M,$ the $\chi^2$-version of Lemma \ref{lem.general_lb} states that for any $j=1,\dots,M,$ $(E_{P_j}[X]-E_{P_0}[X])^2/\chi^2(P_j,P_0) \leq \Var_{P_0}(X).$
If $P_1, \dots, P_M$ describe different directions around $P_0$ in a suitable information theoretic sense, one would hope that in this case a stronger inequality holds with the sum on the left-hand side, that is, $\sum_{j=1}^M (E_{P_j}[X]-E_{P_0}[X])^2/\chi^2(P_j,P_0) \leq \Var_{P_0}(X).$ In a next step, two notions of information matrices are introduced, measuring to which extent $P_1,\dots,P_M$ represent different directions around $P_0.$
If $P_0$ dominates $P_1,\dots,P_M,$ the $\chi^2$-divergence matrix $\chi^2(P_0,\dots,P_M)$ is defined as the $M\times M$ matrix with $(j,k)$-th entry
\begin{align*}
    \chi^2(P_0,\dots,P_M)_{j,k}
    := \int \frac{dP_j}{dP_0} dP_k - 1.
\end{align*}
The $M \times M$ Hellinger affinity matrix is defined entrywise by 
\begin{align*}
    \rho(P_0 | P_1,\dots,P_M)_{j,k}
    := \frac {\int \sqrt{p_j p_k}  \, d\nu}
    {\int \sqrt{p_j p_0  }\, d\nu  \int \sqrt{p_k p_0} \, d\nu} -1, \quad j,k=1,\dots, M.
\end{align*}
Here and throughout the article, we implicitly assume that the distributions $P_0,\dots,P_M$ are chosen such that the Hellinger affinities $\int \sqrt{p_j p_0  }\, d\nu$ are positive and the Hellinger affinity matrix is well-defined. This condition is considerably weaker than assuming that $P_0$ dominates the other measures (which is necessary for finiteness of the $\chi^2$-divergence matrix).
These two notions of information matrices are studied in more detail in \cite{DerumignySchmidtHieber2023}.

For a matrix $A,$ the Moore-Penrose inverse $A^+$ always exists and satisfies the property $AA^+A=A$ and $A^+AA^+=A^+.$ We can now state the generalization of \eqref{eq.lem1_4} to an arbitrary number of distributions.
The following theorem is proved in Appendix~\ref{sec.proofs_gen_lower_bd}.

\begin{thm}\label{thm.multiple_lb} For $M\geq 1$, let $P_0, P_1, \dots, P_M$ be probability measures defined on the same probability space, and $X$ be a random variable.

\begin{enumerate}[(i)]
    \item Set $\Delta:=(E_{P_1}[X]-E_{P_0}[X],\dots,E_{P_M}[X]-E_{P_0}[X])^\top.$ If $P_j \ll P_0$ for all $j=1,\dots, M$, then
    $\Delta^\top \chi^2(P_0,\dots,P_M)^+ \Delta
        \leq  \Var_{P_0}(X),$
    where $\chi^2(P_0,\dots,P_M)^+$ denotes the Moore-Penrose inverse of the $\chi^2$-divergence matrix. 
    
    \medskip
    
    \item Let $A_\ell:=\rho(P_\ell|P_1,\dots,P_{\ell-1},P_{\ell+1},\dots, P_M).$ Then, for $M\geq 2,$
    \begin{align*}
        2M\sum_{j=1}^M &\Big(E_j[X]-\frac 1M \sum_{\ell=1}^M E_\ell[X]\Big)^2 \\
        &= \sum_{j,k=1}^M (E_j[X]-E_k[X])^2
        \leq 4\max_{\ell=1,\ldots,M} \lambda_1(A_\ell)\sum_{k=1}^M  \Var_{P_k}(X),
    \end{align*}
    where $\lambda_1(A_\ell)$ denotes the largest eigenvalue (spectral norm) of the positive semi-definite Hellinger affinity matrix $A_\ell$.
\end{enumerate}
\end{thm}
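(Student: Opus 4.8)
I would treat the two parts separately, as they rest on different mechanisms: part~(i) is a clean multivariate Cram\'er--Rao inequality, while part~(ii) needs a reference-by-reference reduction exploiting the quadratic-form identity \eqref{eq.Hell_aff_pos_def}. For part~(i) one may assume $\Var_{P_0}(X)<\infty$ and that the $\chi^2$-divergence matrix has finite entries, the statement being trivial otherwise. Write $L_j:=dP_j/dP_0$ and $L:=(L_1,\dots,L_M)^\top$. Under $P_0$ one has $E_{P_0}[L_j]=1$ and $\Cov_{P_0}(L_j,L_k)=\int\frac{dP_j}{dP_0}\,dP_k-1$, so that $C:=\chi^2(P_0,\dots,P_M)$ is exactly the covariance matrix $\Cov_{P_0}(L)$; likewise $\Delta_j=E_{P_j}[X]-E_{P_0}[X]=\Cov_{P_0}(X,L_j)$, i.e. $\Delta=\Cov_{P_0}(X,L)$. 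First I would observe that $\Delta\in\image C$: if $v\in\ker C$ then $\Var_{P_0}(v^\top L)=v^\top Cv=0$, so $v^\top L$ is $P_0$-a.s.\ constant and $v^\top\Delta=\Cov_{P_0}(X,v^\top L)=0$; as $C$ is symmetric this gives $\Delta\perp\ker C=(\image C)^\perp$. Then, for any $v\in\Rb^M$, Cauchy--Schwarz applied to $X$ and $v^\top L$ under $P_0$ yields $(v^\top\Delta)^2=\Cov_{P_0}(X,v^\top L)^2\le\Var_{P_0}(X)\,v^\top Cv$. Taking $v=C^+\Delta$, using $CC^+\Delta=\Delta$ (valid since $\Delta\in\image C$ and $CC^+C=C$) and $C^+CC^+=C^+$, both $v^\top\Delta$ and $v^\top Cv$ equal $\Delta^\top C^+\Delta\ge0$, so $(\Delta^\top C^+\Delta)^2\le\Var_{P_0}(X)\,\Delta^\top C^+\Delta$; dividing (or treating the case $\Delta^\top C^+\Delta=0$) proves the claim.

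For part~(ii), the identity $\sum_{j,k}(a_j-a_k)^2=2M\sum_j\big(a_j-\tfrac1M\sum_\ell a_\ell\big)^2$ follows by expanding the squares, so it remains to bound $\sum_{j,k}(E_j[X]-E_k[X])^2$. The plan is to split this over the choice of reference, $\sum_{j,k}(E_j[X]-E_k[X])^2=\sum_{\ell=1}^M\sum_{k\neq\ell}(E_k[X]-E_\ell[X])^2$, and to estimate each inner ``star'' sum using $A_\ell$. Fix $\ell$, write $g_m:=\sqrt{p_m}$, $c_{m\ell}:=\int g_m g_\ell\,d\nu$ and $h_k:=g_k/c_{k\ell}-g_\ell$, so that by \eqref{eq.Hell_aff_pos_def} the Gram matrix of $(h_k)_{k\neq\ell}$ in $L^2(\nu)$ is $A_\ell$ and hence $\big\|\sum_k v_k h_k\big\|^2\le\lambda_1(A_\ell)\|v\|_2^2$; note also $\langle g_\ell,h_k\rangle=0$. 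Since $\int(p_k-p_\ell)\,d\nu=0$ one may center freely; centering at $E_\ell[X]$ and using $p_k-p_\ell=(g_k-g_\ell)(g_k+g_\ell)$ and $g_k=c_{k\ell}(h_k+g_\ell)$, a short computation gives the representation
\[
E_k[X]-E_\ell[X]=c_{k\ell}\big\langle(X-E_\ell[X])g_k,\,h_k\big\rangle+c_{k\ell}^2\big\langle(X-E_\ell[X])g_\ell,\,h_k\big\rangle,
\]
which is linear in the $h_k$'s.

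To use this I would test the vector $D_\ell:=(E_k[X]-E_\ell[X])_{k\neq\ell}$ against an arbitrary unit vector $\alpha$. The ``$g_\ell$-part'' collapses neatly: $\sum_k\alpha_k c_{k\ell}^2\langle(X-E_\ell[X])g_\ell,h_k\rangle=\big\langle(X-E_\ell[X])g_\ell,\sum_k\alpha_k c_{k\ell}^2 h_k\big\rangle\le\sqrt{\Var_{P_\ell}(X)\,\lambda_1(A_\ell)}$, by Cauchy--Schwarz and $c_{k\ell}\le1$. The ``$g_k$-part'' is estimated term by term via Cauchy--Schwarz together with $c_{k\ell}\|h_k\|=\sqrt{c_{k\ell}^2(A_\ell)_{kk}}=\sqrt{1-c_{k\ell}^2}\le\sqrt{\lambda_1(A_\ell)}$ and $\|(X-E_\ell[X])g_k\|^2=\Var_{P_k}(X)+(E_k[X]-E_\ell[X])^2$, followed by Cauchy--Schwarz in $k$. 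Combining these produces a self-referential estimate of the shape
\[
\|D_\ell\|_2\le\sqrt{\lambda_1(A_\ell)\Var_{P_\ell}(X)}+\sqrt{\lambda_1(A_\ell)\Big(\textstyle\sum_{k\neq\ell}\Var_{P_k}(X)+\|D_\ell\|_2^2\Big)}.
\]
It then remains to solve this for $\|D_\ell\|_2^2$, sum over the $M$ references so that the left-hand side becomes $\sum_{j,k}(E_j[X]-E_k[X])^2$, and collapse the constants — using $(a+b)^2\le 2a^2+2b^2$ and the rearrangement of the ``other-variance'' contributions $\sum_\ell\sum_{k\neq\ell}\Var_{P_k}(X)$ into a multiple of $\sum_k\Var_{P_k}(X)$ — down to the factor $4$. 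The degenerate cases (an infinite variance, a singular $A_\ell$, non-mutual absolute continuity, or $\lambda_1(A_\ell)$ large enough that the bound is vacuous) are handled separately.

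The hard part, I expect, is precisely this last bookkeeping. Closing the self-referential inequality and then summing over the $M$ possible references must be arranged so that no factor growing with $M$ survives and the constant lands on exactly $4$ rather than something larger. This is delicate because the term-by-term treatment of the ``$g_k$-part'' brings $\sum_{k\neq\ell}\Var_{P_k}(X)$ into the per-reference bound, and one must either make the summation over $\ell$ telescope cleanly or replace that step by a sharper, jointly-in-$k$ estimate that uses the near-orthogonality encoded in $\lambda_1(A_\ell)$ rather than $\Tr(A_\ell)$. A useful consistency check throughout is that for $M=2$ the claim reduces to \eqref{eq.lem1_2}.
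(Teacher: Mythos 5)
Your part (i) is correct and is essentially the paper's own argument: both proofs are Cauchy--Schwarz under $P_0$ between $X-E_{P_0}[X]$ and the linear combination $\sum_j a_j\,(dP_j/dP_0-1)$ with $a=C^+\Delta$, $C=\chi^2(P_0,\dots,P_M)$, followed by the Moore--Penrose identity $C^+CC^+=C^+$; your additional remark that $\Delta\perp\ker C$ is correct but not needed for this route.

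Part (ii) has a genuine gap, and it is exactly where you suspected. Your representation $E_k[X]-E_\ell[X]=c_{k\ell}\langle(X-E_\ell[X])g_k,h_k\rangle+c_{k\ell}^2\langle(X-E_\ell[X])g_\ell,h_k\rangle$ is correct, but because the first factor $(X-E_\ell[X])g_k$ is centred at the \emph{wrong} mean, its squared $L^2(\nu)$-norm is $\Var_{P_k}(X)+(E_k[X]-E_\ell[X])^2$, and this is what forces the self-referential per-reference bound $\|D_\ell\|_2\le\sqrt{\lambda_1(A_\ell)\Var_{P_\ell}(X)}+\sqrt{\lambda_1(A_\ell)\big(\sum_{k\neq\ell}\Var_{P_k}(X)+\|D_\ell\|_2^2\big)}$. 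This inequality cannot be closed when $\lambda_1(A_\ell)\ge 1$: the right-hand side then grows at least as fast as $\|D_\ell\|_2$, so it carries no information, while the theorem is a genuine (non-vacuous) claim in that regime — so "handling the case of large $\lambda_1$ separately" is not available. Even when $\lambda_1(A_\ell)<1$ for every $\ell$, solving gives $\|D_\ell\|_2^2\le 2\lambda_1(A_\ell)(1-\sqrt{\lambda_1(A_\ell)})^{-2}\sum_k\Var_{P_k}(X)$, and since each reference's bound already contains the \emph{full} variance sum, summing over the $M$ references necessarily produces an extra factor $M$ and a constant blowing up as $\lambda_1\to1$; no rearrangement of $\sum_\ell\sum_{k\neq\ell}\Var_{P_k}(X)$ can repair this. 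So the route as proposed cannot reach the constant $4$, nor indeed any $M$-free bound valid for all $\lambda_1$.

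The missing idea is to make the decomposition symmetric and \emph{self-centred}, which is what the paper does: for each ordered pair, write $(E_j[X]-E_k[X])\int\sqrt{p_jp_k}\,d\nu=\int(X-E_k[X])\sqrt{p_k}\big(\sqrt{p_j}-\alpha_{j,k}\sqrt{p_k}\big)d\nu+\int(X-E_j[X])\sqrt{p_j}\big(\alpha_{k,j}\sqrt{p_j}-\sqrt{p_k}\big)d\nu$ with $\alpha_{j,k}=\int\sqrt{p_jp_k}\,d\nu$, so that every weighted factor $(X-E_m[X])\sqrt{p_m}$ is centred at its own mean and contributes exactly $\Var_{P_m}(X)$, never a mean-difference term. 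Dividing by the affinity, summing over all pairs and swapping $j\leftrightarrow k$ merges the two halves into twice a single term; then two global applications of Cauchy--Schwarz (first in $L^2(\nu)$ for each fixed $k$, then over $k$), together with the quadratic-form bound $\big\|\sum_j v_j\big(\sqrt{p_j}/\int\sqrt{p_kp_j}\,d\nu-\sqrt{p_k}\big)\big\|_{L^2(\nu)}^2\le\lambda_1(A_k)\|v\|_2^2$ from \eqref{eq.Hell_aff_pos_def}, give $\Sigma\le 2\sqrt{\max_\ell\lambda_1(A_\ell)}\,\sqrt{\sum_k\Var_{P_k}(X)}\,\sqrt{\Sigma}$ with $\Sigma=\sum_{j,k}(E_j[X]-E_k[X])^2$. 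The self-reference now enters only through $\sqrt{\Sigma}$, so dividing and squaring yields the constant $4$ with no $M$-dependence and no restriction on the size of $\lambda_1$. Concretely, recentring the $g_k$-pairing in your identity at $E_k[X]$ (absorbing the correction into the affinity weights) is the step that turns your argument into this one.
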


Instead of using a finite number of probability measures, it is in principle possible to extend Theorem \ref{thm.multiple_lb} to families of probability measures. The divergence matrices become then operators and the sums have to be replaced by integral operators. 

If the $\chi^2$-divergence matrix is diagonal with positive entries on the diagonal, we obtain that $\sum_{j=1}^M (E_{P_j}[X]-E_{P_0}[X])^2/\chi^2(P_j,P_0)\leq \Var_{P_0}(X).$ It should be observed that because of the sum, this inequality produces better lower bounds than~\eqref{eq.lem1_4}.

Theorem~\ref{thm.multiple_lb}(i) contains the multivariate Cramér-Rao lower bound as a special case, see Section~\ref{sec:multCRLB}. The connection to the Cram\'er-Rao inequality suggests that for a given statistical problem with a $p$-dimensional parameter space, one should apply Theorem \ref{thm.multiple_lb} with $M=p.$ It turns out that for the high-dimensional models discussed in Section \ref{sec.highdimensional} below, the number of distributions $M$ will be chosen as $\binom{p-1}{s-1}$ with $p$ the number of parameters and $s$ the sparsity. Depending on the sparsity, this can be much larger than~$p.$ 

We are aware of two existing inequalities that are related to Theorem~\ref{thm.multiple_lb}(i). \cite[Equation (3.1)]{wahl2021van} rewritten in our notation is $\sum_{j=1}^M \Var_{P_j}(X) \geq \big( \sum_{j=1}^M E_{P_j}[X] - E_{P_0}[X] \big)^2 / \sum_{j=1}^M \chi^2(P_j,P_0)$ and \cite[p.330]{polyanskiy2021notes} states that for any $p \geq 1$ and any distributions $P,Q$ on $\Rb^p$, $\chi^2(P,Q) \geq (E_P[X] - E_Q[X])^\top \Cov_Q(X)^{-1} (E_P[X] - E_Q[X])$, where $\Cov_Q(X)$ denotes the covariance matrix of $X$ under $Q$. The concept of Fisher $\Phi$-information also generalizes the Fisher information using information measures, see \cite{MR2081075, MR3506739}. It is worth mentioning that this notion is not comparable with our approach and only applies to Markov processes.

To apply Theorem \ref{thm.multiple_lb}(i), we now introduce several variations. As a consequence of Proposition 3.2(ii) in \cite{DerumignySchmidtHieber2023},
a vector $v=(v_1,\dots,v_M)$ lies in the kernel of the $\chi^2$-divergence matrix if and only if $\sum_{j=1}^M v_j (P_j-P_0)=0.$ This shows that such a $v$ and the vector $\Delta$ must be orthogonal. Thus, $\Delta$ is orthogonal to the kernel of $\chi^2(P_0,\dots,P_M)$ and
\begin{align}
    \sum_{j=1}^M \big(E_{P_j}[X]-E_{P_0}[X]\big)^2
    \leq \lambda_1\big(\chi^2(P_0,\dots,P_M)\big) \Var_{P_0}(X),
    \label{eq.bound_lambda1_chi_var}
\end{align}
where $\lambda_1\big(\chi^2(P_0,\dots,P_M)\big)$ denotes the largest eigenvalue (spectral norm) of the $\chi^2$-divergence matrix.
Given a symmetric matrix $A=(a_{ij})_{i,j=1,\dots,M},$ the maximum row sum norm is defined as $\|A\|_{1,\infty}:=\max_{i=1,\ldots,M} \sum_{j=1}^M |a_{ij}|.$ For any eigenvalue $\lambda$ of $A$ with corresponding eigenvector $v=(v_1,\dots,v_M)^\top$ and any $i \in \{1, \dots, M\},$ we have that $\lambda v_i=\sum_{j=1}^M a_{ij} v_j$ and therefore
$|\lambda| \max_{i=1,\ldots,M} |v_i| \leq \max_{i=1,\ldots,M} \sum_{j=1}^M |a_{ij}| \|v\|_\infty.$ Therefore, $\|A\|_{1,\infty}$ is an upper bound for the spectral norm and
\begin{align}
    \sum_{j=1}^M \big(E_{P_j}[X]-E_{P_0}[X]\big)^2
    \leq \big\|\chi^2(P_0,\dots,P_M)\big\|_{1,\infty} \Var_{P_0}(X).
    \label{eq.row_sum_norm_nd}
\end{align}

Whatever variation of Theorem \ref{thm.multiple_lb} is applied to derive lower bounds on the bias-variance trade-off, the key problem is the computation of the information matrix for given probability measures $P_{\theta_j},$ $j=0,\dots, M$ in the underlying statistical model $(P_\theta: \theta \in \Theta).$ Suppose there exists a more tractable statistical model $(Q_\theta: \theta \in \Theta)$ with the same parameter space such that the data in the original model can be obtained by a transformation of the data generated from $(Q_\theta: \theta \in \Theta).$ Theorem 4.1 in the companion paper \cite{DerumignySchmidtHieber2023} states a data processing inequality for $\chi^2$-divergence matrices. 
In the setting considered above, this data processing inequality can be written as matrix inequality 
\begin{align}
    \chi^2(P_{\theta_0},\dots,P_{\theta_M})\leq \chi^2(Q_{\theta_0},\dots,Q_{\theta_M}),
    \label{eq.data_processing}
\end{align} where $\leq$ is understood with respect to the partial order on the set of positive semi-definite matrices. We therefore can apply the upper bounds \eqref{eq.bound_lambda1_chi_var} and \eqref{eq.row_sum_norm_nd} with $\chi^2(P_{\theta_0},\dots,P_{\theta_M})$ replaced by $\chi^2(Q_{\theta_0},\dots,Q_{\theta_M}).$ In Theorem~\ref{thm.multiple_lb}(i), $\chi^2(P_{\theta_0},\dots,P_{\theta_M})^+$ can be replaced by $\chi^2(Q_{\theta_0},\dots,Q_{\theta_M})^+$ if the matrix $\chi^2(P_{\theta_0},\dots,P_{\theta_M})$ is invertible. A specific application for the combination of general lower bounds and the data processing inequality is given in Section \ref{sec.highdimensional}.

% Before discussing a number of specific statistical models, it is worth mentioning that the proper definition of the bias-variance trade-off depends on some subtleties underlying the choice of the space of values that can be attained by an estimator, subsequently denoted by $\Ac$. To illustrate this, suppose we observe $X \sim \Nc(\theta,1)$ with parameter space $\Theta=\{-1,1\}.$ For any estimator $\wh \theta$ with $\Ac=\Theta,$ $E_{1}[\wh \theta]<1$ or $E_{-1}[\wh \theta]>-1.$ Thus, no unbiased estimator with $\Ac=\Theta$ exists. If the estimator is, however, allowed to take values on the real line, then $\widehat \theta = X$ is an unbiased estimator for $\theta.$ We believe that the correct way to derive lower bounds on the bias-variance trade-off is to allow the action space $\Ac$ to be very large. Whenever $\Theta$ is a class of functions on $[0,1]$, the lower bounds below are over all estimators with $\Ac$ the real-valued functions on $[0,1];$ for high-dimensional problems with
% $\Theta \subseteq \Rb^p,$ the lower bounds are over all estimators with $\Ac=\Rb^p.$ { In particular, if the true parameter vector is assumed to be sparse, we do not require that the estimator is sparse as well.}

For various distributions, closed-form expression for the information matrices are derived in \cite{DerumignySchmidtHieber2023}.
In particular, if $P_j=\Nc(\theta_j, \sigma^2 I_d)$ with $\theta_j \in \Rb^d$ and $\sigma>0,$ then
\begin{align}
    \chi^2(P_0, P_1, \ldots, P_M)_{j,k} =\exp\bigg(\dfrac{\langle \theta_j - \theta_0, \theta_k - \theta_0 \rangle}{\sigma^2}\bigg) -1.
    \label{eq.chi2_normal_distr}
\end{align}

\section{The bias-variance trade-off for pointwise estimation in the Gaussian white noise model}
\label{sec:gauss_wn}

In the Gaussian white noise model, we observe a random function $Y=(Y_x)_{x\in [0,1]},$ with
\begin{equation}
    dY_x = f(x) \, dx + n^{-1/2} \, dW_x,
    \label{eq.mod_GWN}
\end{equation}
where $W$ is an unobserved standard Brownian motion. The aim is to recover the regression function $f:[0,1]\to \Rb$ from the data $Y$. In this section, the bias-variance trade-off for estimation of $f(x_0)$ with fixed $x_0\in [0,1]$ is studied. In Section \ref{sec.reduction}, we will also derive a lower bound for the trade-off between integrated squared bias and integrated variance. 

Denote by $\|\cdot\|_{2}$ the $L^2([0,1])$-norm. For $f\in L^2([0,1]),$ the likelihood ratio in the Gaussian white noise model is given by Girsanov's formula $dP_f/dP_0(Y)=\exp(n \int_0^1 f(t) dY_t - \tfrac n2 \|f\|_2^2).$ In particular, for $Y \sim P_f$ and for any function $g\in L^2([0,1])$, we have that
\begin{align*}
    \frac{dP_f}{dP_g}(Y)
    &=\exp\bigg(n\int \big(f(x)-g(x)\big) \, dY_x- \frac n2 \|f\|_2^2+\frac n2 \|g\|_2^2
    \bigg) \\
    &= \exp\bigg( \sqrt{n} \int \big(f(x)-g(x)\big) \,  dW_x+\frac{n}{2}\big\|f-g\big\|_2^2\bigg) \\
    &= \exp\bigg( \sqrt{n} \big\|f-g\big\|_2 \xi +\frac{n}{2}\big\|f-g\big\|_2^2\bigg),
\end{align*}
with $W$ a standard Brownian motion and $\xi \sim \Nc(0,1)$. From this representation, we can easily deduce that $1-H^2(P_f,P_g)=E_f[(dP_f/dP_g)^{-1/2}]$ 
$=\exp(-\tfrac n8 \|f-g\|_2^2),$ $\KL(P_f,P_g)=E_f[\log(dP_f/dP_g)]= \tfrac{n}{2}\|f-g\|_2^2$ and $\chi^2(P_f,P_g)=E_f[dP_f/dP_g]-1=\exp(n\|f-g\|_2^2)-1.$

Let $R > 0$, $\beta > 0$ and denote by $\floorbeta$ the largest integer that is strictly smaller than $\beta$. On a domain $D \subseteq \Rb,$ we define the $\beta$-H\"older norm by $\| f \|_{\Cc^\beta(D)}= \sum_{\ell \leq \floorbeta}  \|f^{(\ell)} \|_{L^\infty(D)}+ \sup_{x, y \in D, x\neq y} |f^{(\floorbeta)}(x) - f^{(\floorbeta)}(y)  | / | x - y  |^{\beta - \floorbeta},$ with $L^\infty(D)$ the supremum norm on $D$ and $f^{(\ell)}$ denoting the $\ell$-th (strong) derivative of $f$ for $\ell \leq \floorbeta$. For $D=[0,1],$ let $\Cc^\beta(R):=\{f:[0,1] \to \Rb :\| f \|_{\Cc^\beta([0,1])} \leq R\}$ be the ball of $\beta$-H\"older smooth functions $f:[0,1] \to \Rb$ with radius $R.$ We also write $\Cc^\beta(\Rb):=\{K:\Rb \to \Rb :\| K \|_{\Cc^\beta(\Rb)} < \infty\}.$

%For a given estimator $\wh f$ and a point $x_0 \in [0,1]$, we define the bias of $\wh f$ in estimating $f$ at point $x_0$ by $\Bias_{f}(\wh f(x_0)) := E_f[\wh f(x_0)] - f(x_0)$.

To explore the bias-variance trade-off for pointwise estimation in more detail, consider for a moment the kernel smoothing estimator, defined by $\wh f(x_0)= (2h)^{-1}\int_{x_0-h}^{x_0+h} dY_t.$ Assume that $x_0$ is not at the boundary such that $0\leq x_0-h$ and $x_0+h\leq 1.$ Bias and variance for this estimator are
\begin{align*}
    \Bias_f\big(\wh f(x_0)\big) 
    = \frac 1{2h} \int_{x_0-h}^{x_0+h} \big(f(u)-f(x_0)\big) \, du, \quad 
    \Var_f\big(\wh f(x_0)\big) 
    = \frac{1}{2nh}.
\end{align*}
%If $f\in \Cc^\beta(R)$ and $\beta \leq 1,$ it follows immediately that $|\Bias_f(\wh f(x_0))|\leq R h^\beta.$ 
While the variance is independent of $f,$ the bias vanishes for large subclasses of $f$ such as, for instance, any function $f$ satisfying $f(x_0-v)=-f(x_0+v)$ for all $0\leq v\leq h.$ The largest possible bias over this parameter class is of the order $h^\beta$ and it is attained for functions that lie on the boundary of $\Cc^\beta(R).$ Because of this asymmetry between bias and variance, the strongest lower bound on the bias-variance trade-off that we can hope for is that any estimator $\wh f(x_0)$ satisfies an inequality of the form
\begin{align}
    \sup_{f\in \Cc^\beta(R)}|\Bias_f(\wh f(x_0))|^{1/\beta} \inf_{f\in \Cc^\beta(R)}\Var_f(\wh f(x_0)) \gtrsim \frac 1n.
    \label{eq.wwwlth_ptw}
\end{align}

Since for fixed $x_0,$ $f \mapsto f(x_0)$ is a linear functional, pointwise reconstruction is a specific linear functional estimation problem. This means in particular that the theory in \cite{low1995bias} for arbitrary linear functionals in the Gaussian white noise model applies. We now summarize the implications of this work on the bias-variance trade-off and state the new lower bounds based on the change of expectation inequalities derived in the previous section afterwards. 

\cite{low1995bias} shows that the bias-variance trade-off for estimation of functionals in the Gaussian white noise model can be reduced to the bias-variance trade-off for estimation of a bounded mean in a normal location family. If $f\mapsto Lf$ denotes a linear functional, $\wh{Lf}$ stands for an estimator of $Lf$, $\Theta$ is the parameter space and $w(\eps):=\sup \big\{|L(f-g)|:\|f-g\|_{L^2[0,1]}\leq \eps, f,g \in \Theta\big\}$ is the so-called modulus of continuity, Theorem 2 in~\cite{low1995bias} rewritten in our notation states that, if $\Theta$ is closed and convex and $\lim_{\eps \downarrow 0} w(\eps) = 0,$ then 
\begin{align*}
    &\inf_{\wh{Lf}: \, \sup_{f\in \Theta} \Var_f(\wh{Lf})\leq V}
    \, \sup_{f\in \Theta} \, \Bias_f(\wh{Lf})^2
    = \frac 14 \sup_{\eps >0} \big(w(\eps)-\sqrt{nV}\eps \big)_+^2,
    \text{ and,} \\
    &\inf_{\wh{Lf}: \, \sup_{f\in \Theta} |\Bias_f(\wh{Lf})|\leq B}
    \, \sup_{f\in \Theta} \, \Var_f(\wh{Lf})
    = \frac 1n \sup_{\eps >0} \eps^{-2} \big(w(\eps)-2B \big)_+^2,
\end{align*}
with $(x)_+:=\max(x,0).$ Moreover, an affine estimator $\wh{Lf}$ can be found attaining these bounds. For pointwise estimation on H\"older balls, $Lf=f(x_0)$ and $\Theta = \Cc^\beta(R).$ To find a lower bound for the modulus of continuity in this case, choose $K\in \Cc^\beta(\Rb),$ $f=0$ and $g= h^\beta K((x-x_0)/h)$. By Lemma~\ref{lem.kernel_norm_bd}, $g\in \Cc^\beta(R)$ whenever $R\geq \|K\|_{\Cc^\beta(\Rb)}$ and by substitution, $\|f-g\|_2=\|g\|_2\leq h^{\beta+1/2}\|K\|_2\leq \eps$ for $h=(\eps/\|K\|_2)^{1/(\beta+1/2)}.$ This proves $w(\eps)\geq (\eps/\|K\|_2)^{\beta/(\beta+1/2)}K(0).$ In Appendix~\ref{subsec:computation_constants_Low}, we show that this further implies
\begin{align}
    & \hspace{-2cm} \inf_{\wh f(x_0)}
    \, \sup_{f\in \Cc^\beta(R)} \, \Big|\Bias_f \big(\wh f(x_0) \big)\Big|^{1/\beta} \sup_{f\in \Cc^\beta(R)}
    \Var_f(\wh f(x_0))
    \geq \frac{\gamma_{\text{Low}}(R,\beta)}{n},
    \label{eq.low_derived_LB} \\
    \text{where } 
    \gamma_{\text{Low}}(R,\beta)
    &:= \sup_{K \in \Cc^\beta(\Rb): \, R \geq \|K\|_{\Cc^\beta(\Rb)}}
    \frac{(2\beta)^2}{2^{1/\beta} (2\beta + 1)^{2+1/\beta}}
    \frac{K(0)^{2+1/\beta}}{\|K\|_2^2}.
    \nonumber
\end{align}
The result is comparable to \eqref{eq.wwwlth_ptw} with a supremum instead of an infimum in front of the variance. 

\medskip

%Such worst-case bounds do not rule out the possibility that there exists an estimator $\wh f(x_0)$ such that for any function $f$ either the bias or the variance is zero. 
We now derive the lower bounds on the bias-variance trade-off for the pointwise estimation problem, that are based on the general framework developed in the previous section. Define
$$\gamma(R,\beta)
    := \sup_{K \in \Cc^\beta(\Rb): K(0) = 1}
    \Bigg(\|K\|_2^{-1} \bigg( 1 - \dfrac{\|K\|_{\Cc^\beta(\Rb)}}{R} \bigg)_+ \Bigg)^2.$$
For fixed $\beta>0,$ this quantity is positive if and only if $R > 1$. Indeed, if $R \leq 1$, for any function $K$ satisfying $K(0) = 1$, we have $R \leq 1 \leq \| K \|_\infty \leq \| K \|_{\Cc^\beta(\Rb)}$ and therefore, $\|K\|_{\Cc^\beta(\Rb)}/R \geq 1$, implying $\gamma(R, \beta) = 0$. On the contrary, when $R > 1$, we can take for example $K(x) = \exp(-x^2/A)$ with $A$ large enough  such that $1 \leq \| K \|_{\Cc^\beta(\Rb)} < R$. This shows that $\gamma(R, \beta) > 0$ in this case.

If $C$ is a positive constant and $a\in [0,R),$ define moreover
\begin{align*}
    \ol \gamma(R,\beta,C,a)
    &:= \sup_{K \in \Cc^\beta(\Rb): K(0) = 1}
    \Bigg(\|K\|_2^{-1} \bigg( 1 - \dfrac{\|K\|_{\Cc^\beta(\Rb)}}{R-a} \bigg)_+ \Bigg)^2 \\
    &\hspace{5cm} \times \exp\bigg(-C(R-a)^2 \frac{\|K\|_2^2}{\|K\|_{\Cc^\beta(\Rb)}^2} \bigg).
\end{align*}
Arguing as above, for fixed $\beta>0,$ this quantity is positive if and only if $a+1< R.$ We can now state the main result of this section. 

%The following theorem is proved in Section~\ref{proof:thm:GaussWN:bias_var_tradeoff_pointwise}.

\begin{thm}\label{thm.pointwise}
    Given $\beta, R, C > 0$ and $x_0 \in [0,1],$ let $\gamma(R,\beta)$ and $\ol \gamma(R,\beta,C,a)$ be the constants defined above. Assign to $(+\infty)\cdot 0$ the value $+\infty.$ 
    \newline
    \noindent
    {\it (i):} If $\Tc=\{\wh f:\sup_{f\in \Cc^\beta(R)} \big|\Bias_f\big(\wh f(x_0)\big)\big|<1\},$ then,
    \begin{align}
        \inf_{\wh f \in \Tc} \, \sup_{f\in \Cc^\beta(R)} \big|\Bias_f\big(\wh f(x_0)\big)\big|^{1/\beta}
        \sup_{f\in \Cc^\beta(R)}\Var_f\big(\wh f(x_0)\big) 
        \geq \frac{\gamma(R,\beta)}{n}.
        \label{eq.thm_ptw_assertion1}
    \end{align}
    \newline
    \noindent
    {\it (ii):} Let $\Sc(C):= \{\wh f: \sup_{f\in \Cc^\beta(R)}|\Bias_f(\wh f(x_0))|<  (C/n)^{\beta/(2\beta+1)}\}\cap \Tc,$ then,
    \begin{align}
        \inf_{\wh f \in \Sc(C)} \, \sup_{f\in \Cc^\beta(R)} \big|\Bias_f\big(\wh f(x_0)\big)\big|^{1/\beta}
        \inf_{f\in \Cc^\beta(R)} \frac{\Var_f(\wh f(x_0))}{\ol \gamma(R,\beta,C,\|f\|_{\Cc^\beta})} 
        \geq \frac{1}{n}.
    \end{align}
\end{thm}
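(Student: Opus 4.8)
The plan is to feed the change-of-expectation inequalities of Section~\ref{sec.general_low_bds} a pair (resp.\ triple) of hypotheses obtained by perturbing a base function with a rescaled kernel, the key device being to calibrate the bandwidth to the bias budget. Throughout write $B:=\sup_{f\in\Cc^\beta(R)}|\Bias_f(\wh f(x_0))|$, which is $<1$ because $\wh f\in\Tc$, and $V:=\sup_{f\in\Cc^\beta(R)}\Var_f(\wh f(x_0))$. For $h\in(0,1]$, $\mu>0$ and a kernel $K\in\Cc^\beta(\Rb)$ normalised by $K(0)=1$ put $g_{h,\mu}(x):=\mu\,h^\beta K((x-x_0)/h)$; Lemma~\ref{lem.kernel_norm_bd} gives $\|g_{h,\mu}\|_{\Cc^\beta([0,1])}\le\mu\|K\|_{\Cc^\beta(\Rb)}$, and by substitution $g_{h,\mu}(x_0)=\mu h^\beta$ and $\|g_{h,\mu}\|_2\le\mu h^{\beta+1/2}\|K\|_2$. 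The decisive choice is $h:=B^{1/\beta}$ (legitimate since $B<1$), which turns these into $g_{h,\mu}(x_0)=\mu B$ and $\|g_{h,\mu}\|_2^2\le\mu^2 B^{2+1/\beta}\|K\|_2^2$. In both parts one may restrict to \emph{admissible} $K$, i.e.\ $\|K\|_{\Cc^\beta(\Rb)}$ less than $R$ in (i) and less than $R-\|f\|_{\Cc^\beta}$ in (ii), since otherwise the corresponding term in the supremum defining $\gamma$ (resp.\ $\ol\gamma$) vanishes; the degenerate case $B=0$ is disposed of by running the same constructions with $h\downarrow0$, which forces the relevant variance to be $+\infty$ so that the asserted inequalities hold by the convention $(+\infty)\cdot0=+\infty$.

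For part (i) I would take $\mu:=R/\|K\|_{\Cc^\beta(\Rb)}$, so $\pm g_{h,\mu}\in\Cc^\beta(R)$, and use the linear path $P_t:=P_{(2t-1)g_{h,\mu}}$, $t\in[0,1]$, from $P_{-g_{h,\mu}}$ to $P_{g_{h,\mu}}$, which stays inside $\{P_f:f\in\Cc^\beta(R)\}$ because $|2t-1|\le1$. From $\KL(P_f,P_g)=\tfrac n2\|f-g\|_2^2$ one gets $\kappa_K^2=4n\|g_{h,\mu}\|_2^2$ in Lemma~\ref{lem.lb_limit}(ii); applying it with $X=\wh f(x_0)$, bounding $\sup_t\Var_{P_t}(\wh f(x_0))\le V$, and estimating $|E_{g_{h,\mu}}[\wh f(x_0)]-E_{-g_{h,\mu}}[\wh f(x_0)]|\ge 2g_{h,\mu}(x_0)-2B$ by the triangle inequality for the bias yields $(g_{h,\mu}(x_0)-B)_+^2\le n\|g_{h,\mu}\|_2^2V$. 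Inserting $h=B^{1/\beta}$ and dividing by $B^2$ gives $B^{1/\beta}V\ge(1-\|K\|_{\Cc^\beta(\Rb)}/R)_+^2/(n\|K\|_2^2)$; taking the supremum over admissible $K$ is \eqref{eq.thm_ptw_assertion1}.

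For part (ii) I would fix $f\in\Cc^\beta(R)$, set $a:=\|f\|_{\Cc^\beta}$ (and assume $a+1<R$, else $\ol\gamma=0$), take $\mu:=(R-a)/\|K\|_{\Cc^\beta(\Rb)}$ so that $f\pm g_{h,\mu}\in\Cc^\beta(R)$, and apply Theorem~\ref{thm.multiple_lb}(i) with $P_0:=P_f$, $P_1:=P_{f+g_{h,\mu}}$, $P_2:=P_{f-g_{h,\mu}}$. A Girsanov computation shows that the $\chi^2$-divergence matrix $M:=\chi^2(P_0,P_1,P_2)$ has diagonal entries $e^{n\|g_{h,\mu}\|_2^2}-1$ and off-diagonal entries $e^{-n\|g_{h,\mu}\|_2^2}-1$, hence is positive definite with eigenvalue $2\sinh(n\|g_{h,\mu}\|_2^2)$ along $(1,-1)^\top$. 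Decomposing the expectation-difference vector $\Delta$ in the eigenbasis of $M$ and keeping only the antisymmetric component gives $\Var_f(\wh f(x_0))\ge\Delta^\top M^{-1}\Delta\ge(g_{h,\mu}(x_0)-B)_+^2/\sinh(n\|g_{h,\mu}\|_2^2)$, the numerator coming from $|E_{f+g_{h,\mu}}[\wh f(x_0)]-E_{f-g_{h,\mu}}[\wh f(x_0)]|\ge 2g_{h,\mu}(x_0)-2B$. Now insert $h=B^{1/\beta}$: since $\wh f\in\Sc(C)$ forces $B^{2+1/\beta}<C/n$, one has $n\|g_{h,\mu}\|_2^2\le n\mu^2 B^{2+1/\beta}\|K\|_2^2<C\mu^2\|K\|_2^2$, and combining this with $\sinh t\le t\,e^t$ bounds $\sinh(n\|g_{h,\mu}\|_2^2)\le n\mu^2 B^{2+1/\beta}\|K\|_2^2\,e^{C\mu^2\|K\|_2^2}$, whence
\begin{align*}
B^{1/\beta}\,\Var_f(\wh f(x_0))\ \ge\ \frac{\big(1-\|K\|_{\Cc^\beta(\Rb)}/(R-a)\big)_+^2\;e^{-C(R-a)^2\|K\|_2^2/\|K\|_{\Cc^\beta(\Rb)}^2}}{n\|K\|_2^2}.
\end{align*}
Taking the supremum over admissible $K$ and then the infimum over $f\in\Cc^\beta(R)$ finishes the proof.

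The main obstacle is to arrive at the difference $g_{h,\mu}(x_0)-B$ rather than $g_{h,\mu}(x_0)-2B$: a naive two-hypothesis argument controls the gap in expectations only by $g_{h,\mu}(x_0)-2B$, which would require $\mu>2$ and would produce $(1-2\|K\|_{\Cc^\beta(\Rb)}/R)_+$ in place of the sharp $(1-\|K\|_{\Cc^\beta(\Rb)}/R)_+$ appearing in $\gamma$ and $\ol\gamma$. This forces the perturbation to be symmetrised around the base point — a path through $P_{-g_{h,\mu}}$ and $P_{g_{h,\mu}}$ in (i), and the three-measure configuration $P_f,P_{f\pm g_{h,\mu}}$ exploited via the antisymmetric eigendirection of the $\chi^2$-divergence matrix in (ii) — which is exactly what recovers the correct constant. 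Everything else is bookkeeping: the calibration $h=B^{1/\beta}$ converts the bias budget into the right power of $n$, and in (ii) the elementary inequality $\sinh t\le t e^t$ together with $B^{2+1/\beta}<C/n$ turns the $\chi^2$-divergence into the exponential factor of $\ol\gamma$.
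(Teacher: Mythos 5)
Your proposal is correct, and it reproduces exactly the constants $\gamma(R,\beta)$ and $\ol\gamma(R,\beta,C,a)$; the overall architecture (a kernel bump with bandwidth calibrated as $h=B^{1/\beta}$, the change-of-expectation inequalities of Section~\ref{sec.general_low_bds}, and a limiting argument for $B=0$) is the same as in the paper, but the way you recover the sharp constant differs in both parts. For (i) the paper runs Lemma~\ref{lem.lb_limit}(ii) along the one-sided path from $f_0=0$ to $f_{\pm 1}$ and obtains the gap $(V-1)B$ by choosing the sign opposite to that of $E_{f_0}[\wh f(x_0)]$; you instead symmetrize, taking the path from $-g$ to $g$, which doubles both the gap and $\kappa_K^2$ and hence gives the identical bound — the two devices are equivalent. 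For (ii) the paper stays with the two-measure $\chi^2$ inequality \eqref{eq.lem1_4}, again with the sign-selection now relative to $f(x_0)$, and uses $e^x-1\le x e^x$; you invoke Theorem~\ref{thm.multiple_lb}(i) with the triple $P_f,P_{f\pm g}$, compute the $2\times 2$ $\chi^2$-divergence matrix, keep the antisymmetric eigendirection with eigenvalue $2\sinh(n\|g\|_2^2)$ (your matrix entries, the invertibility for $g\neq 0$, and the reduction $\Delta^\top M^{-1}\Delta\ge (E_{f+g}[X]-E_{f-g}[X])^2/(2\cdot 2\sinh(n\|g\|_2^2))$ are all correct), and then use $\sinh t\le t e^t$; this buys a proof that never needs the sign trick, at the modest cost of the eigendecomposition, and lands on exactly the same lower bound $(U-1)_+^2B^2\le nU^2B^{2+1/\beta}\|K\|_2^2e^{CU^2\|K\|_2^2}\Var_f(\wh f(x_0))$ as the paper. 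The degenerate case $B=0$ is handled in the paper by adding a deterministic shift $\delta$ to the estimator and letting $\delta\downarrow 0$, whereas you let $h\downarrow 0$ directly; both force the relevant variance to be $+\infty$ and are fine. One small remark on your closing paragraph: the symmetric (or three-measure) configuration is not actually forced — the paper attains the factor $(1-\|K\|_{\Cc^\beta(\Rb)}/R)_+$ with only two measures by comparing the perturbed hypothesis to the centre and selecting the favourable side, so your "naive two-hypothesis loses a factor $2B$" claim applies only if one insists on comparing $f$ with a single fixed one-sided perturbation.
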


Both statements can be easily derived from the abstract lower bounds in Section \ref{sec.general_low_bds}. A full proof is given in Supplement \ref{sec.proofs_gauss_wn} where statement (i) is derived from Lemma~\ref{lem.lb_limit} and statement (ii) is derived from Lemma~\ref{lem.general_lb}. The first statement quantifies a worst-case bias-variance trade-off that must hold for any estimator. The case that $\sup_{f\in \Cc^\beta(R)}|\Bias_f(\wh f(x_0))|$ exceeds one is not covered. As it leads to inconsistent mean squared error it is of little interest and therefore omitted. The second statement restricts attention to estimators with minimax rate-optimal bias. Because of the infimum, we obtain a lower bound on the variance for any function $f.$ Note that this statement is much stronger than \eqref{eq.low_derived_LB} or \eqref{eq.thm_ptw_assertion1} as it holds for the best-case variance instead of the worst-case variance. Compared with \eqref{eq.wwwlth_ptw}, the lower bound depends on the $\Cc^\beta$-norm of $f$ through $\ol \gamma(R,\beta,C,\|f\|_{\Cc^\beta}).$ This quantity becomes large if $f$ is close to the boundary of the H\"older ball. A consequence of $(ii)$ is the uniform bound
\begin{align}
    \hspace{-0.1cm}
    \inf_{\wh f \in \Sc(C)} \sup_{f\in \Cc^\beta(R)}
    \big| \hspace{-0.05cm} \Bias_f\big(\wh f(x_0)\big) \big|^{1/\beta}
    \hspace{-0.2cm}
    \inf_{f\in \Cc^\beta(a)} \Var_f(\wh f(x_0)) 
    \geq \frac{\displaystyle \inf_{b\leq a}\ol \gamma(R,\beta,C,b)}{n},
    \hspace{-0.6cm} 
    \label{eq.8888}
\end{align}
providing a non-trivial lower bound if $a<R-1$, see Supplement \ref{sec.proof8888} for a proof.
The established lower bound requires that the radius of the H\"older ball $R$ is sufficiently large. Such a condition is necessary. To see this, suppose $R\leq 1$ and consider the estimator $\wh f(x_0)=0.$ Notice that for any $f\in \Cc^\beta(R),$ $|\Bias_f(\wh f(x_0))|= |f(x_0)|\leq \|f\|_\infty\leq 1$ and $\Var_f(\wh f(x_0))=0.$ The left-hand side of the inequality \eqref{eq.thm_ptw_assertion1} is hence zero and even such a worst-case bias-variance trade-off does not hold.

Thanks to the bias-variance decomposition of the mean squared error, for every estimator $\wh f(x_0) \in \Tc$,
% there exists an $f\in \Cc^\beta(R)$ such that we have  $|\Bias_{f}(\wh f(x_0))|^{1/\beta} \Var_f(\wh f(x_0))\geq \gamma(R,\beta)/n$ and thus for such an $f,$ 
% \begin{align*}
%     \MSE_f\big(\wh f(x_0)\big) 
%     &=\Bias_{f}\big(\wh f(x_0)\big)^2 + \Var_f\big(\wh f(x_0)\big) \\
%     &\geq 
%     \bigg(\frac{\gamma(R,\beta)}{n \Var_f\big(\wh f(x_0)\big)}\bigg)^{2\beta}
%     \wedge \frac{\gamma(R,\beta)}{n|\Bias_{f}\big(\wh f(x_0)\big)|^{1/\beta}}.
% \end{align*}
\begin{align*}
    \sup_{f\in \Cc^\beta(R)} \MSE_f\big(\wh f(x_0)\big) 
    % &= \sup_{f\in \Cc^\beta(R)} \Bias_{f}\big(\wh f(x_0)\big)^2 + \sup_{f\in \Cc^\beta(R)} \Var_f\big(\wh f(x_0)\big) \\
    &\geq 
    \bigg(\frac{\gamma(R,\beta)}{n \sup_{f\in \Cc^\beta(R)} \Var_f\big(\wh f(x_0)\big)}\bigg)^{2\beta} \\
    &\hspace{3cm} \wedge \frac{\gamma(R,\beta)}{n \sup_{f\in \Cc^\beta(R)} |\Bias_{f}\big(\wh f(x_0)\big)|^{1/\beta}},
\end{align*}
showing that, in a worst case sense, small bias or small variance increases the mean squared error.

\begin{cor}[Classical unconstrained minimax rates]
    Under the same conditions as Theorem~\ref{thm.pointwise}, we have
    \begin{align*}
        \inf_{\wh f} \, \sup_{f\in \Cc^\beta(R)}
        \MSE_f \big( \wh f(x_0) \big)
        \geq \bigg(\frac{\gamma(R,\beta)}{n}\bigg)^{2\beta/(2\beta+1)} \wedge 1,
    \end{align*}
    where the infimum is over all measurable estimators. Moreover, the minimax estimation rate $n^{-2\beta/(2\beta+1)}$ can only be achieved for estimators balancing the rate of the worst-case squared bias and the rate of the worst-case variance.
\end{cor}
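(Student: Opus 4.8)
The plan is to deduce the minimax lower bound from part (i) of Theorem~\ref{thm.pointwise} by a simple case distinction on the size of the worst-case variance. Write $V := \sup_{f\in\Cc^\beta(R)}\Var_f(\wh f(x_0))$ and $B := \sup_{f\in\Cc^\beta(R)}|\Bias_f(\wh f(x_0))|$ for a given estimator $\wh f(x_0)$. If $B\geq 1$, then there exists $f$ with $\Bias_f(\wh f(x_0))^2$ arbitrarily close to $1$, so $\sup_f\MSE_f(\wh f(x_0))\geq 1$ and the bound holds trivially (the $\wedge 1$ on the right-hand side absorbs this case). Hence we may assume $B<1$, i.e. $\wh f\in\Tc$, and apply \eqref{eq.thm_ptw_assertion1}, which gives $B^{1/\beta}\,V\geq \gamma(R,\beta)/n$, equivalently $B^2\geq \big(\gamma(R,\beta)/(nV)\big)^{2\beta}$.

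Next I would combine this with the bias-variance decomposition $\sup_f\MSE_f(\wh f(x_0))\geq B^2 \vee V$ (using that $\MSE$ is at least the squared bias for one choice of $f$ and at least the variance for another; more precisely $\sup_f\MSE_f \ge \sup_f \Var_f = V$ and $\sup_f\MSE_f \ge \sup_f \Bias_f^2 = B^2$, after taking a supremizing sequence for $B$). So
\begin{align*}
    \sup_{f\in\Cc^\beta(R)}\MSE_f\big(\wh f(x_0)\big)
    \;\geq\; \max\!\left\{V,\ \Big(\frac{\gamma(R,\beta)}{nV}\Big)^{2\beta}\right\}.
\end{align*}
The right-hand side, as a function of $V>0$, is the maximum of an increasing and a decreasing function, hence minimized when the two are equal; solving $V = (\gamma(R,\beta)/(nV))^{2\beta}$ gives $V = (\gamma(R,\beta)/n)^{2\beta/(2\beta+1)}$, and at that value both terms equal $(\gamma(R,\beta)/n)^{2\beta/(2\beta+1)}$. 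Therefore $\sup_f\MSE_f(\wh f(x_0))\geq (\gamma(R,\beta)/n)^{2\beta/(2\beta+1)}$ whenever $\wh f\in\Tc$. Taking the infimum over all measurable estimators and combining with the trivial case $B\geq 1$ yields the stated bound $\big(\gamma(R,\beta)/n\big)^{2\beta/(2\beta+1)}\wedge 1$.

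For the final assertion about balancing, I would argue as follows: suppose an estimator achieves the minimax rate, i.e. $\sup_f\MSE_f(\wh f(x_0)) \asymp n^{-2\beta/(2\beta+1)}$. Then both $B^2\lesssim n^{-2\beta/(2\beta+1)}$ and $V\lesssim n^{-2\beta/(2\beta+1)}$. On the other hand, \eqref{eq.thm_ptw_assertion1} forces $B^{1/\beta}V\gtrsim n^{-1}$, i.e. $B^2 V^{2\beta}\gtrsim n^{-2\beta}$. If the worst-case variance were of strictly smaller order than $n^{-2\beta/(2\beta+1)}$, say $V = o(n^{-2\beta/(2\beta+1)})$, then $B^2 \gtrsim n^{-2\beta}/V^{2\beta} \gg n^{-2\beta}/n^{-4\beta^2/(2\beta+1)} = n^{-2\beta/(2\beta+1)}$, contradicting $B^2\lesssim n^{-2\beta/(2\beta+1)}$; symmetrically the worst-case squared bias cannot be of strictly smaller order. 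Hence both $\sup_f\Bias_f^2$ and $\sup_f\Var_f$ must be of the exact order $n^{-2\beta/(2\beta+1)}$, which is the claimed balancing. I do not anticipate a genuine obstacle here — the only mild subtlety is handling the suprema and the $(+\infty)\cdot 0$ convention cleanly, and making sure the $\wedge 1$ correctly covers the inconsistent regime $B\geq 1$ and the case $V=0$ (where $B\ge 1$ must hold for any estimator with $\sup_f\MSE_f<\infty$ on a ball with $R>1$, since a zero-variance estimator is deterministic and cannot track $f(x_0)$ over all of $\Cc^\beta(R)$).
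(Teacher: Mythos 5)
Your proposal is correct and follows essentially the same route as the paper: combine Theorem~\ref{thm.pointwise}(i) with the bias-variance decomposition of the $\MSE$ and optimize over the size of the variance, handling the regime $\sup_f|\Bias_f|\geq 1$ via the $\wedge\,1$ term, and obtain the balancing claim by the same contradiction argument. The only (harmless) difference is that you work directly with the worst-case quantities $B=\sup_f|\Bias_f|$ and $V=\sup_f\Var_f$ together with $\sup_f\MSE_f\geq B^2\vee V$, whereas the paper phrases the intermediate step through a single $f$ at which $|\Bias_f|^{1/\beta}\Var_f\geq\gamma(R,\beta)/n$; your sup-based formulation follows immediately from the statement of Theorem~\ref{thm.pointwise}(i) and is, if anything, the cleaner bookkeeping.
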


For nonparametric problems, an estimator can be superefficient for many parameters simultaneously, see \cite{MR1604424}. Based on that, one might wonder whether it is possible to take for instance a kernel smoothing estimator and shrink small values to zero such that the variance for the regression function $f=0$ is of a smaller order but the order of the variance and bias for all other parameters remains the same. Statement (ii) of Theorem \ref{thm.pointwise} shows that such constructions are impossible if the H\"older radius $R$ is large enough. This question can be viewed as a bias-variance formulation of the constrained risk problem. In the constrained risk problem, we wonder whether an estimator achieving a faster rate for a fixed parameter will have necessarily a suboptimal rate for some other parameter in the parameter space. For pointwise estimation in nonparametric regression, this was studied in Section B of \cite{MR1425965}.

The proof of Theorem \ref{thm.pointwise} depends on the Gaussian white noise model only through the Kullback-Leibler divergence and $\chi^2$-divergence. This indicates that an analogous result can be proved for other nonparametric models with a similar likelihood geometry. As an example consider the Gaussian nonparametric regression model with fixed and uniform design on $[0,1],$ that is, we observe $(Y_1,\dots,Y_n)$ with $Y_i=f(i/n)+\eps_i,$ $i=1,\dots,n$ and $\eps_i \simiid \Nc(0,1).$ Again, $f$ is the (unknown) regression function and we write $P_f$ for the distribution of the observations with regression function $f.$ By evaluating the Gaussian likelihood, we obtain the well-known explicit expressions $\KL(P_f,P_g)=\tfrac n2 \|f-g\|_n^2$ and $\chi^2(P_f,P_g)=\exp(n\|f-g\|_n^2)-1$ where $\|h\|_n^2:=\tfrac 1n \sum_{i=1}^n h(i/n)^2$ is the empirical $L^2([0,1])$-norm. Compared to the Kullback-Leibler divergence and $\chi^2$-divergence in the Gaussian white noise model, the only difference is that the $L^2([0,1])$-norm is replaced here by the empirical $L^2([0,1])$-norm. These norms are very close for functions that are not too spiky. Thus, by following exactly the same steps as in the proof of Theorem \ref{thm.pointwise}, a similar lower bound can be obtained for the pointwise loss in the nonparametric regression model.

\section{The bias-variance trade-off for support boundary recovery}
\label{sec.boundary}

Compared to approaches using the Cram\'er-Rao lower bound, the abstract lower bounds based on information measures have the advantage to be applicable also for irregular models. This is illustrated in this section by deriving lower bounds on the bias-variance trade-off for a support boundary estimation problem.

\begin{wrapfigure}[11]{R}{0.42\textwidth}
    \vspace{-0.8cm}
  \centering
    \includegraphics[trim=0 40 0 50,clip, width=0.44\textwidth]{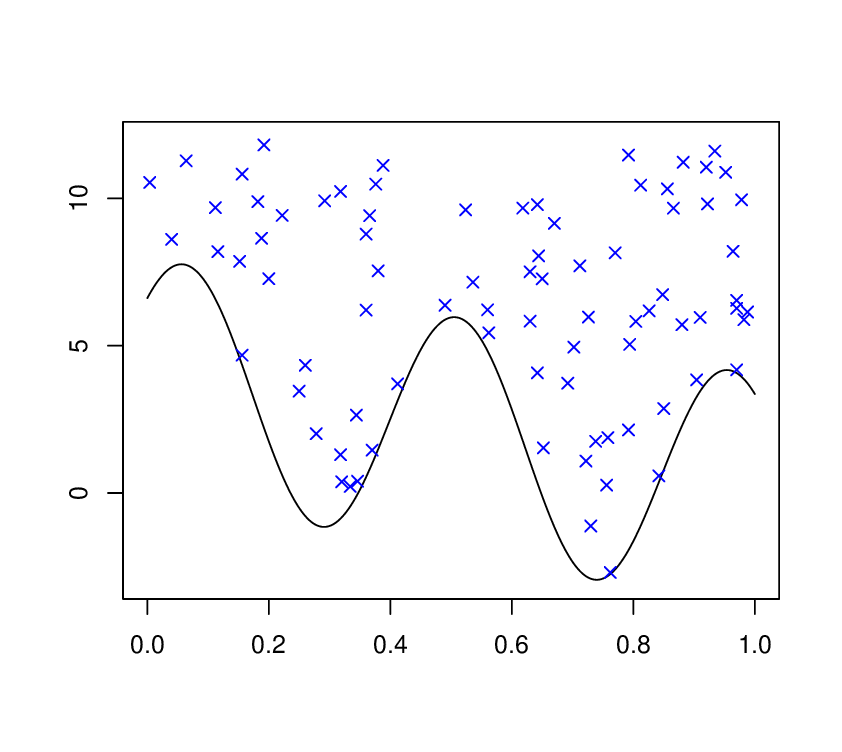}
    \vspace{-0.8cm}
  \caption{\label{fig.1} Generated data (blue) and support boundary (black) for PPP model.}
    \vspace{-0.3cm}
\end{wrapfigure}
Consider the model, where we observe a Poisson point process (PPP) $N=\sum_i \delta_{(X_i,Y_i)}$ with intensity $\lambda_f(x,y)=n\mathbf{1}(f(x) \leq y)$ in the plane $(x,y) \in [0,1]\times \Rb.$ Differently speaking, the Poisson point process has intensity $n$ on the epigraph of the function $f$ and zero intensity on the subgraph of $f.$ The unknown function $f$ appears therefore as a boundary if the data are plotted, see Figure \ref{fig.1}. Throughout the following, $n$ plays the role of the sample size and we refer to $(X_i,Y_i)$ as the support points of the PPP. Estimation of $f$ is also known as support boundary recovery problem.  Similarly as the Gaussian white noise model is a continuous analogue of the nonparametric regression model with Gaussian errors, the support boundary problem arises as a continuous analogue of the nonparametric regression model with one-sided errors, see \cite{MR3010397}.

For a parametric estimation problem, we can typically achieve the estimation rate $n^{-1}$ in this model. For squared loss, this becomes $n^{-2}.$ The $n^{-1}$ rate is to be contrasted with the classical $n^{-1/2}$ rate in regular parametric models. Also for nonparametric problems, faster rates can be achieved. If $\beta$ denotes the H\"older smoothness of the support boundary $f,$ the optimal MSE for estimation of $f(x_0)$ is $n^{-2\beta/(\beta+1)}$ which can be considerably faster than the typical nonparametric rate $n^{-2\beta/(2\beta+1)},$ \cite{MR4158798}.
The following theorem is proved in Supplement~\ref{sec.proofs_boundary} applying the $\chi^2$-divergence version of Lemma~\ref{lem.general_lb}. 

\begin{thm}\label{thm.B_V_tradeoff_boundary}
Let $0<\beta<1,$ $C>0$ and $R > \kappa
:= 2 \inf \{\|K\|_{\Cc^\beta(\Rb)}:
K\in L^2(\Rb), K(0)=1, K\geq 0 \}.$

For any estimator $\wh f$ with 
$\sup_{f\in \Cc^\beta(R)} \, \MSE_f\big(\wh f(x_0)\big)
< (C/n)^{2\beta/(\beta+1)},$
there exist positive constants $c:=c(\beta, C, R)$ and $c':=c'(\beta, C, R)$ such that 
\begin{align}
    &\sup_{f\in  \Cc^\beta(R)} \, \Bias_f\big(\wh f(x_0)\big)^2 
    \geq c n^{-\frac{2\beta}{\beta+1}},
    \textrm{ and,}
    \label{eq.thm_boundary_claim1} \\
    &\Var_f\big(\wh f(x_0)\big)
    \geq c' n^{-\frac{2\beta}{\beta+1}}, \quad \text{for all} \ f\in \Cc^\beta\big((R-\kappa)/2\big).
    \label{eq.thm_boundary_claim2}    
\end{align}
\end{thm}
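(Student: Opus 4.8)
The plan is to derive both inequalities from two-point perturbation arguments based on the $\chi^2$-version \eqref{eq.lem1_4} of Lemma~\ref{lem.general_lb}, using that in this model $\chi^2(P_\phi,P_\psi)=\exp(n\|\phi-\psi\|_1)-1$ whenever $\psi\le\phi$ pointwise and $=+\infty$ otherwise. As a building block I would fix a compactly supported kernel $K$ with $K(0)=1$, $K\ge 0$ and $\|K\|_{\Cc^\beta(\Rb)}<(R+\kappa)/2$; such a $K$ exists (truncate a near-extremal one) precisely because $R>\kappa$ forces $\kappa/2=\inf\{\|K\|_{\Cc^\beta(\Rb)}:K(0)=1,K\ge 0\}<(R+\kappa)/2<R$. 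For a bandwidth $h\in(0,1]$ I use the bump $\psi_h:=h^\beta K((\cdot-x_0)/h)$ restricted to $[0,1]$; since $0<\beta<1$, a scaling computation (cf.\ Lemma~\ref{lem.kernel_norm_bd}) gives $\|\psi_h\|_{\Cc^\beta([0,1])}\le\|K\|_{\Cc^\beta(\Rb)}$, while $\psi_h(x_0)=h^\beta$ and $\|\psi_h\|_1\le h^{\beta+1}\|K\|_1$, all uniformly in $x_0$. I couple $h$ to a free parameter $\delta>0$ via $h^{\beta+1}\|K\|_1=\delta/n$, so that $h^\beta=(\delta/\|K\|_1)^{\beta/(\beta+1)}\,n^{-\beta/(\beta+1)}$ and each perturbation below has $\chi^2\le e^\delta-1$; here $h\le 1$ once $n$ is large, and the statement is understood, as usual, for $n$ exceeding a threshold depending on $\beta,C,R$.

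For \eqref{eq.thm_boundary_claim1} I argue by contradiction, assuming $\sup_{f\in\Cc^\beta(R)}|\Bias_f(\wh f(x_0))|<\sqrt c\,n^{-\beta/(\beta+1)}$. Take $f_0\equiv 0$ and $f_1:=-\psi_h\le f_0$; then $P_{f_0}\ll P_{f_1}$, $\chi^2(P_{f_1},P_{f_0})=+\infty$, and \eqref{eq.lem1_4} reduces to $\big(E_{f_0}[\wh f(x_0)]-E_{f_1}[\wh f(x_0)]\big)^2\le(e^\delta-1)\,\Var_{f_1}(\wh f(x_0))$. Since $f_1\in\Cc^\beta(R)$, the hypothesis gives $\Var_{f_1}(\wh f(x_0))\le\MSE_{f_1}(\wh f(x_0))<(C/n)^{2\beta/(\beta+1)}$. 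On the other hand $E_{f_j}[\wh f(x_0)]=f_j(x_0)+\Bias_{f_j}(\wh f(x_0))$ and $f_0(x_0)-f_1(x_0)=h^\beta$, so the same difference is at least $h^\beta-2\sqrt c\,n^{-\beta/(\beta+1)}=\big((\delta/\|K\|_1)^{\beta/(\beta+1)}-2\sqrt c\big)n^{-\beta/(\beta+1)}$. The two estimates are incompatible once $\delta$ is chosen small enough that $(\delta/\|K\|_1)^{2\beta/(\beta+1)}>4(e^\delta-1)C^{2\beta/(\beta+1)}$ --- possible because $(\beta-1)/(\beta+1)<0$ makes $\delta^{2\beta/(\beta+1)}/(e^\delta-1)\to+\infty$ as $\delta\downarrow 0$ --- and $c$ is then taken small relative to this $\delta$. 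The contradiction yields \eqref{eq.thm_boundary_claim1}.

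For \eqref{eq.thm_boundary_claim2} I fix $f\in\Cc^\beta((R-\kappa)/2)$ and perturb upward: $g:=f+\psi_h\ge f$ satisfies $\|g\|_{\Cc^\beta([0,1])}\le(R-\kappa)/2+\|K\|_{\Cc^\beta(\Rb)}<(R-\kappa)/2+(R+\kappa)/2=R$, so $g\in\Cc^\beta(R)$ and the MSE hypothesis applies to $g$ too. As $P_g\ll P_f$, \eqref{eq.lem1_4} with $P=P_f$, $Q=P_g$ gives $\Var_f(\wh f(x_0))\ge\big(E_f[\wh f(x_0)]-E_g[\wh f(x_0)]\big)^2/(e^\delta-1)$. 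Here $E_f[\wh f(x_0)]-E_g[\wh f(x_0)]=-h^\beta+\Bias_f(\wh f(x_0))-\Bias_g(\wh f(x_0))$, and both bias terms have absolute value at most $(C/n)^{\beta/(\beta+1)}$ since $\Bias^2\le\MSE<(C/n)^{2\beta/(\beta+1)}$ for $f$ and for $g$; hence the numerator is at least $\big(h^\beta-2(C/n)^{\beta/(\beta+1)}\big)^2$. Choosing $\delta=\delta(\beta,C,R)$ large enough that $(\delta/\|K\|_1)^{\beta/(\beta+1)}>4C^{\beta/(\beta+1)}$ turns the right-hand side into a positive constant times $n^{-2\beta/(\beta+1)}$, uniformly over $f\in\Cc^\beta((R-\kappa)/2)$ and over $x_0$, which is the desired $c'=c'(\beta,C,R)$.

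I expect the one genuinely delicate point to be the calibration of the perturbation scale. In this model the $\chi^2$-information of a bump of height $h^\beta$ and width $h$ grows like $n\|\psi_h\|_1\asymp nh^{\beta+1}$, whereas the induced change of the value $f(x_0)$ is only $\asymp h^\beta$; because $\beta<1$, the controlling ratio $h^{2\beta}/(nh^{\beta+1})=h^{\beta-1}/n$ actually \emph{increases} as $h\downarrow 0$. Shrinking $h$ is not free, though: $h^\beta$ must stay above the ambient bias level $\asymp(C/n)^{\beta/(\beta+1)}$ forced by the MSE constraint, which pins $h\asymp n^{-1/(\beta+1)}$ (that is, $\delta\asymp 1$) and, not coincidentally, reproduces the minimax rate $n^{-2\beta/(\beta+1)}$. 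The rest --- keeping the perturbed functions inside the Hölder ball (the origin of the condition $R>\kappa$ and of the shrinkage to $\Cc^\beta((R-\kappa)/2)$ in \eqref{eq.thm_boundary_claim2}) and making the constants uniform in $x_0$ --- is routine bookkeeping.
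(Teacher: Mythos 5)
Your argument is correct, and it runs on the same engine as the paper's proof: a two-point perturbation by a nonnegative H\"older bump at $x_0$, the model-specific formula $\chi^2(P_{g'},P_{g})=\exp(n\|g'-g\|_1)-1$ for $g\le g'$, and the $\chi^2$ change-of-expectation inequality \eqref{eq.lem1_4}. The difference is in the calibration. The paper ties the bump to the estimator itself: height $UB$, bandwidth $B^{1/\beta}$ with $B$ the actual worst-case bias, which yields the single inequality \eqref{eq.thm_boundary1}, $(U-2)_+^2B^2\le nUB^{1+1/\beta}\|K\|_1e^{2CU\|K\|_1}\Var_f$, from which both \eqref{eq.thm_boundary_claim1} and \eqref{eq.thm_boundary_claim2} follow by plugging in the lower/upper bounds on $B$; this forces $U>2$ (hence $\|K\|_{\Cc^\beta(\Rb)}<(R+\kappa)/4$) and a separate treatment of $B=0$. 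You instead fix the information budget $n\|\psi_h\|_1=\delta$ and exploit exactly the $\beta<1$ mechanism you identify at the end ($h^{2\beta}/(nh^{\beta+1})=h^{\beta-1}/n$ improves as $h\downarrow0$): a small $\delta$ gives the bias bound by contradiction, a large fixed $\delta$ gives the uniform variance bound, with the two bias terms absorbed directly via $\Bias^2\le\MSE$. This buys you a cleaner bookkeeping — no $B=0$ case, and only $\|K\|_{\Cc^\beta(\Rb)}<(R+\kappa)/2$ is needed — at the price of two separate choices of $\delta$ instead of one master inequality. Two shared caveats, neither a gap relative to the paper: you need $\|K\|_1<\infty$, which your compactly supported truncation of a near-extremal $K$ (with a dilated cutoff so the H\"older norm barely increases) handles, whereas the paper uses $\|K\|_1$ without comment; and your "for $n$ above a threshold depending on $\beta,C,R$" is also implicit in the paper's proof, which needs $B^{1/\beta}\le1$ (and indeed the statement cannot hold for $n$ so small that a constant estimator meets the MSE constraint).
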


The result shows that any estimator achieving the optimal $n^{-2\beta/(\beta+1)}$ MSE rate must also have worst-case squared bias of the same order. Moreover no superefficiency is possible for functions that are not too close to the boundary of the H\"older ball. Indeed the variance, and therefore also the mean squared error, is always lower-bounded by $\gtrsim n^{-2\beta/(\beta+1)}.$ The smoothness constraint $\beta \leq 1$ is fairly common in the literature on support boundary estimation, see \cite{MR3606758}.

\section{The trade-off between integrated bias and integrated variance in the Gaussian white noise model}
\label{sec.reduction}

All lower bounds so far are based on change of expectation inequalities. In this section we combine this with a different proving strategy for bias-variance lower bounds based on two types of reduction. Firstly, one can in some cases relate the bias-variance trade-off in the original model to the bias-variance trade-off in a simpler model. We refer to this as model reduction. The second type of reduction constraints the class of estimators by showing that it is sufficient to consider estimators satisfying additional symmetry properties. 

To which extent such reductions are possible is highly dependent on the structure of the underlying problem. In this section we illustrate the approach deriving a lower bound on the trade-off between the integrated squared bias ($\IB$) and the integrated variance ($\IVar$) in the Gaussian white noise model \eqref{eq.mod_GWN}. Recall that the mean integrated squared error (MISE) can be decomposed as 
\begin{align}
    \MISE_f\big(\wh f\big)
    := E_f\big[\big\|\wh f-f\big\|_{L^2[0,1]}^2 \big]
    &= \int_0^1 \Bias_f^2\big(\wh f(x)\big) \, dx 
    + \int_0^1 \Var_f\big(\wh f(x)\big) \, dx \nonumber \\
    &=: \IB_{f}(\wh f)+ \IVar_f\big(\wh f\big).  
    \label{eq.MISE_decomp}
\end{align}
To establish a trade-off between integrated bias and integrated variance, turns out to be a hard problem. In particular, we cannot simply integrate the pointwise lower bounds. Below we explain the major reduction steps to prove a lower bound. To avoid unnecessary technicalities involving the Fourier transform, we only consider integer smoothness $\beta=1,2,\dots$ and denote by $S^\beta(R)$ the ball of radius $R$ in the $L^2$-Sobolev space with index $\beta$ on $[0,1]$, that is, all $L^2$-functions satisfying $\|f\|_{S^\beta([0,1])}\leq R,$ where for a general domain $D,$  $\|f\|_{S^\beta(D)}^2:=\|f\|_{L^2(D)}^2+\|f^{(\beta)}\|_{L^2(D)}^2.$ Define
\begin{align}
    \Gamma_\beta:=\inf \Big\{\|K\|_{S^\beta}:\|K\|_{L^2(\Rb)}=1, \supp K\subset [-1/2,1/2]\Big\}.
    \label{eq.def_Gamma_beta_L2}
\end{align}

\begin{thm}\label{thm.LB_L2}
Consider the Gaussian white noise model \eqref{eq.mod_GWN} with parameter space $S^\beta(R)$ and $\beta$ a positive integer. If $R > 2\Gamma_\beta$ and $0\cdot(+\infty)$ is assigned the value $+\infty,$ then,
    \begin{align}
        \inf_{\wh f \in T} \,
        \sup_{f \in S^\beta(R)} \big|\operatorname{IBias}_{f}(\wh f)\big|^{1/\beta}
        \sup_{f\in S^\beta(R)}
        \IVar_f\big(\wh f\big) 
        \geq \frac{1}{8n},
        \label{eq.thmL2_assertion}
    \end{align}
    with $T:= \{\wh f: \sup_{f \in S^\beta(R)} \IB_{f}(\wh f)<2^{-\beta}\}.$
    \label{thm:GaussWN:bias_var_tradeoff_int}
\end{thm}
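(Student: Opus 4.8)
The plan is to implement the two-step reduction announced in the text, and then to apply Theorem~\ref{thm.multiple_lb}(i) (or one of its corollaries involving the $\chi^2$-divergence matrix) to the reduced problem. I will work in the frequency domain, but since $\beta$ is an integer this only requires the $L^2$-Sobolev structure rather than any subtle Fourier machinery.

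\emph{Step 1: model reduction.} Fix a large integer $m \asymp h^{-1}$ (with $h$ a bandwidth to be chosen of order a power of $1/n$) and partition $[0,1]$ into $m$ intervals $I_1,\dots,I_m$ of length $h$. For a fixed kernel $K$ with $\supp K \subset [-1/2,1/2]$ and $\|K\|_{L^2(\Rb)}=1$, consider candidate functions of the form $f_v = h^\beta \sum_{\ell=1}^m v_\ell K_\ell$ where $K_\ell$ is a rescaled, shifted copy of $K$ supported on $I_\ell$ and $v = (v_1,\dots,v_m) \in \{-1,+1\}^m$ (or in a slightly larger cube). By the disjoint-support structure and a rescaling computation, $\|f_v\|_{S^\beta([0,1])}^2$ is controlled by $\Gamma_\beta^2$ up to lower-order terms coming from the $L^2$-part, so $f_v \in S^\beta(R)$ as soon as $R > 2\Gamma_\beta$ and $h$ is small. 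The Gaussian white noise model restricted to this finite-dimensional family is, after projecting $Y$ onto the span of the $K_\ell$'s, equivalent to a Gaussian sequence/location model $Z_\ell = h^\beta v_\ell + n^{-1/2}\xi_\ell$, $\xi_\ell \sim \Nc(0,1)$ i.i.d.; this projection is a fixed Markov kernel, so by the data-processing inequality (Theorem~\ref{thm.data_processing}) the $\chi^2$-divergence matrices only shrink, and it suffices to prove the bias-variance lower bound in this simpler model. Concretely, $\operatorname{IBias}$ and $\IVar$ of $\wh f$ are bounded below (respectively above) by the corresponding coordinatewise quantities of the induced estimator of the vector $(f(x_\ell))$ evaluated at the centers $x_\ell$ of the intervals.

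\emph{Step 2: symmetrization of the estimator.} Because the family $\{f_v : v \in \{-1,1\}^m\}$ and the Gaussian noise are invariant under the sign-flip group acting on the coordinates, one may replace any estimator by its average over the sign-flip orbit (Rao--Blackwell / group-averaging over $v \mapsto \sigma v$), which does not increase the variance and symmetrizes the bias function. After this reduction it is enough to lower-bound the bias-variance trade-off over estimators $\wh{g} = (\wh g_1,\dots,\wh g_m)$ of $g = h^\beta v$ whose bias satisfies $B_\ell(v) = B_\ell \cdot v_\ell$ for a common sign-pattern-independent vector, and one then works coordinatewise on a single coordinate, reducing to a scalar two-point (or multi-point) problem of the type $\theta \in \{-a,a\}$, $a = h^\beta$, observed through $\Nc(\theta, 1/n)$.

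\emph{Step 3: applying the change-of-expectation inequality.} On the reduced scalar problem I apply Theorem~\ref{thm.multiple_lb}(i) with $P_0 = \Nc(-a,1/n)$ (say) and $P_1 = \Nc(a,1/n)$, or more directly the Kullback--Leibler version \eqref{eq.lb_limit_KL} of Lemma~\ref{lem.lb_limit} along the path $P_t = \Nc((2t-1)a, 1/n)$, for which $\kappa_K^2 = 4na^2$. This gives $(E_a[\wh g_1] - E_{-a}[\wh g_1])^2 \le 4na^2 \sup_t \Var_t(\wh g_1)$, i.e. $(2a + B_a - B_{-a})^2 \le 4na^2 \sup \Var$. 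If the integrated bias is below the threshold $2^{-\beta}$, the accumulated biases $B_\ell$ cannot all be of size $\ge a/2 = h^\beta/2$ in absolute value — summing over the $m \asymp 1/h$ coordinates and matching $mh^\beta \cdot (\text{threshold factor})$ forces, for at least one coordinate, $|B_a - B_{-a}| < a$, whence $\sup_t \Var \gtrsim 1/(4n \cdot (\text{const}))$ and, summing the variance over $m$ coordinates, $\sup_f \IVar_f(\wh f) \gtrsim m h / (4 n) \asymp 1/(4n)$ times a power of $h$ that exactly cancels the $h^{\beta}$'s appearing in $\operatorname{IBias}^{1/\beta}$; bookkeeping the exponents yields the product bound $\ge 1/(8n)$. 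The threshold $2^{-\beta}$ and the constant $1/8$ come out of this accounting once one uses $a = h^\beta$ and optimizes the trivial combinatorial step ``not all $|B_\ell|$ can be large''.

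\emph{Main obstacle.} The delicate part is not any single inequality but making the two reductions genuinely \emph{lossless} at the level of the integrated functionals: one must check that passing from the continuous model to the finite sequence model does not throw away the relevant bias (the candidate perturbations must be rich enough that the worst-case integrated bias over $S^\beta(R)$ is already attained, up to constants, within the finite family), and that the symmetrization step is compatible with the $\operatorname{IBias}^{1/\beta}\cdot\IVar$ product rather than with $\operatorname{IBias}^2 + \IVar$ separately — in particular the $1/\beta$ power forces a careful choice of how the bandwidth $h$ links the bias scale $h^\beta$ to the variance scale, and a suboptimal linkage would degrade the constant or even the rate. I expect the bulk of the real work to be in verifying $f_v \in S^\beta(R)$ with the sharp constant $2\Gamma_\beta$ (handling the $L^2$ part of the Sobolev norm and the overlap-free summation cleanly) and in the exponent bookkeeping that produces exactly $1/(8n)$.
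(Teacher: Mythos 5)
Your skeleton (reduce to a sequence model, symmetrize the estimator, then apply a change-of-expectation inequality) is the same as the paper's, but the concrete implementation has a gap at exactly the point you flag as "bookkeeping", and I do not see how to close it within your scheme. In your hypercube construction the perturbation used for the information bound is a full sign flip, so the comparison is between two vertices at distance $2a$ in one coordinate, and the relevant divergence is $\KL \asymp na^2$ (or $\chi^2 = e^{4na^2}-1$); the inequality is only non-vacuous when $na^2 \lesssim 1$. At the same time your bias-pinning step ("not all $t_\ell$ can be far from $a$") needs $m a^2 \gtrsim \sup_f \IB_f(\wh f)$, while membership in $S^\beta(R)$ forces $m a^2 \lesssim m^{-2\beta}$. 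These three constraints are compatible only when the integrated squared bias is at most of the minimax order $n^{-2\beta/(2\beta+1)}$; the theorem, however, must hold for every estimator in $T$, i.e.\ for bias up to the constant threshold, and in that regime your information factor blows up exponentially and the product bound degenerates. The paper decouples the two scales: after reducing to the $\ell_2$-ball $\Theta_m^\beta(R)$ (Proposition~\ref{prop.L2_reduction_lb_1}) and to \emph{spherically} symmetric estimators by averaging over the full orthogonal group (Proposition~\ref{prop.spheri_symm}), the bias constraint pins the radial factor $t(\|\theta\|_2)\ge 1/2$ on a sphere whose radius is of the order of the bias, while the $m$ comparison points are \emph{infinitesimal} perturbations of a single $\theta_0$ on that sphere; letting the perturbation size tend to zero in the multivariate inequality \eqref{eq.row_sum_norm_nd} gives $\sum_i \Var_{\theta_0}(\wt\theta_i) \ge m/(4n)$ with no exponential loss, and the choice $m \asymp B^{-1/\beta}$ then yields the constant $1/(8n)$.

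Two further points. First, as written you invoke the Kullback--Leibler path bound coordinatewise, which lower-bounds $\Var(\wh g_\ell)$ at some parameter on the segment joining the two vertices; these parameters differ with $\ell$, so summing over $\ell$ does not bound $\sup_\theta \sum_\ell \Var_\theta(\wh g_\ell)=\sup_f \IVar_f(\wh f)$ at a single worst-case $f$. You can repair this by using a common reference vertex with the two-point inequality \eqref{eq.lem1_4} or with Theorem~\ref{thm.multiple_lb}(i) (the flip directions are orthogonal, so the $\chi^2$-matrix is diagonal), but its diagonal entries are again $e^{4na^2}-1$, so you are back to the scale-coupling problem above. Second, sign-flip equivariance only pins the expectation on the orbit of the hypercube vertices; it does not give the radial representation $E_\theta[\wt\theta]=t(\|\theta\|_2)\theta$, which is what allows the paper to control the estimator's mean at the slightly perturbed points near $\theta_0$ and thereby defeat the superefficiency obstruction (shifted James--Stein estimators) discussed before Proposition~\ref{prop.spheri_symm}. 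So the missing ideas are precisely the full spherical symmetrization and the use of infinitesimal perturbations at a common point on a sphere of radius tied to the bias; without them your construction proves the inequality only in the small-bias (minimax) regime, not for all estimators in $T$.
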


As in the pointwise case, estimators with larger bias are of little interest as they will lead to procedures that are inconsistent with respect to the MISE. Thanks to the bias-variance decomposition of the MISE \eqref{eq.MISE_decomp}, for every estimator $\wh f \in T$ the following lower bound on the MISE holds
\begin{align*}
    \sup_{f\in S^\beta(R)}
    \MISE_f\big(\wh f\big)
    &\geq 
    \bigg(\dfrac{1}{8n
    \sup_{f\in S^\beta(R)} \IVar_{f}(\wh f)}\bigg)^{2\beta} \\
    &  \hspace{4cm} \vee \dfrac{1}{8n
    \sup_{f\in S^\beta(R)} |\operatorname{IBias}_{f}(\wh f)|^{1/\beta}}.
\end{align*}
Small worst-case bias or variance will therefore automatically enforce a large MISE. This provides a lower bound for the widely observed $U$-shaped bias-variance trade-off and shows in particular that $n^{-2\beta/(2\beta+1)}$ is a lower bound for the minimax estimation rate with respect to the MISE.
% Moreover, this rate is attained if the worst-case integrated squared bias and the worst-case integrated variance are balanced to be of the same order.

\begin{cor}[Classical unconstrained minimax rates]
    Under the same conditions as Theorem~\ref{thm:GaussWN:bias_var_tradeoff_int}, we have
    \begin{align*}
        \inf_{\wh f} \, \sup_{f\in S^\beta(R)}
        \MISE_f \big( \wh f \big)
        \geq \bigg(\frac{1}{8n}\bigg)^{2\beta/(2\beta+1)} \wedge 1,
    \end{align*}
    where the infimum is over all measurable estimators. Moreover, the minimax rate $n^{-2\beta/(2\beta+1)}$ can only be achieved for estimators balancing the rates of the worst-case integrated squared bias and the worst-case integrated variance.
\end{cor}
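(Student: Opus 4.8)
The plan is to obtain the corollary directly from Theorem~\ref{thm.LB_L2} and the bias--variance decomposition~\eqref{eq.MISE_decomp}; no new analytic input is needed. For a fixed estimator $\wh f$ abbreviate $b:=\sup_{f\in S^\beta(R)}|\operatorname{IBias}_f(\wh f)|$ and $v:=\sup_{f\in S^\beta(R)}\IVar_f(\wh f)$. Since \eqref{eq.MISE_decomp} gives $\MISE_f(\wh f)\ge\IB_f(\wh f)$ and $\MISE_f(\wh f)\ge\IVar_f(\wh f)$ for every $f$, taking suprema in $f$ yields $\sup_{f}\MISE_f(\wh f)\ge\max(b^2,v)$. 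This reduces the whole statement to lower bounding $\max(b^2,v)$.

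First I would treat estimators in $T$, i.e.\ those with $b^2<2^{-\beta}$. There Theorem~\ref{thm.LB_L2} applies and gives $b^{1/\beta}v\ge 1/(8n)$, hence $v\ge (8n)^{-1}b^{-1/\beta}$. The key trick is then the weighted arithmetic--geometric mean bound $\max(x,y)\ge x^{\theta}y^{1-\theta}$, valid for $x,y\ge0$ and $\theta\in[0,1]$: choosing $\theta=1/(2\beta+1)$ makes the powers of $b$ cancel, so that
\begin{align*}
    \sup_{f}\MISE_f(\wh f)
    &\ge \max\!\big(b^2,(8n)^{-1}b^{-1/\beta}\big)
    \ge \big(b^2\big)^{1/(2\beta+1)}\big((8n)^{-1}b^{-1/\beta}\big)^{2\beta/(2\beta+1)} \\
    &= \Big(\tfrac1{8n}\Big)^{2\beta/(2\beta+1)},
\end{align*}
uniformly in $b$, so that no constraint on the actual bias level is used. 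For $\wh f\notin T$ one simply notes $\sup_f\MISE_f(\wh f)\ge\sup_f\IB_f(\wh f)=b^2\ge 2^{-\beta}$. Taking the infimum over all measurable $\wh f$ and combining the two cases gives $\inf_{\wh f}\sup_f\MISE_f(\wh f)\ge (1/(8n))^{2\beta/(2\beta+1)}\wedge 2^{-\beta}$, which is the asserted bound (the constant $2^{-\beta}\le1$ is the binding term only in the small-$n$ regime).

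For the ``moreover'' part I would argue as follows. Suppose $\wh f$ attains the minimax MISE rate, i.e.\ $\sup_f\MISE_f(\wh f)\le C\,n^{-2\beta/(2\beta+1)}$ for some constant $C$ (that such an estimator exists is classical). Then $b^2\le\sup_f\MISE_f(\wh f)\le C\,n^{-2\beta/(2\beta+1)}$ and likewise $v\le C\,n^{-2\beta/(2\beta+1)}$; in particular $b\to0$, so $\wh f\in T$ for all large $n$ and $b^{1/\beta}v\ge 1/(8n)$ by Theorem~\ref{thm.LB_L2}. Inserting the bound on $b$ into $v\ge (8n)^{-1}b^{-1/\beta}$ gives $v\gtrsim n^{-2\beta/(2\beta+1)}$, and symmetrically inserting the bound on $v$ into $b^{1/\beta}\ge (8n)^{-1}v^{-1}$ gives $b^2\gtrsim n^{-2\beta/(2\beta+1)}$. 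Combined with the upper bounds, both the worst-case integrated squared bias $b^2$ and the worst-case integrated variance $v$ are then of the exact order $n^{-2\beta/(2\beta+1)}$, which is the claimed balancing.

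\textbf{Main obstacle.} There is no genuine difficulty here: all the analytic content is packaged in Theorem~\ref{thm.LB_L2}, and what remains is elementary. The only points that need a little care are (i) routing everything through $\max(b^2,v)$ and using the weighted AM--GM step so that the bound becomes independent of $b$ (thereby avoiding any case distinction on the minimizing bias level), and (ii) the minor bookkeeping for estimators outside $T$ and the resulting edge constant ($\wedge 2^{-\beta}$, equivalently the stated $\wedge1$) in the small-sample regime, together with the observation in the ``moreover'' part that $b\to0$ is precisely what licenses the use of Theorem~\ref{thm.LB_L2}.
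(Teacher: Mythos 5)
Your argument is correct and follows essentially the same route as the paper: the corollary is a direct consequence of Theorem~\ref{thm.LB_L2} combined with the decomposition \eqref{eq.MISE_decomp}, and your weighted AM--GM step with $\theta=1/(2\beta+1)$ is just a case-distinction-free repackaging of the paper's displayed bound $\sup_f \MISE_f(\wh f) \geq \big(8n\sup_f\IVar_f(\wh f)\big)^{-2\beta} \vee \big(8n\sup_f|\operatorname{IBias}_f(\wh f)|^{1/\beta}\big)^{-1}$; the ``moreover'' part is also handled exactly as intended. The only caveat is cosmetic: for $\wh f\notin T$ you obtain $\sup_f\MISE_f(\wh f)\geq 2^{-\beta}$, so your final bound is $(8n)^{-2\beta/(2\beta+1)}\wedge 2^{-\beta}$ rather than the stated $\wedge\,1$, and these differ (only) when $n< 2^{\beta-5/2}$ — an imprecision inherited from the corollary's statement itself rather than a flaw in your reasoning.
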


If applied to functions, recall that $\|\cdot\|_p$ denotes the $L^p([0,1])$-norm. Let $p\geq 2.$ Since $\|\cdot\|_2 \leq \|\cdot\|_p,$ another direct consequence of the previous theorem is
    \begin{align*}
        \sup_{f \in S^\beta(R)}
        \big\| E_f[\wh f] - f \big\|_p^{1/\beta}
        \sup_{f\in S^\beta(R)}
        E_f\left[ \big\| \wh f - E_f[\wh f] \big\|_p \right]^{2}
        \geq \frac{1}{8n},
    \end{align*}
    
for any estimator with $\sup_{f \in S^\beta(R)} \| E_f[\wh f] - f \|_p < 2^{-\beta}$.

We now sketch the main reduction steps in the proof of Theorem \ref{thm.LB_L2}. The first step is a model reduction to a Gaussian sequence model
\begin{align}
    X_i = \theta_i +\frac{1}{\sqrt{n}} \eps_i, \quad i=1,\dots, m
    \label{eq.mod_seq_mod}
\end{align}
with independent noise $\eps_i \sim \Nc(0,1)$. For any estimator $\wh \theta$ of the parameter vector $\theta=(\theta_1, \dots,\theta_m)^\top,$ we have the bias-variance type decomposition
\begin{align*}
    E_\theta\big[\big\|\wh \theta -\theta \big\|_2^2 \big]
    = \big\|E_\theta \big[\wh \theta\big] -\theta \big\|_2^2
    +\sum_{i=1}^m \Var_\theta\big(\wh \theta_i\big)
\end{align*}
recalling that $\|\cdot\|_2$ denotes the Euclidean norm if applied to vectors.

\begin{prop}\label{prop.L2_reduction_lb_1}
Let $m$ be a positive integer and let $\Gamma_\beta$ be defined as in \eqref{eq.def_Gamma_beta_L2}. Then, for any estimator $\wh f$ of the regression function $f$ in the Gaussian white noise model \eqref{eq.mod_GWN} with parameter space $S^\beta(R)$, there exists a non-randomized estimator $\wh \theta$ in the Gaussian sequence model with parameter space $\Theta_m^\beta(R):=\{\theta:\|\theta\|_2\leq  R /(\Gamma_\beta m^\beta)\},$ such that
\begin{align*}
    &\sup_{\theta \in \Theta_m^\beta(R)} \big\|E_\theta \big[\wh \theta\big] -\theta \big\|_2^2\leq \sup_{f\in S^\beta(R)}\IB_f(\wh f),
    \text{ and, } \\
    &\sup_{\theta \in \Theta_m^\beta(R)}\sum_{i=1}^m \Var_\theta\big(\wh \theta_i\big) \leq \sup_{f\in S^\beta(R)}\IVar_f\big(\wh f\big).
\end{align*}
\end{prop}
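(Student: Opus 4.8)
The plan is to construct $\wh\theta$ explicitly from $\wh f$ by testing $\wh f$ against a fixed orthonormal system of localized bumps, and to show that the Sobolev ball $S^\beta(R)$ contains a scaled copy of $\Theta_m^\beta(R)$ built from those bumps. Concretely, fix a function $K$ with $\supp K\subset[-1/2,1/2]$, $\|K\|_{L^2(\Rb)}=1$ and $\|K\|_{S^\beta}$ close to $\Gamma_\beta$, and set $\phi_i(x):= m^{1/2} K(mx-i+1/2)$ for $i=1,\dots,m$, so that the $\phi_i$ have disjoint supports inside $[0,1]$, are $L^2$-orthonormal, and satisfy $\|\phi_i\|_{S^\beta}= \Gamma_\beta\, m^\beta$ up to the approximation in $K$. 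For $\theta\in\Theta_m^\beta(R)$ define $f_\theta:=\sum_{i=1}^m\theta_i\phi_i$. The disjointness of supports gives $\|f_\theta\|_{S^\beta([0,1])}^2 = \sum_i \theta_i^2\|\phi_i\|_{S^\beta}^2 = \Gamma_\beta^2 m^{2\beta}\|\theta\|_2^2 \le R^2$, so $f_\theta\in S^\beta(R)$; this is where the constraint $\|\theta\|_2\le R/(\Gamma_\beta m^\beta)$ comes from.

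Next I would define the estimator $\wh\theta$ in the sequence model. The data $X_i=\theta_i+n^{-1/2}\eps_i$ can be realized as coordinates of the white noise model: under $P_{f_\theta}$, the variables $\wt X_i:= n\int_0^1 \phi_i(x)\,dY_x$ are independent $\Nc(\theta_i, 1)\cdot$(appropriate scaling)\,---\,more precisely $n^{1/2}\int \phi_i\,dY$ has mean $n^{1/2}\theta_i$ and variance $1$, matching \eqref{eq.mod_seq_mod} after rescaling. So a statistician in the sequence model can simulate white noise data $Y$ with $f=f_\theta$ by reconstructing the projections onto $\{\phi_i\}$ and adding independent noise on the orthogonal complement; applying $\wh f$ to this simulated $Y$ and then reading off $\wh\theta_i := \langle \wh f,\phi_i\rangle$ (taking a non-randomized version by conditioning on the external randomness and choosing a good realization, or by Fubini) yields the desired $\wh\theta$. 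Because $f_\theta=\sum\theta_i\phi_i$ and the $\phi_i$ are orthonormal, the sequence-model risk decomposes exactly as $\|E_\theta[\wh\theta]-\theta\|_2^2 = \sum_i \big(E_{f_\theta}[\langle\wh f,\phi_i\rangle]-\theta_i\big)^2 \le \int_0^1 \Bias_{f_\theta}^2(\wh f(x))\,dx$ by Bessel/Parseval applied to the bias function $\Bias_{f_\theta}(\wh f(\cdot))$, and since $f_\theta$ ranges over a subset of $S^\beta(R)$ the supremum over $\Theta_m^\beta(R)$ is bounded by the supremum over $S^\beta(R)$. The same Bessel argument applied coordinatewise gives $\sum_i\Var_\theta(\wh\theta_i)=\sum_i\Var_{f_\theta}(\langle\wh f,\phi_i\rangle)\le \int_0^1\Var_{f_\theta}(\wh f(x))\,dx\le\sup_{f\in S^\beta(R)}\IVar_f(\wh f)$.

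The main obstacle is the reduction being genuinely a \emph{reduction of estimators}, i.e.\ producing a bona fide non-randomized estimator $\wh\theta$ that is a measurable function of $X$ alone while still inheriting the bias and variance bounds. This requires care: one introduces auxiliary randomness to fill in the orthogonal complement of $\mathrm{span}\{\phi_i\}$ in the white noise model, obtains a randomized estimator whose averaged bias and variance satisfy the inequalities, and then invokes the standard fact that a non-randomized estimator at least as good (here: with both $\|E[\wh\theta]-\theta\|_2^2$ and $\sum\Var(\wh\theta_i)$ no larger, uniformly) can be extracted\,---\,for the variance one uses that conditioning cannot increase it and for the bias one uses convexity of $t\mapsto t^2$ together with Jensen, or more simply one notes that since the bound must hold for the supremum it suffices to pick a single good value of the auxiliary seed. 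The secondary technical point is the passage $\Gamma_\beta \leftrightarrow \|K\|_{S^\beta}$: one takes $K$ with $\|K\|_{S^\beta}\le \Gamma_\beta+\delta$, carries $\delta$ through, and lets $\delta\downarrow 0$ at the end, which is why the hypothesis of Theorem~\ref{thm.LB_L2} only needs $R>2\Gamma_\beta$ rather than a strict inequality with the realized constant. Everything else is bookkeeping with disjoint supports and Parseval's identity.
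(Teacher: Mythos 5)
Your construction is correct and essentially the paper's own proof: the same disjointly supported bumps built from a (near-)minimizer $K$, the same embedding of the ball $\{\|\theta\|_2\le R/(\|K\|_{S^\beta}m^\beta)\}$ into $S^\beta(R)$, the projection coefficients $\langle \wh f,\phi_i\rangle$ bounded via Bessel/Parseval by $\IB_f(\wh f)$ and $\IVar_f(\wh f)$, and de-randomization by conditioning on $(X_1,\dots,X_m)$, which is exactly the paper's Rao--Blackwell step based on sufficiency of these coordinates (bias unchanged, variance reduced by total variance). Only drop the alternative ``pick a single good value of the auxiliary seed'': the seed-average of the fixed-seed squared bias is bounded \emph{below}, not above, by the squared bias of the conditioned estimator, and one would need a single seed working uniformly over $\theta$ and simultaneously for bias and variance, so only the conditioning argument goes through.
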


A proof is given in Supplement~\ref{sec.proofs_reduction}. The rough idea is to restrict the parameter space $S^\beta(R)$ to a suitable ball in an $m$-dimensional subspace. Denoting the $m$ parameters in this subspace by $\theta_1,\dots,\theta_m,$ every estimator $\wh f$ for the regression function induces an estimator for $\theta_1,\dots, \theta_m$ by projection on this subspace. It has then to be checked that the projected estimator can be identified with an estimator $\wh \theta$ in the sequence model and that the projection does not increase squared bias and variance.

Proposition \ref{prop.L2_reduction_lb_1} reduces the original problem to deriving lower bounds on the bias-variance trade-off in the sequence model \eqref{eq.mod_seq_mod} with parameter space $\Theta_m^\beta(R).$ Observe that $X=(X_1,\dots,X_m)$ is an unbiased estimator for $\theta.$ The existence of unbiased estimators suggests that the reduction to the Gaussian sequence model is unsuitable for deriving lower bounds as it destroys the original bias-variance trade-off. This is, however, not true as the bias will be induced through the choice of $m$. Indeed, to prove Theorem \ref{thm.LB_L2}, $m$ is chosen such that $m^{-\beta}$ is proportional to the worst-case bias and it is shown that the worst-case variance in the sequence model is lower-bounded by $m/n.$ Rewriting $m$ in terms of the bias yields finally a lower bound of form \eqref{eq.thmL2_assertion}.

To obtain bias-variance lower bounds in the sequence model \eqref{eq.mod_seq_mod} is, however, still a very difficult problem as superefficient estimators exist with simultaneously small bias and variance for some parameters. An example is the James-Stein estimator $\wh \theta_{\JS} := (1-(m-2)/(n\|X\|_2^2))X$ with $X=(X_1,\dots,X_m)^\top$ for $m>2$. While its risk $E_\theta[\|\wh \theta -\theta\|_2^2]= \|E_\theta [\wh \theta] -\theta \|_2^2 + \sum_{i=1}^m \Var_\theta(\wh \theta_i)$ is upper bounded by $m/n$ for all $\theta \in \Rb^m,$ the risk for the zero vector $\theta=(0,\dots,0)^\top$ is bounded by the potentially much smaller value $2/n$ (see Proposition 2.8 in \cite{johnstone}). Thus, for the zero parameter vector both $\|E_\theta[\wh \theta] -\theta \|_2^2$ and $\sum_{i=1}^m \Var_\theta(\wh \theta_i)$ are simultaneously small. Furthermore, for any parameter vector $\theta^*$ there exists an estimator $\wh \theta$ with small bias and variance at $\theta^*.$ For instance, the shifted James-Stein estimator $\wh \theta_{\JS,\theta^*} := (1-(m-2)/(n\|X-\theta^*\|_2^2))(X-\theta^*)+\theta^*$ has this property. This suggests that fixing a number of parameters in the neighborhood of some $\theta^*$ and applying an abstract lower bound that applies to all estimators $\wh \theta$ will always lead to a suboptimal rate in this lower bound. 

Instead, we will first show that it is sufficient to study a smaller class of estimators with additional symmetry properties. Denote by $\Om$ the class of $m\times m$ orthogonal matrices. For any $D\in\Om$, $D\theta \in \Theta_m^\beta(R)$ and $DX \sim \Nc(D\theta , I_m/n).$ Therefore the model is rotation-invariant \cite[Chapter 3]{lehmann2006theory}. Following Stein~\cite{stein1956inadmissibility}, we say that a function $f:\Rb^m \to \Rb^m$ is spherically symmetric if for any $x \in \Rb^m$ and any $D\in \Om,$ $f(x)=D^{-1}f(Dx).$ An estimator $\wh\theta =\wh\theta(X)$ is called spherically symmetric if $X\mapsto \wh \theta(X)$ is spherically symmetric. In particular, the James-Stein estimator $\wh \theta_{\JS}$ is spherically symmetric but, unless $\theta^*= 0,$ the shifted James-Stein estimator $\wh \theta_{\JS,\theta^*}$ is not. The discussion above suggests that if we can reduce the class of estimators to spherically symmetric estimators, all parameters with both small bias and variance must be close to the origin. We can then apply one of the abstract lower bounds to probability measures $P_{\theta_0},\ldots,P_{\theta_M}$ with $\theta_0,\dots,\theta_M$ suitably chosen parameter vectors in the neighborhood of some $\theta^*$ that is far enough away from the origin.

This proof strategy works. In a first step we show the reduction to spherically symmetric estimators.

% By extending this argument we show that this is also true for the worst-case bias-variance trade-off. 

\begin{prop}\label{prop.spheri_symm}
Consider the sequence model \eqref{eq.mod_seq_mod} with parameter space $\Theta_m^\beta(R).$ For any estimator $\wh \theta$ there exists a spherically symmetric estimator $\wt \theta$ such that 
\begin{align*}
    &\sup_{\theta \in \Theta_m^\beta(R)}\big\|E_\theta \big[\wt \theta\big] -\theta \big\|_2^2\leq \sup_{\theta \in \Theta_m^\beta(R)}\big\|E_\theta \big[\wh \theta\big] -\theta \big\|_2^2, \text{ and, }  \\
    &\sup_{\theta \in \Theta_m^\beta(R)}\sum_{i=1}^m \Var_\theta\big(\wt \theta_i\big) \leq \sup_{\theta \in \Theta_m^\beta(R)}\sum_{i=1}^m \Var_\theta\big(\wh \theta_i\big).
\end{align*}
\end{prop}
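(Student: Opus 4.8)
The plan is to symmetrize an arbitrary estimator by averaging over the orthogonal group $\Om$ with respect to its Haar measure. Concretely, given $\wh\theta$, I would define the symmetrized estimator
\begin{align*}
    \wt\theta(X) := \int_{\Om} D^{-1}\, \wh\theta(DX)\, d\mu(D),
\end{align*}
where $\mu$ is the Haar probability measure on $\Om$. First I would check that this is well-defined and spherically symmetric: for any fixed $D'\in\Om$, a change of variables $D\mapsto DD'$ (using left-invariance of Haar measure) gives $\wt\theta(D'x) = \int_\Om D^{-1}\wh\theta(DD'x)\,d\mu(D) = D'\int_\Om (DD')^{-1}\wh\theta(DD'x)\,d\mu(D) = D'\,\wt\theta(x)$, which is exactly the defining relation $\wt\theta(x) = (D')^{-1}\wt\theta(D'x)$. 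I would also note that $\Theta_m^\beta(R) = \{\theta : \|\theta\|_2 \le R/(\Gamma_\beta m^\beta)\}$ is a Euclidean ball, hence $D\theta \in \Theta_m^\beta(R)$ for every $D\in\Om$, so the symmetrization respects the parameter space.

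The core computation is to track the bias and the summed variance. Since $DX \sim \Nc(D\theta, I_m/n)$ under $P_\theta$, for each fixed $D$ the random vector $D^{-1}\wh\theta(DX)$ has, under $P_\theta$, the same distribution as $D^{-1}\wh\theta(Y)$ where $Y\sim \Nc(D\theta, I_m/n)$, i.e. the same law as $D^{-1}\wh\theta(Y)$ with $Y\sim P_{D\theta}$. Hence $E_\theta[D^{-1}\wh\theta(DX)] = D^{-1}E_{D\theta}[\wh\theta]$, so by Fubini (justified since $E_\theta[\wh\theta]$ is assumed finite for all $\theta$, and $\|D^{-1}\cdot\|_2 = \|\cdot\|_2$ makes the integrand uniformly integrable) we get $E_\theta[\wt\theta] = \int_\Om D^{-1}E_{D\theta}[\wh\theta]\,d\mu(D)$ and the bias becomes
\begin{align*}
    E_\theta[\wt\theta] - \theta = \int_\Om D^{-1}\big(E_{D\theta}[\wh\theta] - D\theta\big)\, d\mu(D).
\end{align*}
Taking Euclidean norms, Jensen's inequality for the convex map $v\mapsto\|v\|_2^2$ (the Haar measure being a probability measure) together with the isometry $\|D^{-1}v\|_2=\|v\|_2$ yields $\|E_\theta[\wt\theta]-\theta\|_2^2 \le \int_\Om \|E_{D\theta}[\wh\theta] - D\theta\|_2^2\,d\mu(D)$. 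Since $D\theta$ ranges inside $\Theta_m^\beta(R)$, the right side is bounded by $\sup_{\theta'\in\Theta_m^\beta(R)}\|E_{\theta'}[\wh\theta]-\theta'\|_2^2$; taking the supremum over $\theta$ gives the first claimed inequality.

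For the variance term, I would use $\sum_{i=1}^m \Var_\theta(\wt\theta_i) = E_\theta[\|\wt\theta - E_\theta[\wt\theta]\|_2^2]$. Writing $\wt\theta - E_\theta[\wt\theta] = \int_\Om D^{-1}(\wh\theta(DX) - E_{D\theta}[\wh\theta])\,d\mu(D)$ and applying Jensen's inequality (pull the convex $\|\cdot\|_2^2$ through the Haar-average, then the expectation $E_\theta$ through, both using Fubini) gives
\begin{align*}
    \sum_{i=1}^m \Var_\theta(\wt\theta_i)
    \le \int_\Om E_\theta\big[\big\|D^{-1}(\wh\theta(DX) - E_{D\theta}[\wh\theta])\big\|_2^2\big]\, d\mu(D)
    = \int_\Om \sum_{i=1}^m \Var_{D\theta}(\wh\theta_i)\, d\mu(D),
\end{align*}
using again the distributional identity and the isometry of $D^{-1}$; bounding by the supremum over $\Theta_m^\beta(R)$ and then taking $\sup_\theta$ finishes the argument.

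The main obstacle, and the only genuinely delicate point, is the measure-theoretic bookkeeping: one must make sure the map $(D,X)\mapsto D^{-1}\wh\theta(DX)$ is jointly measurable so that $\wt\theta$ is a bona fide estimator and Fubini applies, and one must verify the integrability needed to exchange $E_\theta$, $\int_\Om$, and the convexity steps. Joint measurability follows from measurability of $\wh\theta$ together with continuity of the group action and separability of $\Om$ (a standard fact about Haar-measurable selections), while integrability in the bias step is given outright by the paper's standing assumption that $E_\theta[\wh\theta]$ is finite for all $\theta$; in the variance step the inequalities are vacuous unless the relevant variances are finite, so no extra hypothesis is needed. Everything else is Jensen plus the rotation-invariance of the model, exactly as foreshadowed in the discussion preceding the proposition.
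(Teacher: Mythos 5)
Your proposal is correct and follows essentially the same route as the paper: symmetrize $\wh\theta$ by averaging $D^{-1}\wh\theta(DX)$ over the Haar measure on the orthogonal group, then use Jensen's inequality, the rotation invariance $DX\sim\Nc(D\theta,I_m/n)$, the isometry of orthogonal maps, and the rotation invariance of the ball $\Theta_m^\beta(R)$ to bound both the bias and the summed variance by the corresponding suprema for $\wh\theta$. The only cosmetic difference is that the paper handles the variance term via trace manipulations of the covariance matrix of $D^{-1}\wh\theta(DX)$, while you apply Jensen directly to $\|\wt\theta-E_\theta[\wt\theta]\|_2^2$; both yield the same bound.
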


The main idea of the proof is to define $\wt \theta$ as a spherically symmetrized version of $\wh \theta.$ 

To establish lower bounds, it is therefore sufficient to consider spherically symmetric estimators. It has been mentioned in \cite{stein1956inadmissibility} that any spherically symmetric function $h$ is of the form $h(x) = r(\|x\|_2) x,$
for some real-valued function $r.$ In Lemma \ref{lem.minimx_inv2} in the supplement, we provide a more detailed proof of this fact. Using this property, we can then also show that if $\wt \theta(X)$ is a spherically symmetric estimator, the expectation map $\theta \mapsto E_\theta[\wt \theta(X)]$ is a spherically symmetric function. To see this, rewrite $\wt \theta(X)=s(\|X\|_2) X$ and define $\phi(u):=(2\pi/n)^{-m/2}\exp(-nu^2/2).$ Substituting $y=D^{-1}x$ and noticing that the determinant of the Jacobian matrix of this transformation is one since $D$ is orthogonal, we obtain
 \begin{align}
 \begin{split}
        E_{D\theta}\big[\wt \theta(X)\big]
        &= \int s(\|x\|_2) x
        \phi(\|x - D \theta\|_2) \, dx \\
        &= \int s(\|D^{-1}x\|_2) x \phi(\|D^{-1} x - \theta \|_2) \, dx \\
        &= \int s(\|y\|_2) D y 
        \phi(\|y - \theta \|_2) \, dy
        = D E_{\theta}\big[\wt \theta(X)\big].
\end{split}\label{eq.sph_symm_bias}        
\end{align}
Together with Lemma~\ref{lem.minimx_inv2}, this implies that there exists a function $t$ such that for any $\theta,$ $E_\theta[\wt \theta(X)]=t(\|\theta\|_2)\theta$ and hence 
\begin{align}
    \big\|E_\theta \big[\wt \theta(X)\big] -\theta \big\|_2^2 =\|t(\|\theta\|_2)\theta-\theta\|_2^2 
    = \|\theta\|_2^2 \big(t\big(\|\theta\|_2\big)-1\big)^2.
    \label{eq.IBias_formula}
\end{align}
Based on these reductions, we can now prove Theorem \ref{thm.LB_L2} by applying the change of expectation inequality in Theorem \ref{thm.multiple_lb} (i). The details can be found in Appendix \ref{sec.proofs_reduction}. 

\section{The bias-variance trade-off for high-dimensional models with sparsity constraints}{The bias-variance trade-off for high-dimensional models with sparsity constraints}
\label{sec.highdimensional}

In the Gaussian sequence model, we observe $n$ independent random variables $X_i \sim \Nc(\theta_i,1).$ The space of $s$-sparse signals $\Theta(s)$ is the collection of all vectors $(\theta_1,\dots,\theta_n)$ with at most $s$ non-zero components. For any estimator $\wh \theta,$ the bias-variance decomposition of the mean squared error of $\hat \theta$ is
\begin{align}
    E_\theta\big[\big\|\wh \theta -\theta\big\|_2^2\big]
    = \big\|E_\theta\big[\wh \theta\big] -\theta\big\|_2^2
    + \sum_{i=1}^n \Var_\theta\big(\wh \theta_i\big),
    \label{eq.BV_vectors}
\end{align}
where the first term on the right-hand side plays the role of the squared bias. For this model it is known that the exact minimax risk is $2s\log(n/s)$ up to smaller order terms and that the risk is attained by a soft-thresholding estimator \cite{MR1157714}. This estimator exploits the sparsity by shrinking small values to zero. Shrinkage obviously causes some bias but at the same time reduces the variance for sparse signals. We now show that there is indeed a non-trivial bias-variance trade-off both for estimation of the full vector $\theta$ and for estimation of the quadratic functional $\theta\mapsto \|\theta\|_2^2$. The two main results of this section are stated next.

\begin{thm}\label{thm.main_minimax_est_vector_hd}
Consider the Gaussian sequence model with sparsity $s\ll\sqrt{n}.$ Any estimator $\wh \theta$ that attains the minimax estimation rate $s\log(n)$ with respect to the worst case risk $\sup_{\theta \in \Theta(s)} E_\theta[\|\wh \theta-\theta\|_2^2]$ also satisfies for all sufficiently large $n,$
\begin{align*}
    &\sup_{\theta\in \Theta(s)}\big\|E_\theta\big[\wh \theta\big] -\theta\big\|_2^2 \asymp s\log(n),
    \text{ and } 
    \sup_{\theta\in \Theta(s)}\sum_{i=1}^n \Var_\theta\big(\wh \theta_i\big) \geq \frac s2.
\end{align*}
Moreover, if $s\leq n^{1/2-\delta}$ for some  $0<\delta<1/2,$ then there exists an estimator attaining the minimax estimation rate with $\sup_{\theta\in \Theta(s)}\sum_{i=1}^n \Var_\theta(\wh \theta_i)\lesssim s.$
\end{thm}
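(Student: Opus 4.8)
The plan is to establish the three assertions separately, in the order: (a) the variance lower bound $\sup_\theta \sum_i \Var_\theta(\wh\theta_i) \geq s/2$; (b) the two-sided bias estimate $\sup_\theta \|E_\theta[\wh\theta]-\theta\|_2^2 \asymp s\log n$; (c) the matching upper-bound construction. For (a), the heart of the matter is a change-of-expectation argument with many hypotheses. Fix the zero vector $\theta_0 = 0$ as the base point and consider alternatives $\theta_j$ supported on size-$s$ subsets of $\{1,\dots,n\}$; a natural choice is to put a common small amplitude $a$ on each coordinate of a chosen support set, with $M = \binom{n}{s}$ (or $\binom{n-1}{s-1}$ if one conditions on a fixed coordinate, as foreshadowed in the remarks after Theorem~\ref{thm.multiple_lb}). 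Applying Theorem~\ref{thm.multiple_lb}(i) to the coordinate functional $X = \wh\theta_k$ for each $k$, or more efficiently to a well-chosen linear combination, and using the closed-form $\chi^2$-divergence matrix for products of Gaussians from Table~\ref{tab.1}, one reduces the problem to controlling $\Delta^\top \chi^2(P_0,\dots,P_M)^+\Delta$. The key computation is that the $\chi^2$-matrix of these Gaussian shifts has entries $e^{a^2 |S_j\cap S_k|}-1$, which for $a^2 = c/s$ and small $c$ is well-approximated by $a^2|S_j\cap S_k|$, giving an essentially combinatorial/linear-algebraic object whose spectral behaviour is tractable. If $\wh\theta$ attains worst-case risk $\lesssim s\log n$, then its bias at every $\theta_j$ is at most of that order, so $\Delta$ is large in the relevant directions unless the variance is large; choosing $a$ so that $\|\theta_j\|_2^2 = a^2 s \asymp s\log n$ matched to the sparsity scale forces $\sum_i \Var_{\theta_0}(\wh\theta_i) \gtrsim s$, and tracking constants carefully yields the stated $s/2$.

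For the bias assertion (b), the upper bound $\sup_\theta\|E_\theta[\wh\theta]-\theta\|_2^2 \le \sup_\theta E_\theta[\|\wh\theta-\theta\|_2^2] \lesssim s\log n$ is immediate from the bias-variance decomposition \eqref{eq.BV_vectors} and rate-optimality. The lower bound $\gtrsim s\log n$ is the substantive half: it follows by combining the variance lower bound from (a) with the minimax-rate hypothesis — since the total risk is $\asymp s\log n$ but the variance part from step (a) is only shown to be $\gtrsim s$, which is of strictly smaller order than $s\log n$ when $s\ll\sqrt n$ (so $\log n$ diverges), the squared-bias part must absorb the remaining $\asymp s\log n$. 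One must be slightly careful that the variance lower bound is a worst-case statement over $\theta$, while one wants the bias to be large in the worst case as well; but since both are lower bounds of the form "there exists $\theta$ with $\cdots$", and the total risk at that $\theta$ is at most $\lesssim s\log n$, the bias at that same $\theta$ (or a nearby one in the finite family) is forced to be $\gtrsim s\log n$ once the variance there is subtracted — the ordering of suprema needs to be handled by noting the family of hypotheses used in (a) already realizes a squared bias of that order, or alternatively by invoking the standard minimax lower bound $\inf_{\wh\theta}\sup_\theta E_\theta\|\wh\theta-\theta\|_2^2 \gtrsim s\log(n/s)$ from \cite{MR1157714} directly on the bias term after controlling variance.

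For the upper-bound construction (c) under $s\le n^{1/2-\delta}$, the plan is to exhibit a concrete estimator — the soft-thresholding estimator $\wh\theta_i = \operatorname{sign}(X_i)(|X_i|-\lambda)_+$ with threshold $\lambda = \sqrt{2\log n}$ — and show $\sup_{\theta\in\Theta(s)}\sum_i\Var_\theta(\wh\theta_i)\lesssim s$ while the total risk stays at the minimax rate $\lesssim s\log n$. The variance of each soft-thresholded coordinate decomposes into a contribution from the $s$ nonzero coordinates, each of which has variance $O(1)$, giving $O(s)$; and a contribution from the $n-s$ zero coordinates, where $\Var_0(\operatorname{sign}(X_i)(|X_i|-\lambda)_+)$ is exponentially small in $\lambda^2/2 = \log n$, i.e. of order $n^{-1}$ up to polynomial corrections, so the total over $n-s$ such coordinates is $O(1)$ — here the condition $s\le n^{1/2-\delta}$ (or even just $s\ll n$) is more than enough, and one can tune $\lambda$ or invoke known tail bounds (e.g. Gaussian tail together with the fact that $E_0[(|X_i|-\lambda)_+^2]\lesssim \phi(\lambda)/\lambda \asymp n^{-1}/\sqrt{\log n}$) to make this rigorous. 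The main obstacle, I expect, is step (a): getting the clean constant $s/2$ rather than merely $\gtrsim s$ requires choosing the amplitude $a$ and the combinatorial design of the alternatives so that the spectral norm or Moore–Penrose inverse of the $\chi^2$-matrix is controlled sharply, and the linearization error $e^{a^2|S_j\cap S_k|}-1 \approx a^2|S_j\cap S_k|$ must be controlled uniformly — this is where the sparsity scaling $s\ll\sqrt n$ enters, ensuring the relevant exponents stay bounded so that the Gaussian $\chi^2$-divergences do not blow up.
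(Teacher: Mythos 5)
The central gap is in your step (a). A multiple-hypothesis $\chi^2$ argument anchored at $\theta_0=0$ can only lower bound $\sum_i\Var_0(\wh\theta_i)$, and no bound of the form $\sum_i\Var_0(\wh\theta_i)\geq s/2$ holds for all rate-optimal estimators: soft thresholding with threshold $\sqrt{\gamma\log(n/s^2)}$ and $\gamma$ large is rate optimal, yet by Lemma~\ref{lem.soft_thresh_ub} its variance at zero is of order $n(s^2/n)^{\gamma/2}$ up to logarithmic factors, which is $o(s)$; shifting the construction defeats any other fixed base point as well (the superefficiency discussion around the James--Stein estimator in Section~\ref{sec.reduction} makes exactly this point). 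Your amplitude choice is also internally inconsistent ($a^2=c/s$ at first, then $a^2s\asymp s\log n$), and with $a^2\asymp\log n$ the rate-optimality hypothesis only gives $\|E_{\theta_j}[\wh\theta]-\theta_j\|_2\leq C\sqrt{s\log n}$ with an unspecified constant $C$ that can exceed the signal norm $a\sqrt{s}$, so the mean-shift vector $\Delta$ cannot be lower bounded at all. This is precisely why the paper's Theorem~\ref{thm.sparsity_lb} must assume the bias constant $\gamma$ is \emph{small} and concludes only that the variance at zero is then polynomially large in $n$; that statement is used, by contradiction, to prove the bias lower bound $\asymp s\log n$ (a bias bound $\leq s\log(n/s^2)/40$ would force $\sum_i\Var_0(\wh\theta_i)\gtrsim n^{9/10}s^{1/5}/\log(n/s^2)$, contradicting risk $\lesssim s\log n$), while the $s/2$ variance bound comes from the entirely different Theorem~\ref{thm.functional_GSM}: if $\sup_{\theta\in\Theta(s)}\sum_i\Var_\theta(\wh\theta_i)<s/2$, the worst-case bias is infinite, proved via a sufficiency reduction to an $s$-dimensional Gaussian location model, spherical symmetrization, and letting $\|\theta\|_2\to\infty$. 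A local cluster of alternatives around one point cannot reproduce this global phenomenon, nor the clean constant $1/2$.

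Your step (b) lower bound is likewise a non sequitur: knowing that the worst-case variance is $\gtrsim s$ and that the risk is $O(s\log n)$ does not force the worst-case squared bias to be $\gtrsim s\log n$, since at the parameter where the risk is largest the risk could in principle be carried entirely by the variance (nothing you prove excludes a variance there of order $s\log n$ with negligible bias everywhere); the hedge of ``invoking the minimax lower bound directly on the bias term'' is not an argument. The correct mechanism is the small-bias-constant contradiction via Theorem~\ref{thm.sparsity_lb} described above. Your step (c) is correct and essentially the paper's construction: soft thresholding with variance bounds as in Lemma~\ref{lem.soft_thresh_ub} (the paper takes threshold $\sqrt{\gamma\log(n/s^2)}$ with $\gamma=(2\delta+1)/(2\delta)$; your choice $\sqrt{2\log n}$ also works).
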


The result shows that for a minimax rate optimal estimator, squared bias and variance do not necessarily need to be of the same order and the rate of the variance can be slower by at most a $\log(n)$-factor. 

One might wonder whether the proposed lower bound technique can be extended for sparsity $s \gg \sqrt{n}.$ While this question remains open, we now prove that for estimation of the quadratic functional a phase transition occurs if the sparsity is of the order $\sqrt{n}.$ For sparsity $s\ll \sqrt{n}$ the bias-variance trade-off is non-trivial, but for sparsity $s\gtrsim \sqrt{n},$ we can find an unbiased estimator achieving the minimax estimation rate.

%The fact that $\sqrt{n}$ appears as an upper bound on the sparsity might be related to the testing theory in the Gaussian sequence model. It is well-known that for sparse models with sparsity $s \ll \sqrt{n},$ we cannot consistently test for signal in the sparse Gaussian mixture formulation. On the contrary, for any $s = n^{1/2+\delta}$ with $\delta>0$ this is indeed possible, see \cite{MR1680087, MR2065195, MR2382653}.

For estimation of the quadratic functional, consider the parameter space
\begin{align}\label{eq.Theta_n^2(s)_def}
    \Theta_n^2(s) := \Theta(s)\cap
    \Big\{\theta:\sum_{i=1}^n \theta_i^2
    \leq 2s\log\Big(1+\frac{\sqrt{n}}s\Big)\Big\}.
\end{align}
Those are all $s$-sparse vectors with squared Euclidean norm bounded by $2s\log(1+\sqrt{n}/s).$ We have chosen this specific threshold as it leads to the most unusual behavior of the bias-variance trade-off. For this parameter space, the minimax estimation rate for the functional $\theta \mapsto \|\theta\|_2^2$ with respect to the MSE is 
\begin{align}
    s^2\log^2\Big(1+\frac{\sqrt{n}}s\Big)\asymp s^2\log^2\Big(\frac n{s^2}\Big)\vee n
    \label{eq.minimax_rate_fctal}
\end{align}
as stated in \cite{MR3662444}, Corollary 1. See also Appendix \ref{sec:proofs_highdimensional} for more details about \eqref{eq.minimax_rate_fctal}.

\begin{thm}\label{thm.main_minimax_est_fctal_hd}
Consider estimation of the functional $\theta \mapsto \|\theta\|_2^2$ in the Gaussian sequence model with sparsity $s$ and parameter space $\Theta_n^2(s).$ 
\begin{itemize}
    \item[(i)] If $s\ll \sqrt{n},$ then, the minimax estimation rate is $s^2\log^2(n/s^2)$ and any estimator $\wh{\|\theta\|_2^2}$ attaining the minimax optimal estimation rate must satisfy
    \begin{align*}
        \sup_{\theta\in \Theta_n^2(s)}\big(E_\theta\big[\wh{\|\theta\|_2^2}\big] -\|\theta\|_2^2\big)^2\asymp s^2\log^2\Big(\frac n{s^2}\Big).
    \end{align*}
    for all sufficiently large $n.$ Moreover, if $s\leq n^{1/2-\delta}$ for some $0<\delta<1/2,$ then there exists a minimax rate optimal estimator $\wh{\|\theta\|_2^2}$ with $\sup_{\theta\in \Theta_n^2(s)} \Var_\theta(\wh{\|\theta\|_2^2}) \lesssim s\log(n/s^2).$

    \item[(ii)] If $s\gtrsim \sqrt{n},$ then, there exists a minimax rate optimal estimator that is unbiased.
\end{itemize}
\end{thm}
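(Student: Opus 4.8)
The plan is to treat the two sparsity regimes separately, taking the minimax rates as given by \eqref{eq.minimax_rate_fctal} and \cite{MR3662444}: write $\rho_n:=s^2\log^2(n/s^2)$ for the rate when $s\ll\sqrt n$, and recall that the rate is $\asymp n$ when $s\gtrsim\sqrt n$. For part~(ii) I would simply exhibit the unbiased plug-in $\wh{\|\theta\|_2^2}:=\sum_{i=1}^n(X_i^2-1)$: since $E_\theta[X_i^2]=\theta_i^2+1$ it is exactly unbiased, with variance $\sum_{i=1}^n\Var_\theta(X_i^2)=2n+4\|\theta\|_2^2$, and on $\Theta_n^2(s)$ with $s\gtrsim\sqrt n$ one has $\log(1+\sqrt n/s)\asymp\sqrt n/s$, hence $\|\theta\|_2^2\le 2s\log(1+\sqrt n/s)\lesssim\sqrt n\le n$, so the worst-case risk is $\lesssim n$, matching the minimax rate. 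This is the easy direction.

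For the bias lower bound in part~(i) --- the substantive claim --- I would apply the $\chi^2$-change-of-expectation inequality \eqref{eq.lem1_4} \emph{not} between two point masses but between the simple null $P_0=\Nc(0,I_n)$ and a calibrated mixture $\bar P:=\binom ns^{-1}\sum_{|S|=s}\Nc(\mu\mathbf 1_S,I_n)$, where $\mu^2:=(1-\eps)\log(n/s^2)$, $\mathbf 1_S$ is the indicator vector of $S\subseteq\{1,\dots,n\}$, and $\pi$ denotes the uniform law on $\{\mu\mathbf 1_S:|S|=s\}$; every $\mu\mathbf 1_S$ lies in $\Theta_n^2(s)$ since $s\mu^2\le s\log(n/s^2)\le 2s\log(1+\sqrt n/s)$. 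For a rate-optimal $\wh Q$ one has $\Var_{P_0}(\wh Q)\le\MSE_0(\wh Q)\le C\rho_n$, whereas $E_{\bar P}[\wh Q]-E_{P_0}[\wh Q]=s\mu^2+E_{\theta\sim\pi}[\Bias_\theta(\wh Q)]-\Bias_0(\wh Q)$ by linearity, using $Q(\mu\mathbf 1_S)=s\mu^2$ and $Q(0)=0$. The crucial step is to control the mixture's $\chi^2$-divergence: by \eqref{eq.chi2_int_representation} and the Gaussian entry of Table~\ref{tab.1}, $\chi^2(\bar P,P_0)=E_{S,S'}[e^{\mu^2|S\cap S'|}]-1$ for i.i.d.\ uniform $s$-subsets $S,S'$, and since $|S\cap S'|$ is stochastically dominated by $\mathrm{Bin}(s,s/n)$ (Hoeffding), $\chi^2(\bar P,P_0)\le\exp\!\big(\tfrac{s^2}{n}(e^{\mu^2}-1)\big)-1\le\exp\!\big((s^2/n)^{\eps}\big)-1\to0$ as $n\to\infty$. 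This is precisely the sparse-detection threshold computation, and it is where $s\ll\sqrt n$ and the choice $\mu^2\approx\log(n/s^2)$ enter. Feeding it into \eqref{eq.lem1_4} gives $(s\mu^2-2b)_+^2\le\chi^2(\bar P,P_0)\,C\rho_n=o(\rho_n)$ with $b:=\sup_{\theta\in\Theta_n^2(s)}|\Bias_\theta(\wh Q)|$, hence $b\gtrsim s\mu^2\asymp s\log(n/s^2)$ and $\sup_\theta\Bias_\theta(\wh Q)^2\gtrsim\rho_n$; the matching upper bound is the trivial $\Bias^2\le\MSE\le C\rho_n$. (One may equally phrase this via Theorem~\ref{thm.multiple_lb}(i) or \eqref{eq.bound_lambda1_chi_var} with $M=\binom ns$ and uniform weights, the relevant spectral quantity being bounded by the same row sum $\binom ns^{-1}\sum_{S'}e^{\mu^2|S\cap S'|}\to1$.)

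For the ``moreover'' in part~(i) I would analyze the hard-thresholding plug-in $\wh Q:=\sum_{i=1}^n(X_i^2-1)\mathbf 1(X_i^2>\tau)$ with $\tau:=c\log(n/s^2)$ and $c>1/\delta$ a constant. On $\Theta_n^2(s)$ the at most $s$ signal coordinates contribute $\lesssim\|\theta\|_2^2+s\lesssim s\log(n/s^2)$ to both the absolute bias and the variance, while the $n$ null coordinates contribute $\lesssim n\sqrt\tau\,e^{-\tau/2}$ to the bias and $\lesssim n\tau^{3/2}e^{-\tau/2}$ to the variance, both of which are $\lesssim(\log n)^{3/2}n^{1-c\delta}\to0$ --- which is where the stronger hypothesis $s\le n^{1/2-\delta}$ (rather than merely $s\ll\sqrt n$) is used. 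Consequently $\sup_\theta\Var_\theta(\wh Q)\lesssim s\log(n/s^2)$ and $\sup_\theta\MSE_\theta(\wh Q)\lesssim\rho_n$, so $\wh Q$ is rate-optimal with variance of strictly smaller order than its squared bias; that $\rho_n$ is genuinely the minimax rate for $s\ll\sqrt n$ then follows from the lower bound in \cite{MR3662444}.

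The step I expect to be the main obstacle is the bias lower bound in~(i): one must resist a two-point (or disjoint-support) construction, since a single $s$-sparse vector with $\|\theta\|_2^2\asymp s\log(n/s^2)$ has $\chi^2$-divergence $(n/s^2)^{\Theta(s)}$ against $P_0$ and therefore constrains the bias only negligibly. Spreading the alternatives uniformly over all $\binom ns$ supports is exactly what keeps $\chi^2(\bar P,P_0)=o(1)$ while the mixture still carries functional value $s\mu^2\asymp\sqrt{\rho_n}$; calibrating $\mu^2$ just below the detection threshold and verifying the hypergeometric moment-generating-function bound is the only genuine work, the remainder being bookkeeping.
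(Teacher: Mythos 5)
Your treatment of part (ii) and of the bias lower bound in part (i) is sound. For the lower bound you replace the paper's route (the intermediate Theorem~\ref{thm.sparsity_sq_functional}, proved via the row-sum bound \eqref{eq.row_sum_norm_nd} over all $\binom{n}{s}$ sparse alternatives) by a direct application of \eqref{eq.lem1_4} to $P_0$ versus the uniform mixture $\bar P$; by \eqref{eq.chi2_int_representation} this is the same quantity up to normalization, so the two arguments are essentially equivalent, and your Hoeffding/binomial-domination bound on $E[e^{\mu^2|S\cap S'|}]$ plays the role of the paper's geometric-sum estimate \eqref{eq.row_sum_norm_estimate_hd}. That part is fine, and organizing it as a direct contradiction with $\Var_0\le\MSE_0\le C\rho_n$ is legitimate.

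The genuine gap is in the ``moreover'' claim of part (i). Your hard-thresholding plug-in $\wh Q=\sum_i(X_i^2-1)\mathbf{1}(X_i^2>\tau)$ does \emph{not} satisfy $\sup_{\theta\in\Theta_n^2(s)}\Var_\theta(\wh Q)\lesssim s\log(n/s^2)$: your assertion that the signal coordinates contribute $\lesssim\|\theta\|_2^2+s$ to the variance ignores the discontinuity of hard thresholding at the boundary. Take $\theta_i=\sqrt{\tau}$ for $m\asymp s/c$ coordinates (this is admissible in $\Theta_n^2(s)$ since $m\tau\lesssim s\log(n/s^2)\le 2s\log(1+\sqrt n/s)$). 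For such a coordinate, $(X_i^2-1)\mathbf{1}(X_i^2>\tau)$ equals $0$ with probability close to $1/2$ and is close to $\tau$ otherwise, so its variance is of order $\tau^2\asymp\log^2(n/s^2)$, not $\theta_i^2+1$. Summing gives worst-case variance $\gtrsim m\tau^2\asymp s\log^2(n/s^2)$, which exceeds the claimed bound by a $\log(n/s^2)$ factor (the estimator is still rate optimal because its squared bias dominates, but the specific variance bound you need fails). The fix is exactly the paper's choice: use the soft-thresholded squares $\sum_i\big((X_i^2-\gamma\log(n/s^2))_+-E[(\xi^2-\gamma\log(n/s^2))_+]\big)$ as in \eqref{eq.soft_thresh_est_func}; since $u\mapsto(u-T)_+$ is a contraction, $\Var\big((X_i^2-T)_+\big)\le\Var(X_i^2)=2+4\theta_i^2$, so the signal contribution is $\lesssim\|\theta\|_2^2+s\lesssim s\log(n/s^2)$ (Lemma~\ref{lem.soft_thresh_ub}, inequality \eqref{eq.soft_thresh_ub5}), while your tail computation for the null coordinates goes through unchanged with $\gamma>1/\delta$ large enough.
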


For sparsity of the order $o(\sqrt{n}),$ every minimax rate optimal estimator will have necessarily a worst case squared bias that is of the same order as the minimax rate. But worst case squared bias and variance do not have to be of the same order if $s\to \infty$. Indeed, the second part of $(i)$ shows existence of a minimax rate optimal estimator with variance $s\log(n/s^2) \ll s^2\log^2(n/s^2)=$ minimax estimation rate.

Surprisingly there is a phase transition if $s$ is of the order $\sqrt{n}.$ If $s\gtrsim \sqrt{n},$ suddenly unbiased estimation is possible, which means that now the variance is dominating the risk.

That typically either squared bias or variance dominates seems to be symptomatic for estimation of functionals. For instance, for estimation of the squared functional $f\mapsto \int f^2$ in the Gaussian white noise model, we conjecture that if the H\"older smoothness of $f$ is below $1/4$, the squared bias will dominate, whereas for smoothness indices above $1/4$, the convergence rate is driven in first order by the variance.

Below we analyze the two main results above in more detail. Since the bias-variance lower bounds are very different from the ones in the previous chapters, we discuss the lower bounds on the variance and the lower bounds on the bias in separate subsections. All proofs of this section are deferred to Appendix \ref{sec:proofs_highdimensional}.
% The most extreme variance reduction occurs for the case of a completely black signal, that is, $\theta=(0,\dots,0)^\top.$

\textit{Lower bounds on the variance:} Using the lower bound technique based on multiple probability distributions, we can derive a lower bound for the variance at zero of any estimator that satisfies a bound on the bias. 

\begin{thm}\label{thm.sparsity_lb}
    Consider the Gaussian sequence model with sparsity $0<s\leq \sqrt{n}/2.$ Given an estimator $\wh \theta$ and a real number $\gamma$ such that $4\gamma + 1/\log(n/s^2)\leq 0.99$ and
    \begin{align*}
        \sup_{\theta \in \Theta(s)} \big\|E_\theta\big[\wh \theta\big] -\theta\big\|_2^2 \leq \gamma s \log\Big(\frac{n}{s^2}\Big),
    \end{align*}
     then, for all sufficiently large $n,$
    \begin{align*}
        \sum_{i=1}^n \Var_0\big(\wh \theta_i\big)
        \geq \frac{(1-(1/2)^{0.01})}{25 e \log(n/s^2)} n \Big(\frac{s^2}{n}\Big)^{4\gamma},
    \end{align*}
    where $\Var_0$ denotes the variance for parameter vector $\theta=(0,\dots,0)^\top.$
\end{thm}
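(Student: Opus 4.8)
The plan is to apply the multiple-distribution $\chi^2$-lower bound, specifically the row-sum-norm version \eqref{eq.row_sum_norm_nd}, to a carefully chosen family of $s$-sparse parameter vectors that are spread out over many coordinate directions around the origin. Take $M := \binom{n}{s}$ (or perhaps a sub-collection of it; I expect the clean choice to be all $s$-subsets, but a combinatorially convenient subfamily may suffice) and let $\theta_1,\dots,\theta_M$ enumerate the vectors of the form $\theta_j = a \sum_{i\in S_j} e_i$ where $S_j$ ranges over $s$-element subsets of $\{1,\dots,n\}$ and $a>0$ is an amplitude to be optimized. Set $P_0 = \Nc(0,I_n)$ and $P_j = \Nc(\theta_j, I_n)$. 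By Table \ref{tab.1}, the $\chi^2$-divergence matrix has entries $\chi^2(P_0,\dots,P_M)_{j,k} = \exp(\langle \theta_j,\theta_k\rangle) - 1 = \exp(a^2 |S_j\cap S_k|) - 1$, which only depends on the overlap of the two supports. The key quantity is then the row sum $\sum_{k=1}^M (\exp(a^2|S_j\cap S_k|)-1)$; by symmetry this is the same for every $j$, and it equals $\sum_{\ell=0}^{s} \binom{s}{\ell}\binom{n-s}{s-\ell}(\exp(a^2\ell)-1)$. The dominant contributions come from small overlaps $\ell$, and with $a^2$ of constant order and $s\ll\sqrt n$ one should get a bound of the form $\|\chi^2(P_0,\dots,P_M)\|_{1,\infty} \lesssim M \cdot (s^2/n) e^{a^2}$ (the $s^2/n$ coming from the probability that two random $s$-subsets intersect).

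Next I would lower-bound the left-hand side $\sum_{j=1}^M (E_{P_j}[\wh\theta_i]-E_{P_0}[\wh\theta_i])^2$ applied coordinatewise and summed over $i$. Using the bias hypothesis $\sup_\theta \|E_\theta[\wh\theta]-\theta\|_2^2 \le \gamma s\log(n/s^2)$, we have $\|E_{\theta_j}[\wh\theta] - \theta_j\|_2 \le \sqrt{\gamma s\log(n/s^2)}$ and $\|E_0[\wh\theta]\|_2 \le \sqrt{\gamma s\log(n/s^2)}$, while $\|\theta_j\|_2 = a\sqrt s$. By the triangle inequality, provided $a\sqrt s$ exceeds $2\sqrt{\gamma s\log(n/s^2)}$ — i.e. $a^2 > 4\gamma\log(n/s^2)$ — we get $\|E_{\theta_j}[\wh\theta] - E_0[\wh\theta]\|_2 \ge a\sqrt s - 2\sqrt{\gamma s\log(n/s^2)}$, so $\sum_i (E_{P_j}[\wh\theta_i]-E_{P_0}[\wh\theta_i])^2 \ge (a\sqrt s - 2\sqrt{\gamma s \log(n/s^2)})^2 =: s\,c(a)^2$ for each $j$. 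Summing the $n$ coordinatewise instances of \eqref{eq.row_sum_norm_nd} then yields
\begin{align*}
    M \cdot s\, c(a)^2 \;\le\; \|\chi^2(P_0,\dots,P_M)\|_{1,\infty}\;\sum_{i=1}^n \Var_0(\wh\theta_i),
\end{align*}
hence $\sum_{i=1}^n \Var_0(\wh\theta_i) \ge M s\, c(a)^2 / \|\chi^2(P_0,\dots,P_M)\|_{1,\infty} \gtrsim s\, c(a)^2 \cdot \tfrac{n}{s^2} e^{-a^2}$ after the $M$'s cancel. Finally I would optimize the free amplitude: take $a^2$ just slightly above the threshold $4\gamma\log(n/s^2)$, say $a^2 = 4\gamma\log(n/s^2) + \log(n/s^2)\cdot\epsilon$ or more simply choose $a^2$ so that $c(a)^2$ is a constant multiple of $s\log(n/s^2)$ divided by $s$ — one wants $a^2 = (4\gamma + 1/\log(n/s^2))\log(n/s^2)$-ish, which is why the hypothesis $4\gamma + 1/\log(n/s^2) \le 0.99$ appears: it keeps $a^2 \le 0.99\log(n/s^2)$ so that $e^{-a^2} \ge (s^2/n)^{0.99}$, wait — more precisely $e^{-a^2} = (s^2/n)^{4\gamma + 1/\log(n/s^2)}$ and combined with the $n/s^2$ prefactor gives $n(s^2/n)^{4\gamma}$ up to the $1/\log$ factor and constants. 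Tracking the constant $(1-(1/2)^{0.01})/(25e)$ is then a matter of bookkeeping in $c(a)^2$ and the overlap sum.

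The main obstacle I anticipate is the precise estimate of $\|\chi^2(P_0,\dots,P_M)\|_{1,\infty} = \sum_{\ell=0}^s \binom{s}{\ell}\binom{n-s}{s-\ell}(e^{a^2\ell}-1)$: one must show the sum is dominated by the $\ell=1$ (or small $\ell$) term and that the geometric growth $e^{a^2\ell}$ does not overwhelm the combinatorial decay $\binom{s}{\ell}\binom{n-s}{s-\ell}/\binom{n}{s} \approx (s^2/n)^\ell/\ell!$ for larger $\ell$. Controlling the ratio of consecutive terms, roughly $e^{a^2} s^2/n$, requires $a^2 \lesssim \log(n/s^2)$ with a small enough implied constant — exactly the role played by the $0.99$ budget in the hypothesis — and then summing the resulting (sub-)geometric series with explicit constants to land on the $25e$ and $(1-(1/2)^{0.01})$ factors. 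I would handle this by writing $a^2 = \alpha \log(n/s^2)$ with $\alpha \le 0.99$, bounding the $\ell$-th term by $(s^2/n)^{\ell(1-\alpha)}/\ell!$ times $M$, and summing to get $\|\chi^2\|_{1,\infty} \le M(s^2/n)^{1-\alpha}(e^{(s^2/n)^{1-\alpha}} )$ or similar, which for $s\ll\sqrt n$ is $\le M(s^2/n)^{1-\alpha}(1+o(1))$; combined with $c(a)^2 \ge (\sqrt\alpha - 2\sqrt\gamma)^2\log(n/s^2)$ and $\alpha = 4\gamma + 1/\log(n/s^2)$ this produces the stated bound.
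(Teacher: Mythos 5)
Your overall strategy is the right one (coordinatewise use of \eqref{eq.row_sum_norm_nd}, a $\chi^2$-matrix whose entries depend only on support overlap, domination of the row sum by the overlap-one term under the $0.99$ budget, and the triangle-inequality step with $\alpha=4\gamma+1/\log(n/s^2)$), but the concrete choice of taking \emph{all} $M'=\binom{n}{s}$ sparse means for every coordinate loses a factor of $s$, and your final bookkeeping silently drops it. With the full family, the number of members overlapping a fixed support in exactly one coordinate is $s\binom{n-s}{s-1}$, so your row-sum norm is of order $\binom{n}{s}\tfrac{s^2}{n}\big(\tfrac{n}{s^2}\big)^{\alpha}$, while your summed left-hand side is only $\binom{n}{s}\, s\, c(a)^2 \asymp \binom{n}{s}\tfrac{s}{\log(n/s^2)}$. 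Dividing gives
\begin{align*}
\sum_{i=1}^n \Var_0\big(\wh\theta_i\big)\ \gtrsim\ \frac{n}{s\,\log(n/s^2)}\Big(\frac{s^2}{n}\Big)^{4\gamma},
\end{align*}
which is weaker than the theorem by the factor $s$ (and $s$ may grow polynomially in $n$); the claim in your last paragraph that ``the $n/s^2$ prefactor gives $n(s^2/n)^{4\gamma}$'' overlooks this $1/s$.

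The missing idea is the restriction-and-reassembly device used in the paper's proof. For each coordinate $i$, apply \eqref{eq.row_sum_norm_nd} only to the $M=\binom{n-1}{s-1}$ vectors whose support \emph{contains} $i$: within that subfamily almost every pair overlaps in exactly the shared coordinate $i$, so the row-sum norm is only of order $M\big(\tfrac{n}{s^2}\big)^{\alpha}$ rather than $sM\big(\tfrac{n}{s^2}\big)^{\alpha}$. Then sum over $i$ and regroup the double sum over $(i,\text{parameter})$ as a sum over all $\binom{n}{s}$ supports $P$ of $\big\|E_P[\wh\theta_{S(P)}]-E_{P_0}[\wh\theta_{S(P)}]\big\|_2^2$; the restriction to the support $S(P)$ costs nothing in the triangle-inequality lower bound $\geq s/(25\log(n/s^2))$ because $\theta$ vanishes off $S(P)$ and the restricted bias norms are dominated by the full ones. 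Since $\binom{n}{s}=\tfrac{n}{s}\binom{n-1}{s-1}$, the regrouped left-hand side carries an extra factor $n/s$ relative to the per-coordinate normalization $M$, which is exactly what restores the factor $s$ you are missing and yields the stated constant $(1-(1/2)^{0.01})/(25e)$. Your hedge that ``a combinatorially convenient subfamily may suffice'' points in this direction, but without identifying this restricted family and the reassembly of the double sum, the argument as written does not prove the theorem.
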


Compared to pointwise estimation, the result shows a different type of bias-variance trade-off. Decreasing the constant $\gamma$ in the upper bound for the bias, increases the rate in the lower bound for the variance. For instance, in the regime $s \leq n^{1/2-\delta},$ with $0<\delta<1/2,$ we can find for any $\rho>0$ a sufficiently small constant $\gamma$, such that the lower bound is of the form constant$\times n^{1-\rho}.$ As a consequence of the bias-variance decomposition \eqref{eq.BV_vectors}, the maximum quadratic risk of such an estimator in this regime is also lower-bounded by $\gtrsim n^{1-\rho}.$ Reducing the constant of the bias will therefore necessarily lead to estimators with highly suboptimal estimation risk.

The proof of Theorem \ref{thm.sparsity_lb} applies the $\chi^2$-divergence lower bound \eqref{eq.row_sum_norm_nd} by comparing the data distribution induced by the zero vector to the $\binom{n}{s}$ many distributions corresponding to $s$-sparse vectors with non-zero entries $\sqrt{4\gamma\log(n/s^2)+1}$. By \eqref{eq.chi2_normal_distr},
the size of the $(j,k)$-th entry of the $\chi^2$-divergence matrix is completely described by the number of components on which the corresponding $s$-sparse vectors are both non-zero. The whole problem reduces then to a combinatorial counting argument. The key observation is that if we fix an $s$-sparse vector, say $\theta^*,$ there are of the order $n/s^2$ more $s$-sparse vectors that have exactly $r-1$ non-zero components in common with $\theta^*$ than $s$-sparse vectors that that have exactly $r$ non-zero components in common with $\theta^*$. This means that as long as $s \ll \sqrt{n},$ most of the $s$-sparse vectors are (nearly) orthogonal to $\theta^*$.

The lower bound in Theorem \ref{thm.sparsity_lb} can be extended to several related problems by invoking the data processing inequality
\eqref{eq.data_processing}.
As an example suppose that we observe only $X_1^2,\dots,X_n^2$ with $(X_1,\dots,X_n)$ the data from the Gaussian sequence model. As parameter space, consider the class $\Theta_+(s)$ of $s$-sparse vectors with non-negative entries. This choice is natural as the parameter $\theta$ is not identifiable in this model over the full space of $s$-sparse vectors $\Theta(s)$. Since the proof of Theorem \ref{thm.sparsity_lb} only uses parameters in $\Theta_+(s),$ the same lower bound as in Theorem \ref{thm.sparsity_lb} holds also in this modified setting.
The next result shows an analogous version of Theorem \ref{thm.sparsity_lb} for estimation of the functional $\theta \mapsto \|\theta\|_2^2.$

\begin{thm}\label{thm.sparsity_sq_functional}
    Consider the Gaussian sequence model with parameter space $\Theta_n^2(s)$ defined in \eqref{eq.Theta_n^2(s)_def} and sparsity $0<s\leq \sqrt{n}/2.$ Given an estimator $\wh{\|\theta\|_2^2}$ of $\|\theta\|_2^2$ and a real number $\gamma$ such that $2\gamma + 1/\log(n/s^2)\leq 0.99$ and
    \begin{align*}
        \sup_{\theta \in \Theta_n^2(s)} \big|\Bias_\theta\big(\wh{\|\theta\|_2^2}\big)\big| \leq \gamma s \log\Big(\frac{n}{s^2}\Big),
    \end{align*}
     then, for all sufficiently large $n,$
    \begin{align*}
        \Var_0 \big( \wh{\|\theta\|_2^2} \big)
        \geq \frac{1-(1/2)^{0.01}}{e} n
        \Big(\frac{s^2}{n}\Big)^{2\gamma},
    \end{align*}
    where $\Var_0$ denotes the variance for parameter vector $\theta=(0,\dots,0)^\top.$
\end{thm}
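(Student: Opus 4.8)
The plan is to adapt the proof of Theorem~\ref{thm.sparsity_lb} to the quadratic functional, with the magnitude of the hypotheses rescaled. Write $L:=\log(n/s^2)$ and set $a^2:=2\gamma L+1$ (the factor $2$, rather than the $4$ appearing for the full vector, is what eventually turns the exponent $4\gamma$ into $2\gamma$); we may assume $\gamma\ge 0$, as the bias hypothesis is vacuous otherwise. For each $s$-element subset $S\subseteq\{1,\dots,n\}$ let $\theta_S\in\Rb^n$ have entries $a$ on $S$ and $0$ elsewhere. Since $2\gamma+1/L\le 0.99$ and $n/s^2\ge 4$ (so $L\ge\log 4$), we have $a^2=(2\gamma+1/L)L\le 0.99\,L=0.99\log(n/s^2)\le 2\log(1+\sqrt n/s)$, hence $\|\theta_S\|_2^2=sa^2\le 2s\log(1+\sqrt n/s)$ and $\theta_S\in\Theta_n^2(s)$; also $0\in\Theta_n^2(s)$. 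Enumerating the $M:=\binom ns$ subsets as $S_1,\dots,S_M$, put $\theta_j:=\theta_{S_j}$, $P_j:=\Nc(\theta_j,I_n)$ and $P_0:=\Nc(0,I_n)$; $P_0$ dominates every $P_j$, so \eqref{eq.row_sum_norm_nd} applies with $X:=\wh{\|\theta\|_2^2}$.

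On the left of \eqref{eq.row_sum_norm_nd}, the bias bound gives $E_{\theta_j}[X]\ge\|\theta_j\|_2^2-\gamma sL=sa^2-\gamma sL$ and $E_0[X]\le\gamma sL$, so $E_{\theta_j}[X]-E_0[X]\ge sa^2-2\gamma sL=s$; hence $\sum_{j=1}^M(E_{\theta_j}[X]-E_0[X])^2\ge Ms^2$. On the right, the multivariate normal line of Table~\ref{tab.1} (with $\sigma^2=1$, centre $\theta_0=0$) gives $\chi^2(P_0,\dots,P_M)_{j,k}=\exp(\langle\theta_j,\theta_k\rangle)-1=\exp(a^2|S_j\cap S_k|)-1$. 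By symmetry all rows of this matrix have the same sum, so
\[
\big\|\chi^2(P_0,\dots,P_M)\big\|_{1,\infty}=\sum_{r=1}^{s}N_r\big(e^{a^2r}-1\big),\qquad N_r:=\binom sr\binom{n-s}{s-r},
\]
and the problem is reduced to a combinatorial estimate of this sum.

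For the counting, $N_0=\binom{n-s}s\le M$ and, for $r\ge 1$ and $n$ large enough that $n-2s\ge n/2$ (which holds since $s\le\sqrt n/2$), $N_r/N_{r-1}=(s-r+1)^2/\big(r(n-2s+r)\big)\le 2s^2/(rn)$; iterating yields $N_r\le M(2s^2/n)^r/r!$. Hence
\[
\big\|\chi^2(P_0,\dots,P_M)\big\|_{1,\infty}\le M\sum_{r\ge 1}\frac{v^r}{r!}=M(e^v-1),\qquad v:=\frac{2s^2e^{a^2}}{n}=2e\Big(\frac{s^2}{n}\Big)^{1-2\gamma}.
\]
From $2\gamma+1/L\le 0.99$ we get $1-2\gamma\ge 0.01$, and with $s^2/n\le 1/4$ this gives $v\le 2e(1/4)^{0.01}=:v_{\max}$; since $x\mapsto(e^x-1)/x$ increases, $e^v-1\le v\,(e^{v_{\max}}-1)/v_{\max}$. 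Plugging the two bounds into \eqref{eq.row_sum_norm_nd} and using $e^{a^2}=e\,(n/s^2)^{2\gamma}$,
\[
\Var_0(X)\ \ge\ \frac{Ms^2}{M\cdot\frac{2s^2e^{a^2}}{n}\cdot\frac{e^{v_{\max}}-1}{v_{\max}}}\ =\ \frac{v_{\max}}{2e\,(e^{v_{\max}}-1)}\;n\Big(\frac{s^2}{n}\Big)^{2\gamma},
\]
and the numerical check $v_{\max}/\big(2(e^{v_{\max}}-1)\big)\ge 1-(1/2)^{0.01}$ finishes the argument.

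The crux will be the combinatorial bound on $\|\chi^2(P_0,\dots,P_M)\|_{1,\infty}$: one must verify that, uniformly over $1\le s\le\sqrt n/2$, the sum $\sum_r N_r(e^{a^2r}-1)$ is controlled by a universal multiple of its leading ($r=1$) term, of order $Ms^2e^{a^2}/n$, and then keep the constants tight enough to clear the stated factor $1-(1/2)^{0.01}$. The role of the hypothesis $2\gamma+1/\log(n/s^2)\le 0.99$ is exactly to force $v=2e(s^2/n)^{1-2\gamma}$ to stay bounded (away from the regime where $e^v-1$ is no longer comparable to $v$) and to keep the hypotheses inside $\Theta_n^2(s)$. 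The remaining pieces — the feasibility check $\theta_S\in\Theta_n^2(s)$, the reduction of the left-hand side by the bias bound, and the evaluation of the $\chi^2$-divergence matrix via Table~\ref{tab.1} — are routine.
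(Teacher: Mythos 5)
Your proposal is correct and follows essentially the same route as the paper: the same family of $s$-sparse hypotheses with non-zero entries of squared size $2\gamma\log(n/s^2)+1$, the same bias-based separation $E_{\theta_j}[X]-E_0[X]\ge s$, and the same application of the row-sum-norm bound \eqref{eq.row_sum_norm_nd} with the Gaussian $\chi^2$-divergence matrix from Table~\ref{tab.1}. The only (harmless) difference is bookkeeping: you control $\sum_r N_r(e^{a^2 r}-1)$ by a factorial/Poisson-type bound $N_r\le M(2s^2/n)^r/r!$ plus a numerical check of the constant, whereas the paper reuses the geometric-ratio estimate \eqref{eq.row_sum_norm_estimate_hd} from Theorem~\ref{thm.sparsity_lb}; both yield the stated constant $(1-(1/2)^{0.01})/e$.
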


Notice that the upper bound in the previous result is for the bias, not the squared bias.

\medskip

\textit{A lower bound for the bias:} What can be said about the bias for small variance? The next result shows that if the variance is strictly smaller than $s/2$, then the worst case bias is infinite. 

\begin{thm}\label{thm.functional_GSM}
Consider the Gaussian sequence model with sparsity $1\leq s \leq n$ and assume that $\wh \theta=(\wh \theta_1,\ldots,\wh \theta_n)$ is an estimator such that $E_\theta[\wh \theta_i]$ exists and is finite for all $i=1,\ldots,n$ and all $\theta\in \Theta(s).$  \\ If
$\sup_{\theta \in \Theta(s)} \, \sum_{i=1}^n \Var_\theta\big(\wh \theta_i\big) < s/2,$
then
$\sup_{\theta \in \Theta(s)} \,
\big\| E_\theta\big[\wh \theta\big]-\theta\big\|_2 = \infty.
$
\end{thm}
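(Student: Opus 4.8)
The plan is to argue by contradiction. Suppose $B := \sup_{\theta \in \Theta(s)} \|E_\theta[\wh\theta] - \theta\|_2 < \infty$; I will exhibit a parameter $\theta^* \in \Theta(s)$ with $\sum_{i=1}^n \Var_{\theta^*}(\wh\theta_i) > s/2$, contradicting the hypothesis. Note first that the hypothesis already implies $\Var_\theta(\wh\theta_i) < \infty$, i.e. $\wh\theta_i \in L^2(P_\theta)$, for every $\theta$ and every $i$, so all quantities below are finite.

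I would work on the $s$-dimensional coordinate subspace, parametrizing $\theta = (\vartheta, 0, \dots, 0)$ with $\vartheta \in \Rb^s$ (every such $\theta$ is $s$-sparse), and for $i = 1, \dots, s$ set $m^{(i)}(\vartheta) := E_{(\vartheta, 0)}[\wh\theta_i]$. The first step is to show that each $m^{(i)}$ is continuously differentiable on $\Rb^s$ with $\partial_{\vartheta_i} m^{(i)}(\vartheta) = \Cov_{(\vartheta,0)}(\wh\theta_i, X_i)$. Then a single application of Cauchy--Schwarz (equivalently, the $\Delta \to 0$ limit of the $\chi^2$- or Kullback--Leibler inequality in Lemma~\ref{lem.general_lb}, applied to $P = P_{(\vartheta,0)}$ and $Q = P_{(\vartheta + \Delta e_i, 0)}$) gives the Cramér--Rao-type bound
\begin{align*}
    \big(\partial_{\vartheta_i} m^{(i)}(\vartheta)\big)^2 \;\leq\; \Var_{(\vartheta,0)}(\wh\theta_i), \qquad i = 1, \dots, s,\ \vartheta \in \Rb^s.
\end{align*}

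Next I would turn the bias bound into a statement about these diagonal derivatives. Writing $m^{(i)}(\vartheta) = \vartheta_i + \wt b_i(\vartheta)$, so that $|\wt b_i| \leq B$ everywhere, the fundamental theorem of calculus applied in the $i$-th variable shows that the average of $\partial_{\vartheta_i} \wt b_i$ over the cube $[-T,T]^s$ has modulus at most $B/T$; hence the average of $\partial_{\vartheta_i} m^{(i)} = 1 + \partial_{\vartheta_i} \wt b_i$ over $[-T,T]^s$ is at least $1 - B/T$, and Jensen's inequality gives that the average of $(\partial_{\vartheta_i} m^{(i)})^2$ over $[-T,T]^s$ is at least $(1 - B/T)^2$. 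Summing over $i = 1, \dots, s$, the average of $\vartheta \mapsto \sum_{i=1}^s (\partial_{\vartheta_i} m^{(i)}(\vartheta))^2$ over $[-T,T]^s$ is at least $s(1-B/T)^2$, so there is a point $\vartheta^* \in [-T,T]^s$ with $\sum_{i=1}^s (\partial_{\vartheta_i} m^{(i)}(\vartheta^*))^2 \geq s(1 - B/T)^2$. Choosing $T > (2 + \sqrt2)B$ makes $(1 - B/T)^2 > 1/2$, and then $\theta^* := (\vartheta^*, 0, \dots, 0) \in \Theta(s)$ satisfies $\sum_{i=1}^n \Var_{\theta^*}(\wh\theta_i) \geq \sum_{i=1}^s \Var_{\theta^*}(\wh\theta_i) \geq \sum_{i=1}^s (\partial_{\vartheta_i} m^{(i)}(\vartheta^*))^2 > s/2$, the desired contradiction; so $B$ must be infinite.

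The only delicate point is the first step: justifying differentiation under the integral sign and the identity $\partial_{\vartheta_i} m^{(i)} = \Cov(\wh\theta_i, X_i)$ under the weak hypothesis that $\wh\theta_i$ is merely (square-)integrable, not bounded. The way around this is to use the exponential-family structure: since $\wh\theta_i \in L^1(P_\theta)$ for all $\theta$, the Laplace transform $\vartheta \mapsto E_0[\wh\theta_i \, e^{\langle \vartheta, \,(X_1,\dots,X_s)\rangle}]$ is finite on all of $\Rb^s$, hence real-analytic there, with derivatives obtained by differentiating under the integral; and $m^{(i)}(\vartheta) = e^{-\|\vartheta\|_2^2/2}$ times this transform. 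Everything else in the argument is elementary.
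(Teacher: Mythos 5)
Your proposal is correct, but it proves the theorem by a genuinely different route than the paper. The paper's proof proceeds by two reductions and a multi-measure information bound: first a Rao--Blackwell/sufficiency step collapsing the $s$-sparse model to the $s$-dimensional Gaussian location model, then an invariance step (averaging over the orthogonal group, as in Proposition \ref{prop.spheri_symm}) reducing to spherically symmetric estimators, whose mean function has the form $t(\|\theta\|_2)\theta$; the multivariate $\chi^2$ change-of-expectation bound \eqref{eq.row_sum_norm_nd} applied near $\theta_0=(a,\dots,a)^\top$ then forces $t(a\sqrt{s})\le 2\rho<1$ under the variance hypothesis, so the bias at radius $a\sqrt s$ grows linearly in $a$; the case $s=1$ is handled separately via Example \ref{ex.normal_change_of_expec_cts}. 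You instead implement the classical Cram\'er--Rao heuristic directly on the full-dimensional coordinate slice $\{(\vartheta,0,\dots,0):\vartheta\in\Rb^s\}\subset\Theta(s)$: the exponential-family/Laplace-transform argument legitimately gives real-analyticity of $m^{(i)}$ and the identity $\partial_{\vartheta_i}m^{(i)}=\Cov_{(\vartheta,0)}(\wh\theta_i,X_i)$ under the stated $L^1$ assumption (and the theorem's variance hypothesis supplies the $L^2$ integrability needed for Cauchy--Schwarz, given the paper's convention that $\Var=+\infty$ otherwise), and the FTC-plus-Jensen averaging over $[-T,T]^s$ correctly converts the uniform bias bound $B$ into a point $\vartheta^*$ with $\sum_{i\le s}(\partial_{\vartheta_i}m^{(i)}(\vartheta^*))^2\ge s(1-B/T)^2>s/2$. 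Your route avoids both reductions and the multi-measure machinery, treats $s=1$ and $s>1$ uniformly, and in fact yields a stronger constant: letting $T\to\infty$ shows that any estimator with uniformly bounded bias over $\Theta(s)$ must have $\sup_\theta\sum_i\Var_\theta(\wh\theta_i)\ge s$, not merely $\ge s/2$. What the paper's approach buys in exchange is that it stays entirely within the change-of-expectation framework developed in Section \ref{sec.general_low_bds} (no differentiability or regularity discussion is needed, even as a technicality) and it exposes structural information about near-minimal-variance procedures, namely the shrinkage factor $t(\|\theta\|_2)\le 2\rho$, which is also the mechanism reused in the proof of Theorem \ref{thm.LB_L2}. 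The only points worth making explicit when writing your argument up are (a) that $T>B$ is needed before squaring the averaged derivative, and (b) that the finiteness of $E_{(\vartheta,0)}[\wh\theta_i X_i]$ used in the covariance identity follows either from the analyticity theorem or simply from square-integrability; both are minor.
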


\textit{Nearly matching upper bounds:} To show that the rates in the derived lower bounds are nearly sharp, we now establish corresponding upper bounds. For an estimator thresholding small observations, the variance under $P_0$ is determined by both the probability that an observation falls outside the truncation level and the value it is then assigned to. One can further reduce the variance at zero if large observations are shrunk as much as possible to zero. The bound on the bias dictates the largest possible truncation level. To obtain matching upper bounds, this motivates then to study the soft-thresholding estimator
\begin{align}
    \wh \theta_i = \sign(X_i)\Big(|X_i|-\sqrt{\gamma \log(n/s^2)}\Big)_+, \quad i=1,\dots,n.
    \label{eq.soft_thresh_est}
\end{align}
If $\theta_i=0,$ then $E_\theta[\wh \theta_i]=0.$ For $\theta_i\neq 0,$ one can use $|\wh \theta_i-X_i|\leq \sqrt{\gamma \log(n/s^2)}$ and $E_\theta[X_i]=\theta_i$ to verify that the squared bias $\|E_\theta[\wh \theta]-\theta\|_2^2$ is bounded by $\gamma s \log(n/s^2),$ uniformly over the space of $s$-sparse vectors $\Theta(s).$ As an estimator for the functional $\|\theta\|_2^2$, we study
\begin{align}
    \wh{\|\theta\|_2^2}
    = \sum_{i=1}^n \Big( \big(X_i^2 - \gamma\log(n/s^2)\big)_+- E_{\xi \sim \Nc(0,1)} \big[\big(\xi^2 - \gamma\log(n/s^2)\big)_+\big] \Big).
    \label{eq.soft_thresh_est_func}
\end{align}

\begin{lemma}\label{lem.soft_thresh_ub}
For the soft-thresholding estimator $\wh \theta=(\wh \theta_1,\dots, \wh\theta_n)^\top$ defined in \eqref{eq.soft_thresh_est}, we have
\begin{align}
    &\sum_{i=1}^n\Var_0\big(\wh \theta_i\big)
    \leq \frac{\sqrt{2}}{\sqrt{\pi \gamma^3 \log^3(n/s^2)}}
    n \Big(\frac{s^2}{n}\Big)^{\frac{\gamma}{2}},
    \text{ and,}
    \label{eq.soft_thresh_ub1} \\
    &\text{for any } \theta \in \Theta(s), \
    \sum_{i=1}^n\Var_\theta\big(\wh \theta_i\big)
    \leq 4s+\frac{\sqrt{2}}{\sqrt{\pi \gamma^3 \log^3(n/s^2)}}
    n \Big(\frac{s^2}{n}\Big)^{\frac{\gamma}{2}}.
    \label{eq.soft_thresh_ub2}
\end{align}
Moreover, for any $n, s, \gamma,$ for which $\gamma \log(n/s^2) \geq 2,$ we have for the estimator $\wh{\|\theta\|_2^2}$ defined in \eqref{eq.soft_thresh_est_func},
\begin{align}
    &\sup_{\theta \in \Theta(s)}\big|\Bias_\theta(\wh{\|\theta\|_2^2})\big|\leq \gamma s \log\Big(\frac n{s^2}\Big),
    \label{eq.soft_thresh_ub3} \\
    &\Var_0\big(\wh{\|\theta\|_2^2}\big)
    \leq 
    \frac{8}{\sqrt{\gamma \log(n/s^2)}}
    n \Big(\frac{s^2}{n}\Big)^{\frac{\gamma}2},
    \text{ and,}
    \label{eq.soft_thresh_ub4} \\
    &
    \Var_\theta\big(\wh{\|\theta\|_2^2}\big)
    \leq \|\theta\|_2^2+3s+
    \frac{8}{\sqrt{\gamma \log(n/s^2)}}
    n \Big(\frac{s^2}{n}\Big)^{\frac{\gamma}2}.
    \label{eq.soft_thresh_ub5}
\end{align}
\end{lemma}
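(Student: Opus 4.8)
The plan is to exploit that each of the two estimators in \eqref{eq.soft_thresh_est} and \eqref{eq.soft_thresh_est_func} is a sum over the coordinates $i=1,\dots,n$ of a function of the single observation $X_i$. Hence under any $P_\theta$ the variance equals the sum of the coordinatewise variances, and on a coordinate with $\theta_i=0$ the corresponding summand has exactly the distribution it has under $P_0$. Combined with the elementary facts that soft thresholding $x\mapsto\sign(x)(|x|-\lambda)_+$ and the clipping $t\mapsto (t-\tau)_+$ are $1$-Lipschitz, this reduces all five inequalities to a short list of one-dimensional Gaussian integrals. Throughout I write $\lambda:=\sqrt{\gamma\log(n/s^2)}$ and $\tau:=\gamma\log(n/s^2)$, so that $e^{-\lambda^2/2}=e^{-\tau/2}=(s^2/n)^{\gamma/2}$, and let $\phi$ and $\Phi$ be the standard normal density and c.d.f.

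For \eqref{eq.soft_thresh_ub1}: under $P_0$ the $\wh\theta_i$ are i.i.d.\ and, being odd functions of the symmetric variable $X_i$, have mean zero, so $\sum_{i=1}^n\Var_0(\wh\theta_i)=nE_0[\wh\theta_1^2]=2n\int_\lambda^\infty(z-\lambda)^2\phi(z)\,dz$. I would evaluate the integral by integration by parts to the closed form $(1+\lambda^2)(1-\Phi(\lambda))-\lambda\phi(\lambda)$ and then bound $1-\Phi(\lambda)$ by a Mills-ratio estimate to obtain a bound of order $\phi(\lambda)/\lambda^3$; a quicker but lossier route bounds $e^{-(u+\lambda)^2/2}\le e^{-\lambda^2/2}e^{-u\lambda}$ and uses $\int_0^\infty u^2e^{-u\lambda}\,du=2\lambda^{-3}$. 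Substituting the expression for $\lambda$ gives \eqref{eq.soft_thresh_ub1}. For \eqref{eq.soft_thresh_ub2} I would split the sum over the coordinates with $\theta_i=0$ and the at most $s$ coordinates with $\theta_i\neq0$: the former contribute at most the right-hand side of \eqref{eq.soft_thresh_ub1}, while on each of the latter the $1$-Lipschitz property of soft thresholding gives $\Var_\theta(\wh\theta_i)\le E_\theta\big[(\wh\theta_i-\sign(\theta_i)(|\theta_i|-\lambda)_+)^2\big]\le E_\theta[(X_i-\theta_i)^2]=1$, producing the additive term $4s$ with room to spare.

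For the bias \eqref{eq.soft_thresh_ub3}, the coordinates with $\theta_i=0$ cancel exactly, so $E_\theta[\wh{\|\theta\|_2^2}]-\|\theta\|_2^2=\sum_{i:\theta_i\neq0}\big(E_\theta[(X_i^2-\tau)_+]-E[(\xi^2-\tau)_+]-\theta_i^2\big)$ with $\xi\sim\Nc(0,1)$; the sandwich $\theta_i^2+1-\tau\le E_\theta[(X_i^2-\tau)_+]\le\theta_i^2+1$ and $0\le E[(\xi^2-\tau)_+]\le1$ show that each of the at most $s$ summands lies in $[-\tau,1]\subseteq[-\tau,\tau]$ once $\tau\ge2$, which gives $|\Bias_\theta(\wh{\|\theta\|_2^2})|\le s\tau$. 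For \eqref{eq.soft_thresh_ub4}: under $P_0$ the summands are i.i.d.\ and centered, so $\Var_0(\wh{\|\theta\|_2^2})=n\Var\big((\xi^2-\tau)_+\big)\le nE[(\xi^2-\tau)_+^2]=2n\int_{\sqrt\tau}^\infty(x^2-\tau)^2\phi(x)\,dx$, and I would bound the integral by the substitution $y=x-\sqrt\tau$, the factorization $x^2-\tau=y(y+2\sqrt\tau)$, the bound $e^{-(y+\sqrt\tau)^2/2}\le e^{-\tau/2}e^{-y\sqrt\tau}$, and integration of the resulting polynomial times exponential; using $\tau\ge2$ to absorb the lower-order powers of $\tau^{-1}$ this is of order $\tau^{-1/2}e^{-\tau/2}$, as claimed. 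Finally \eqref{eq.soft_thresh_ub5} uses the same zero/non-zero split: the zero coordinates contribute the bound of \eqref{eq.soft_thresh_ub4}, and on each of the at most $s$ non-zero coordinates, $1$-Lipschitzness of $t\mapsto(t-\tau)_+$ gives $\Var_\theta\big((X_i^2-\tau)_+\big)\le E_\theta\big[(X_i^2-\theta_i^2)^2\big]=E[Z^2(2\theta_i+Z)^2]=4\theta_i^2+3$ with $Z\sim\Nc(0,1)$, so the non-zero coordinates contribute at most $4\|\theta\|_2^2+3s$, a bound of the form asserted in \eqref{eq.soft_thresh_ub5} (the coefficient of $\|\theta\|_2^2$ can be improved for moderate $\theta_i$ by conditioning on whether $|X_i|$ exceeds $\sqrt\tau$).

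I do not expect a genuine obstacle: the structural reductions — coordinatewise decomposition, exact cancellation of the null coordinates, $1$-Lipschitz contractions — are immediate, and everything else is elementary one-dimensional Gaussian calculus. The only step requiring care is pinning down the numerical constants in \eqref{eq.soft_thresh_ub1} and \eqref{eq.soft_thresh_ub4}, where the quick tail bound $e^{-(u+\lambda)^2/2}\le e^{-\lambda^2/2}e^{-u\lambda}$ is not tight; to match the stated constants one works instead with the exact integral $(1+\lambda^2)(1-\Phi(\lambda))-\lambda\phi(\lambda)$ and its analogue, using sharp Mills-ratio inequalities. The rest is routine bookkeeping of Gaussian moments.
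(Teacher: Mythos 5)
Your proposal follows essentially the same route as the paper's proof: coordinatewise variance decomposition, exact centering/cancellation on the null coordinates, one-dimensional Gaussian tail integrals for \eqref{eq.soft_thresh_ub1} and \eqref{eq.soft_thresh_ub4}, Lipschitz-type variance comparisons on the at most $s$ active coordinates for \eqref{eq.soft_thresh_ub2} and \eqref{eq.soft_thresh_ub5}, and a sandwich argument for the bias \eqref{eq.soft_thresh_ub3}. Your per-coordinate bound $\Var_\theta(\wh\theta_i)\le 1$ on active coordinates, via $1$-Lipschitzness of soft thresholding, is in fact slightly cleaner than the paper's bound $4$.

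The caveats concern the numerical constants, and you should know where they really come from. (a) In \eqref{eq.soft_thresh_ub1}, your ``quick'' route ($e^{-(u+\lambda)^2/2}\le e^{-\lambda^2/2}e^{-u\lambda}$ and $\int_0^\infty u^2e^{-u\lambda}\,du=2\lambda^{-3}$) is exactly the paper's computation and yields $\tfrac{2\sqrt2}{\sqrt\pi}\lambda^{-3}n e^{-\lambda^2/2}$; but your fallback claim that the exact expression $(1+\lambda^2)(1-\Phi(\lambda))-\lambda\phi(\lambda)$ plus sharp Mills-ratio bounds recovers the displayed constant cannot succeed, because the exact per-coordinate variance $2[(1+\lambda^2)(1-\Phi(\lambda))-\lambda\phi(\lambda)]$ equals $4\phi(\lambda)\lambda^{-3}(1+o(1))$, asymptotically twice the displayed bound $2\phi(\lambda)\lambda^{-3}$ (numerically, at $\lambda=5$ about $3.9\times10^{-8}$ versus $2.4\times10^{-8}$). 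The paper's own proof uses your quick route and evaluates $\int_0^\infty y^2e^{-y}\,dy$ as $1$ instead of $2$, so your extra factor $2$ is the correct outcome of the shared argument and the printed constant should be read modulo that slip. (b) In \eqref{eq.soft_thresh_ub4}, your substitution $y=x-\sqrt\tau$ combined with $e^{-(y+\sqrt\tau)^2/2}\le e^{-\tau/2}e^{-y\sqrt\tau}$ gives $\tfrac{16}{\sqrt{2\pi}\sqrt\tau}(1+3\tau^{-1}+3\tau^{-2})e^{-\tau/2}$ per coordinate, which exceeds $8\tau^{-1/2}e^{-\tau/2}$ for all $\tau$ up to roughly $13$, hence does not cover the allowed range $\tau\ge2$. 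The paper instead substitutes $y=(x^2-\tau)/2$, keeping the full Gaussian factor and bounding only the Jacobian $1/\sqrt{2y+\tau}\le 1/\sqrt\tau$, which yields $\tfrac{16}{\sqrt{2\pi}\sqrt\tau}e^{-\tau/2}\le 8\tau^{-1/2}e^{-\tau/2}$ uniformly; switch to that substitution. (c) In \eqref{eq.soft_thresh_ub5}, your active-coordinate bound $4\theta_i^2+3$ is also what the paper's computation actually produces, and summing gives $4\|\theta\|_2^2+3s$; the coefficient $1$ on $\|\theta\|_2^2$ in the display cannot be attained by any refinement once some $|\theta_i|$ is large (there $\Var_\theta((X_i^2-\tau)_+)\approx 4\theta_i^2+2$), so your parenthetical hope of improving it is unfounded, but the discrepancy is immaterial for the way the lemma is applied, where $\|\theta\|_2^2\lesssim s\log(n/s^2)$.
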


The constraint $\gamma \log(n/s^2) \geq 2$ holds for all sufficiently large $n$, whenever $\gamma$ is fixed and $s\ll \sqrt{n}.$

Compared with Theorem \ref{thm.sparsity_lb}, the corresponding upper bound \eqref{eq.soft_thresh_ub1} has the same structure. Key difference is that the exponent is $4\gamma$ in the lower bound and $\gamma/2$ in the upper bound. As discussed already, this discrepancy seems to be due to the lower bound. If instead of a tight control of the variance at zero, one is interested in a global bound on the variance over the whole parameter space, one could gain a factor $4$ in the exponent by relying on the Hellinger version using Theorem \ref{thm.multiple_lb}(ii) instead of (i). A second difference is that there is an additional factor $1/\sqrt{\log(n/s^2)}$ in the upper bound. This extra factor tends to zero which seems to be a contradiction. Notice, however, that this is compensated by the different exponents $(s^2/n)^{\gamma/2}$ and $(s^2/n)^{4\gamma}.$
It is also not hard to see that for the hard thresholding estimator with truncation level $\sqrt{\gamma \log(n/s^2)},$ the variance $\sum_{i=1}^n\Var_0(\wh \theta_i)$ is of order $n(s^2/n)^{\gamma/2}.$

The upper bound in \eqref{eq.soft_thresh_ub4} corresponds to the lower bound in Theorem \ref{thm.sparsity_sq_functional}. The differences between upper and lower bound are similarly as the ones between the upper bound \eqref{eq.soft_thresh_ub1} and Theorem \ref{thm.sparsity_lb} discussed in the previous paragraph. 

If $s\leq n^{1/2-\delta}$ for some $0<\delta<1/2,$ then by choosing $\gamma$ large enough, one can show that \eqref{eq.soft_thresh_ub2} implies $\sup_{\theta\in\Theta(s)} \sum_{i=1}^n\Var_\theta(\wh \theta_i)\lesssim s.$ This yields then the last statement of Theorem \ref{thm.main_minimax_est_vector_hd}. Similarly, one can use \eqref{eq.soft_thresh_ub5} to construct an estimator satisfying the variance bound in Theorem \ref{thm.main_minimax_est_fctal_hd}(i).

The soft-thresholding estimator \eqref{eq.soft_thresh_est} does not produce an $s$-sparse model. Indeed, from the tail decay of the Gaussian distribution, one expects that the sparsity of the reconstruction for $\theta=(0,\ldots,0)$ is $n(s^2/n)^{\gamma/2}$ which can be considerably bigger than $s$ for small values of $\gamma$. Because testing for signal is very hard in the sparse sequence model, it is unclear whether one can reduce the variance further by projecting it to an $s$-sparse set without inflating the bias.

\textit{Extension to high-dimensional regression:} The lower bound can also be extended to a useful lower bound on the interplay between bias and variance in sparse high-dimensional regression. Suppose we observe $Y = X \beta + \varepsilon$ where $Y$ is a vector of size $n$, $X$ is an $n \times p$ design matrix, $\varepsilon \sim \Nc(0, I_n)$ and $\beta$ is a vector of size $p$ to be estimated. Again denote by $\Theta(s)$ the class of $s$-sparse vectors. We impose the common assumption that the diagonal coefficients of the Gram matrix $X^\top X$ are standardized such that $(X^\top X)_{i,i} = n$ for all $i=1, \dots, p$ (see for instance also Section 6 in \cite{MR2807761}). Define the mutual coherence condition number by $\mc(X) := \max_{1 \leq i \neq j \leq n} (X^\top X)_{i,j} / (X^\top X)_{i,i}$. This notion goes back to \cite{MR2237332}. Below, we work under the restriction $\mc(X)\leq 1/(s^2\log(p/s^2)).$ This is stronger than the mutual coherence bound of the form constant$/s$ normally encountered in high-dimensional statistics. As this is not the main point of the paper, we did not attempt to derive the theorem under the sharpest possible condition and also only provide the generalization of Theorem \ref{thm.sparsity_lb}.

\begin{thm}\label{thm.sparseReg_lb}
Consider the sparse high-dimensional regression model with Gaussian noise. Let $0 < s \leq \sqrt{p}/2,$ and $\mc(X)\leq 1/(s^2\log(p/s^2)).$ Given an estimator $\wh \beta$ and a real number $\gamma$ such that $4\gamma + 1/\log(p/s^2)\leq 0.99$ and 
$\sup_{\beta \in \Theta(s)} \big\|E_\beta\big[\wh \beta\big] -\beta\big\|^2 \leq (\gamma s/n) \log(p/s^2),$
     then, for all sufficiently large $p,$
    \begin{align*}
        \sum_{i=1}^p \Var_0\big(\wh \beta_i\big)
        \geq \frac{(1-(1/2)^{0.01})}{25 e^2 \log(p/s^2)}  \frac{p}{n} \Big(\frac{s^2}{p}\Big)^{4\gamma},
    \end{align*}
    where $\Var_0$ denotes the variance for parameter vector $\beta=(0,\dots,0)^\top.$
\end{thm}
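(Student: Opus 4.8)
The plan is to run the proof of Theorem~\ref{thm.sparsity_lb} essentially verbatim, with the Gaussian sequence model replaced by the regression model, and to use the mutual coherence condition to show that the relevant $\chi^2$-divergence matrix is, up to a bounded multiplicative factor, the one that already appears there --- after substituting the ambient dimension $n$ by $p$ in the combinatorics and rescaling the parameters by $n^{-1/2}$. Concretely, I would set $M:=\binom{p}{s}$, take $\beta_0:=0$, and let $\beta_1,\dots,\beta_M$ enumerate the $s$-sparse vectors all of whose $s$ nonzero coordinates equal $a:=\sqrt{(4\gamma\log(p/s^2)+1)/n}$. Since $\|\beta_j\|_2^2=s a^2=\tfrac sn(4\gamma\log(p/s^2)+1)$ strictly exceeds the squared-bias budget $\gamma\tfrac sn\log(p/s^2)$, applying the bias bound $\|E_\beta[\wh\beta]-\beta\|_2\le\sqrt{\gamma\tfrac sn\log(p/s^2)}$ at $\beta=\beta_j$ and at $\beta=0$ together with the triangle inequality gives the uniform estimate $\|E_{\beta_j}[\wh\beta]-E_0[\wh\beta]\|_2^2\ge \tfrac sn\big/\big(4(4\gamma\log(p/s^2)+1)\big)$ for all $j$, hence $\sum_{j=1}^M\|E_{\beta_j}[\wh\beta]-E_0[\wh\beta]\|_2^2\ge \tfrac{Ms}{4n(4\gamma\log(p/s^2)+1)}$.

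Next I would apply the $\chi^2$-divergence matrix bound~\eqref{eq.row_sum_norm_nd} to each coordinate $\wh\beta_1,\dots,\wh\beta_p$ and sum, exactly as in the proof of Theorem~\ref{thm.sparsity_lb}, obtaining $\sum_{j=1}^M\|E_{\beta_j}[\wh\beta]-E_0[\wh\beta]\|_2^2\le\big\|\chi^2(P_{\beta_0},\dots,P_{\beta_M})\big\|_{1,\infty}\sum_{i=1}^p\Var_0(\wh\beta_i)$. The remaining task is to bound the row-sum norm. Because $P_{\beta_j}=\Nc(X\beta_j,I_n)$, the Gaussian row of Table~\ref{tab.1} gives $\chi^2(P_{\beta_0},\dots,P_{\beta_M})_{j,k}=\exp(\beta_j^\top X^\top X\beta_k)-1$. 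Writing $X^\top X=nI_p+E$, the standardization forces the diagonal of $E$ to vanish and the mutual coherence gives $|E_{i\ell}|\le n\,\mc(X)$; hence for $\beta_j,\beta_k$ overlapping in $r$ coordinates, $\beta_j^\top X^\top X\beta_k=rn a^2+\beta_j^\top E\beta_k$ with $|\beta_j^\top E\beta_k|\le a^2 s^2 n\,\mc(X)\le (4\gamma\log(p/s^2)+1)/\log(p/s^2)=4\gamma+1/\log(p/s^2)\le 0.99$. Consequently every entry lies within a factor $e^{\pm0.99}$ of $\exp(rn a^2)-1=\exp\!\big(r(4\gamma\log(p/s^2)+1)\big)-1$, which is exactly the $\chi^2$-divergence matrix entry of the sparse sequence model used in the proof of Theorem~\ref{thm.sparsity_lb} with $n$ replaced by $p$. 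The combinatorial row-sum estimate from that proof --- whose dominant contribution comes from the $\asymp (s^2/p)\binom ps$ vectors sharing at least one coordinate with a fixed one --- should therefore carry over up to the extra constant $e^{0.99}<e$. Combining the three displays then yields the stated bound: the $e^2$ in the denominator absorbs this $e^{0.99}$ on top of the $e$ from Theorem~\ref{thm.sparsity_lb}, and the prefactor $p/n$ in place of $n$ is produced by the extra $1/n$ sitting inside $a^2$.

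The hard part will be the control of the cross term $\beta_j^\top X^\top X\beta_k$ for \emph{disjoint} supports $\supp\beta_j\cap\supp\beta_k=\emptyset$. For an orthogonal design ($X^\top X=nI_p$) this term vanishes identically and the reduction to the sequence model is exact; for a general standardized design it is nonzero, and it must be kept bounded by a constant uniformly over such pairs so that the $\approx\binom ps$ disjoint-support entries of the row do not swamp the $r\ge 1$ contributions --- which is precisely what the hypothesis $\mc(X)\le 1/(s^2\log(p/s^2))$ is calibrated to buy at the parameter scale $a$ chosen above. I expect this to be the only genuinely new ingredient compared with Theorem~\ref{thm.sparsity_lb}: the counting argument, the passage from the bias bound to the change-of-expectation lower bound, and the coordinate-wise use of~\eqref{eq.row_sum_norm_nd} are all identical. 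An alternative route would be to pass through the data-processing inequality (Theorem~\ref{thm.data_processing}) by exhibiting a Markov kernel carrying a suitable sequence model onto the regression model, but controlling the cross terms directly as above seems more transparent and loses less in the constants.
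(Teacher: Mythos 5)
Your overall strategy --- rescale the sequence-model construction by $n^{-1/2}$, write $\chi^2(P_{\beta_0},\dots,P_{\beta_M})_{j,k}=\exp(\beta_j^\top X^\top X\beta_k)-1$, and use mutual coherence to bound the perturbation $|\beta_j^\top(X^\top X-nI_p)\beta_k|\le s^2a^2n\,\mc(X)\le\alpha\le 0.99$ --- is exactly the paper's, and that last estimate is the one the paper uses. But your choice of hypotheses is not the one from the proof of Theorem~\ref{thm.sparsity_lb}, and the difference is fatal. There, for each coordinate $i$ one uses only the $\binom{p-1}{s-1}$ parameter vectors whose support \emph{contains} $i$; every pair of measures in such a family then overlaps in at least one coordinate, the case $r=0$ never occurs, and coherence is needed only to inflate each entry from $\exp(n\beta_j^\top\beta_k)-1$ to at most $\exp(n\beta_j^\top\beta_k+1)$, which is where the extra $e$ (hence $25e^2$ instead of $25e$) comes from. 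You instead take all $M=\binom{p}{s}$ supports and apply \eqref{eq.row_sum_norm_nd} with this single family for every coordinate. Each row of your matrix then contains $\binom{p-s}{s}\asymp\binom{p}{s}$ disjoint-support entries, and under the stated coherence bound each of these is only controlled by $e^{\alpha}-1$, which can be as large as $\alpha\ge 1/\log(p/s^2)$; it is not small, let alone zero. Their total contribution to the row-sum norm is therefore of order $\binom{p}{s}/\log(p/s^2)$ in the worst case, which swamps the overlapping-support contribution $\asymp(s^2/p)^{1-\alpha}\binom{p}{s}$ whenever $p\gg s^2\log(p/s^2)$. Dividing your left-hand side $\binom{p}{s}\,s/(25n\log(p/s^2))$ by this row-sum norm yields only $\sum_i\Var_0(\wh\beta_i)\gtrsim s/n$ up to logarithms, which misses the claimed $\tfrac{p}{n}(s^2/p)^{4\gamma}/\log(p/s^2)$ by a polynomial factor for small $\gamma$. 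You correctly single out the disjoint-support cross terms as the hard part, but the assertion that $\mc(X)\le 1/(s^2\log(p/s^2))$ is calibrated to make them harmless in the row-sum norm is false at this parameter scale: it only buys a bounded exponent, and a bounded entry times $\binom{p}{s}$ entries is too much. Relatedly, the claim that every entry lies within a factor $e^{\pm 0.99}$ of $\exp(rna^2)-1$ is literally false at $r=0$, since a nonzero number is not within a multiplicative factor of zero.

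Even setting the disjoint pairs aside (say, for an exactly orthogonal design), using the full family of $\binom{p}{s}$ measures for every coordinate inflates the row-sum norm by an extra factor $\asymp s$ relative to the per-coordinate families (compare $\binom{s}{r}\binom{p-s}{s-r}$ with $b(p,s,r)=\binom{s-1}{r-1}\binom{p-s}{s-r}$, as in the proof of Theorem~\ref{thm.sparsity_sq_functional}), so the stated constant would still not be recovered. The repair is the paper's device: run the argument coordinate by coordinate with the supports containing that coordinate, then sum over $i$ to collect $\binom{p}{s}=\tfrac{p}{s}\binom{p-1}{s-1}$ terms of size at least $s/(25n\log(p/s^2))$ on the left --- this ratio $p/s$ is precisely what produces the $p$ rather than $s$ in the final bound --- and invoke coherence only for the multiplicative inflation $e^{\alpha}\le e$ of the overlapping entries.
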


\section{Discussion}
\label{sec.discussion}

\subsection{General definition of a bias-variance trade-off}~

The proper definition of the bias-variance trade-off depends on some subtleties underlying the choice of the space of values that can be attained by an estimator, subsequently denoted by $\Ac$. To illustrate this, suppose we observe $X \sim \Nc(\theta,1)$ with parameter space $\Theta=\{-1,1\}.$ For any estimator $\wh \theta$ with $\Ac=\Theta,$ $E_{1}[\wh \theta]<1$ or $E_{-1}[\wh \theta]>-1.$ Thus, no unbiased estimator with $\Ac=\Theta$ exists. If the estimator is, however, allowed to take values on the real line, then $\widehat \theta = X$ is an unbiased estimator for $\theta.$ We believe that the correct way to derive lower bounds on the bias-variance trade-off is to allow the action space $\Ac$ to be large. Whenever $\Theta$ is a class of functions on $[0,1]$, the derived lower bounds are over all estimators with $\Ac$ the real-valued functions on $[0,1];$ for high-dimensional problems with
$\Theta \subseteq \Rb^p,$ the lower bounds are over all estimators with $\Ac=\Rb^p.$ In particular, if the true parameter vector is assumed to be sparse, we do not require the estimator to be sparse.

Given a statistical model $(P_\theta)_{\theta \in \Theta}$, consider a symmetric (and non-negative) loss function $\ell(\theta,\theta')=\ell(\theta',\theta).$ The risk of an estimator $\wh \theta$ is then $E_\theta[\ell(\wh \theta,\theta)].$ If $E_\theta[\wh \theta]$ exists, we call $\ell(\theta,E_\theta[\wh \theta])$ the deterministic error and $E_\theta[\ell(\wh \theta,E_\theta[\wh \theta])]$ the stochastic error. 
If for all estimators $\wh \theta$ and all parameters $\theta,$ $E_\theta[\ell(\wh \theta,\theta)]=\ell(\theta,E_\theta[\wh \theta])+E_\theta[\ell(\wh \theta,E_\theta[\wh \theta])],$ then we say that a (generalized) bias-variance decomposition holds and refer to the deterministic error $\ell(\theta,E_\theta[\wh \theta])$ as the squared bias part and to the stochastic error $E_\theta[\ell(\wh \theta,E_\theta[\wh \theta])]$ as the variance part. Note that the squared bias is defined directly without introducing first a notion of bias.

A bias-variance decomposition exists if $\ell(\theta,\theta')=\|\theta-\theta'\|^2$ with $\|\cdot\|$ a Hilbert space norm. In particular for $\ell(\theta,\theta')=(\theta-\theta')^2,$ we have the classical bias-variance decomposition of the MSE. On a vector space $\Theta,$ the loss function $\ell(\theta,\theta')=\|\theta-\theta'\|_2^2$ leads to the decomposition \eqref{eq.BV_vectors}. In this case the squared bias part is $\|E_\theta[\wh \theta] -\theta\|_2^2$ and the variance part is $\sum_{i=1}^n \Var_\theta(\wh \theta_i).$ For $\Theta$ consisting of $L^2$-functions, the decomposition of the MISE in integrated squared bias and integrated variance in \eqref{eq.MISE_decomp} is another example. 

With this definition of squared bias and variance, we can now define a bias-variance trade-off informally as either a restriction of the squared bias part that follows from imposing a constraint on the variance part or a restriction on the variance part that is implied by a constraint on the squared bias part. To introduce a formal definition, denote the squared bias part by $B_\theta(\wh \theta)^2$ and the variance part by $V_\theta(\wh \theta).$ The functions $\theta \mapsto B_\theta(\wh \theta)$ and $\theta \mapsto V_\theta(\wh \theta)$ belong to the space $[0,+\infty]^\Theta$. Let $\Tc \subset \Theta^\Xc$ be a class of estimators and let $\psi$ be a function $\psi: [0,+\infty]^\Theta \times [0,+\infty]^\Theta \to [0,+\infty]$ increasing in both of its arguments in the sense that for $b_1(\cdot)^2 \leq b_2(\cdot)^2$ and $v_1(\cdot) \leq v_2(\cdot)$, $\psi(b_1^2, v_1) \leq \psi(b_2^2,v_2)$.
We say that $\psi$ is a bias-variance trade-off for the class of estimators $\Tc$ if
$\inf_{\wh \theta \in \Tc}
\psi \Big( \theta \mapsto B_\theta(\wh \theta)^2 \, , \,
\theta \mapsto V_\theta(\wh \theta) \Big) 
\geq 1.$
All bias-variance trade-offs derived in this paper can be put into this form. For instance, for the two bias-variance trade-offs for pointwise estimation in Theorem \ref{thm.pointwise}, we can choose using the notation introduced in Section \ref{sec:gauss_wn},
\begin{align*}
    &\psi(b^2,v)
    := 
    \frac{n}{\gamma(R,\beta)} \, \sup_{f\in \Cc^\beta(R)} |b(f)|^{1/\beta}  \sup_{f\in \Cc^\beta(R)} v(f),
    \text{ and,} \\
    & \psi(b^2,v)
    := n \, \sup_{f\in \Cc^\beta(R)} |b(f)|^{1/\beta}
    \inf_{f\in \Cc^\beta(R)} \frac{v(f)}{\ol \gamma(R,\beta,C,\|f\|_{\Cc^\beta})}.
\end{align*}

\subsection{Comparison of the abstract lower bounds for the bias-variance trade-off and the hypothesis testing approach for minimax lower bounds}

While non-trivial minimax rates exist for parametric and non-parametric problems alike, the bias-variance trade-off phenomenon occurs mainly in high-dimen\-sional and infinite dimensional models. Despite these differences, the here proposed strategy for lower bounds on the bias-variance trade-off and the well-developed testing approach for lower bounds on the minimax estimation rate share some similarities. A clear similarity is that for both approaches, the problem is reduced in a first step by selecting a discrete subset of the parameter space. To achieve rate-optimal minimax lower bounds, it is well-known that for a large class of functionals, reduction to two parameters is sufficient. On the contrary, optimal lower bounds for global loss functions, such as $L^p$-loss in nonparametric regression, require to pick a number of parameter values that increases with the sample size. We argued in this work that a similar distinction occurs also for bias-variance trade-off lower bounds. As in the case of the minimax estimation risk, we can relate the two-parameter lower bounds to a bound with respect to any of the commonly used information measures including the Kullback-Leibler divergence. 

More pronounced differences occur in the formulation of both lower bound techniques for lower bounds involving more than two parameter values. While for minimax lower bounds the parameters correspond to several hypotheses that form a local packing of the parameter space, for bias-variance trade-off lower bounds the contribution of the selected parameters is determined by how orthogonal the corresponding distributions are. Here, the orthogonality of distributions is measured by the $\chi^2$-divergence divergence matrix or the Hellinger affinity matrices, see Table 1 in
% \ref{tab.1}
\cite{DerumignySchmidtHieber2023}
for examples.

By Proposition 3.2(ii) in \cite{DerumignySchmidtHieber2023}, $\v^\top \chi^2(P_0 | P_1, \dots, P_M) \v = \chi^2(\sum_{j=1}^M v_j P_j, P_0),$ 
    where $\sum_{j=1}^M v_j P_j$ is the mixture (signed) measure of $P_1, \dots, P_M.$ This suggests to interpret the case of multiple measures $P_0,\ldots,P_M$ as a two point testing problem, where we measure the information distance between $P_0$ and a linear combination $\sum_{j=1}^M v_jP_j.$  Viewed from this perspective, the proposed approach shares some similarities with the minimax lower bounds based on two fuzzy hypothesis and Fano's lemma. For a description of these approaches, see Section 2.7 in \cite{tsybakov2009introduction}.

\subsection{Bias-variance trade-off lower bounds and their proof techniques}

\begin{table}[ht]
    \centering
    \begin{tabular}{ccc}
        % \hline
        \textbf{Framework} & \textbf{Theorem} & \textbf{Proof technique} \\
        \hline
        Pointwise estimation in 
        & \multirow{2}{*}{\ref{thm.pointwise}}
        & \multirow{2}{*}{Univariate change of expectation} \\
        the Gaussian white noise model && \\
        \hline
        Pointwise estimation of the
        & \multirow{2}{*}{\ref{thm.B_V_tradeoff_boundary}}
        & \multirow{2}{*}{Univariate change of expectation} \\
        boundary of a Poisson point process && \\
        \hline
        Function estimation in the Gaussian
        & \multirow{2}{*}{\ref{thm.LB_L2}} & 2 reductions + \\
        white noise model with $L_2$-loss & & multivariate change of expectation \\
        \hline
        Estimation of $\theta$ in the Gaussian
        & \multirow{2}{*}{\ref{thm.sparsity_lb} and \ref{thm.functional_GSM}}
        & Multivariate change of expectation \\
        sequence model under sparsity && ( + 1 reduction for Theorem \ref{thm.functional_GSM}) \\
        \hline
        Estimation of $\| \theta \|_2^2$ in the Gaussian
        & \multirow{2}{*}{\ref{thm.sparsity_sq_functional}}
        & \multirow{2}{*}{Multivariate change of expectation} \\
        sequence model under sparsity && \\
        \hline
        Sparse high-dimensional regression
        & \multirow{2}{*}{\ref{thm.sparseReg_lb}}
        & \multirow{2}{*}{Multivariate change of expectation} \\
         with Gaussian noise && \\
        % \hline
    \end{tabular}
    \caption{Proof techniques for different examples}
    \label{tab:thm_proofs_tech}
\end{table}

Table \ref{tab:thm_proofs_tech} states the applied proof technique for each of the lower bounds proved in this article. Here ``univariate change of expectation'' refers to the use of Lemma~\ref{lem.general_lb} or Lemma~\ref{lem.lb_limit}; ``multivariate change of expectation'' refers to applying Theorem~\ref{thm.multiple_lb} or the variations stated in Equations~\eqref{eq.bound_lambda1_chi_var} and~\eqref{eq.row_sum_norm_nd}.

%======================
%======================

\section*{Acknowledgements}
We are grateful to Ming Yuan for helpful discussions during an early stage of the project, to Zijian Guo for pointing us to the article \cite{MR3662444}, and  to Tomohiro Nishiyama for mentioning a typo in an earlier version. We thank two reviewers and an Associate Editor for many helpful comments and suggestions that significantly improved the manuscript. The project has received funding from the Dutch Science Foundation (NWO) via the Vidi grant VI.Vidi.192.021.

\begin{supplement}[id=suppA]
  \sname{Supplement to ``On lower bounds for the bias-variance trade-off''}
  \stitle{}
  \slink[doi]{10.1214/00-AOSXXXXSUPP}
  \sdatatype{.pdf}
  \sdescription{All proofs are given in the supplement.}
\end{supplement}

\bibliographystyle{acm}       % (uses file "plain.bst")
\bibliography{biblio}           % expects file "refsPart1.bib"

\begin{thebibliography}{10}

\bibitem{belkin2018reconciling}
{\sc Belkin, M., Hsu, D., Ma, S., and Mandal, S.}
\newblock Reconciling modern machine-learning practice and the classical
  bias{\textendash}variance trade-off.
\newblock {\em Proceedings of the National Academy of Sciences 116}, 32 (2019),
  15849--15854.

\bibitem{pmlr-v30-Berthet13}
{\sc Berthet, Q., and Rigollet, P.}
\newblock Complexity theoretic lower bounds for sparse principal component
  detection.
\newblock In {\em Conference on Learning Theory\/} (2013), PMLR,
  pp.~1046--1066.

\bibitem{MR3127849}
{\sc Berthet, Q., and Rigollet, P.}
\newblock Optimal detection of sparse principal components in high dimension.
\newblock {\em Ann. Statist. 41}, 4 (2013), 1780--1815.

\bibitem{MR1425965}
{\sc Brown, L.~D., and Low, M.~G.}
\newblock A constrained risk inequality with applications to nonparametric
  functional estimation.
\newblock {\em Ann. Statist. 24}, 6 (1996), 2524--2535.

\bibitem{MR1604424}
{\sc Brown, L.~D., Low, M.~G., and Zhao, L.~H.}
\newblock Superefficiency in nonparametric function estimation.
\newblock {\em Ann. Statist. 25}, 6 (1997), 2607--2625.

\bibitem{MR2807761}
{\sc B\"{u}hlmann, P., and van~de Geer, S.}
\newblock {\em Statistics for high-dimensional data}.
\newblock Springer Series in Statistics. Springer, Heidelberg, 2011.

\bibitem{MR3650395}
{\sc Cai, T.~T., and Guo, Z.}
\newblock Confidence intervals for high-dimensional linear regression: minimax
  rates and adaptivity.
\newblock {\em Ann. Statist. 45}, 2 (2017), 615--646.

\bibitem{MR2081075}
{\sc Chafa\"{\i}, D.}
\newblock Entropies, convexity, and functional inequalities: on
  {$\Phi$}-entropies and {$\Phi$}-{S}obolev inequalities.
\newblock {\em J. Math. Kyoto Univ. 44}, 2 (2004), 325--363.

\bibitem{chen2004notes}
{\sc Chen, J.}
\newblock Notes on the bias-variance trade-off phenomenon.
\newblock In {\em A Festschrift for Herman Rubin}. Institute of Mathematical
  Statistics, 2004, pp.~207--217.

\bibitem{chen2016bayes}
{\sc Chen, X., Guntuboyina, A., and Zhang, Y.}
\newblock {On Bayes risk lower bounds}.
\newblock {\em The Journal of Machine Learning Research 17}, 1 (2016),
  7687--7744.

\bibitem{MR3662444}
{\sc Collier, O., Comminges, L., and Tsybakov, A.~B.}
\newblock Minimax estimation of linear and quadratic functionals on sparsity
  classes.
\newblock {\em Ann. Statist. 45}, 3 (2017), 923--958.

\bibitem{DerumignySchmidtHieber2023}
{\sc Derumigny, A., and Schmidt-Hieber, J.}
\newblock {Codivergences and information matrices}.
\newblock {\em arXiv e-prints\/} (2023), arXiv:2303.08122.

\bibitem{MR2237332}
{\sc Donoho, D.~L., Elad, M., and Temlyakov, V.~N.}
\newblock Stable recovery of sparse overcomplete representations in the
  presence of noise.
\newblock {\em IEEE Trans. Inform. Theory 52}, 1 (2006), 6--18.

\bibitem{MR1157714}
{\sc Donoho, D.~L., Johnstone, I.~M., Hoch, J.~C., and Stern, A.~S.}
\newblock Maximum entropy and the nearly black object.
\newblock {\em J. Roy. Statist. Soc. Ser. B 54}, 1 (1992), 41--81.
\newblock With discussion and a reply by the authors.

\bibitem{6686179}
{\sc {Duchi}, J.~C., {Jordan}, M.~I., and {Wainwright}, M.~J.}
\newblock Local privacy and statistical minimax rates.
\newblock In {\em 2013 IEEE 54th Annual Symposium on Foundations of Computer
  Science\/} (2013), pp.~429--438.

\bibitem{MR4227314}
{\sc Feldman, V., Ligett, K., and Sabato, S.}, Eds.
\newblock {\em Algorithmic learning theory\/} (2021), vol.~132 of {\em
  Proceedings of Machine Learning Research (PMLR)}.

\bibitem{Geman1992}
{\sc Geman, S., Bienenstock, E., and Doursat, R.}
\newblock Neural networks and the bias/variance dilemma.
\newblock {\em Neural Computation 4}, 1 (1992), 1--58.

\bibitem{Gerchinovitz2020Fano}
{\sc Gerchinovitz, S., Ménard, P., and Stoltz, G.}
\newblock {Fano’s Inequality for Random Variables}.
\newblock {\em Statistical Science 35}, 2 (2020), 178 -- 201.

\bibitem{MR2722294}
{\sc Hastie, T., Tibshirani, R., and Friedman, J.}
\newblock {\em The elements of statistical learning}, second~ed.
\newblock Springer Series in Statistics. Springer, New York, 2009.

\bibitem{johnstone}
{\sc Johnstone, I.~M.}
\newblock Gaussian estimation: Sequence and wavelet models, September 2019.
\newblock URL: \url{http://statweb.stanford.edu/~imj/GE_09_16_19.pdf}.

\bibitem{lehmann2006theory}
{\sc Lehmann, E.~L., and Casella, G.}
\newblock {\em Theory of point estimation}.
\newblock Springer, 2006.

\bibitem{liu1993nonexistence}
{\sc Liu, R.~C., and Brown, L.~D.}
\newblock Nonexistence of informative unbiased estimators in singular problems.
\newblock {\em The Annals of Statistics\/} (1993), 1--13.

\bibitem{low1995bias}
{\sc Low, M.~G.}
\newblock Bias-variance tradeoffs in functional estimation problems.
\newblock {\em Ann. Statist. 23}, 3 (1995), 824--835.

\bibitem{MR3010397}
{\sc Meister, A., and Rei\ss, M.}
\newblock Asymptotic equivalence for nonparametric regression with non-regular
  errors.
\newblock {\em Probab. Theory Related Fields 155}, 1-2 (2013), 201--229.

\bibitem{meyer2000matrix}
{\sc Meyer, C.~D.}
\newblock {\em Matrix analysis and applied linear algebra}, vol.~71.
\newblock Siam, 2000.

\bibitem{neal2018modern}
{\sc {Neal}, B., {Mittal}, S., {Baratin}, A., {Tantia}, V., {Scicluna}, M.,
  {Lacoste-Julien}, S., and {Mitliagkas}, I.}
\newblock {A Modern Take on the Bias-Variance Tradeoff in Neural Networks}.
\newblock {\em arXiv e-prints\/} (Oct. 2018), arXiv:1810.08591.

\bibitem{nishiyama2019newKL}
{\sc Nishiyama, T.}
\newblock {A new lower bound for Kullback-Leibler divergence based on
  Hammersley-Chapman-Robbins bound}.
\newblock {\em arXiv e-prints\/} (2019), arXiv:1907.00288.

\bibitem{nishiyama2020tightHellinger}
{\sc Nishiyama, T.}
\newblock {A tight lower bound for the Hellinger distance with given means and
  variances}.
\newblock {\em arXiv e-prints\/} (2020), arXiv:2010.13548.

\bibitem{MR1836577}
{\sc Pfanzagl, J.}
\newblock A nonparametric asymptotic version of the {C}ram\'{e}r-{R}ao bound.
\newblock In {\em State of the art in probability and statistics ({L}eiden,
  1999)}, vol.~36 of {\em IMS Lecture Notes Monogr. Ser.} Inst. Math. Statist.,
  Beachwood, OH, 2001, pp.~499--517.

\bibitem{polyanskiy2021notes}
{\sc Polyanskiy, Y.}
\newblock Lecture 2: Dpi and statistics. information theoretic methods in
  statistics and computer science.
\newblock Available at
  \url{http://people.lids.mit.edu/yp/homepage/sdpi_course.html}, 2021.

\bibitem{MR3506739}
{\sc Raginsky, M.}
\newblock Strong data processing inequalities and {$\Phi$}-{S}obolev
  inequalities for discrete channels.
\newblock {\em IEEE Trans. Inform. Theory 62}, 6 (2016), 3355--3389.

\bibitem{rao1973linear}
{\sc Rao, C.~R.}
\newblock {\em Linear statistical inference and its applications}, second~ed.
\newblock Wiley Series in Probability and Mathematical Statistics. John Wiley
  \& Sons, 1973.

\bibitem{MR3606758}
{\sc Rei\ss~\, M., and Selk, L.}
\newblock Efficient estimation of functionals in nonparametric boundary models.
\newblock {\em Bernoulli 23}, 2 (2017), 1022--1055.

\bibitem{reissSHAnnals}
{\sc Reiss, M., and Schmidt-Hieber, J.}
\newblock Nonparametric {B}ayesian analysis of the compound {P}oisson prior for
  support boundary recovery.
\newblock {\em Ann. Statist. 48}, 3 (2020), 1432--1451.

\bibitem{MR4158798}
{\sc Rei\ss, M., and Schmidt-Hieber, J.}
\newblock Posterior contraction rates for support boundary recovery.
\newblock {\em Stochastic Process. Appl. 130}, 11 (2020), 6638--6656.

\bibitem{2018arXiv180501422R}
{\sc Rohde, A., and Steinberger, L.}
\newblock Geometrizing rates of convergence under local differential privacy
  constraints.
\newblock {\em Ann. Statist. 48}, 5 (2020), 2646--2670.

\bibitem{stein1956inadmissibility}
{\sc Stein, C.}
\newblock Inadmissibility of the usual estimator for the mean of a multivariate
  normal distribution.
\newblock In {\em Proceeding of the fourth Berkeley symposium on Mathematical
  statistics and Probability\/} (1956), vol.~1, University of California Press,
  pp.~197--206.

\bibitem{MR4134798}
{\sc Szab\'{o}, B., and van Zanten, H.}
\newblock Adaptive distributed methods under communication constraints.
\newblock {\em Ann. Statist. 48}, 4 (2020), 2347--2380.

\bibitem{tsybakov2009introduction}
{\sc Tsybakov, A.~B.}
\newblock {\em Introduction to non-parametric estimation}.
\newblock Springer, 2009.

\bibitem{MR3224285}
{\sc van~de Geer, S., B\"{u}hlmann, P., Ritov, Y., and Dezeure, R.}
\newblock On asymptotically optimal confidence regions and tests for
  high-dimensional models.
\newblock {\em Ann. Statist. 42}, 3 (2014), 1166--1202.

\bibitem{wahl2021van}
{\sc Wahl, M.}
\newblock Van {Trees} inequality, group equivariance, and estimation of
  principal subspaces.
\newblock {\em arXiv e-prints. To appear in Annales de l'Institut Henri
  Poincaré Probabilités et Statistiques\/} (2021), arXiv:2107.08723.

\bibitem{MR3153940}
{\sc Zhang, C.-H., and Zhang, S.~S.}
\newblock Confidence intervals for low dimensional parameters in high
  dimensional linear models.
\newblock {\em J. R. Stat. Soc. Ser. B. Stat. Methodol. 76}, 1 (2014),
  217--242.

\bibitem{NIPS2013_4902}
{\sc Zhang, Y., Duchi, J., Jordan, M.~I., and Wainwright, M.~J.}
\newblock Information-theoretic lower bounds for distributed statistical
  estimation with communication constraints.
\newblock In {\em Advances in Neural Information Processing Systems 26}. Curran
  Associates, Inc., 2013, pp.~2328--2336.

\end{thebibliography}

\newpage
\setcounter{page}{1}

% indicate corresponding author with \corref{}
% \author{\fnms{John} \snm{Smith}\corref{}\ead[label=e1]{smith@foo.com}\thanksref{t1}}
% \thankstext{t1}{Thanks to somebody} 
% \address{line 1\\ line 2\\ printead{e1}}
% \affiliation{Some University}

\begin{frontmatter}

% "Title of the paper"
\title{Supplement to ``On lower bounds for the bias-variance trade-off''}
\runtitle{On lower bounds for the bias-variance trade-off}

% indicate corresponding author with \corref{}
% \author{\fnms{John} \snm{Smith}\corref{}\ead[label=e1]{smith@foo.com}\thanksref{t1}}
% \thankstext{t1}{Thanks to somebody} 
% \address{line 1\\ line 2\\ printead{e1}}
% \affiliation{Some University}

\author{\fnms{Alexis} \snm{Derumigny}\ead[label=e3]{a.f.f.derumigny@tudelft.nl}
\and 
\fnms{Johannes} \snm{Schmidt-Hieber}\ead[label=e4]{a.j.schmidt-hieber@utwente.nl}
}
\address{Delft University of Technology, \\
Mekelweg 4, \\ 2628 CD Delft \\ The Netherlands.\\ \printead{e3}}
\address{University of Twente,\\ P.O. Box 217, \\ 7500 AE Enschede \\ The Netherlands\\ \printead{e4}}
\affiliation{Delft University of Technology \and University of Twente}

\runauthor{A. Derumigny and J. Schmidt-Hieber}

\begin{abstract}
The supplement contains additional material for the article ``On lower bounds for the bias-variance trade-off''.
\end{abstract}

% -----------
% Special comments for deleting the keywords at this point so that no footnote is created 
\makeatletter
\let\keyword@fmt\relax
\makeatother

\end{frontmatter}

\appendix

\section{Proofs for Section \ref{sec.general_low_bds}}
\label{sec.proofs_gen_lower_bd}

\begin{proof}[Proof of Lemma \ref{lem.general_lb}]
We first prove \eqref{eq.lem1_1}. Applying the Cauchy-Schwarz inequality, for any real number $a,$
\begin{align*}
    \big|E_P[X]-E_Q[X] \big|
    &= \Big|\int (X(\om)-a) (p(\om)-q(\om)) \, d\nu(\om)\Big|\\
    &\leq \Big(\int(X(\om)-a)^2|p(\om)-q(\om)| \, d\nu(\om)\Big)^{1/2}\sqrt{2\TV(P,Q)}.
\end{align*}
We can bound $|p(\om)-q(\om)|\leq p(\om)+q(\om)$ and $E_P[(X-a)^2]=\Var_P(X)+(E_P[X]-a)^2$ (which holds for all $P$ and all $a$), to deduce that for $a_*:=(E_P[X]+E_Q[X])/2,$
\begin{align*}
    \int(X(\om)-a_*)^2 &|p(\om)-q(\om)| \, d\nu(\om) \\
    &\leq \Var_P(X)+\Var_Q(X)+ 2\Big(\frac{E_P[X]-E_Q[X]}{2}\Big)^2.
\end{align*}
This shows that 
\begin{align*}
    \big(E_P[X]-E_Q[X] \big)^2
    \leq 
    \Big( \Var_P(X) &+\Var_Q(X) \\
    & + \frac{(E_P[X]-E_Q[X])^2}{2}\Big) 2\TV(P,Q).
\end{align*}
Rearranging the inequality yields \eqref{eq.lem1_1}.

We now prove \eqref{eq.lem1_2}. 
\cite[Theorem 1]{nishiyama2020tightHellinger} states that for any random variable $X,$
\begin{align*}
    H^2(P,Q) \geq 1 - \sqrt{1 -
    \frac{(E_P[X] - E_Q[X])^2}
    {(E_P[X] - E_Q[X])^2 + (\sqrt{\Var_P(X)}+ \sqrt{\Var_Q(X)})^2}}.
\end{align*}
% \begin{align*}
%     (H^2(P,Q) - 1) \geq - \sqrt{1 -
%     \frac{(E_P[X] - E_Q[X])^2}
%     {(E_P[X] - E_Q[X])^2 + (\sqrt{\Var_{\vphantom{Q}P}(X)}+ \sqrt{\Var_Q(X)})^2}}
% \end{align*}
% \begin{align*}
%     (1 - H^2(P,Q)) \leq \sqrt{1 -
%     \frac{(E_P[X] - E_Q[X])^2}
%     {(E_P[X] - E_Q[X])^2 + (\sqrt{\Var_{\vphantom{Q}P}(X)}+ \sqrt{\Var_Q(X)})^2}}
% \end{align*}
% \begin{align*}
%     (1 - H^2(P,Q) )^2 - 1 \leq -
%     \frac{(E_P[X] - E_Q[X])^2}
%     {(E_P[X] - E_Q[X])^2 + (\sqrt{\Var_{\vphantom{Q}P}(X)}+ \sqrt{\Var_Q(X)})^2}
% \end{align*}
Rewriting the previous equation and squaring gives
\begin{align*}
    1 - (1 - H^2(P,Q) )^2 \geq
    \frac{(E_P[X] - E_Q[X])^2}
    {(E_P[X] - E_Q[X])^2 + (\sqrt{\Var_{\vphantom{Q}P}(X)}+ \sqrt{\Var_Q(X)})^2}.
\end{align*}
% \begin{align*}
%     &(E_P[X] - E_Q[X])^2
%     \big(1 - (1 - H^2(P,Q) )^2 \big) \\
%     &+ (\sqrt{\Var_{\vphantom{Q}P}(X)} + \sqrt{\Var_Q(X)})^2 
%     \big(1 - (1 - H^2(P,Q) )^2 \big)
%     \geq (E_P[X] - E_Q[X])^2
% \end{align*}
% \begin{align*}
%     (E_P[X] - E_Q[X])^2
%     \times \big( - (1 - H^2(P,Q) )^2 \big)
%     + \big(\sqrt{\Var_{\vphantom{Q}P}(X)} + \sqrt{\Var_Q(X)}\big)^2 
%     \big(1 - (1 - H^2(P,Q) )^2 \big)
%     \geq 0
% \end{align*}
% \begin{align*}
%     (\sqrt{\Var_{\vphantom{Q}P}(X)} + \sqrt{\Var_Q(X)})^2 
%     \big(1 - (1 - H^2(P,Q) )^2 \big)
%     \geq (1 - H^2(P,Q) )^2 (E_P[X] - E_Q[X])^2
% \end{align*}
% \begin{align*}
%     \frac{1 - (1 - H^2(P,Q) )^2}{(1 - H^2(P,Q) )^2}
%     (\sqrt{\Var_{\vphantom{Q}P}(X)} + \sqrt{\Var_Q(X)})^2 
%     \geq  (E_P[X] - E_Q[X])^2
% \end{align*}
% \begin{align*}
%     \frac{(1 - H^2(P,Q) )^2}{1 - (1 - H^2(P,Q) )^2}
%     (E_P[X] - E_Q[X])^2
%     \leq (\sqrt{\Var_{\vphantom{Q}P}(X)} + \sqrt{\Var_Q(X)})^2 
% \end{align*}
% \begin{align*}
%     \frac{(1 - H^2(P,Q) )^2}{H^2(P,Q) ( 2 - H^2(P,Q))}
%     (E_P[X] - E_Q[X])^2
%     \leq (\sqrt{\Var_{\vphantom{Q}P}(X)} + \sqrt{\Var_Q(X)})^2 
% \end{align*}
Solving for $(E_P[X] - E_Q[X])^2$, we obtain
\begin{align*}
    \frac{(1 - H^2(P,Q))^2}{H^2(P,Q) (2-H^2(P,Q))}
    (E_P[X] - E_Q[X])^2
    \leq \bigg(\sqrt{\Var_{\vphantom{Q}P}(X)} + \sqrt{\Var_Q(X)} \bigg)^2.
\end{align*}
Using that for any positive real numbers $u,v,$ $(\sqrt{u}+\sqrt{v})^2\leq 2u+2v$ yields 
\begin{align*}
    \frac{(1 - H^2(P,Q))^2}{2 H^2(P,Q) (2-H^2(P,Q))}
    (E_P[X] - E_Q[X])^2
    \leq \Var_{P}(X) + \Var_Q(X),
\end{align*}
as claimed.

\medskip

We also provide a more direct proof of the slightly weaker version of~\eqref{eq.lem1_2},
\begin{align}
    \frac{( E_P[X]-E_Q[X])^2}{4}
    \Big( \frac{1}{H(P,Q)}-H(P,Q)\Big)^2
    &\leq \Var_P(X)+ \Var_Q(X),
    \label{eq.lem1_2_old}
\end{align}
without involving \cite[Theorem 1]{nishiyama2020tightHellinger}.
Using $H^2(P,Q)=1-\int \sqrt{pq},$ triangle inequality and Cauchy-Schwarz twice, we find
\begin{align*}
    &\big|E_P[X]-E_Q[X] \big|\\
    &= \Big|\int (X(\om)-E_P[X]) \sqrt{p(\om)} (\sqrt{p(\om)}-\sqrt{q(\om)}) \, d\nu(\om)\\
    &\quad + \int (X(\om)-E_Q[X]) \sqrt{q(\om)} (\sqrt{p(\om)}-\sqrt{q(\om)}) \, d\nu(\om)\\
    &\quad + \big(E_P[X]-E_Q[X]\big) H^2(P,Q)  \Big|\\
    &\leq 
    \Big(\Var_P(X)^{1/2}+\Var_Q(X)^{1/2}\Big)\sqrt{2} H(P,Q)+ 
    \big|E_P[X]-E_Q[X] \big|H^2(P,Q).
\end{align*}
Squaring, rearranging the terms and using that for any positive real numbers $u,v,$ $(\sqrt{u}+\sqrt{v})^2\leq 2u+2v$ yields \eqref{eq.lem1_2_old}.

To prove \eqref{eq.lem1_3}, it is enough to consider the case that $K(P,Q)+K(Q,P)<\infty.$ This implies in particular that the Radon-Nikodym derivatives $dP/dQ$ and $dQ/dP$ both exist.  Set $h(t,\om) := \exp(t\log p +(1-t)\log q(\om)).$ Observe that $p(\om)-q(\om)=\int_0^1 \log(p(\om)/q(\om)) h(t,\om) \, dt.$ Due to the concavity of the logarithm, we also have that $h(t,\om)\leq t p(\om)+(1-t)q(\om).$ Choosing again $a_*:=(E_P[X]+E_Q[X])/2,$ and using $E_P[(X-a^*)^2]=\Var_P(X)+(E_P[X]-E_Q[X])^2/4$ and $E_Q[(X-a^*)^2]=\Var_Q(X)+(E_P[X]-E_Q[X])^2/4,$ we therefore have that
\begin{align*}
    &\int \big(X(\om)-a^*\big)^2 h(t,\om) d\om \\
    &\leq t\Var_P(X)+(1-t)\Var_Q(X)+\frac{(E_P[X]-E_Q[X])^2}4 \\
    &\leq \big(\Var_P(X)\vee \Var_Q(X)\big)+\frac{(E_P[X]-E_Q[X])^2}4.
\end{align*}
Also notice that
\begin{align*}
    \int \log^2 \Big(\frac{p(\om)}{q(\om)}\Big) \int_0^1 h(t,\om) \, dt \, d\om
    &= \int \log \Big(\frac{p(\om)}{q(\om)}\Big) \big(p(\om)-q(\om)\big) \, d\om \\
    &= \KL(P,Q)+\KL(Q,P).
\end{align*}
Changing the order of integration and applying the properties of the function $h(t,\om)$, the Cauchy-Schwarz inequality, and Jensen's inequality, we find
\begin{align*}
    \big|&E_P[X]-E_Q[X]\big| \\
    &=\Big| \int \big(X(\om)-a^*\big) \big(p(\om)-q(\om)\big) \, d\om \Big| \\
    &= \Big| 
    \int_0^1  \Big(\int \big(X(\om)-a^*\big) \sqrt{h(t,\om)} \log \Big(\frac{p(\om)}{q(\om)}\Big)\sqrt{h(t,\om)} \, d\om \, dt\Big| \\
    &\leq \int_0^1 \Big( \int \big(X(\om)-a^*\big)^2 h(t,\om) \, d\om \Big)^{1/2} \Big( \int \log^2 \Big(\frac{p(\om)}{q(\om)}\Big) h(t,\om) \, d\om \Big)^{1/2} \, dt \\
    &\leq \Big(\big(\Var_P(X) \vee \Var_Q(X)\big)
    + \frac{(E_P[X]-E_Q[X])^2}4\Big)^{1/2}  \\
    &\hspace{5cm} \times \Big(\int \log^2\Big(\frac{p(\om)}{q(\om)}\Big) \int_0^1 h(t,\om) \, dt \, d\om \Big)^{1/2} \\
    &=\Big(\big(\Var_P(X) \vee \Var_Q(X)\big)
    +\frac{(E_P[X]-E_Q[X])^2}4\Big)^{1/2} \\
    &\hspace{5cm} \times \Big(\KL(P,Q)+\KL(Q,P)\Big)^{1/2}.
\end{align*}
Squaring and rearranging the terms yields \eqref{eq.lem1_3}.

The proof for \eqref{eq.lem1_4} combines change of measure and the Cauchy-Schwarz inequality via
\begin{align*}
    \big|E_P[X]-E_Q[X]\big|
    =\big| E_P\Big[ \Big(\frac{dQ}{dP} -1\Big)
    \big(X- E_P[X]\big)\Big] \Big|
    \leq
    \sqrt{\chi^2(Q,P)\Var_P(X)}.
\end{align*}
Squaring and interchanging the role of $P$ and $Q$ completes the proof.
\end{proof}

\begin{proof}[Proof of Theorem \ref{thm.multiple_lb}] \textit{(i):} Let $a_j$ denote the $j$-th entry of the vector $\Delta^\top \chi^2(P_0,\dots,P_M)^+$ and $E_j$ the expectation $E_{P_j}.$ Observe that for any $j,k=1,\dots, M,$ $\chi^2(P_0,\dots,P_M)_{j,k}= \int (dP_j/dP_0) dP_k -1 = E_0[(dP_j/dP_0-1)(dP_k/dP_0-1)].$ Using the Cauchy-Schwarz inequality and the fact that for a Moore-Penrose inverse $A^+$ of $A,$ $A^+AA^+=A^+$, we find
\begin{align*}
    \Big(\Delta^\top &\chi^2(P_0,\dots,P_M)^+ \Delta \Big)^2 \\
    &=\Big(\sum_{j=1}^M a_j \big(E_j[X]-E_0[X]\big)\Big)^2 \\
    &= E_0^2\Big[ 
    \sum_{j=1}^M a_j
    \Big(\frac{dP_j}{dP_0}-1\Big) \big(X-E_0[X]\big)\Big] \\
    &\leq E_0\bigg[ \Big(\sum_{j=1}^M a_j
    \Big(\frac{dP_j}{dP_0}-1\Big) \Big)^2\bigg] 
    \Var_{P_0}(X) \\
    &=\bigg(\sum_{j,k=1}^M a_j \chi^2(P_0,\dots,P_M)_{j,k}a_k \bigg) \Var_{P_0}(X) \\
    &= \Delta^\top \chi^2(P_0,\dots,P_M)^+ \chi^2(P_0,\dots,P_M) \chi^2(P_0,\dots,P_M)^+ \Delta \Var_{P_0}(X) \\
    &= \Delta^\top \chi^2(P_0,\dots,P_M)^+ \Delta \Var_{P_0}(X).
\end{align*}
For $\Delta^\top \chi^2(P_0,\dots,P_M)^+\Delta=0,$ the asserted inequality is trivially true. For $\Delta^\top \chi^2(P_0,\dots,P_M)^+ \Delta>0$ the claim (i) follows by dividing both sides by $\Delta^\top \chi^2(P_0,\dots,P_M)^+ \Delta.$

\textit{(ii):} The first identity is elementary and follows from expansion of the squares. It therefore remains to prove the inequality. To keep the mathematical expressions readable, we agree to write $E_j:=E_{P_j}[X]$ and $V_j:=\Var_{P_j}(X).$ Furthermore, we omit the integration variable as well as the differential in the integrals. Rewriting, we find that for any real number $\alpha_{j,k},$
\begin{align}
\begin{split}
    \big(E_j-E_k\big) \int \sqrt{p_k p_j}
    &= \int (X-E_k)\sqrt{p_k}\big(\sqrt{p_j}-\alpha_{j,k} \sqrt{p_k}\big) \\
    &+ \int (X-E_j)\sqrt{p_j}\big( \alpha_{k,j} \sqrt{p_j}- \sqrt{p_k}\big).
\end{split}\label{eq.H_mult_1}    
\end{align}
From now on, we choose $\alpha_{j,k}$ to be $\int \sqrt{p_j p_k}.$ Observe that for this choice, the term $\alpha_{j,k} \sqrt{p_k}$ is the $L^2$-projection of $\sqrt{p_j}$ on $\sqrt{p_k}.$ Dividing by $\int \sqrt{p_kp_j},$ summing over $(j,k),$ interchanging the role of $j$ and $k$ for the second term in the first equality, applying the Cauchy-Schwarz inequality first to each of the $M$ integrals and then also to bound the sum over $k,$ and using Proposition 3.2(v) in \cite{DerumignySchmidtHieber2023},
gives
\begin{align*}
    &\sum_{j,k=1}^M \big(E_j-E_k\big)^2 \\
    &= \sum_{k=1}^M 
    \int (X-E_k)\sqrt{p_k} \sum_{j=1}^M 
    \Big(\frac{\sqrt{p_j}}{\int \sqrt{p_k p_j}}- \sqrt{p_k}\Big)(E_j-E_k) \\
    &\quad + \sum_{j=1}^M \int (X-E_j)\sqrt{p_j}
    \sum_{k=1}^M\Big(\sqrt{p_j}-\frac{\sqrt{p_k}}{\int \sqrt{p_k p_j}}\Big)(E_j-E_k) \\
    &= 2\sum_{k=1}^M 
    \int (X-E_k)\sqrt{p_k} \sum_{j=1}^M 
    \Big(\frac{\sqrt{p_j}}{\int \sqrt{p_k p_j}}- \sqrt{p_k}\Big) (E_j-E_k) \\
    &\leq 2\sum_{k=1}^M 
    \sqrt{V_k \int \Big(\sum_{j=1}^M 
    \Big(\frac{\sqrt{p_j}}{\int \sqrt{p_k p_j}}- \sqrt{p_k}\Big) (E_j-E_k)\Big)^2} \\
    &\leq 2
    \sqrt{\sum_{r=1}^M  V_r} \sqrt{\sum_{k=1}^M \int \Big(\sum_{j=1}^M 
    \Big(\frac{\sqrt{p_j}}{\int \sqrt{p_k p_j}}- \sqrt{p_k}\Big) (E_j-E_k)\Big)^2 } \\
    &\leq 2
    \sqrt{\sum_{r=1}^M  V_r} \sqrt{\sum_{k=1}^M \lambda_1(A_k) \sum_{j=1}^M (E_j-E_k)^2 } \\
    &\leq 2  \big(\max_{\ell=1,\ldots,M} \lambda_1(A_\ell)\big)^{1/2}
    \sqrt{\sum_{r=1}^M  V_r}\sqrt{\sum_{k,j=1}^M (E_j-E_k)^2 }.
\end{align*}
Squaring both sides and dividing by $\sum_{k,j=1}^M (E_j-E_k)^2$ yields the claim.
\end{proof}

\subsection{Examples for the case of a Gaussian distribution and a general lower bound based on a family of distribution}

\begin{example} \label{ex.bds_for_normal}  
    To illustrate the inequalities in Lemma \ref{lem.general_lb} for a specific example, consider multivariate normal distributions $P=\Nc(\theta,I)$ and $Q=\Nc(\theta',I),$ for vectors $\theta, \theta'$ and $I$ the identity matrix. In this case, closed-form expressions for all four information measures exist. Denote by $\Phi$ the c.d.f. of the normal distribution. Since 
    $\TV(P,Q)=1-P(dQ/dP>1)-Q(dP/dQ\geq 1)
    = 1-2\Phi(-\tfrac 12 \|\theta-\theta'\|_2),$
    $H^2(P,Q)=1-\exp(-\tfrac 18 \|\theta-\theta'\|_2^2),$ 
    $\KL(P,Q)=\KL(Q,P)=\tfrac 12 \|\theta-\theta'\|_2^2,$
    and $\chi^2(P,Q)=\exp(\|\theta-\theta'\|_2^2)-1,$ the inequalities \eqref{eq.lem1_1}-\eqref{eq.lem1_4} become
    \begin{align*}
    % \begin{split}
        \big(E_\theta[X]-E_{\theta'}[X]\big)^2 
        \frac{\Phi(-\tfrac 12 \|\theta-\theta'\|_2)}{1-2\Phi(-\tfrac 12 \|\theta-\theta'\|_2)}
        &\leq \Var_\theta(X)+\Var_{\theta'}(X) \\
        \big(E_\theta[X]-E_{\theta'}[X]\big)^2
        \frac{\tfrac 14 \exp(-\tfrac 14 \|\theta-\theta'\|_2^2)}{1-\exp(-\tfrac 18 \|\theta-\theta'\|_2^2)}
        &\leq \Var_\theta(X)+\Var_{\theta'}(X) \\
        \big(E_\theta[X]-E_{\theta'}[X]\big)^2
        \bigg(\frac1{\|\theta-\theta'\|_2^2}-\frac 14\bigg)
        &\leq \Var_\theta(X)+\Var_{\theta'}(X) \\
        \big(E_\theta[X]-E_{\theta'}[X]\big)^2
        % &\leq \big(\exp\big(\|\theta-\theta'\|_2^2\big)-1\big)\big(\Var_\theta(X)\wedge \Var_{\theta'}(X)\big).
        \leq \big(e^{\|\theta-\theta'\|_2^2}-1\big)
        &\big(\Var_\theta(X)\wedge \Var_{\theta'}(X)\big).
    % \end{split}
    % \label{eq.bds_for_normal}    
    \end{align*}
\end{example}

\begin{lemma} \label{lem.lb_limit}
    Given a family of probability measures $(P_t)_{t\in [0,1]}.$ For simplicity write $E_t$ and $\Var_t$ for $E_{P_t}$ and $\Var_{P_t},$ respectively.  
    \newline
    \noindent 
    {(i):} If $\kappa_H := \limsup_{\delta \rightarrow 0} \,  \delta^{-1} \sup_{t\in [0,1-\delta]} H(P_t,P_{t+\delta})$ is finite, then for any random variable $X$,
    \begin{align}
        \big(E_1[X]-E_0[X]\big)^2 
        \leq 8\kappa_H^2 \sup_{t\in [0,1]} \Var_t(X).
    \end{align}
    {(ii):} If $\kappa_K^2 := \limsup_{\delta \rightarrow 0} \,  \delta^{-2} \sup_{t\in [0,1-\delta]} \KL(P_t,P_{t+\delta})+\KL(P_{t+\delta},P_t)$ is finite, then for any random variable~$X,$
    \begin{align}
        \big(E_1[X]-E_0[X]\big)^2 
        \leq \kappa_K^2 \sup_{t\in [0,1]} \Var_t(X).
        \label{eq.lb_limit_KL}
    \end{align}
    {(iii):} If $\kappa_{\chi}^2:= \limsup_{\delta \rightarrow 0} \,  \delta^{-2}\sup_{t\in [0,1-\delta]} \chi^2(P_t,P_{t+\delta})$ is finite, then for any random variable $X,$
    \begin{align}
        \big(E_1[X]-E_0[X]\big)^2 
        \leq \kappa_\chi^2 \sup_{t\in [0,1]} \Var_t(X).
    \end{align}
\end{lemma}

\begin{example}
\label{ex.normal_change_of_expec_cts}
    As an example, consider the family $P_t=\Nc(t\theta+(1-t)\theta',I)$ $t\in [0,1].$ Then, $(i)-(iii)$ all lead to the inequality $$(E_\theta[X]-E_{\theta'}[X])^2\leq \|\theta-\theta'\|_2^2\sup_{t\in [0,1]}\Var_t(X).$$ 
    In Example~\ref{ex.bds_for_normal}, the bounds for the Hellinger distance and the $\chi^2$-divergence grow exponentially in $\|\theta-\theta'\|_2^2$ and the Kullback-Leibler bound only provides a non-trivial lower bound if $\|\theta-\theta'\|_2^2< 4.$ Lemma \ref{lem.lb_limit} leads thus to much sharper constants if $\|\theta-\theta'\|_2$ is large. On the other hand, compared to the earlier bounds, Lemma \ref{lem.lb_limit} results in a weaker statement on the bias-variance trade-off as it only produces a lower bound for the largest of all variances $\Var_t(X),$ $t\in [0,1].$
\end{example}

\begin{proof}[Proof of Lemma \ref{lem.lb_limit}]
Rewriting $E_1[X]-E_0[X]$ as the telescoping sum $\sum_{j=1}^K E_{j/K}[X]-E_{(j-1)/K}[X]$ and taking the limit $K\to \infty$ over a subset converging to the $\limsup$, we find that $$(E_1[X]-E_0[X])^2 \leq \limsup_{K \to \infty} K^2 \max_{j=1,\ldots,K} (E_{j/K}[X]-E_{(j-1)/K}[X])^2.$$ Applying \eqref{eq.lem1_2}, \eqref{eq.lem1_3} and \eqref{eq.lem1_4} to $(E_{j/K}[X]-E_{(j-1)/K}[X])^2,$ bounding $\Var_{j/K}(X)$ and $\Var_{(j-1)/K}(X)$ always by $\sup_{t\in [0,1]}\Var_t(X),$ and taking the limit $K\to \infty$ yields the three inequalities.
\end{proof}

\subsection{The univariate Cram\'er-Rao lower bound as a limit of the change of expectation inequalities in Lemma \ref{lem.general_lb}}
\label{sec:univarCRLB}

\begin{thm}[Univariate Cramér-Rao lower bound]
    Let $(P_\theta, \theta \in \Theta)$ be a one dimensional statistical model, with $\Theta$ an open subset of $\Rb$, assumed to be dominated by a measure $\nu$. If $\theta \mapsto E_\theta[\wh \theta]$ is differentiable at $\theta_0,$ the function $\theta \mapsto \Var_\theta[\wh \theta]$ is continuous at $\theta_0$ and one of the following domination conditions is satisfied
    \begin{enumerate}
        \item \textbf{Hellinger domination}: there exists a $\nu$-integrable function $\bar p$ such that for all $h$ small enough $h^{-2} \big(\sqrt{p_{\theta_0}} - \sqrt{p_{\theta_0+h}}\big)^2 \leq \bar p$;
        \item \textbf{KL domination}: there exists a $\nu$-integrable function $\bar p$ such that for all $h$ small enough
        $h^{-2} \big| \log ( p_{\theta_0+h}/p_{\theta_0}) \big|
        |p_{\theta_0+h} - p_{\theta_0}| \leq \bar p$;
        \item \textbf{$\chi^2$ domination}: there exists a $\nu$-integrable function $\bar p$ such that for all $h$ small enough $h^{-2}
        (p_{{\theta_0}+h} / p_{\theta_0} - 1 )^2 \, p_{\theta_0}
        \leq \bar p$;
    \end{enumerate}
    then the Cramér-Rao lower bound
    \begin{align*}
        \Var_{\theta_0}\big(\wh \theta\big) \geq
        \frac{\big( \partial E_{\theta}[\wh \theta] / \partial \theta \big)^2 |_{\theta=\theta_0}}{F(\theta_0)} 
    \end{align*}
    holds, where $F(\theta_0)$ denotes the Fisher information at $\theta_0.$
\end{thm}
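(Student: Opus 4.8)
The plan is to obtain the bound as a limiting case of the change of expectation inequalities in Lemma~\ref{lem.general_lb}: take $P=P_{\theta_0}$ and $Q=P_{\theta_0+h}$, rescale the relevant inequality by $h^{-2}$, and let $h\to0$. Write $D_h:=E_{\theta_0}[\wh\theta]-E_{\theta_0+h}[\wh\theta]$. The assumed differentiability of $\theta\mapsto E_\theta[\wh\theta]$ at $\theta_0$ gives $(D_h/h)^2\to\big(\partial E_\theta[\wh\theta]/\partial\theta\big)^2\big|_{\theta=\theta_0}$, and the assumed continuity of $\theta\mapsto\Var_\theta(\wh\theta)$ at $\theta_0$ gives $\Var_{\theta_0}(\wh\theta)+\Var_{\theta_0+h}(\wh\theta)\to2\Var_{\theta_0}(\wh\theta)$ as well as $\Var_{\theta_0}(\wh\theta)\vee\Var_{\theta_0+h}(\wh\theta)\to\Var_{\theta_0}(\wh\theta)$. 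Hence everything reduces to the behaviour of the rescaled information measures; on the trivial event where $\theta\mapsto E_\theta[\wh\theta]$ is locally constant near $\theta_0$ both sides (or the left side) of the asserted inequality vanish and there is nothing to do.

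The core step is to show, with $F(\theta_0)=\int(\partial_\theta\log p_{\theta_0})^2p_{\theta_0}\,d\nu$,
\begin{align*}
    \frac{H^2(P_{\theta_0},P_{\theta_0+h})}{h^2}&\ \longrightarrow\ \frac{F(\theta_0)}{8},\qquad
    \frac{\chi^2(P_{\theta_0+h},P_{\theta_0})}{h^2}\ \longrightarrow\ F(\theta_0), \\
    \frac{\KL(P_{\theta_0},P_{\theta_0+h})+\KL(P_{\theta_0+h},P_{\theta_0})}{h^2}&\ \longrightarrow\ F(\theta_0).
\end{align*}
I would use the integral representations $H^2(P_{\theta_0},P_{\theta_0+h})=\tfrac12\int(\sqrt{p_{\theta_0}}-\sqrt{p_{\theta_0+h}})^2\,d\nu$, $\KL(P_{\theta_0},P_{\theta_0+h})+\KL(P_{\theta_0+h},P_{\theta_0})=\int\log(p_{\theta_0}/p_{\theta_0+h})(p_{\theta_0}-p_{\theta_0+h})\,d\nu$ (with non-negative integrand, since $\log$ is increasing), and $\chi^2(P_{\theta_0+h},P_{\theta_0})=\int(p_{\theta_0+h}/p_{\theta_0}-1)^2p_{\theta_0}\,d\nu$, together with the $\nu$-a.e.\ pointwise Taylor limits $h^{-2}(\sqrt{p_{\theta_0}}-\sqrt{p_{\theta_0+h}})^2\to\tfrac14(\partial_\theta\log p_{\theta_0})^2p_{\theta_0}$, $h^{-2}\log(p_{\theta_0}/p_{\theta_0+h})(p_{\theta_0}-p_{\theta_0+h})\to(\partial_\theta\log p_{\theta_0})^2p_{\theta_0}$ and $h^{-2}(p_{\theta_0+h}/p_{\theta_0}-1)^2p_{\theta_0}\to(\partial_\theta\log p_{\theta_0})^2p_{\theta_0}$, and then invoke dominated convergence. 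The three domination hypotheses in the statement are exactly the $\nu$-integrable envelopes needed to justify these three dominated-convergence arguments.

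With these limits in hand, the conclusion follows by choosing the inequality matching the domination hypothesis. Under Hellinger domination I would rewrite \eqref{eq.lem1_2}, after multiplying numerator and denominator by $h^{-2}$, as
\begin{align*}
    \frac{(D_h/h)^2\big(1-H^2(P_{\theta_0},P_{\theta_0+h})\big)^2}{\big(4-2H^2(P_{\theta_0},P_{\theta_0+h})\big)\,H^2(P_{\theta_0},P_{\theta_0+h})/h^2}\ \leq\ \Var_{\theta_0}(\wh\theta)+\Var_{\theta_0+h}(\wh\theta),
\end{align*}
and let $h\to0$ to get $2\big(\partial E_\theta[\wh\theta]/\partial\theta\big)^2\big|_{\theta=\theta_0}/F(\theta_0)\leq2\Var_{\theta_0}(\wh\theta)$. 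Under KL domination I would do the same with \eqref{eq.lem1_3} (now with $\Var_{\theta_0}(\wh\theta)\vee\Var_{\theta_0+h}(\wh\theta)$ on the right), and under $\chi^2$ domination I would simply divide the first branch of \eqref{eq.lem1_4} by $h^2$ to obtain $(D_h/h)^2\leq\big(\chi^2(P_{\theta_0+h},P_{\theta_0})/h^2\big)\Var_{\theta_0}(\wh\theta)$. In all three cases the limit is the Cram\'er-Rao inequality $\big(\partial E_\theta[\wh\theta]/\partial\theta\big)^2\big|_{\theta=\theta_0}\leq F(\theta_0)\Var_{\theta_0}(\wh\theta)$.

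The main obstacle is the interchange of limit and integral in the second step; everything else is bookkeeping. Two points require care there: the $\nu$-a.e.\ Taylor limits above presuppose that $\theta\mapsto p_\theta(\omega)$ is differentiable at $\theta_0$ for $\nu$-a.e.\ $\omega$, which is already implicit in the very definition of $F(\theta_0)$; and one must check that the chosen domination hypothesis not only dominates the integrand of its divergence but also forces the limiting integral to be the finite quantity $F(\theta_0)$, so that the resulting bound is meaningful.
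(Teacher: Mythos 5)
Your proposal is correct and follows essentially the same route as the paper's proof: for each divergence, establish $H^2/h^2 \to F(\theta_0)/8$, $(\KL+\KL)/h^2 \to F(\theta_0)$, or $\chi^2/h^2 \to F(\theta_0)$ via pointwise Taylor limits and dominated convergence (with the stated envelopes), then pass to the limit in the matching inequality of Lemma~\ref{lem.general_lb} using differentiability of $\theta \mapsto E_\theta[\wh\theta]$ and continuity of the variance. The only differences are cosmetic (your explicit treatment of the locally constant mean case and the remark on $\nu$-a.e.\ differentiability of $\theta\mapsto p_\theta$, which the paper leaves implicit).
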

\begin{proof}[Proof: Hellinger version]
    If $h \neq 0$ and $h \to 0$, then by the dominated convergence theorem,
    \begin{align*}
        \frac{H^2(P_{\theta_0}, P_{{\theta_0}+h})}{h^2}
        &= \frac{1}{2} \int
        \frac{\big( \sqrt{p_{\theta_0}(\omega)} - \sqrt{p_{{\theta_0}+h}(\omega)} \big)^2}{h^2}
         \, d\nu(\omega) \\
        &= \frac{1}{2} \int
        \frac{\big( (p_{\theta_0}(\omega) - p_{{\theta_0}+h}(\omega))/h \big)^2}{\big(\sqrt{p_{\theta_0}(\omega)} + \sqrt{p_{{\theta_0}+h}(\omega)}\big)^2} \, d\nu(\omega) \\
        &\to \frac{1}{2} \int
        \frac{\big( \partial p_{\theta_0}(\omega) / \partial {\theta_0} \big)^2}{4 p_{\theta_0}(\omega)^2} p_{\theta_0}(\omega) \, d\nu(\omega) \\
        &= \frac{F({\theta_0})}8.
    \end{align*}
    The definition of derivative gives $(E_{\theta+h}[\wh \theta] - E_\theta[\wh \theta])/h \to \partial E_\theta[\wh \theta] / \partial \theta.$ Using the change of expectation inequality for the Hellinger distance in \eqref{eq.lem1_2}, we get for $h \to 0,$
    \begin{align*}
        &\frac{(E_{{\theta_0}+h}[\wh \theta] - E_{\theta_0}[\wh \theta])^2}{4-hF(\theta_0)/4+o(h)}
        \Big( \frac{1}{h (F({\theta_0}) / 8 + o(1))^{1/2}}
        - h (F({\theta_0}) / 8 + o(1))^{1/2} \Big)^2 \\
        &\leq \Var_{\theta_0}(\wh \theta) + \Var_{{\theta_0}+h}(\wh \theta)
        \to 2\Var_{\theta_0}(\wh \theta).
    \end{align*}
    By letting $h \to 0$ on the right hand side of the inequality, we obtain the claimed inequality.
\end{proof}
\begin{proof}[Kullback-Leibler version]
    If $h \neq 0$ and $h \to 0$, then by the dominated convergence theorem,
    \begin{align*}
        &h^{-2} \big( \KL(P_{\theta_0}, P_{{\theta_0}+h})
        + \KL(P_{{\theta_0}+h}, P_{\theta_0}) \big) \\
        &= h^{-2} \int \log \bigg( \frac{p_{{\theta_0}+h}}{p_{\theta_0}} \bigg) (p_{{\theta_0}+h} - p_{\theta_0}) \, d\nu \\
        &= h^{-2} \int \log \bigg( 1 + \frac{p_{{\theta_0}+h} - p_{\theta_0}}{p_{\theta_0}} \bigg) \frac{p_{{\theta_0}+h} - p_{\theta_0}}{p_{\theta_0}} p_{\theta_0} \, d\nu \\
        &\to F({\theta_0}).
    \end{align*}
    Using the change of expectation inequality for the Kullback-Leibler divergence in~\eqref{eq.lem1_3}, we get
    \begin{align*}
        (E_{{\theta_0}+h}[\wh \theta] - E_{\theta_0}[\wh \theta])^2
        \Big( \frac{1}{h^2 (F({\theta_0})+o(1))} - \frac{1}{4}\Big)
        &\leq \Var_{\theta_0}(X) \vee \Var_{{\theta_0}+h}(X).
    \end{align*}
    By letting $h \to 0$, we obtain the claimed inequality.
\end{proof}
\begin{proof}[$\chi^2$ version]
    If $h \neq 0$ and $h \to 0$, then by the dominated convergence theorem,
    \begin{align*}
        \frac{\chi^2(P_{{\theta_0}+h} , P_{\theta_0})}{h^2}
        &= h^{-2} \int \bigg( \frac{p_{{\theta_0}+h}}{p_{\theta_0}} - 1 \bigg)^2 \,
        p_{\theta_0} \, d\nu \\
        &= h^{-2} \int \bigg( \frac{p_{{\theta_0}+h} - p_{\theta_0}}{p_{\theta_0}} \bigg)^2 \,
        p_{\theta_0}\,  d\nu
        \to F({\theta_0}).
    \end{align*}
    Using the change of expectation inequality for the $\chi^2$- divergence in~\eqref{eq.lem1_4}, we get
    \begin{align*}
        &\big(E_{{\theta_0}+h}[\wh \theta] - E_{\theta_0}[\wh \theta]\big)^2 \\
        &\leq h^2 (F({\theta_0})+o(1)) \Var_{\theta_0}(X)
        \wedge h^2 (F({\theta_0})+o(1)) \Var_{\theta_0+h}(X).
    \end{align*}
    Dividing by $h^2$ and letting $h \to 0$, we obtain the claimed inequality.
\end{proof}

% As this is not the main focus of the work, we give an informal argument without stating the exact regularity conditions. In the above inequalities, take $P$ and $Q$ to be $P_\theta$ and $P_{\theta+\Delta}$ and let $\Delta$ tend to zero. Recall that $B'(\theta)$ is the derivative of the bias at $\theta$ and $F(\theta)$ denotes the Fisher information. For any estimator $\wh \theta,$ we have for small $\Delta,$ $(E_{P_\theta}[\wh \theta]-E_{P_{\theta+\Delta}}[\wh \theta])^2 \approx \Delta^2 (1+B'(\theta))^2.$ Using that $(\sqrt{x}-\sqrt{y})^2=(x-y)^2/(\sqrt{x}+\sqrt{y})^2$ shows that $H^2(P_\theta,P_{\theta+\Delta})\approx \Delta^2 F(\theta)/8.$ Moreover, $\KL(P,Q)+\KL(Q,P)=\int \log (p/q) (p-q)$ and a first order Taylor expansion of $\log(x)$ shows that $\KL(P_\theta,P_{\theta+\Delta})+\KL(P_{\theta+\Delta},P_\theta)\approx \Delta^2 F(\theta).$ Similarly, we find $\chi^2(P_{\theta+\Delta},P_\theta)\approx \Delta^2 F(\theta).$ Replacing the divergences by their approximations then shows that for the Hellinger, Kullback-Leibler and $\chi^2$ versions, the Cram\'er-Rao lower bound can be retrieved taking the limit $\Delta \rightarrow 0$. 

\subsection{The multivariate Cram\'er-Rao lower bound as a limit of the change of expectation inequalities in Theorem \ref{thm.multiple_lb}(i)}~
\label{sec:multCRLB}

\medskip

We derive the general version of the multivariate Cramér-Rao lower bound (see e.g. \cite{rao1973linear}, p.326) as a limit of the change of expectation inequality established in Theorem~\ref{thm.multiple_lb}(i).
Let $(P_\theta, \theta \in \Theta)$ be a statistical model, with parameter space $\Theta$ an open subset of $\Rb^p$ and assuming existence of densities $p_\theta(x)$ with respect to a given dominating measure $\nu$.
For $\theta_0\in \Theta$ and a positive integer $r,$ let $f_1, \dots, f_r$ be $r$ statistics with finite variance and, for $i,j = 1, \dots, r$, and $k,l = 1, \dots, p$, set $g_i(\theta) := E_\theta[f_i],$ $V_{i,j} := E_{\theta_0} [ (f_i - g_i(\theta_0))(f_j - g_j(\theta_0))]$ and 
\begin{align*}
    F_{k,\ell} := E_{\theta} \bigg[ \frac{\partial \log p_\theta}{\partial \theta_k} \frac{\partial \log p_\theta}{\partial \theta_\ell} \bigg] \, \Big|_{\theta=\theta_0}.
\end{align*}
For the parameter $\theta_0$, $V:=(V_{i,j})_{1\leq i,j \leq r}$ is the covariance matrix of $f = (f_1, \dots, f_r)$, 
$F(\theta_0) := F:= (F_{k,l})_{1\leq k,l \leq p}$ is the Fisher information and
$\Delta:=\Delta(\theta_0) := \Jac_{\theta_0}(g_1, \dots g_j)
= (\partial_{\theta_i} g_j(\theta))_{1\leq i,j \leq p}\,|_{\theta=\theta_0}$ is the Jacobian matrix.
\begin{thm}[Multivariate Cramér-Rao lower bound]
    Assume the following local domination condition
    \begin{align*}
        \forall i,j = 1, \dots, p, \, \exists h_0 > 0, \ \text{such that} \ \forall h \in [-h_0, h_0], \,
        \frac{p_{{\theta_0} + h e_i} p_{{\theta_0} + h e_j}}{p_{\theta_0}}
        \leq \bar p,
    \end{align*}
    for some integrable function $\bar p$, where $(e_i)_i$ denotes the canonical basis of $\Rb^p$.
    Then 
    \begin{align*}
        V \geq \Delta^\top F^+ \Delta,
    \end{align*}
    in the sense of the difference being positive semi-definite. Here, $F^+$ denotes the Moore-Penrose pseudo-inverse of the Fisher information $F$.
    
    As a consequence, for any estimator $\wh \theta$ with values in $\Rb^p$, we obtain
    \begin{align*}
        \Cov_{\theta_0}(\wh \theta) \geq
        \Jac_{\theta_0}(E_{\theta_0}[\wh \theta])^\top F({\theta_0})^+
        \Jac_{\theta_0}(E_{\theta_0}[\wh \theta]).
    \end{align*}
\end{thm}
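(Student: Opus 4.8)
The plan is to obtain the bound as the $h\to0$ limit of the change-of-expectation inequality in Theorem~\ref{thm.multiple_lb}(i), applied to the reference measure $P_{\theta_0}$ together with the $p$ coordinate perturbations $P_{\theta_0+he_i}$, tested against an arbitrary linear combination of the $f_i$.

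\emph{Step 1: the inequality at scale $h$.} Fix an arbitrary $c=(c_1,\dots,c_r)^\top\in\Rb^r$ and put $X:=\sum_{i=1}^r c_if_i$, so that $\Var_{\theta_0}(X)=c^\top V c$. For $h\neq0$ small enough that $\theta_0+he_i\in\Theta$ for all $i$, set $P_0:=P_{\theta_0}$ and $P_i:=P_{\theta_0+he_i}$. Taking $i=j$ in the local domination condition shows $p_{\theta_0+he_i}^2/p_{\theta_0}$ is finite $\nu$-a.e., hence $P_i\ll P_0$, and Theorem~\ref{thm.multiple_lb}(i) applies, giving
\begin{align*}
  \Delta_h^\top\,\chi^2(P_0,\dots,P_p)^{+}\,\Delta_h\ \le\ c^\top V c,
  \qquad (\Delta_h)_k=\sum_{i=1}^r c_i\big(g_i(\theta_0+he_k)-g_i(\theta_0)\big).
\end{align*}
The point is that the right-hand side is finite and independent of $h$, so this single inequality carries all the weight once we pass to the limit.

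\emph{Step 2: two expansions.} Differentiability of the $g_i$ at $\theta_0$ gives $h^{-1}\Delta_h\to w:=\nabla\big(c^\top g\big)(\theta_0)$, which is precisely $\Delta c\in\Rb^p$ in the notation of the statement. For the information matrix, write $\chi^2(P_0,\dots,P_p)_{k,l}=\Cov_{P_0}\!\big(\tfrac{dP_k}{dP_0},\tfrac{dP_l}{dP_0}\big)$ and observe that the centred difference quotients $h^{-1}\big(p_{\theta_0+he_k}/p_{\theta_0}-1\big)$ converge $\nu$-a.e.\ to the score components $\partial_{\theta_k}\log p_{\theta_0}$; the local domination hypothesis legitimises the interchange of limit and integral (dominated convergence for these quotients and for $\int p_\theta\,d\nu\equiv 1$), whence $h^{-2}\chi^2(P_0,\dots,P_p)_{k,l}\to F_{k,l}$. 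Thus, writing $B_h:=h^{-2}\chi^2(P_0,\dots,P_p)$ and $w_h:=h^{-1}\Delta_h$, Step~1 becomes $w_h^\top B_h^{+}w_h\le c^\top V c$ for every small $h\neq0$, where $B_h\to F$ and $w_h\to w$.

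\emph{Step 3: passing to the limit past the pseudo-inverse.} The main obstacle is that $A\mapsto A^{+}$ is discontinuous, so one cannot simply conclude $B_h^{+}\to F^{+}$. The remedy is the variational identity, valid for symmetric positive semi-definite $A$ and any vector $v$:
\begin{align*}
  \sup_{u}\big(2u^\top v-u^\top A\,u\big)=
  \begin{cases}v^\top A^{+}v,& v\in\operatorname{range}(A),\\[1mm]+\infty,& v\notin\operatorname{range}(A).\end{cases}
\end{align*}
By the remark following Theorem~\ref{thm.multiple_lb}, $\Delta_h$ is orthogonal to $\ker\chi^2(P_0,\dots,P_p)=\ker B_h$, so $w_h\in\operatorname{range}(B_h)$ and $w_h^\top B_h^{+}w_h=\sup_u\big(2u^\top w_h-u^\top B_h\,u\big)$. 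For each fixed $u$, $2u^\top w_h-u^\top B_h\,u\to2u^\top w-u^\top F\,u$, so $\liminf_{h\to0}w_h^\top B_h^{+}w_h\ge2u^\top w-u^\top F\,u$; taking the supremum over $u$ and combining with Step~1 gives $\sup_u(2u^\top w-u^\top F\,u)\le c^\top V c<\infty$. Finiteness forces $w\in\operatorname{range}(F)$, and the identity then reads $w^\top F^{+}w=c^\top\Delta^\top F^{+}\Delta\,c\le c^\top V c$. As $c\in\Rb^r$ was arbitrary, $V\ge\Delta^\top F^{+}\Delta$ in the positive semi-definite order, and specialising to $r=p$ and $f_i=\wh\theta_i$ yields the stated corollary. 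I expect Step~2 (the uniform integrable control of the rescaled $\chi^2$-affinities needed to pass the difference quotients through the integral under the stated domination condition) to be the most laborious part, while Step~3 — handling the rank instability of $B_h$ through the variational characterisation of $v^\top A^{+}v$ — is the conceptual crux that makes the limit robust.
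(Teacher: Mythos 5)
Your proof is correct, but it takes a genuinely different route from the paper's at the crucial point where the Fisher information may be singular. The paper first reduces the positive semi-definiteness of $V-\Delta^\top F^+\Delta$ to nonnegativity of determinants of principal submatrices, and then reparametrizes the model along the eigenvectors of $F$ with positive eigenvalues so that the (reduced) Fisher matrix is invertible; only then can it pass to the limit, using continuity of the matrix inverse via the adjugate/determinant, since $A\mapsto A^+$ is discontinuous. You instead apply Theorem~\ref{thm.multiple_lb}(i) directly to an arbitrary linear combination $c^\top f$ and absorb the rank instability into the variational identity $v^\top A^+v=\sup_u(2u^\top v-u^\top Au)$ for $v\in\operatorname{range}(A)$, using the kernel-orthogonality of $\Delta_h$ (noted in the paper before \eqref{eq.bound_lambda1_chi_var}) to guarantee $w_h\in\operatorname{range}(B_h)$; lower semicontinuity of a supremum of continuous functions then lets you pass to the limit, and finiteness automatically forces $\Delta c\in\operatorname{range}(F)$. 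This is shorter, avoids the determinant criterion and the auxiliary sub-model $\Mk$, and treats rank deficiency uniformly; the paper's argument, by contrast, stays within elementary matrix manipulations at the price of the two reductions. Your Step~2 (the limit $h^{-2}\chi^2(P_0,\dots,P_p)\to F$ and $h^{-1}\Delta_h\to\Delta c$ under the local domination condition) is the same computation as the paper's, carried out at the same level of rigor — both implicitly assume pointwise differentiability of $\theta\mapsto p_\theta(\omega)$ and use the domination hypothesis to interchange limit and integral — so no gap arises there relative to the paper.
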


For the proof, we will use the following notation.
For any matrix $M$, and any two index sets $S,S'$, $[M]_{S,S'}$ denotes the submatrix of $M$ obtained by keeping the rows in $S$ and the columns in $S'$. $0$ denotes a matrix full of zeros and $I$ the identity matrix.

\begin{proof}
    Write $P_0 := P_{\theta_0}$ and $P_{i} = P_{\theta_0 + h e_i}$ for $i=1, \dots, p$ with $(e_i)_i$ the canonical basis of $\Rb^p$ and $h \in \Rb$.
    
    \textit{1. Reduction to proving positive determinant.}
    To show that $V - \Delta^\top F^+ \Delta$ is positive semi-definite, it is sufficient to prove that for each subset $S \subset \{1, \dots, p \}$,
    $\Det([V - \Delta^\top F^+ \Delta]_{S,S}) \geq 0$ (see \cite{meyer2000matrix}, Equation (7.6.12), p.566). As we can permute the statistics $f_1, \dots, f_r$, it is even sufficient to prove the inequality for $S = \{1, \dots, s\}$ for some $1 \leq s \leq r$. In this case, we have
    \begin{align*}
        [V - \Delta^\top F^+ \Delta]_{S,S}
        = [V]_{S,S} - 
        [\Delta]_{1:p,S}^\top F^+ [\Delta]_{1:p,S}.
    \end{align*}
    This expression is exactly the equivalent of $V - \Delta^\top F^+ \Delta$ with $r$ replaced by $s$.
    Therefore, it is sufficient to show that for any $r \geq 1$ and any statistics $f_1, \dots, f_r$, 
    $\Det(V - \Delta^\top F^+ \Delta) \geq 0$.

    \textit{2. Reduction to a non-singular and diagonal Fisher matrix.}
    If $F$ is the null matrix, then $F^+=0$ and the proof is completed. Indeed, since $V$ is a covariance matrix it is positive semi-definite and consequently $\Det(V) \geq 0.$
    
    Thus, we may assume that $F$ is not the zero matrix. Denote the rank of $F$ by $q.$ As $F$ is positive semi-definite, $F$ is of the form $F=U^\top D U$ where $U$ is orthogonal and $D$ is diagonal, the first $q$ diagonal components are positive and the remaining ones are zero. Write $U = (U_{i,j})$ for $1 \leq i,j \leq p$ and define $\v_i := \sum_{k=1}^p e_k U_{i,k}$. Our goal is to show that everything can be written locally in terms of the vectors $\v_i,$ $i=1, \dots, q$ (instead of $p$). By assumption, the Fisher information matrix in this ``diagonalized'' statistical model is invertible as we keep only the non-zero eigenvalues. Observe that
    \begin{align*}
        [U \Delta]_{i,j} &= \sum_{k=1}^p U_{i,k} \frac{\partial_\theta E_\theta[f_j]}{\partial \theta_k} \, \Big|_{\theta=\theta_0} \\
        &= \partial_t E_{\theta_0 + t \sum_{k=1}^p e_k U_{i,k}}[f_j] \, \big|_{t=0} \\
        &= \partial_t E_{\theta_0 + t \v_i}[f_j] \, \big|_{t=0},
    \end{align*}
    and 
    \begin{align*}
        [U F U^\top]_{i,j} &= \sum_{k,\ell=1}^p U_{i,k} F_{k,\ell} U_{j,\ell}
        = \sum_{k,\ell=1}^p U_{i,k} U_{j,\ell} 
        E_\theta \left[ \frac{\partial \log p_\theta}{\partial \theta_k}
        \frac{\partial \log p_\theta}{\partial \theta_\ell} \right] \, \Big|_{\theta=\theta_0} \\
        &= E_{\theta_0} \left[ \frac{\partial \log p_{\theta_0+\sum_{k=1}^p t_k \v_k}}{\partial t_i}
        \frac{\partial \log p_{\theta_0+\sum_{\ell=1}^p t_\ell \v_\ell}}{\partial t_j} \right] \, \Bigg|_{t_1=\ldots=t_k=0}.
    \end{align*}
    Consider now the statistical model
    $\Mk := (P_{\theta_0+\sum_{k=1}^q t_k \v_k}, (t_1,\dots, t_q) \in [-\eps, \eps]^q)$ for $\eps > 0$ small enough such that $\theta_0+\sum_{k=1}^q t_k \v_k \in \Theta$ for all $(t_1,\dots, t_q) \in [-\eps, \eps]^q$. Such a choice is possible as $\theta_0$ lies in the open set $\Theta.$ Therefore,
    \begin{align*}
        \Det(V - \Delta^\top F^+ \Delta)
        % = \Det(V - \Delta^\top U^\top D^+ U \Delta)
        = \Det \big(V - (U \Delta)^\top D^+ (U \Delta) \big)
        = \Det \big(V - \wt \Delta^\top \wt F^{-1} \wt \Delta \big),
    \end{align*}
    with $\wt \Delta = [U \Delta]_{1:p,1:q}
    = \Jac_{(t_1,\dots, t_q)} \big(E_{\theta_0+\sum_{k=1}^q t_k \v_k}[(f_1, \dots, f_r)] \big) \, \big|_{t_1=\ldots=t_k=0}$
    is the Jacobian matrix of the expectation of the statistic $(f_1, \dots, f_r)$ in $\Mk$ (with respect to the new model parameters $t_1, \dots, t_q$) evaluated at $t_1=\ldots=t_k=0$,
    and $\wt F = [U F U^\top]_{1:q,1:q} = [D]_{1:q,1:q}$ is the Fisher information matrix of $\Mk$. Note that $\wt F$ is non-singular and diagonal. Together with the last display, this shows that $\Det(V - \Delta^\top F^+ \Delta) \geq 0$ holds if and only if the inequality $\Det \big(V - \wt \Delta^\top \wt F^{-1} \wt \Delta \big) \geq 0$ holds. Therefore, it is sufficient to show that $\Det(V - \Delta^\top F^+ \Delta) \geq 0$ holds for a non-singular and diagonal Fisher matrix $F$.

    \textit{3. Applying change of expectation inequality.}
    We show in the next step that $\Det(V - \Delta^\top F^+ \Delta) \geq 0$ follows as a limit of the change of expectation inequality $V \geq \Delta_h^\top
    \big( \chi^2(P_0,\dots,P_M) / h^2 \big)^+ \Delta_h$ in Theorem \ref{thm.multiple_lb} (i). By Lebesgue's dominated convergence theorem, we obtain
    \begin{align}
    \begin{split}
        [\chi^2(P_0, \dots, P_M) / h^2]_{i,j}
        &= \int \frac{p_{\theta_0 + h e_i}}{p_{\theta_0}}
        \frac{p_{\theta_0 + h e_j}}{p_{\theta_0}} p_{\theta_0} d\nu - h^{-2}   \\
        &= \int \Bigg[ \bigg( \frac{p_{\theta_0 + h e_i} - p_{\theta_0}}{h p_{\theta_0}} \bigg)
        \bigg( \frac{p_{\theta_0 + h e_j} - p_{\theta_0}}{h p_{\theta_0}} \bigg)   \\
        &\quad \quad+ \frac{p_{\theta_0 + h e_i} - p_{\theta_0} + p_{\theta + h e_j} - p_\theta}{h^2 p_{\theta_0}} \Bigg] p_{\theta_0} d\nu  \\
        &= \int
        \frac{ \partial p_{\theta} / \partial \theta_i}{p_{\theta_0}}
        \frac{ \partial p_{\theta} / \partial \theta_j}{p_{\theta_0}} p_{\theta_0} d\nu \bigg|_{\theta=\theta_0}+ o(1) \\
        &= F_{i,j} + o(1),
        \end{split}
        \label{eq:cv_chi2_FisherInformation}
    \end{align}
    as $h \to 0$. By construction, $F$ is non-singular. Since the set of non-singular matrices is open, $\chi^2(P_0, \dots, P_M) / h^2$ must be non-singular for all sufficiently small $h$. In particular, \eqref{eq:cv_chi2_FisherInformation} implies that $\Det(\chi^2(P_0, \dots, P_M) / h^2) \to \Det(F^{-1})$ and that the adjugate of $\chi^2(P_0, \dots, P_M) / h^2$ converges entrywise to the adjugate of $F$. Since the inverse of a matrix is the same as dividing all entries of the adjugate by the determinant, $\Det(F^{-1})>0$ also guarantees that the inverse $(\chi^2(P_0, \dots, P_M) / h^2)^{-1}$ converges entrywise to the inverse $F^{-1}.$ Up to rescaling by $h,$ the matrix $\Delta_h := h^{-1} (E_{\theta_0 + h e_i}[f_j] - E_{\theta_0}[f_j])_{1\leq i \leq p, 1 \leq j \leq r}$ can be viewed as a discretized version of the Jacobian matrix $\Delta$, that is, $\Delta_h \to \Delta$ as $h \to 0.$ This proves that $\Det (V - \Delta_h^\top \big( \chi^2(P_0,\dots,P_M) / h^2 \big)^{-1} \Delta_h) \to \Det(V-\Delta^\top F^{-1}\Delta)$ for $h \to 0.$ Since by Theorem \ref{thm.multiple_lb} (i), $V \geq \Delta_h^\top
    \big( \chi^2(P_0,\dots,P_M) / h^2 \big)^+ \Delta_h$ we conclude that $\Det(V-\Delta^\top F^{-1}\Delta)\geq 0.$ This completes the proof.

\end{proof}

\section{Proofs for Section \ref{sec:gauss_wn}}
\label{sec.proofs_gauss_wn}

\begin{lemma}\label{lem.kernel_norm_bd}
For $0 < h \leq 1,$
\begin{align*}
    \Big\|h^\beta
    K \Big( \frac{ \cdot - x_0 }{h} \Big)\Big\|_{\Cc^\beta}
    \leq \|K\|_{\Cc^\beta(\Rb)}.
\end{align*}
\end{lemma}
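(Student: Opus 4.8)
The plan is to unwind the definition of the $\Cc^\beta$-norm applied to the rescaled, shifted kernel $g_h(x) := h^\beta K((x-x_0)/h)$ and to check separately that each of the two contributions — the sup-norms of the derivatives up to order $\floorbeta$, and the H\"older seminorm of the top derivative — is bounded by the corresponding contribution to $\|K\|_{\Cc^\beta(\Rb)}$. The key computational input is the chain rule: for $\ell \leq \floorbeta$ one has $g_h^{(\ell)}(x) = h^{\beta-\ell} K^{(\ell)}((x-x_0)/h)$, so that $\|g_h^{(\ell)}\|_{L^\infty} = h^{\beta-\ell}\|K^{(\ell)}\|_{L^\infty(\Rb)}$ after the substitution $u = (x-x_0)/h$ (which is a bijection of $\Rb$, so the supremum is unchanged). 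Since $0 < h \leq 1$ and $\beta - \ell \geq \beta - \floorbeta > 0$, we have $h^{\beta-\ell} \leq 1$, hence $\|g_h^{(\ell)}\|_{L^\infty} \leq \|K^{(\ell)}\|_{L^\infty(\Rb)}$.

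For the H\"older seminorm of the top derivative, write $g_h^{(\floorbeta)}(x) = h^{\beta-\floorbeta}K^{(\floorbeta)}((x-x_0)/h)$. For $x \neq y$ set $u = (x-x_0)/h$, $v = (y-x_0)/h$, so that $x - y = h(u-v)$ and $u \neq v$. Then
\begin{align*}
    \frac{|g_h^{(\floorbeta)}(x) - g_h^{(\floorbeta)}(y)|}{|x-y|^{\beta-\floorbeta}}
    &= \frac{h^{\beta-\floorbeta}\,|K^{(\floorbeta)}(u) - K^{(\floorbeta)}(v)|}{h^{\beta-\floorbeta}|u-v|^{\beta-\floorbeta}}
    = \frac{|K^{(\floorbeta)}(u) - K^{(\floorbeta)}(v)|}{|u-v|^{\beta-\floorbeta}},
\end{align*}
and taking the supremum over $x \neq y$ (equivalently over $u \neq v$) gives exactly the H\"older seminorm of $K^{(\floorbeta)}$ on $\Rb$. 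Crucially here the powers of $h$ cancel exactly, so no constraint on $h$ is even needed for this term. Summing the two contributions and using $h^{\beta-\ell} \leq 1$ termwise yields $\|g_h\|_{\Cc^\beta} \leq \sum_{\ell \leq \floorbeta}\|K^{(\ell)}\|_{L^\infty(\Rb)} + [K^{(\floorbeta)}]_{\beta-\floorbeta} = \|K\|_{\Cc^\beta(\Rb)}$.

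There is no real obstacle here; the statement is essentially the scaling behaviour of H\"older norms, and the only point requiring a moment's care is bookkeeping the exponents: the $h^{\beta-\ell}$ factors in the lower-order terms are $\leq 1$ precisely because $h \leq 1$ and $\ell < \beta$, while in the seminorm term the factor $h^{\beta-\floorbeta}$ in the numerator is cancelled by $|x-y|^{\beta-\floorbeta} = h^{\beta-\floorbeta}|u-v|^{\beta-\floorbeta}$ in the denominator. One should also note that the $\Cc^\beta$-norm on the left is the one on the domain $[0,1]$ while on the right it is on $\Rb$; since the relevant suprema over $[0,1]$ are bounded by the suprema over $\Rb$ after the change of variables, this only helps.
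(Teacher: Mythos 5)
Your proposal is correct and follows essentially the same route as the paper's proof: compute the derivatives to pick up factors $h^{\beta-\ell}\leq 1$ (using $0<h\leq 1$ and $\ell\leq\floorbeta<\beta$), observe that the $h^{\beta-\floorbeta}$ factors cancel in the H\"older seminorm, and bound the suprema over $[0,1]$ by those over $\Rb$. The only cosmetic slip is writing equality for $\|g_h^{(\ell)}\|_{L^\infty}$ where it should be an inequality (the domain on the left is $[0,1]$, not $\Rb$), which your closing remark already addresses.
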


\begin{proof}
Set $f(x) := h^\beta K((x- x_0)/h).$ Then,
\begin{align*}
    \| f \|_{\Cc^\beta}
    &= \sum_{\ell \leq \floorbeta} \big\|f^{(\ell)}\big\|_\infty
    + \sup_{x,y \in [0,1]}
    \frac{|f^{(\floorbeta)}(x) - f^{(\floorbeta)}(y)|}{|x-y|^{\beta - \floorbeta}} \\
    &= \sum_{\ell \leq \floorbeta} h^{\beta-\ell} \big\|K^{(\ell)}\big\|_\infty \\
    &\qquad + \sup_{x,y \in [0,1]}
    \frac{|K^{(\floorbeta)}((x-x_0)/h) - K^{(\floorbeta)}((y-x_0)/h)|}{|(x-y)/h|^{\beta -\floorbeta}} \\
    &\leq\|K\|_{\Cc^\beta(\Rb)}.
\end{align*}
\end{proof}

\subsection{Derivations of (\ref{eq.low_derived_LB})}
\label{subsec:computation_constants_Low}

We have
\begin{align} \label{eq:Low_bound_var}
    \inf_{\wh{Lf}: \, \sup_{f\in \Theta} \Var_f(\wh{Lf})\leq V}
    \, \sup_{f\in \Theta} \, \Bias_f(\wh{Lf})^2
    = \frac 14 \sup_{\eps >0} \big(w(\eps)-\sqrt{nV}\eps \big)_+^2
\end{align}
and 
\begin{align} \label{eq:Low_bound_bias}
    \inf_{\wh{Lf}: \, \sup_{f\in \Theta} |\Bias_f(\wh{Lf})|\leq B}
    \, \sup_{f\in \Theta} \, \Var_f(\wh{Lf})
    = \frac 1n \sup_{\eps >0} \eps^{-2} \big(w(\eps)-2B \big)_+^2.
\end{align}
We now show that \eqref{eq:Low_bound_var} implies 
\begin{align}\label{eq:Low_bound_var_2}
    \inf_{\wh f(x_0) : \,
    \sup_{f\in \Cc^\beta(R)} \Var_f(\wh f(x_0))\leq V}
    \, \sup_{f\in \Cc^\beta(R)} \, \Bias_f \big(\wh f(x_0) \big)^2
    \geq \Big(\frac{\gamma_{\text{Low}}(R,\beta)}{nV}\Big)^{2\beta},
\end{align}
and that \eqref{eq:Low_bound_bias} implies
\begin{align}\label{eq:Low_bound_bias_2}
    \inf_{\wh f(x_0): \, \sup_{f\in \Cc^\beta(R)} |\Bias_f(\wh f(x_0))|\leq B}
    \, \sup_{f\in \Cc^\beta(R)} \, \Var_f\big(\wh f(x_0)\big)
    &\geq \frac{\gamma_{\text{Low}}(R,\beta)}{nB^{1/\beta}}.
\end{align}
We already showed in Section \ref{sec:gauss_wn} that, for the functional $Lf=f(x_0)$ and for any $K\in \Cc^\beta(\Rb),$ $w(\eps) \geq (\eps/\|K\|_2)^{\beta/(\beta+1/2)}K(0)=\eps^{2\beta/(2\beta+1)} C_1$ with $C_1 := K(0)/\|K\|_2^{2\beta/(2\beta+1)}.$

To see that \eqref{eq:Low_bound_var} implies \eqref{eq:Low_bound_var_2}, observe that
\begin{align*}
    \inf_{\wh{Lf}: \,
    \sup_{f\in \Theta} \Var_f(\wh{Lf})\leq V}
    \, \sup_{f\in \Theta} \, \Bias_f(\wh{Lf})^2
    &\geq \frac{1}{4} \sup_{\eps > 0}
    \big( \eps^{2\beta/(2\beta+1)} C_1
    - \sqrt{nV}\eps \big)_+^2 \\
    &\geq \frac{C_1^2}4 \sup_{\eps > 0}
    \big( \eps^{2\beta/(2\beta+1)} - \sqrt{nV} \eps/ C_1 \big)_+^2.
\end{align*}
For $\alpha<1,$ the function $x\mapsto (x^\alpha-xC)_+$ attains its maximum for $x=(C/\alpha)^{1/(\alpha-1)}$ and the maximal function value is 
\begin{align*}
    \Big(\frac{C}{\alpha}\Big)^{\frac{\alpha}{\alpha-1}}-\Big(\frac{C}{\alpha}\Big)^{\frac{1}{\alpha-1}} C
    = \Big(\frac{C}{\alpha}\Big)^{\frac{\alpha}{\alpha-1}}\big(1-\alpha\big).
\end{align*}
Thus, with $\alpha=2\beta/(2\beta+1)<1,$
\begin{align*}
    \sup_{\eps > 0} \big( \eps^{2\beta/(2\beta+1)} - \sqrt{nV} \eps/ C_1 \big)_+^2
    &= \bigg( \frac{(2 \beta + 1) \sqrt{nV}}{2\beta C_1} \bigg)^{-4\beta}
    \frac{1}{(2 \beta+1)^2}.
\end{align*}
Using the definition of $C_1$ gives
\begin{align*}
    &\inf_{\wh{Lf}: \,
    \sup_{f\in \Theta} \Var_f(\wh{Lf})\leq V}
    \, \sup_{f\in \Theta} \, \Bias_f(\wh{Lf})^2
    \\
    &\hspace{1cm} \geq \frac{1}{4} \frac{K(0)^2}{\|K\|_2^{4\beta/(2\beta+1)}}
    \bigg(\frac{(2 \beta + 1) \sqrt{nV} \|K\|_2^{2\beta/(2\beta+1)}}{2\beta K(0)} \bigg)^{-4\beta}
    \frac{1}{(2 \beta+1)^2}.
\end{align*}
Optimizing over the kernel $K$, the right hand side becomes $(\gamma_{\text{Low}}(R,\beta)/(nV))^{2 \beta},$ where
\begin{align*}
    \gamma_{\text{Low}}(R,\beta)
    := \sup_{K \in \Cc^\beta(\Rb): \, R \geq \|K\|_{\Cc^\beta(\Rb)}}
    \frac{(2\beta)^2}{2^{1/\beta} (2\beta + 1)^{2+1/\beta}}
    \frac{K(0)^{2+1/\beta}}{\|K\|_2^2}.
\end{align*}
This proves \eqref{eq:Low_bound_var_2}.

\medskip

We now show that \eqref{eq:Low_bound_bias} implies \eqref{eq:Low_bound_bias_2}. Substituting $x=\eps^{-1}$ and arguing as for the variance-constrained case (taking now $\alpha=1/(2\beta+1)$), Equation~\eqref{eq:Low_bound_bias} implies that
\begin{align*}
    \inf_{\wh{Lf}: \, \sup_{f\in \Theta} |\Bias_f(\wh{Lf})|\leq B}
    \, \sup_{f\in \Theta} \, \Var_f(\wh{Lf})
    &= \frac{C_1^2}n \sup_{\eps > 0} \eps^{-2}
    \Big( \eps^{2\beta / (2\beta+1)} - \frac{2B}{C_1} \Big)_+^2 \\
    &= \frac{C_1^2}n \sup_{x > 0} 
    \Big( x^{1/ (2\beta+1)} - x\frac{2B}{C_1} \Big)_+^2 \\
    &= \frac{C_1^2}n \Big( \frac{2B(2\beta+1)}{C_1}\Big)^{-\frac 1{\beta}}\Big(\frac{2\beta}{2\beta+1}\Big)^2 \\
    &= \frac{C_1^{\frac{2\beta}{2\beta+1}} (2\beta)^2}{nB^{1/\beta} 2^{1/\beta}(2\beta+1)^{\frac{2\beta+1}{\beta}}}.
\end{align*}
Using the definition of $C_1$ and optimizing over the kernel $K,$ the right hand side becomes $\gamma_{\text{Low}}(R,\beta)/(nB^{1/\beta}),$ proving \eqref{eq:Low_bound_bias_2}. 

Whenever we have an estimator $\wh f(x_0),$ we can choose $V=\sup_{f\in \Cc^\beta(R)}\Var_f(\wh f(x_0))$ and then obtain from \eqref{eq:Low_bound_var_2} that 
\begin{align*}
    \sup_{f\in \Cc^\beta(R)} \, \Big|\Bias_f \big(\wh f(x_0) \big)\Big|^{1/\beta} \sup_{f\in \Cc^\beta(R)} \Var_f(\wh f(x_0))
    \geq \frac{\gamma_{\text{Low}}(R,\beta)}{n}.
\end{align*}
This proves \eqref{eq.low_derived_LB}.

In the same way, one can derive \eqref{eq.low_derived_LB} also as a consequence of \eqref{eq:Low_bound_bias_2}. This means that both statements \eqref{eq:Low_bound_var} and \eqref{eq:Low_bound_bias} lead to the same bias-variance lower bound.

\subsection{Proof of Theorem \ref{thm.pointwise}}

\begin{proof}[Proof of Theorem \ref{thm.pointwise}]
{\it (i):} Given an estimator $\wh f,$ let us define $B:=\sup_{f\in \Cc^\beta(R)}|\Bias_f(\wh f(x_0))|.$ It is sufficient to show that for an arbitrary estimator with $B<1$ and any $K\in \Cc^\beta(\Rb) \cap L^2(\Rb)$ satisfying $K(0)=1,$
\begin{align}
    B^{1/\beta}
    \sup_{f\in \Cc^\beta(R)} \Var_f\big(\wh f(x_0)\big) 
    \geq 
    \frac 1n \|K\|_2^{-2} \bigg( 1 - \dfrac{\|K\|_{\Cc^\beta(\Rb)}}{R} \bigg)_+^2.
    \label{eq.pointwise_to_show_i}
\end{align}
We first assume that the worst-case bias bound is positive, that is, $B>0$. In a second part, we treat the case $B=0.$

Assuming $B>0,$ we begin by constructing a subspace $\Fc$ of the parameter space $\Fc \subseteq \Cc^\beta(R)$ parametrized by $\theta \in [-1, 1]$.
For $K \in \Cc^\beta(\Rb)$ any function satisfying $K(0) = 1$ and $\|K\|_2 < +\infty,$ define $V:=R/\|K\|_{\Cc^\beta(\Rb)}$ and
\begin{equation*}
    \Fc:=\Big\{f_\theta(x) =
    \theta
    V B K \Big( \frac{ x - x_0 }{B^{1/\beta}} \Big) : |\theta| \leq  1 \Big\}.
\end{equation*}
Using Lemma \ref{lem.kernel_norm_bd} and $B<1$, we have that $\|f_\theta\|_{\Cc^\beta([0,1])}\leq |\theta| V \|K\|_{\Cc^\beta(\Rb)} \leq R$ for all $\theta \in [-1,1]$. This implies $\Fc \subseteq \Cc^\beta(R)$.

As we want to apply our information inequalities, we need to control the Kullback-Leibler divergence between two elements of $\Fc$.
As explained at the beginning of Section \ref{sec:gauss_wn},  $\KL(P_f,P_g)=\tfrac n2 \|f-g\|_{L^2[0,1]}^2.$ We will apply Lemma \ref{lem.lb_limit} (ii) to the family of distributions $(P_{f_{\theta}})_{\theta \in [0,1]}$ and $(P_{f_{\theta}})_{\theta \in [-1,0]}.$ Due to
\begin{align}
    \KL\big(P_{f_{\theta}},P_{f_{\theta+\delta}}\big)
    &= \KL\big(P_{f_{\theta+\delta}},P_{f_{\theta}}\big)
    = \frac n2  \big\|f_{\theta}-f_{\theta +\delta} \big\|_{L^2[0,1]}^2 \notag \\
    &= \frac n2  \Big\|\delta V B K\Big( \frac{ x - x_0 }{B^{1/\beta}} \Big)  \Big\|_{L^2[0,1]}^2
    \leq \frac{n}{2} \delta^2 V^2 B^{2+1/\beta} \|K\|_{L^2(\Rb)}^2,
\label{eq.pointwise_L2_bd}   
\end{align}
the constant $\kappa_K^2$ in the statement of Lemma \ref{lem.lb_limit} (ii) is bounded by the quantity $n V^2 B^{2+1/\beta} \|K\|_{L^2(\Rb)}^2.$ Now, we apply the information inequality~\eqref{eq.lb_limit_KL} to the random variable $\wh f(x_0).$ This gives 
\begin{align*}
    \big(E_{f_{\pm 1}}\big[\wh f(x_0)\big]-E_{f_0}\big[\wh f(x_0)\big]\big)^2
    \leq n V^2 B^{2+1/\beta} \|K\|_{L^2(\Rb)}^2
    \sup_{|\theta|\leq 1} \Var_{f_{\theta}}\big(\wh f(x_0)\big),
\end{align*}
where $E_{f_{\pm 1}}$ stand for either $E_{f_1}$ or $E_{f_{-1}}.$

\begin{figure}[tb]
    % \centering
    % \resizebox{!}{8cm}{
    \begin{tikzpicture}
        \begin{axis}[
        width=14cm,height=9cm,
        axis lines=middle,
        xlabel=$\theta$, ylabel={True value $f_\theta(x_0)=\theta \times VB$},
        xlabel style = {anchor=north east},
        ylabel style = {anchor=north east},
        xtick=\empty, ytick=\empty,
        clip=false,
        xmin = -2.5, xmax = 2.5, ymin = -5.3, ymax = 5.3]
        
        \addplot [name path=A,domain=-2:2, samples=201, color=black] {3/2*x} node[right] {};
        
        % The positive part
        \coordinate (V1) at ($(2,3.8)$);
        \coordinate (V2) at ($(2,2.2)$);
        \node[align=right] (vp1) at ($(-0.4,3.8)$) {$(V+1)B$};
        \fill (0,3.8) circle (2pt);
        \node[align=right] at ($(-0.24,3)$) {$VB$};
        \fill (0,3) circle (2pt);
        \node[align=right] at ($(-0.4,2.2)$) {$(V-1)B$};
        \fill (0,2.2) circle (2pt);
        
        \draw[|-|, thick, color=red] (V1.center) -- (V2.center);
        \draw[dashed] (0,3.8) -- (V1.center);
        \draw[dashed] (0,3) -- (2,3);
        \draw[dashed] (0,2.2) -- (V2.center);
        
        \node at ($(2,4.3)$) {\color{red}$E_{f_1}[\wh f(x_0)]$};
        \draw[dashed] (2,2.2) -- (2,0);
        \fill (2,0) circle (2pt);
        \node at ($(2,-0.4)$) {$1$};
        
        % The negative part
        \coordinate (V3) at ($(-2,-3.8)$);
        \coordinate (V4) at ($(-2,-2.2)$);
        \node at ($(0.4,-3.8)$) {$-(V+1)B$};
        \fill (0,-3.8) circle (2pt);
        \node at ($(0.24,-3)$) {$-VB$};
        \fill (0,-3) circle (2pt);
        \node at ($(0.4,-2.2)$) {$-(V-1)B$};
        \fill (0,-2.2) circle (2pt);
        
        \draw[|-|, thick, color=red] (V3.center) -- (V4.center);
        \draw[dashed] (0,-3.8) -- (V3.center);
        \draw[dashed] (0,-3) -- (-2,-3);
        \draw[dashed] (0,-2.2) -- (V4.center);
        
        \node at ($(-2,-4.3)$) {\color{red}$E_{f_{-1}}[\wh f(x_0)]$};
        \draw[dashed] (-2,-2.2) -- (-2,0);
        \node at ($(-2,0.4)$) {$-1$};
        \fill (-2,0) circle (2pt);
        \end{axis}
    \end{tikzpicture}
    % }
    \caption{How an upper bound $B$ on the bias results in a lower bound for $E_{f_{\theta}}[\wh f(x_0)]$ with $\theta = \pm 1$ (red intervals).}
    \label{fig:contraint_bias}
\end{figure}
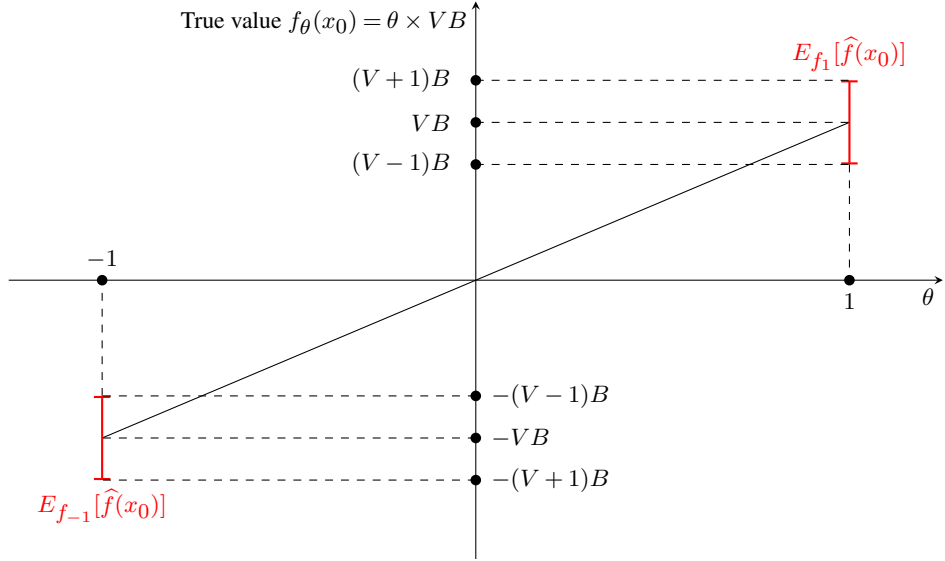

Recall that $K(0)=1$ and notice that it is enough to prove the result for $V \geq 1.$ Therefore, $\Bias_{f_\theta}(\wh f(x_0)) = E_{f_{\theta}}[\wh f(x_0)]-f_{\theta}(x_0) = E_{f_{\theta}}[\wh f(x_0)]-\theta V B$. As displayed in Figure~\ref{fig:contraint_bias}, applied to the parameter values $\theta = \pm 1$ (that is, the extreme elements of the parametric family) this yields the constraints $E_{f_1}[\wh f(x_0)]\geq (V-1)B$ and $E_{f_{-1}}[\wh f(x_0)]\leq -(V-1)B.$
Choosing for the lower bound $f_1$ if $E_{f_0}[\wh f(x_0)]$ is negative and $f_{-1}$ if $E_{f_0}[\wh f(x_0)]$ is positive, we find that $\big(E_{f_{\theta}}\big[\wh f(x_0)\big]-E_{f_0}\big[\wh f(x_0)\big]\big)^2 \geq (V-1)^2 B^2$ for either $\theta = 1$ or $\theta=-1$. Therefore,
\begin{align*}
    (V-1)^2 B^2
    \leq n V^2 B^{2+1/\beta} \|K\|_{L^2(\Rb)}^2
    \sup_{|\theta|\leq 1} \Var_{f_{\theta}}\big(\wh f(x_0)\big).
\end{align*}
Dividing both sides by $n V^2 B^2 \|K\|_{L^2(\Rb)}^2$ yields \eqref{eq.pointwise_to_show_i}.

To complete the proof, it remains to consider the case $B = 0$. Let $\wh f$ be an estimator such that $B=\sup_{f\in \Cc^\beta(R)}|\Bias_f(\wh f)| = 0$. Define the estimator $\wh f_\delta := \wh f + \delta$ with $\delta\in (0,1).$ Since $\delta$ is deterministic, $\Var_f(\wh f_\delta(x_0))=\Var_f(\wh f(x_0)).$ Applying the lower bound derived above gives
\begin{align*}
    \delta^{1/\beta }\sup_{f\in \Cc^\beta(R)}
    \Var_f\big(\wh f(x_0)\big)
    \geq 
    \frac 1n \Bigg(\|K\|_2^{-1} \bigg( 1 - \dfrac{\|K\|_{\Cc^\beta(\Rb)}}{R} \bigg)_+ \Bigg)^2.
\end{align*}
For $\delta \rightarrow 0,$ we obtain $\sup_{f\in \Cc^\beta(R)}\Var_f(\wh f(x_0)) \rightarrow \infty$ and the conclusion holds because of $(+\infty)\cdot 0=+\infty.$ This completes the proof for $(i)$.

\medskip

{\it (ii):} We use the same notation as for the proof of $(i).$ It is sufficient to show that for an arbitrary estimator $\wh f$ with worst-case bias $B<1$ and any $f\in \Cc^\beta(R),$
\begin{align}
    B^{1/\beta}
    \Var_f\big(\wh f(x_0)\big) 
    \geq  
    \frac {\ol\gamma(R,\beta,C,\|f\|_{\Cc^\beta([0,1])})} n.
    \label{eq.pointwise_to_show_ii}
\end{align}
Assume first that $B>0.$ For any function $K \in \Cc^\beta(\Rb)$ satisfying $K(0) = 1$ and $\|K\|_2 < +\infty,$ define $U:=(R-\|f\|_{\Cc^\beta([0,1])})/\|K\|_{\Cc^\beta(\Rb)}$ and
\begin{equation*}
    \Gc:=\Big\{f_\theta(x) = f(x)+
    \theta
    U B K \Big( \frac{ x - x_0 }{B^{1/\beta}} \Big) : |\theta| \leq  1 \Big\}.
\end{equation*}
Combining the fact that the triangle inequality holds for any norm with Lemma \ref{lem.kernel_norm_bd} (using $B<1$) and $|\theta|\leq 1,$ we obtain $\|f_\theta\|_{\Cc^\beta([0,1])}\leq \|f\|_{\Cc^\beta([0,1])}+U \|K\|_{\Cc^\beta(\Rb)}\leq R.$ Hence $\Gc \subseteq \Cc^\beta(R).$ As explained at the beginning of Section \ref{sec:gauss_wn}, the $\chi^2$-divergence in this model is $\chi^2(P_f,P_g)=\exp(n\|f-g\|_{L^2[0,1]}^2)-1.$ By assumption, $B^{2+1/\beta}\leq C/n.$ Combining this with the inequality $e^x-1\leq xe^x$ and arguing as in \eqref{eq.pointwise_L2_bd}, we find that 
\begin{align*}
    \chi^2(P_{f_{\pm 1}},P_{f_0})
    &\leq n\|f_{\pm 1}-f\|_{L^2[0,1]}^2\exp\Big(n\|f_{\pm 1}-f\|_{L^2[0,1]}^2\Big) \\
    &\leq nU^2 B^{2+1/\beta} \|K\|_2^2
    \exp\big( C U^2\|K\|_2^2\big). 
\end{align*}
Applying the $\chi^2$-divergence version of Lemma \ref{lem.general_lb} to the random variable $\wh f(x_0)$ and using the just derived bound for the $\chi^2$-divergence in the Gaussian white noise model yields
\begin{align*}
    \big(E_{f_{\pm 1}}\big[\wh f(x_0)\big] - 
    E_0\big[\wh f(x_0)\big] \big)^2
    \leq nU^2 B^{2+1/\beta} \|K\|_2^2
    \exp\big(C U^2\|K\|_2^2\big) \Var_f\big(\wh f(x_0)\big).
\end{align*}
By arguing as for the proof of $(i)$ with the constant $V$ replaced by $U,$ we obtain 
\begin{align*}
    (U-1)_+^2 B^2
    \leq nU^2 B^{2+1/\beta} \|K\|_2^2
    \exp\big(C U^2\|K\|_2^2\big) \Var_f\big(\wh f(x_0)\big).
\end{align*}
Rearranging the terms and taking the supremum over all kernels $K\in \Cc^\beta(\Rb)$ with $K(0)=1$ yields \eqref{eq.pointwise_to_show_ii}.

The case $B=0$ can be treated in the same way as in the proof for $(i)$ since we can always choose a sufficiently small $\delta>0,$ such that $\wh f_{\delta}=\wh f+\delta \in \Sc.$
\end{proof}

\subsection{Proof of \eqref{eq.8888}}
\label{sec.proof8888}

Assume $a < R-1$, and consider the function $K_A := \exp(-x^2/A)$, where $A > 0$ is chosen such that $\|K_A\|_{\Cc^\beta} = (1 + R-a) / 2$. Such a choice is possible since $A>0 \mapsto \|K_A\|_{\Cc^\beta}$ is a continuous function onto $(1, +\infty)$.
Note that $\|K_A\|_2^2 = \sqrt{A \pi / 2}$ and that $K_A(0) = 1$.
Therefore, for every $b \in [0,a]$, $\ol \gamma(R,\beta,C,b) \geq \ol \gamma^*(R,\beta,C,b),$ where
\begin{align*}
    \ol \gamma^*(R,\beta, C, b)
    &:= \|K_A\|_2^{-2} \bigg( 1 - \dfrac{\|K_A\|_{\Cc^\beta(\Rb)}}{R-b} \bigg)_+^2 \exp\bigg(-C(R-b)^2 \frac{\|K_A\|_2^2}{\|K_A\|_{\Cc^\beta(\Rb)}^2} \bigg) \\
    &= \frac{1}{\sqrt{A \pi / 2}} \bigg( 1 - \dfrac{1 + R - a}{2(R - b)} \bigg)^2 \exp\bigg(-C(R-b)^2 \frac{\sqrt{8 A \pi}}{(1 + R - a)^2} \bigg) \\
    &> 0,
\end{align*}
using $2(R - b) > 1 + R - a$ which holds since $b \leq a < R - 1$.
This is a positive continuous function over the compact interval $[0,a]$, so $0 < \inf_{b\leq a}\ol \gamma^*(R,\beta,C,b) \leq \inf_{b\leq a}\ol \gamma(R,\beta,C,b)$.\qed

\section{Proofs for Section \ref{sec.boundary}}
\label{sec.proofs_boundary}

The information measures in the support boundary model are governed by the $L^1$-geometry. For a detailed description of the following results, see Section 2 in \cite{MR4158798}. If $P_f$ denotes the distribution of the data for support boundary $f,$ then it can be shown that $P_f$ is dominated by $P_g$ if and only if $g \leq f$ pointwise. If $g\leq f$, then, the likelihood ratio is given by $dP_f/dP_g =\exp(n\int_0^1 (f(x)-g(x)) \, dx)\mathbf{1}(\forall i: f(X_i) \leq Y_i)$. In particular, we have for $g \leq f,$ $\alpha>0,$ and $\|\cdot\|_1$ the $L^1([0,1])$-norm, $E_g[(dP_f/dP_g)^\alpha]=\exp(n\|f-g\|_1(\alpha-1))E_g[dP_f/dP_g]=\exp(n\|f-g\|_1(\alpha-1))$ and so $H^2(P_f,P_g)=1-\exp(-\tfrac{n}2 \|f-g\|_1)$ and $\chi^2(P_f,P_g)=\exp(n\|f-g\|_1)-1.$

\medskip

Before proving Theorem \ref{thm.B_V_tradeoff_boundary}, we briefly comment on the change of expectation inequalities (Lemma \ref{lem.general_lb}) applied to this model. Since $\KL(P_f,P_g)+\KL(P_g,P_f)=\infty$ whenever $f \neq g,$ the Kullback-Leibler version of Lemma \ref{lem.general_lb} is not applicable in this case. Also we argued earlier that for regular models, we can retrieve the Cram\'er-Rao lower bound from the lower bounds in Lemma \ref{lem.general_lb} by choosing $P=P_\theta,$ $Q=P_{\theta+\Delta}$ and letting $\Delta$ tend to $0.$ As no Fisher information exists in the support boundary model, it is of interest to study the abstract lower bounds in Lemma \ref{lem.general_lb} under the limit $\Delta \to 0$. For this, consider constant support boundaries $f_\theta=\theta.$ It is then natural to evaluate the lower bounds for the sufficient statistic $X = \min_i Y_i$ for $\theta.$ That this is indeed a sufficient statistic can be shown by first observing that from the likelihood ratio formula given above, it follows that $X$ is the MLE for $\theta$ and then applying Proposition 3.1 in \cite{MR3606758}. Moreover, under $P_{f_\theta}$, $X-\theta$ follows an exponential distribution with rate parameter $n$, see Section 4.1 in \cite{reissSHAnnals} for more details. With $P=P_{f_{\theta}}$ and $Q=P_{f_{\theta+\Delta}},$ $(H^{-1}(P,Q)-H(P,Q))^{-2}=e^{n\Delta}(1-e^{-n\Delta/2}),$ $2+2(1-H^2(P,Q))=2(1+e^{-n\Delta/2})$ and $\chi^2(P,Q)\wedge \chi^2(Q,P)=e^{n\Delta}-1.$ Since $E_P[X]=\theta+1/n,$ $E_Q[X]=\theta+\Delta+1/n,$ and $\Var_P(X)=\Var_Q(X)=1/n^2,$ we find that the Hellinger lower bound \eqref{eq.lem1_2} can be rewritten as $\Delta^2 \leq 4(e^{n\Delta}-1)/n^2$ and the $\chi^2$-divergence lower bound \eqref{eq.lem1_4} becomes $\Delta^2\leq (e^{n\Delta}-1)/n^2.$ In both inequalities the upper bound is of the order $\Delta^2$ if $\Delta \asymp 1/n.$ Otherwise the inequalities are suboptimal in the sense that the rates on the right hand side and left hand side of the inequalities do not match. While the Cram\'er-Rao asymptotics $\Delta \to 0$ for fixed $n$ does not yield anything useful here, we still can obtain rate-optimal lower bounds for the bias-variance trade-off by applying a change of expectation inequality in the regime $\Delta=\Delta_n\asymp 1/n.$

\begin{proof}[Proof of Theorem \ref{thm.B_V_tradeoff_boundary}]
We follow the same strategy as in the proof of Theorem \ref{thm.pointwise}. Let $B:=\sup_{f\in  \Cc^\beta(R)} \, |\Bias_f(\wh f(x_0))|.$ Assume first that $B>0.$ By assumption, we can find a function $K \in L^2(\Rb)$ satisfying $\|K\|_{\Cc^\beta(\Rb)}< (R+\kappa)/4,$ $K(0) = 1$ and $K \geq 0.$ For such a $K,$ define $U:=(R-\|f\|_{\Cc^\beta([0,1])})/\|K\|_{\Cc^\beta(\Rb)}$ and observe that $U>2,$ whenever $f\in \Cc^\beta((R-\kappa)/2).$ Let
\begin{equation*}
    \Gc:=\Big\{f_\theta(x) = f(x)+
    \theta
    U B K \Big( \frac{ x - x_0 }{B^{1/\beta}} \Big) : |\theta| \leq  1 \Big\}.
\end{equation*}
As seen in the proof of Theorem \ref{thm.pointwise}, this defines a subset of the H\"older ball $\Cc^\beta(R).$ As derived in Section \ref{sec.boundary}, the $\chi^2$-divergence in this model is $\chi^2(P_f,P_g)=\exp(n\|f-g\|_1)-1,$ whenever $f \geq g.$ By assumption, $B^2\leq \sup_{f\in \Cc^\beta(R)} \MSE_{f}(\wh f(x_0)) \leq (C/ n)^{2\beta/(\beta+1)}.$ Rewriting gives $B^{1+1/\beta}=B^{(\beta+1)/\beta}\leq C/n.$ Combining this with the inequality $e^x-1\leq xe^x$ and using that $f \leq f_1$ pointwise, we find that 
\begin{align*}
    \chi^2(P_{f_1},P_f)
    &\leq n\|f_1-f\|_1\exp\Big(n\|f_1-f\|_1\Big) \\
    &\leq n U B^{1+1/\beta} \|K\|_1
    \exp\big( C U\|K\|_1\big). 
\end{align*}
Applying the $\chi^2$-divergence version of Lemma \ref{lem.general_lb} to the random variable $\wh f(x_0)$ and using the just derived bound for the $\chi^2$-divergence yields
\begin{align*}
    \big(E_{f_1}\big[\wh f(x_0)\big] - 
    E_f\big[\wh f(x_0)\big] \big)^2
    \leq n U B^{1+1/\beta} \|K\|_1
    \exp\big( C U\|K\|_1\big) \Var_f\big(\wh f(x_0)\big).
\end{align*}
Due to $K(0)=1,$ we have that $f_1(x_0)-f(x_0)= U B.$ Since $B$ is the supremum over the absolute value of the bias, it follows that $E_{f_1}[\wh f(x_0)] - E_f[\wh f(x_0)]\geq UB-2B$ and consequently
\begin{align}
    (U-2)_+^2 B^2
    \leq n U B^{1+1/\beta} \|K\|_1
    \exp\big( 2C U\|K\|_1\big) \Var_f\big(\wh f(x_0)\big).
    \label{eq.thm_boundary1}
\end{align}
Recall that $U>2,$ whenever  $f\in \Cc^\beta((R-\kappa)/2).$ Due to $\beta<1,$ the bound $B^2<c n^{-2\beta/(\beta+1)}$ implies $B^{1-1/\beta}/n \geq c^{(1-1/\beta)/2}n^{-2\beta/(\beta+1)}.$ By making $c$ sufficiently small, \eqref{eq.thm_boundary1} shows that eventually $\Var_0(\wh f(x_0))\geq (C/n)^{2\beta/(\beta+1)}.$ This is a contradiction, since we have also $\Var_0(\wh f(x_0))\leq \MSE_{0}(\wh f(x_0))< (C/n)^{2\beta/(\beta+1)}.$ Hence, there exists a value $c=c(\beta, C, R),$ such that $B^2\geq c n^{-2\beta/(\beta+1)}.$ This proves \eqref{eq.thm_boundary_claim1}. 

To verify \eqref{eq.thm_boundary_claim2}, we can use the inequality $B^2\leq \sup_{f\in \Cc^\beta(R)} \, \MSE_f(\wh f(x_0)) \leq (C/n)^{2\beta/(\beta+1)}.$ This gives $B^{1-1/\beta}/n \geq C^{(1-1/\beta)/2}n^{-2\beta/(\beta+1)}$ and if inserted in \eqref{eq.thm_boundary1} shows the existence of a positive constant $c'(\beta, C, R)$ such that $\Var_f(\wh f(x_0))\geq c'(\beta, C, R)n^{-2\beta/(\beta+1)}.$

Suppose now that $B=0$ holds. Then we can add a (deterministic) positive sequence $\delta_n < \sqrt{c} n^{-\beta/(\beta+1)}$ to the estimator such that for the perturbed estimator $\wh f_\delta,$ we still have $\sup_{f\in \Cc^\beta(R)}\MSE_f(\wh f_\delta(x_0))<(C/n)^{2\beta/(\beta+1)}.$ Since $B^2< cn^{-2\beta/(\beta+1)},$ applying the argument above shows that such an estimator cannot exist. Therefore, $B=0$ is impossible.
\end{proof}

\section{Proofs for Section \ref{sec.reduction}}
\label{sec.proofs_reduction}

\begin{proof}[Proof of Proposition \ref{prop.L2_reduction_lb_1}] It will be enough to prove the result for $\Gamma_\beta$ replaced by $\|K\|_{S^\beta}$ for an arbitrary function $K \in S^\beta(\Rb)$ with $\|K\|_{L^2(\Rb)}=1$ and support contained in $[-1/2, 1/2]$. Introduce
\begin{equation}
    \Fc:=\bigg\{ f_\theta(x) = \sum_{i = 1}^m \theta_i \sqrt{m}
    K \big( m x- (i-1/2)\big) : \|\theta\|_2 \leq \frac R{\|K\|_{S^\beta}m^\beta} \bigg\}.
    \label{eq.Fc_def_L2-version}
\end{equation}
The support of the function $K( m x- (i-1/2))$ is contained in $[i-1,i].$ For different $i$ and $j,$ the dilated and scaled kernel functions have therefore disjoint support and
\begin{align*}
    \| f_\theta \|_{S^\beta}^2
    &= \int_0^1 \bigg( \sum_{i = 1}^m \theta_i
    \sqrt{m} K \big( m x- (i-1/2)\big)\bigg)^2 \, dx \\
    &\qquad + \int_0^1 \bigg(\sum_{i = 1}^m \theta_i m^{\beta+1/2}
    K^{(\beta)} \big( m x- (i-1/2)\big)\bigg)^2 \, dx \\
    &= \sum_{i = 1}^m \theta_i^2 \int_0^1 m K \big( m x- (i-1/2)\big)^2 \\
    &\qquad + m^{2\beta+1}
    K^{(\beta)} \big( m x- (i-1/2)\big)^2 \, dx \\
    &= \sum_{i = 1}^m \theta_i^2 m^{2\beta} \|K\|_{S^\beta}^2 \leq R,
\end{align*}
so that $\Fc \subset S^\beta(R),$ since $\|\theta\|_2\leq  R /(\|K\|_{S^\beta}m^{\beta}).$ It is therefore sufficient to prove Proposition \ref{prop.L2_reduction_lb_1} with $S^\beta(R)$ replaced by $\Fc.$ We say that two statistical models are equivalent if the data can be transformed into each other without knowledge of the unknown parameters. The Gaussian white noise model \eqref{eq.mod_GWN} is by definition equivalent to observing all functionals $\int_0^1 \phi(t) \, dY_t$ with $\phi \in L^2([0,1]).$ In particular, for any orthonormal $L^2([0,1])$ basis $(\phi_i)_{i=1,\dots},$ the Gaussian white noise model is equivalent to observing $X_i:=\int_0^1 \phi_i(t) \, dY_t,$  $i=1,\dots$ The latter is the well-known sequence space formulation. The functions $\psi_i:=\sqrt{m}K( m \cdot - (i-1/2))$ are orthogonal (because of the disjoint support) and $L^2$-normalized. Choosing $\phi_i=\psi_i$ for $i=1,\dots,m$ and extending this to an orthonormal basis of $L^2([0,1]),$ we find that the Gaussian white noise model with parameter space $\Fc$ is equivalent to observing 
\begin{align*}
    X_i = \theta_i \mathbf{1}(i\leq m) +\frac1{\sqrt{n}} \eps_i,\quad i=1,\dots 
\end{align*}
with independent $\eps_i \sim \Nc(0,1).$ Here we have used that $\int_0^1 \phi_i(t) \, dY_t = \int_0^1 \phi_i(t) f(t) \, dt + n^{-1/2}\int_0^1 \phi_i(t) \, dW_t$ and that $\eps_i:=\int_0^1 \phi_i(t) \, dW_t$ are standard normal and independent.

Because of the equivalence, every estimator $\wh f$ in the Gaussian white noise model with parameter space $\Fc$ can be rewritten as an estimator $\wh f=\wh f(X_1,\dots)$ depending on the transformed data $X_1,X_2,\dots$ Moreover, for any estimator $\wh f$ for the regression $f$ in the Gaussian white noise model, we can consider the estimator $\wt \theta=(\wt \theta_1, \dots, \wt \theta_m)$ with $\wt \theta_i := \int_0^1 \wh f(x) \psi_i(x) \, dx.$ This is now an estimator depending on $X_1,X_2,\dots$ Observe that $(X_1,\dots, X_m)$ is a sufficient statistic for the vector $\theta.$ In view of the Rao-Blackwell theorem, it is then natural to eliminate the dependence on $X_{m+1},X_{m+2}, \dots$ by considering the estimator $\wh \theta_i := E[\wt \theta_i|X_1,\dots,X_m].$ This estimator only depends on the Gaussian sequence model with data $(X_1,\dots, X_m).$ 

The proof is complete if we can show that $\|E_\theta [\wh \theta] -\theta \|_2^2 \leq \IB_{f_\theta}(\wh f)$ and $\sum_{i=1}^m \Var_\theta\big(\wh \theta_i\big) \leq \IVar_{f_\theta}(\wh f)$ for all $f_\theta \in \Fc,$ or equivalently, for all $\theta \in \Theta.$ First observe that $\|E_\theta [\wh \theta] -\theta \|_2^2=\|E_\theta [\wt \theta] -\theta \|_2^2$ and by using the formula for the conditional variance, we have $\Var_\theta(\wh \theta_i)=\Var_\theta(\wt \theta_i)-E[\Var_\theta(\wt \theta_i |X_1,\dots,X_m)]\leq \Var_\theta(\wt \theta_i)$ for all $i=1,\ldots,m.$ It is therefore sufficient to show that $\|E_\theta [\wt \theta] -\theta \|_2^2 \leq \IB_{f_\theta}(\wh f)$ and $\sum_{i=1}^m \Var_\theta(\wt \theta_i) \leq \IVar_{f_\theta}(\wh f)$ for all $f_\theta \in \Fc.$

Denote by $\Gc$ the linear span of $(\psi_i)_{i=1,\dots,m}$ and by $\Gc^c$ the orthogonal complement of $\Gc$ in $L^2([0,1]).$ Obviously, $\Gc$ is a finite-dimensional subspace of $L^2([0,1])$ and hence closed. Let $\wt f := \sum_{i=1}^m \wt \theta_i \psi_i$ with $\wt \theta_i$ as defined above.  Since $\wt f$ is the $L^2$-projection of $f$ on $\Gc,$ it holds that $\wh f-\wt f\in \Gc^c.$ Consequently, $\wt f$ and $\wh f-\wt f$ must be orthogonal in $L^2([0,1])$. Moreover, also $E_{f_\theta}[\wt f] \in \Gc$ and $E_{f_\theta}[\wh f-\wt f] \in \Gc^c.$ Therefore, for any $f_\theta\in \Fc,$
\begin{align*}
    \IVar_{f_\theta}(\wh f)
    &= \int_0^1 \Var_{f_\theta}(\wh f(x)) \, dx
    = \int_0^1 E_{f_\theta} \Big[ \big( \wh f(x)) - E_{f_\theta}[\wh f(x)] \big)^2 \Big] \, dx \\
    &= E_{f_\theta} \Big[ \int_0^1 \big( \wh f(x)) - E_{f_\theta}[\wh f(x)] \big)^2 \, dx \Big] \\
    &= E_{f_\theta} \Big[ \big\| \wt f + (\wh f-\wt f)
    - E_{f_\theta} \big[\wt f + (\wh f-\wt f)\big] \big\|_2^2 \Big] \\
    &= E_{f_\theta}\Big[ \big\| \wt f - E_{f_\theta}[\wt f ] \big\|_2^2 \Big]
    + E_{f_\theta}\Big[ \big\| \wh f-\wt f - E_{f_\theta}[\wh f-\wt f ] \big\|_2^2 \Big] \\
    &\geq E_{f_\theta} \Big[ \big\| \wt f - E_{f_\theta}[\wt f ] \big\|_2^2 \Big] \\
    &= \IVar_{f_\theta}(\wt f).
\end{align*}
Using that the $\psi_i$ are orthonormal with respect to $L^2([0,1])$,
\begin{align*}
    \IVar_{f_\theta}(\wt f)
    &= \int_0^1 E_{f_\theta}\Bigg[ \bigg( \sum_{i=1}^m (\wt \theta_i - E_{f_\theta}[\wt \theta_i]) \psi_i(x) \bigg)^2 \Bigg] \, dx \\
    &= \int_0^1 E_{f_\theta} \Bigg[ \sum_{i=1}^m \Big(\wt \theta_i - E_{f_\theta} [\wt \theta_i]\Big)^2 \psi_i^2(x) \Bigg] \, dx \\
    &= \sum_{i=1}^m \Var_\theta\big(\wt \theta_i\big).
\end{align*}
Combined with the previous display, this proves that $\sum_{i=1}^m \Var_\theta(\wt \theta_i) \leq \IVar_{f_\theta}(\wh f)$ for all $f_\theta \in \Fc.$

With the same notation as above, we find using $f_\theta \in \Gc,$
\begin{align*}
    \IB_{f_\theta}\big(\wh f\big)
    &= \int_0^1 \big( E_{f_\theta}[\wh f(x)] - f_\theta(x) \big)^2 \, dx \\
    &= \big\| E_{f_\theta}[\wh f] - f_\theta \big\|_2^2 \\
    &= \big\| E_{f_\theta}[\wt f] - f_\theta \big\|_2^2
    + \big\| E_{f_\theta}\big[\wh f- \wt f\big] \big\|_2^2 \\
    &\geq \big\| E_{f_\theta}[\wt f] - f_\theta \big\|_2^2 \\ 
    &= \IB_{f_\theta}\big(\wt f\big)
\end{align*}
and
\begin{align*}
    \IB_{f_\theta}(\wt f)
    &= \int_0^1 \Big( E_{f_\theta}\big[\wt f(x)\big] - f_\theta(x) \Big)^2 \, dx \\
    &= \int_0^1 \bigg( \sum_{i=1}^m \Big(E_{f_\theta} \big[\wt \theta_i\big] - \theta_i\Big)
    \psi_i(x) \bigg)^2 \, dx \\
    &= \int_0^1 \sum_{i=1}^m \Big(E_{f_\theta}\big[\wt \theta_i\big]  - \theta_i\Big)^2
    \psi_i^2(x) \, dx \\
    &= \sum_{i=1}^m \Big( E_{f_\theta}\big[\wt \theta_i\big] - \theta_i \Big)^2 \\
    &= \big\|E_\theta \big[\wt \theta\big] -\theta \big\|_2^2.
\end{align*}
This finally proves $\|E_\theta [\wt \theta] -\theta \|_2^2 \leq \IB_{f_\theta}(\wh f).$ The proof is complete.
\end{proof}

\begin{proof}[Proof of Proposition \ref{prop.spheri_symm}]
We follow Stein~\cite[p.201]{stein1956inadmissibility} and denote by $\mu$ the Haar measure on the orthogonal group $\Om.$ In particular, $\mu(\Om)=1.$ We write $\wh\theta(X)$ and $\wt \theta(X)$ to highlight the dependence on the sample $X \in \Rb^m.$ Given $\wh \theta(X),$ define
\begin{align*}
    \wt \theta(X)
    := \int D^{-1} \wh\theta(DX) \, d\mu(D),
\end{align*}
where the integral is over the orthogonal group. By construction, $\wt \theta(X)$ is a spherically symmetric estimator. Using Jensen's inequality, the fact that $DX \sim \Nc(D\theta,I_m/n)$ with $I_m$ the $m\times m$ identity matrix, and $\theta=D^{-1}D \theta$ yields for any $\theta\in \Theta_m^\beta(R),$
\begingroup\allowdisplaybreaks
\begin{align*}
    \big\| E_\theta \big[ \wt \theta(X) \big] - \theta \big\|_2^2
    &= \bigg\| E_\theta \bigg[ \int_{D \in \Om} 
    D^{-1} \wh\theta(DX) \, d\mu(D) \bigg]
    - \theta \bigg\|_2^2 \\
    &\leq \int_{D \in \Om}
    \Big\| E_\theta \big[ D^{-1} \wh\theta(DX) \big]
    - \theta \Big\|_2^2 d\mu(D) \\
    &\leq \int_{D \in \Om}
    \Big\| E_{D\theta} \big[ D^{-1} \wh\theta (X) \big]
    - \theta \Big\|_2^2 d\mu(D) \\
    &\leq \int_{D \in \Om}
    \Big\| E_{D\theta} \big[ \wh\theta (X) \big]
    - D\theta \Big\|_2^2 d\mu(D) \\
    &\leq \sup_{\theta\in \Theta_m^\beta(R)}\big\|E_\theta \big[\wh \theta(X)\big] -\theta \big\|_2^2.
\end{align*}
\endgroup
With $e_i$ the $i$-th standard basis vector of $\Rb^m,$ we also find using that $\Tr(AB)=\Tr(BA),$ $D=(D^{-1})^\top,$ and again $DX \sim \Nc(D\theta,I_m/n),$
\begin{align*}
    \sum_{i=1}^m \Var_\theta\big(\wt \theta_i(X)\big)
    &= \int_{D \in \Om} \sum_{i=1}^m
    \Var_\theta \Big(e_i^\top D^{-1}\wh\theta(DX) \Big) \, d\mu(D) \\
    &= \int_{D \in \Om} \Tr \Big[
    \Var_\theta \big(D^{-1}\wh\theta(DX) \big)
    \Big] \, d\mu(D) \\
    &= \int_{D \in \Om} \Tr \Big[ D^{-1}
    \Var_\theta \big( \wh\theta(DX) \big) (D^{-1})^\top
    \Big] \, d\mu(D) \\
    &= \int_{D \in \Om} \Tr \Big[
    \Var_\theta \big( \wh\theta(DX) \big)
    \Big] \, d\mu(D) \\
    &= \int_{D \in \Om} \Tr \Big[
    \Var_{D\theta} \big( \wh\theta(X) \big)
    \Big] \, d\mu(D) \\
    &\leq  \sup_{\theta\in \Theta_m^\beta(R)} \sum_{i=1}^m \Var_\theta\big(\wh \theta_i\big).
\end{align*}
\end{proof}

\begin{lemma}
\label{lem.minimx_inv2}
Any function $h(x)$ satisfying $h(x)=D^{-1}h(Dx)$ for all $x\in \Rb^m$ and all orthogonal transformations $D$ must be of the form 
\begin{align*}
	h(x)= r(\|x\|_2) x
\end{align*} 
for some univariate function $r.$
\end{lemma}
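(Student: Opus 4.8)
Lemma \ref{lem.minimx_inv2} asserts that a function $h:\Rb^m\to\Rb^m$ equivariant under the orthogonal group, meaning $h(x)=D^{-1}h(Dx)$ for all $x$ and all $D\in\Om$, must have the radial form $h(x)=r(\|x\|_2)x$. Let me sketch the proof.

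\textbf{The plan.} First I would dispose of the trivial case $x=0$: taking $x=0$ in the defining identity gives $h(0)=D^{-1}h(0)$ for every orthogonal $D$, hence $Dh(0)=h(0)$ for all $D$, which forces $h(0)=0$; any choice of $r(0)$ then works. So fix $x\neq 0$ and write $\rho:=\|x\|_2>0$. The first key observation is that $\|h(x)\|_2$ depends only on $\rho$. Indeed, if $\|y\|_2=\|x\|_2$, there is an orthogonal $D$ with $Dx=y$; applying equivariance, $h(y)=h(Dx)=Dh(x)$, so $\|h(y)\|_2=\|Dh(x)\|_2=\|h(x)\|_2$ since $D$ is an isometry. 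The second, and main, step is to show that $h(x)$ is a scalar multiple of $x$. For this, consider the subgroup $\Om_x\subset\Om$ of orthogonal transformations fixing $x$, i.e. $Dx=x$. For such $D$, the equivariance relation $h(x)=h(Dx)=Dh(x)$ shows that $h(x)$ is fixed by every element of $\Om_x$. But the set of vectors in $\Rb^m$ fixed by all of $\Om_x$ is exactly the line $\Rb x$ spanned by $x$: indeed $\Om_x$ acts as the full orthogonal group on the hyperplane $x^\perp$, and the only vector of $x^\perp$ fixed by all rotations/reflections of that hyperplane is $0$ (here one needs $m\geq 2$; for $m=1$ the statement is immediate since every function is of the form $r(|x|)x$ after adjusting signs, or one treats it separately). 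Hence $h(x)\in\Rb x$, so $h(x)=\lambda(x)\,x$ for some scalar $\lambda(x)$.

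\textbf{Consistency of the scalar.} It remains to check that $\lambda(x)$ depends only on $\rho=\|x\|_2$, so that we may set $r(\rho):=\lambda(x)$ for any $x$ with $\|x\|_2=\rho$. Given $x,y$ with $\|x\|_2=\|y\|_2=\rho$, pick $D\in\Om$ with $Dx=y$. Then
\begin{align*}
    \lambda(y)\,y = h(y) = h(Dx) = D\,h(x) = D\big(\lambda(x)\,x\big) = \lambda(x)\,Dx = \lambda(x)\,y,
\end{align*}
and since $y\neq 0$ this forces $\lambda(y)=\lambda(x)$. Therefore $\lambda$ is a well-defined function of $\|x\|_2$ alone, and writing $r$ for this function yields $h(x)=r(\|x\|_2)\,x$ for all $x\neq 0$, with the case $x=0$ already handled. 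This completes the proof.

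\textbf{Main obstacle.} The only genuinely substantive point is the claim that the common fixed-point set of the stabilizer subgroup $\Om_x$ is precisely the line $\Rb x$; everything else is a short symmetry argument. This fixed-point claim is elementary linear algebra — decompose $\Rb^m=\Rb x\oplus x^\perp$, note that $\Om_x$ acts trivially on the first summand and as the orthogonal group of $x^\perp$ on the second, and observe that the orthogonal group of a space of dimension $\geq 1$ has no nonzero common fixed vector (e.g. by exhibiting a reflection, or, in dimension $\geq 2$, a rotation). I would state this as a one-line sub-observation rather than belabor it. One minor bookkeeping caveat: for $m=1$ the group $\Om=\{\pm 1\}$ and equivariance reads $h(x)=h(x)$ trivially for $D=1$ and $h(x)=-h(-x)$ for $D=-1$; writing $h(x)=r(|x|)x$ with $r(|x|):=h(x)/x$ for $x\neq 0$ and extending by oddness is straightforward, so this edge case causes no difficulty.
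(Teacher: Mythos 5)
Your proof is correct and takes essentially the same route as the paper's: you first pin $h(x)$ to the line $\Rb x$ via transformations fixing $x$ (the paper uses the single reflection equal to $+1$ on $\Rb x$ and $-1$ on $x^\perp$, which is exactly one element of your stabilizer $\Om_x$), and then transport $x$ to any $y$ with $\|y\|_2=\|x\|_2$ by an orthogonal map (the paper writes the Householder reflection $I-2vv^\top/\|v\|^2$, $v=x-y$, explicitly) to conclude the scalar depends only on the norm. Your additional treatment of $x=0$ and the $m=1$ edge case is fine but inessential to the argument.
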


\begin{proof}
Throughout the proof, we write $\|\cdot\|$ for the Euclidean norm. In a first step of the proof, we show that 
\begin{align}
	h(x)= \lambda(x) x
	\label{eq.proof_minimax_inv_1}
\end{align} 
for some univariate function $\lambda.$

Fix $x$ and consider an orthogonal basis $v_1 :=x/\|x\|, v_2,\ldots, v_m$ of $\Rb^m.$ The orthogonal matrix $D := \sum_{j=1}^m (-1)^{\mathbf{1}(j\neq 1)} v_j v_j^\top$ has eigenvector $v_1=x/\|x\|$ with corresponding eigenvalue one. For all other eigenvectors the eigenvalue is always $-1.$ Using that $h(x)=D^{-1}h(Dx),$ we find that $h(x)=D^{-1}h(x)$ which implies that $h(x)$ is a multiple of $x$ and therefore $h(x)=\lambda(x)x,$ proving \eqref{eq.proof_minimax_inv_1}.

Let $x$ and $y$ be such that $\|x\|=\|y\|.$ Let $v=x-y,$ and observe that $D = I - 2 v v^\top/\|v\|^2$ is an orthogonal matrix. Since $\|v\|^2=2\|x\|^2-2y^\top x=2\|y\|^2-2y^\top x,$ we also have that $D x=y$ and $Dy=x.$ For this $D,$ we have
\begin{align*}
	\lambda(x)x=h(x)=h(Dy)=Dh(y)=\lambda(y)Dy =\lambda(y)x
\end{align*}
which shows that $\lambda(x)=\lambda(y)$ whenever $\|x\|=\|y\|.$ Differently speaking, $\lambda$ only depends on $y$ through $\|y\|.$ This completes the proof.
\end{proof}

\begin{proof}[Proof of Theorem \ref{thm.LB_L2}]
Fix an estimator $\wh f$ in the Gaussian white noise model \eqref{eq.mod_GWN} and set $B:=\sup_{f \in S^\beta(R)} \IB_f(\wh f).$ Consider first the case that $B>0.$ Choose $m_*:=\lfloor B^{-1/\beta}\rfloor$ and observe that since $B<2^{-\beta},$ we must have $m_*\geq 2.$ Also $2m_*\geq m_*+1\geq B^{-1/\beta}$ and so $m_*\geq B^{-1/\beta}/2.$ Applying Proposition \ref{prop.L2_reduction_lb_1} and Proposition \ref{prop.spheri_symm}, there exists a spherically symmetric estimator $\wt \theta$ in the Gaussian sequence model with $m=m_*$ satisfying 
\begin{align*}
    \sup_{\theta \in \Theta_{m_*}^\beta(R)} \big\|E_\theta \big[\wt \theta\big] -\theta \big\|_2^2 \leq B \quad \text{and}  \ \sup_{\theta \in \Theta_{m_*}^\beta(R)} \sum_{i=1}^{m_*} \Var_\theta\big(\wt \theta_i\big) 
    \leq \sup_{f\in S^\beta(R)} \IVar_f\big(\wh f\big).
\end{align*}
Below we will construct a $\theta_0$ for which 
\begin{align}
    \sum_{i=1}^{m_*} \Var_{\theta_0}\big(\wt \theta_i\big) B^{1/\beta} \geq \frac{1}{8n}.
    \label{eq.to_show_L2_lb}
\end{align}
This proves then the result. 

Since by assumption $R\geq 2\Gamma_\beta,$ for any $\theta$ with $\|\theta\|_2= R/(\Gamma_\beta m_*^\beta),$ we have that $\|\theta\|_2\geq 2B$ and combined with \eqref{eq.IBias_formula}, this gives
\begin{align*}
    \big\|E_\theta \big[\wt \theta\big] -\theta \big\|_2^2
    = \|\theta\|_2^2 \big(t\big(\|\theta\|_2\big)-1\big)^2
    \geq 4B^2 \big(t\big(\|\theta\|_2\big)-1\big)^2.
\end{align*}
As $B$ is an upper bound for the bias, $|t(\|\theta\|_2)-1|\leq 1/2$ and thus $t(\|\theta\|_2) \geq 1/2.$ 

Let $0<\Delta\leq 1/2$ and set $A:=R/(\Gamma_\beta m_*^{\beta+1/2}).$ Consider $\theta_0:=(A,\dots,A)^\top$ and $\theta_i=(\theta_{ij})_{j=1,\dots,m_*}^\top$ with $\theta_{ii}:=\sqrt{1+\Delta}\, A$ and $\theta_{ij}:=\sqrt{1-\Delta/(m_*-1)}\, A$ for $j\neq i.$ By construction $\|\theta_i\|_2=R/(\Gamma_\beta m_*^\beta)$ and in particular $\theta_i \in \Theta_{m_*}^\beta(R)$ for all $i=0,1,\dots,m_*.$ Using that $\sqrt{1+u}-1=u/2+O(u^2)$ for $u \to 0,$ we have for $i=1,\dots,m_*$ and $\Delta \to 0,$
\begin{align*}
    \frac{\|\theta_i-\theta_0\|_2^2}{\Delta^2}
    &=\frac{A^2}{\Delta^2}
    \bigg[
    (m_*-1) \Big( \sqrt{1-\frac{\Delta}{m_*-1}}-1\Big)^2+\Big(\sqrt{1+\Delta}-1\Big)^2\bigg] \\
    &= \frac{A^2}{4}\Big(1+\frac{1}{m_*-1}\Big) +O(\Delta).
\end{align*}
Similarly, for $i,j=1,\dots,m_*,$ $i\neq j$ and $\Delta \to 0,$
\begin{align*}
    \frac{\big\langle \theta_i-\theta_0, \theta_j-\theta_0\big \rangle }{\Delta^2}
    &=
    \frac{A^2}{\Delta^2}
    \bigg[
    (m_*-2) \Big( \sqrt{1-\frac{\Delta}{m_*-1}}-1\Big)^2 \\
    &\quad \quad \quad +2\Big(\sqrt{1+\Delta}-1\Big)\Big(\sqrt{1-\frac{\Delta}{m_*-1}}-1\Big)\bigg] \\
    &=
    -\frac{A^2}{4(m_*-1)}\Big(1+\frac{1}{m_*-1}\Big)+O(\Delta).
\end{align*}
Recall that $\|\theta_i\|_2=\|\theta_j\|_2$ by construction. Applying \eqref{eq.row_sum_norm_nd} to the random variables $\wt \theta_1,\dots,\wt \theta_{m_*}$ and using \eqref{eq.chi2_normal_distr} yields
\begin{align*}
    \frac 12 \sum_{i=1}^{m_*} \|\theta_i-\theta_0\|_2^2
    &\leq 
    \sum_{i=1}^{m_*} t(\|\theta_i\|_2) \|\theta_i-\theta_0\|_2^2
    =
    \sum_{i=1}^{m_*} \big\|E_{\theta_i}\big[\wt \theta \big]- E_{\theta_0}\big[\wt \theta \big]\big\|_2^2 \\
    &\leq \max_{i=1,\ldots,m_*} \sum_{j=1}^{m_*} \big |e^{n \langle \theta_i-\theta_0, \theta_j-\theta_0\rangle }-1 \big| \sum_{j=1}^{m_*} \Var_{\theta_0}\big(\wt \theta_j\big).
\end{align*}
Multiplying both sides of the inequality with $\Delta^{-2},$ using the expressions for $\Delta^{-2}\|\theta_i-\theta_0\|_2^2$ and $\Delta^{-2}\langle \theta_i-\theta_0, \theta_j-\theta_0\rangle,$ and letting $\Delta$ tend to zero yields $\sum_{i=1}^{m_*} \Var_{\theta_0}(\wt \theta_i)\geq m_*/(4n).$ As remarked above, $m_*\geq B^{-1/\beta}/2$ and this shows finally \eqref{eq.to_show_L2_lb} proving the theorem for $B>0.$ 

If $B=0$ we consider the estimator $\wh f_\delta := \wh f+\delta$ for an arbitrary deterministic $\delta>0$ that is sufficiently small such that $\wh f_\delta \in T.$ Observe that $\IVar_f(\wh f_\delta)=\IVar_f(\wh f).$ We can now apply the result from the first part and let $\delta$ tend to zero to verify that $\IVar_f(\wh f)$ must be unbounded in this case. The result follows since $0 \cdot (+\infty)$ is interpreted as $+\infty.$   
\end{proof}

The proof strategy carries over to the nonparametric regression model with fixed uniform design on $[0,1].$ The discretization effects result in a slight heteroscedasticity of the noise in the Gaussian sequence model which make the computations considerably more technical.

\section{Proofs for Section \ref{sec.highdimensional}}
\label{sec:proofs_highdimensional}

\textbf{About Equation~\eqref{eq.minimax_rate_fctal}:} To show that
\begin{align*}
    \inf_{\wh{\|\theta\|_2^2}} \ \  \sup_{\theta\in \Theta_n^2(s)} \, \big(E_\theta\big[\wh{\|\theta\|_2^2}\big] -\|\theta\|_2^2\big)^2
    \asymp s^2\log^2\Big(1+\frac{\sqrt{n}}s\Big)\asymp s^2\log^2\Big(\frac n{s^2}\Big)\vee n
\end{align*}
is the minimax estimation rate for $\|\theta\|_2^2,$ observe that the parameter space $\Theta_n^2(s)$ can be rewritten as
$\Theta_n^2(s) = \Theta(s) \cap B_2(\kappa)$,
where $B_2(\kappa) := \{\theta \in \Rb^n: \| \theta\|_2 \leq \kappa \}$ and 
$\kappa^2 := 2s\log(1+\sqrt{n}/s)$.
With this choice of $\kappa,$ Theorem 4 and Theorem 5 of \cite{MR3662444} can be rewritten in our notation as
\begin{align*}
    \inf_{\wh{\|\theta\|_2^2}} \ \sup_{\theta\in \Theta_n^2(s)}\big(E_\theta\big[\wh{\|\theta\|_2^2}\big] -\|\theta\|_2^2\big)^2
    \asymp \psi^Q_1 (s, n, \kappa),
\end{align*}
where, for any $\sigma, \kappa \geq 0$,
$\psi^Q_\sigma (s, n, \kappa) := \min \big(\kappa^4, \max(\sigma^2 \kappa^2, \bar{\psi}_\sigma(s,n) \big)$ and $\bar{\psi}_\sigma(s,n) := \sigma^4 s^2 \log^2(1+n/s^2)$ if $s < \sqrt{n}$ and $\bar{\psi}_\sigma(s,n) := \sigma^4 n$ if $s \geq \sqrt{n}$.

As we consider the case $\sigma=1$ and $\kappa^2=2s\log(1+\sqrt{n}/s)$, it follows that
\begin{align*}
    &\psi^Q_1 (s, n, \kappa)
    = \min \big(\kappa^4, \max(\kappa^2, \bar{\psi}_1(s,n)) \big) \\
    % &\asymp \min \Big( s^2 \log^2 \big(1 + \sqrt{n}/s \big) \, , \, \max \Big(s \log\big(1 + \sqrt{n}/s \big) \,, \\
    % &\hspace{2cm} s^2 \log^2\big(1+n/s^2\big) \mathbf{1}(s \leq \sqrt{n})
    % + n \mathbf{1}(s > \sqrt{n}) \Big) \\
    %
    &\asymp \min \Big( s^2 \log^2 \big(1 + \sqrt{n}/s \big) , \max \Big(s \log\big(1 + \sqrt{n}/s \big) , s^2 \log^2\big(1+n/s^2\big) \Big) \mathbf{1}(s \leq \sqrt{n}) \\
    & + \min \Big( s^2 \log^2 \big(1 + \sqrt{n}/s \big) , \max \Big(s \log\big(1 + \sqrt{n}/s \big) , n \Big) \mathbf{1}(s > \sqrt{n}) \\
    &\asymp s^2 \log^2 \big(1+\sqrt{n}/s \big)
    \mathbf{1}(s \leq \sqrt{n})
    + n \mathbf{1}(s > \sqrt{n}) \\
    &\asymp s^2 \log^2 \big(1 + \sqrt{n}/s \big),
\end{align*}
as claimed.

\begin{proof}[Proof of Theorem \ref{thm.sparsity_lb}]
For each $i=1,\dots,n,$ we derive a lower bound for $\Var_0(\wh \theta_i)$ applying \eqref{eq.row_sum_norm_nd}. Denote by $P_\theta$ the distribution of the data in the Gaussian sequence model for the parameter vector $\theta=(\theta_1,\dots,\theta_n).$ Fix an integer $i \in \{1, \dots, n\}.$ There are $M:=\binom{n-1}{s-1}$ distinct vectors $\theta_1^{(i)},\dots,\theta_M^{(i)}$ with exactly $s$ non-zero entries, having a non-zero entry at the $i$-th position and all non-zero entries equal to $ \sqrt{\alpha\log(n/s^2)},$ where  $\alpha := 4\gamma + 1/\log(n/s^2).$ To indicate also the dependence on $i,$ for each $j \in \{1, \dots, M\}$ write $P_{ji}:=P_{\theta_j^{(i)}}$ and $P_0=P_{(0,\dots,0)}.$ 

% \ref{tab.1} (see also \eqref{eq.chi2_matrix_for_normal}),
By \eqref{eq.chi2_normal_distr}, we have that $\chi^2(P_0,\dots,P_M)_{j,k}
= \exp(\langle \theta_j^{(i)}, \theta_k^{(i)} \rangle)-1.$ For fixed $j,$ there are $b(n,s,r):=\binom{s-1}{r-1}\binom{n-s}{s-r}$ among the $M$ vectors $\theta_1^{(i)},\dots,\theta_M^{(i)}$ with exactly $r$ non-zero components with $\theta_j^{(i)}$ in common, that is, $\langle \theta_j^{(i)}, \theta_k^{(i)}\rangle= \alpha r \log(n/s^2).$ Hence,
\begin{align}
    \big\| \chi^2(P_0,P_{1i}\dots,P_{Mi}) \big\|_{1,\infty}
    = \sum_{r=1}^s b(n,s,r)  \Big[\Big(\frac{n}{s^2}\Big)^{r\alpha}-1\Big].
    \label{eq.chisq_row_sum_norm_hd}
\end{align}
Since $s\leq \sqrt{n}/2,$ we have for $r=1,\dots,s-1,$
\begin{align*}
    b(n,s,r+1)
    &= \binom{s-1}{r}\binom{n-s}{s-r-1} \\
    &= \frac{(s-1)!(n-s)!}{r!(s-r-1)!(n-2s+r+1)!(s-r-1)!} \\
    &= \frac{(s-r)^2}{r(n-2s+r+1)} \frac{(s-1)!(n-s)!}{(r-1)!(s-r)!(n-2s+r)!(s-r)!} \\
    &= \frac{(s-r)^2}{r(n-2s+r+1)}b(n,s,r) \\
    &\leq \frac{s^2}{n(1-n^{-1/2})}b(n,s,r).
\end{align*}
Recall that $\alpha = 4\gamma + 1/\log(n/s^2)\leq 0.99.$ Thus, for all sufficiently large $n,$ $(1-n^{-1/2})^{-1}(s^2/n)^{1-\alpha}\leq (1-n^{-1/2})^{-1}(1/4)^{0.01}\leq (1/2)^{0.01}.$ Combined with the recursion formula for $b(n,s,r)$ and the formula for the geometric sum, we obtain
\begin{align}
\begin{split}
    \big\| \chi^2(P_0,P_{1i}\dots,P_{Mi}) \big\|_{1,\infty}
    &\leq b(n,s,1) \Big(\frac{n}{s^2}\Big)^{\alpha} \, \sum_{q=0}^{s-1} \frac{1}{(1-n^{-1/2})^q}\Big(\frac{s^2}{n}\Big)^{q(1-\alpha)} \\
    &\leq b(n,s,1) \frac{1}{1-(1/2)^{0.01}} \Big(\frac n{s^2}\Big)^{\alpha},
\end{split}
\label{eq.row_sum_norm_estimate_hd}
\end{align}    
where the last inequality holds for all sufficiently large $n.$ We must have that $M=\sum_{r=1}^s b(n,s,r)$ and so $b(n,s,1)\leq M.$ Let $\wh \theta=(\wh \theta_1,\dots, \wh \theta_n)$ be an arbitrary estimator for $\theta.$ Applying the inequality~\eqref{eq.row_sum_norm_nd}
% Theorem \ref{thm.multiple_lb}(i)
to the random variable $\wh \theta_i$ yields
\begin{align}
    \sum_{j=1}^M \big(E_{P_{ji}}[\wh \theta_i]-E_{P_0}[\wh \theta_i]\big)^2 \leq M \frac{1}{1-(1/2)^{0.01}} \Big(\frac n{s^2}\Big)^{\alpha} \Var_0\big(\wh \theta_i\big). 
    \label{eq.tech311}
\end{align}
Let $\Mc$ be the set of all $\binom{n}{s}$ distributions $P\sim \Nc(\theta,I_n),$ where the mean vector $\theta$ has exactly $s$ non-zero entries and all non-zero entries equal to $\sqrt{\alpha \log(n/s^2)}.$ For a $P\in \Mc$ denote by $S(P)$ the support (the location of the non-zero entries) of the corresponding mean vector $\theta.$ For $S\subset \{1,\dots, n\},$ define moreover $\wh \theta_S:=(\wh \theta_j)_{j\in S}.$ Summing over $i$ in \eqref{eq.tech311} yields then, 
\begin{align}
    \sum_{j=1}^M \sum_{i=1}^n \big(E_{P_{ji}}[\wh \theta_i]-E_{P_0}[\wh \theta_i]\big)^2
    \leq M \frac{1}{1-(1/2)^{0.01}} \Big(\frac n{s^2}\Big)^{\alpha} \sum_{i=1}^n \Var_0\big(\wh \theta_i\big). 
    \label{eq.tech311_2}
\end{align}
For fixed $P \in \Mc$ and for each $i \in S(P)$, we can find $j \in \{1, \dots, M\}$ such that $P_{ji} = P$, so that the previous equation can be rewritten as
\begin{align}
    \sum_{P\in \Mc} \big\| E_{P}\big[\wh \theta_{S(P)}\big]-E_{P_0}\big[\wh \theta_{S(P)}\big]\big\|_2^2
    &= \sum_{P\in \Mc} \sum_{i \in S(P)}
    \big( E_{P}\big[\wh \theta_i\big]-E_{P_0}\big[\wh \theta_i\big]\big)^2 \nonumber \\
    &\leq M \frac{1}{1-(1/2)^{0.01}} \Big(\frac n{s^2}\Big)^{\alpha} \sum_{i=1}^n \Var_0\big(\wh \theta_i\big).
    \label{eq.tech624}
\end{align}
% \begin{align}
%     \sum_{P\in \Mc} \big\| E_{P}\big[\wh \theta_{S(P)}\big]-E_{P_0}\big[\wh \theta_{S(P)}\big]\big\|_2^2 \leq M \frac{1}{1-(1/2)^{0.01}} \Big(\frac n{s^2}\Big)^{\alpha} \sum_{i=1}^n \Var_0\big(\wh \theta_i\big). 
%     \label{eq.tech624}
% \end{align}
For any $P\in \Mc$ with $P=\Nc(\theta,I_d),$ we obtain using the triangle inequality, $\theta_0=0,$ $\|\theta\|_2=\|\theta_{S(P)}\|_2,$ the bound on the bias, and $\alpha = 4\gamma + 1/\log(n/s^2)\leq 1$ combined with $\sqrt{\alpha}-2\sqrt{\gamma}=(\alpha-4\gamma)/(\sqrt{\alpha}+2\sqrt{\gamma})\geq (\alpha-4\gamma)/5=1/(5 \log(n/s^2)),$
\begin{align*}
  \big\| E_{P}\big[\wh \theta_{S(P)}\big]-E_{P_0}\big[\wh \theta_{S(P)}\big]\big\|_2
  &\geq \|\theta\|_2 - \big\| E_{P}[\wh \theta] -\theta\big\|_2-\big\| E_{P_0}[\wh \theta]\big\|_2 \\
  &\geq \sqrt{s\alpha \log(n/s^2)}- 2\sqrt{\gamma s \log(n/s^2)} \\
  &\geq \sqrt{\frac{s}{25\log(n/s^2)}}.
\end{align*}
Observe that $(n/s^2)^{\alpha}=(n/s^2)^{4\gamma}e$ and that the cardinality of $\Mc$ is $\binom{n}{s}=\tfrac ns \binom{n-1}{s-1}= \tfrac ns M.$ Combining this with \eqref{eq.tech624} yields
\begin{align*}
    \sum_{i=1}^n \Var_0\big(\wh \theta_i\big)
    \geq \frac{(1-(1/2)^{0.01})}{25 e \log(n/s^2)} n \Big(\frac{s^2}{n}\Big)^{4\gamma},
\end{align*}
completing the proof.
\end{proof}

\begin{proof}[Proof of Theorem \ref{thm.sparsity_sq_functional}]
We argue in a similar way as in the proof of Theorem \ref{thm.sparsity_lb}. Again, let $P_0\sim \Nc(0,I_n)$ and denote by $\Mc$ the set of distributions $P\sim \Nc(\theta,I_n)$ with $s$-sparse mean vector $\theta$ and all non-zero entries equal to $\sqrt{\alpha' \log(n/s^2)}$ for $\alpha' = 2\gamma + 1/\log(n/s^2)\leq 0.99.$ Thus, $\sum_{i=1}^n\theta_i^2 \leq s\log(n/s^2)\leq 2s\log(1+\sqrt{n}/s)$ and $\theta\in \Theta_n^2(s).$ Write $P_1,\dots,P_{|\Mc|}$ for an arbitrary enumeration of these $|\Mc|=\binom{n}{s}$ probability measures. Using the same arguments as for \eqref{eq.chisq_row_sum_norm_hd}, $\binom{s}{r}=\tfrac sr \binom{s-1}{r-1},$ $b(n,s,r)= \binom{s-1}{r-1}\binom{n-s}{s-r}$ and arguing as for \eqref{eq.row_sum_norm_estimate_hd}, we find that 
\begin{align*}
    \big\| \chi^2\big(P_0,P_1,\dots,P_{|\Mc|}\big) \big\|_{1,\infty}
    &= \sum_{r=1}^s \binom{s}{r}\binom{n-s}{s-r} \Big[\Big(\frac{n}{s^2}\Big)^{r\alpha'}-1\Big] \\
    &= \sum_{r=1}^s \frac{s}{r} \binom{s-1}{r-1}\binom{n-s}{s-r} \Big[\Big(\frac{n}{s^2}\Big)^{r\alpha'}-1\Big] \\
    &\leq s \sum_{r=1}^s b(n,s,r) \Big[\Big(\frac{n}{s^2}\Big)^{r\alpha'}-1\Big] \\
    &\leq  \frac{s b(n,s,1)}{1-(1/2)^{0.01}} \Big(\frac n{s^2}\Big)^{\alpha'}.
\end{align*}
Recall that also $b(n,s,1) = \binom{n-s}{s-1}
\leq \binom{n-1}{s-1}=M.$ Equation~\eqref{eq.row_sum_norm_nd} applied to the random variable $\wh{\|\theta\|_2^2}$ gives then
\begin{align}
    \sum_{P \in \Mc} \bigg(
    E_P \Big[\wh{\|\theta\|_2^2}\Big] - E_{P_0}\Big[\wh{\|\theta\|_2^2}\Big]\bigg)^2
    \leq  \frac{s M}{1-(1/2)^{0.01}} \Big(\frac n{s^2}\Big)^{\alpha'} \Var_0\big( \wh{\|\theta\|_2^2} \big). 
    \label{eq.quadfunct_3}
\end{align}
For any $P \in \Mc$, we obtain using the triangle inequality, $\theta_0=0$ and the bound on the bias
\begin{align*}
  \bigg| E_P \Big[\wh{\|\theta\|_2^2}\Big] - E_{P_0}\Big[\wh{\|\theta\|_2^2}\Big] \bigg|
  &\geq \|\theta\|_2^2
  - \bigg| E_P \Big[\wh{\|\theta\|_2^2}\Big] - \|\theta\|_2^2 \bigg|
  - \bigg| E_{P_0}\Big[\wh{\|\theta\|_2^2}\Big] \bigg| \\
  &\geq s\alpha' \log(n/s^2) - 2 \gamma s \log(n/s^2) \\
  &\geq s.
\end{align*}
Observe that $(n/s^2)^{\alpha'}=(n/s^2)^{2\gamma}e$. Combining this with \eqref{eq.quadfunct_3} yields
\begin{align*}
    |\Mc|s^2
    \leq   \frac{sM}{1-(1/2)^{0.01}} \Big(\frac n{s^2}\Big)^{2\gamma}e \Var_0\Big( \wh{\|\theta\|_2^2} \Big),
\end{align*}
and together with $|\Mc|=\binom{n}{s}=\tfrac ns \binom{n-1}{s-1}=\tfrac ns M,$ we finally obtain
\begin{align*}
    \Var_0 \Big( \wh{\|\theta\|_2^2} \Big)
    \geq \frac{1-(1/2)^{0.01}}{e} n
    \Big(\frac{s^2}{n}\Big)^{2\gamma},
\end{align*}
completing the proof.

\end{proof}

\begin{proof}[Proof of Theorem \ref{thm.functional_GSM}]
We show that the result already holds if $\Theta(s)$ is replaced by $\Theta_{[s]}:=\{(\theta_1,\dots,\theta_s,0,\dots,0)^\top:\theta_i \in \mathbb{R}, i=1,\ldots,s\},$ that is, the space of $s$-sparse vectors with support on the first $s$ components. To simplify the problem, we apply two reductions. In a first step, we prove that for any estimator $\wh \theta$ in the $s$-sparse Gaussian sequence model, there exists a (non-randomized) estimator $\wt \theta$ in the Gaussian sequence model $Z_i=\theta_i +\eps_i,$ $i=1,\dots,s$ with independent $\eps_i \sim \Nc(0,1)$ and parameter space $\Theta=\mathbb{R}^s,$ such that 
\begin{align}
    \sup_{\theta \in \mathbb{R}^s} \, \sum_{i=1}^s \Var_\theta\big(\wt \theta_i\big) 
    \leq \sup_{\theta \in \Theta_{[s]}} \, \sum_{i=1}^n \Var_\theta\big(\wh \theta_i\big)
    \label{eq.red1_var}
\end{align}
and
\begin{align}
    \sup_{\theta \in \mathbb{R}^s} \,
    \big\|E_\theta\big[ \wt \theta \big] - \theta\big\|_2^2
    \leq 
    \sup_{\theta \in \Theta_{[s]}} \,
    \big\|E_\theta\big[ \wh \theta \big]-\theta\big\|_2^2.
    \label{eq.red1_expec}
\end{align}
In the last two inequalities, the expectation and variance of an estimator is always taken with respect to the distributions induced by the corresponding models. 

To prove these inequalities, fix an estimator $\wh \theta$ and observe that $X_1,\dots,X_s$ is a sufficient statistic for the parameter $\theta \in \Theta_{[s]}.$ By the definition of sufficiency, $E_{(\theta_1,\dots,\theta_s,0)^\top}[\wt \theta|X_1,\dots,X_s]$ does not depend on $ (\theta_1,\dots,\theta_s,0,\dots,0)^\top \in \Theta_{[s]}.$ We now interpret the Rao-Blackwell type estimator $$\wt \theta := E_{(\theta_1,\dots,\theta_s,0,\dots,0)^\top}[(\wh \theta_1,\dots,\wh \theta_s)^\top |X_1,\dots,X_s]$$ as an estimator for $(\theta_1,\dots,\theta_s)^\top \in \mathbb{R}^s$ in the Gaussian sequence model $X_i=\theta_i+\eps_i,$ $i=1,\dots,s.$ The inequality in \eqref{eq.red1_expec} follows from $$E_{(\theta_1,\dots,\theta_s)^\top}[\wt \theta]=E_{(\theta_1,\dots,\theta_s,0,\dots,0)^\top}[(\wh \theta_1,\dots,\wh \theta_s)^\top].$$
For $i=1,\dots,s,$ the law of total variance yields 
\begin{align*}
    \Var_{(\theta_1,\dots,\theta_s)^\top}(\wt \theta_i)
    &=\Var_{(\theta_1,\dots,\theta_s, 0,\dots,0)^\top}\big(E_{(\theta_1,\dots,\theta_s,0,\dots,0)^\top}[\wh \theta_i |X_1,\dots,X_s]\big) \\
    &\leq 
    \Var_{(\theta_1,\dots,\theta_s,0,\dots,0)^\top}(\wh \theta_i)
\end{align*}
and \eqref{eq.red1_var} follows.

Recall the definition of a spherically symmetric estimators in Section \ref{sec.reduction}. Arguing as in the proof of Proposition \ref{prop.spheri_symm}, we can find for any estimator $\wt \theta,$ a spherically symmetric estimator $\wt \theta^{(1)}$ with 
\begin{align}
    \sup_{\theta \in \mathbb{R}^s} \, \big\|E_\theta\big[\wt \theta^{(1)}\big]-\theta \big\|_2
    \leq \sup_{\theta \in \mathbb{R}^s} \, \big\|E_\theta\big[\wt \theta\big]-\theta \big\|_2
    \label{eq.red2_expec}
\end{align}
and 
\begin{align}
    \sup_{\theta \in \mathbb{R}^s} \, \sum_{i=1}^s \Var_\theta\big(\wt \theta_i^{(1)} \big)
    \leq 
    \sup_{\theta \in \mathbb{R}^s} \, \sum_{i=1}^s \Var_\theta\big(\wt \theta_i \big).
    \label{eq.red2_var}
\end{align}
Moreover, the same reasoning as for \eqref{eq.IBias_formula} shows that there exists a function $t,$ such that for any $\theta \in \mathbb{R}^s,$ {}{}
\begin{align}
    E_\theta\big[\wt \theta^{(1)}\big] = t(\|\theta\|_2) \theta 
    \ \ 
    \text{and} \ \ 
    \big\|E_\theta\big[\wt \theta^{(1)}\big]-\theta \big\|_2^2=\|\theta\|_2^2 \big(t\big(\|\theta\|_2\big)-1\big)^2.
    \label{eq.exp_red2_seq_model}
\end{align}

The last two reduction steps combined show that if there exists an estimator $\wh \theta$ satisfying $ \sup_{\theta \in \Theta(s)} \, \sum_{i=1}^n \Var_\theta(\wh \theta_i) < s/2,$ then, there exists a spherically symmetric estimator $\wt \theta^{(1)}$ satisfying $\sup_{\theta \in \mathbb{R}^s} \, \sum_{i=1}^s \Var_\theta(\wt \theta_i^{(1)}) < s/2.$ We now show that for such an estimator 
\begin{align}
    \sup_{\theta \in \mathbb{R}^s}
    \, \big\| E_\theta\big[\wt \theta^{(1)}\big]-\theta\big\|_2^2 =\infty.
    \label{eq.exp=infty_to_show}
\end{align}
Because of \eqref{eq.red1_expec} and \eqref{eq.red2_expec}, it then follows that $\sup_{\theta \in \mathbb{R}^s} \, \| E_\theta[\wh \theta]-\theta\|_2^2 =\infty,$ completing the proof.

Let $\rho := (1/2 + s^{-1} \sup_{\theta \in \Theta(s)} \, \sum_{i=1}^n \Var_\theta\big(\wh \theta_i\big))/2.$ Because $\rho$ is the average of $1/2$ and of a quantity that is strictly smaller than $1/2$,
\begin{align*}
    s^{-1} \sup_{\theta \in \Theta(s)} \, \sum_{i=1}^n \Var_\theta\big(\wh \theta_i\big) < \rho < 1/2.
\end{align*}
In order to show \eqref{eq.exp=infty_to_show} we need to treat the cases $s=1$ and $s>1$ separately. If $s=1,$ then, Example \ref{ex.normal_change_of_expec_cts} shows that $|E_\theta[\wt \theta^{(1)}]-E_0[\wt \theta^{(1)}]|\leq |\theta|\sqrt{\rho}.$ Using triangle inequality, this implies that $|\theta-E_{\theta}[\wt \theta^{(1)}]|\geq |\theta|(1-\sqrt{\rho})-|E_0[\wt \theta^{(1)}]|.$ Since $\rho<1$ and by assumption $|E_0[\wt \theta^{(1)}]|<\infty,$ we must have $\lim_{\theta \to \infty} |\theta-E_{\theta}[\wt \theta^{(1)}]|=\infty,$ proving \eqref{eq.exp=infty_to_show} for $s=1.$

We now prove \eqref{eq.exp=infty_to_show} for $s>1.$ Let $a>0$ and $\theta_0=(a,\dots,a)^\top \in \mathbb{R}^s.$ Suppose we can show that
\begin{align}
    \frac s2 t\big(a\sqrt{s}\big) 
    \leq 
    \sum_{i=1}^s \Var_{\theta_0} \big(\wt \theta_i^{(1)}\big).
    \label{eq.var_lb_red2_seq_model}
\end{align}
Together with \eqref{eq.exp_red2_seq_model}, $\sum_{i=1}^s \Var_{\theta_0}(\wt \theta_i^{(1)})\leq s\rho$, and $\rho < 1/2,$ this implies that
$t\big(a\sqrt{s}\big) \leq 2 \rho \leq 1$ and
\begin{align*}
    \big\|E_{\theta_0}\big[\wt \theta^{(1)}\big]
    -\theta\big\|_2^2
    &= \|\theta_0\|_2^2 \big(t\big(\|\theta\|_2\big)-1\big)^2 \\
    &= a^2 s \big(t ( a\sqrt{s} )-1\big)^2
    \geq a^2 s (1-2\rho)^2.
\end{align*}
Taking $a \to \infty$ yields then \eqref{eq.exp=infty_to_show}.

Thus, all what remains is to establish \eqref{eq.var_lb_red2_seq_model}. To show this inequality, we adapt the arguments in the proof of Theorem \ref{thm.LB_L2}. Let $0<\Delta\leq 1/2$ and consider $\theta_i=(\theta_{ij})_{j=1,\dots,s}^\top \in \mathbb{R}^s,$ where we choose $\theta_{ii}:=\sqrt{1+\Delta} \, a$ and $\theta_{ij}:=\sqrt{1-\Delta/(s-1)} \, a$ for $1 \leq j\neq i \leq s.$ Because of $s>1,$ the latter is well-defined. By construction, $\|\theta_i\|_2=a\sqrt{s}$ for all $i=0,1,\dots,s.$ Using that $\sqrt{1+u}-1=u/2+O(u^2)$ for $u \to 0,$ we have for $i=1,\dots,s$ and $\Delta \to 0,$
\begin{align*}
    \frac{\|\theta_i-\theta_0\|_2^2}{\Delta^2} 
    &=\frac{a^2}{\Delta^2}
    \bigg[
    (s-1) \Big( \sqrt{1-\frac{\Delta}{s-1}}-1\Big)^2+\Big(\sqrt{1+\Delta}-1\Big)^2\bigg] \\
    &= \frac{a^2}{4}\Big(1+\frac{1}{s-1}\Big) +O(\Delta).
\end{align*}
Similarly, for $j,\ell=1,\dots,s,$ $j\neq \ell$ and $\Delta \to 0,$
\begin{align*}
    \frac{\big\langle \theta_\ell-\theta_0, \theta_j-\theta_0\big \rangle }{\Delta^2}
    &=
    \frac{a^2}{\Delta^2}
    \bigg[
    (s-2) \Big( \sqrt{1-\frac{\Delta}{s-1}}-1\Big)^2 \\
    &\quad +2\Big(\sqrt{1+\Delta}-1\Big)\Big(\sqrt{1-\frac{\Delta}{s-1}}-1\Big)\bigg] \\
    &=
    -\frac{a^2}{4(s-1)}\Big(1+\frac{1}{s-1}\Big)+O(\Delta).
\end{align*}
% \begin{align*}
%     &\frac{a^2}{\Delta^2}
%     \bigg[
%     (s-2) \Big( \sqrt{1-\frac{\Delta}{s-1}}-1\Big)^2
%     +2\Big(\sqrt{1+\Delta}-1\Big)
%     \Big(\sqrt{1-\frac{\Delta}{s-1}}-1\Big)\bigg] \\
%     &= \frac{a^2}{\Delta^2}
%     \bigg[ (s-2) \frac{\Delta^2}{4 (s-1)^2}
%     + \frac{-2\Delta^2}{4 (s-1)}\bigg] \\
%     &= \frac{a^2}{4 (s-1)}
%     \bigg[ \frac{s-2}{s-1} - 2 \bigg] \\
%     &= \frac{-a^2}{4 (s-1)}
%     \bigg[ 1 + \frac{1}{s-1} \bigg]
% \end{align*}
Recall that $\|\theta_i\|_2=\sqrt{s}a$ by construction. Applying \eqref{eq.row_sum_norm_nd} to the random variables $\wt \theta_i^{(1)}$ and using \eqref{eq.exp_red2_seq_model} and \eqref{eq.chi2_normal_distr}
% \eqref{eq.chi2_matrix_for_normal}
yields
\begin{align*}
    t(\sqrt{s}a) \sum_{j=1}^{s}  \|\theta_j-\theta_0\|_2^2
    &=
    \sum_{j=1}^{s} \big\|E_{\theta_j}\big[\wt \theta^{(1)} \big]- E_{\theta_0}\big[\wt \theta^{(1)} \big]\big\|_2^2 \\
    &\leq \max_{\ell=1,\ldots,s} \sum_{j=1}^{s} \big |e^{\langle \theta_\ell-\theta_0, \theta_j-\theta_0\rangle }-1 \big| \sum_{i=1}^s \Var_{\theta_0}\big(\wt \theta_i^{(1)}\big).
\end{align*}
Multiplying both sides of the inequality with $\Delta^{-2},$ using the expressions for $\Delta^{-2}\|\theta_i-\theta_0\|_2^2$ and $\Delta^{-2}\langle \theta_i-\theta_0, \theta_j-\theta_0\rangle,$ and letting $\Delta$ tend to zero yields \eqref{eq.var_lb_red2_seq_model}.

This completes the proof.
\end{proof}

\begin{proof}[Proof of Lemma \ref{lem.soft_thresh_ub}]
Let $T:=\sqrt{\gamma \log(n/s^2)}$ denote the truncation value. Using that $\wh \theta$ is unbiased under $\theta=(0,\dots,0)^\top$ and applying substitution twice, we find for any $i=1,\dots,n$ 
\begin{align}
\begin{split}
    \Var_0\big(\wh \theta_i\big)
    &= \sqrt{\frac{2}{\pi}} \int_{T}^\infty (x-T)^2 e^{-\frac{x^2}{2}} \, dx
    = \sqrt{\frac{2}{\pi}} \int_0^\infty x^2 e^{-\frac{(x+T)^2}{2}} \, dx \\
    &\leq \sqrt{\frac{2}{\pi}} e^{-\frac{T^2}2} \int_0^\infty x^2 e^{-xT} \, dx 
    = \frac{\sqrt{2}}{\sqrt{\pi} T^3} e^{-\frac{T^2}2} \int_0^\infty y^2 e^{-y} \, dy \\
    &= \frac{\sqrt{2}}{\sqrt{\pi} T^3} e^{-\frac{T^2}2}.
\end{split}    
    \label{eq.var_thetai=0}
\end{align}
Summing over $i$ and inserting the expression for $T$ yields \eqref{eq.soft_thresh_ub1}.

We now prove \eqref{eq.soft_thresh_ub2}. Using Cauchy-Schwarz and the inequality $2\sqrt{ab}\leq a+b,$ $a,b \geq 0,$ we find that for any random variables $Y,Z,$ $\Var(Y+Z)=\Var(Y)+2\Cov(Y,Z)+\Var(Z) \leq \Var(Y)+2\sqrt{\Var(Y)\Var(Z)}+\Var(Z)\leq 2\Var(Y)+2\Var(Z).$ 

Moreover, for any random variable $U,$ we have $\Var(U)\geq \Var(U \wedge 0)+\Var(U\vee 0).$ To see this observe that $E[U\vee 0]\geq 0$ and $E[U \wedge 0]\leq 0$. Hence, $E[U\vee 0]E[U \wedge 0]\leq 0.$ Since also $E[(U\vee 0)(U\wedge 0)]=0,$ we have $\Cov(U\vee 0,U\wedge 0)\geq 0$ and $\Var(U)=\Var((U\vee 0)+(U\wedge 0))\geq \Var(U \wedge 0)+\Var(U\vee 0).$

As a last ingredient of the proof, observe that for a random variable $X,$ we have  $\sign(X)(|X|-T)_+
= (X-T)\mathbf{1}(X\geq T) + (X+T)\mathbf{1}(X\leq -T)
= (X-T)\vee 0 +(X+T) \wedge 0.$ Thus, with $T=\sqrt{\gamma \log(n/s^2)},$
\begin{align}
\begin{split}
    \Var_\theta(\wh \theta_i) 
    &\leq 2\Var_\theta\big( (X_i-T)\vee 0\big)
    + 2\Var_\theta\big( (X_i+T)\wedge 0\big) \\
    &\leq 2\Var_\theta(X_i-T)+2\Var_\theta(X_i+T) \\
    &= 4\Var(X_i) \\
    &=4.
\end{split}\label{eq.min_max_decomp_var}
\end{align}
Applying this inequality for all $i$ with $\theta_i \neq 0,$ and \eqref{eq.var_thetai=0} for all $i$ with $\theta_i=0,$ \eqref{eq.soft_thresh_ub2} follows.

We now study the estimator $\wh{\|\theta\|_2^2}$ for the functional $\|\theta\|_2^2.$ By assumption, $\gamma \log(n/s^2) \geq 2,$ and therefore, $|(X_i^2-\gamma \log(n/s^2))_+ -(X_i^2-1)|\leq \gamma \log(n/s^2)-1.$ Moreover, $E[(\xi^2 - \gamma\log(n/s^2))_+]\leq E[\xi^2]=1.$ With $S=\{i:\theta_i\neq 0\}$ the support of $\theta,$ and since $E_{\theta_i}[X_i^2-1]=\theta_i^2,$ we find for suitable numbers $\eta_i$ satisfying $|\eta_i|\leq \gamma \log(n/s^2),$ that
\begin{align*}
    E_\theta\big[\wh{\|\theta\|_2^2}\big] 
    &= 
    \sum_{i \in S} 
    \Big( E_{\theta_i}\big[(X_i^2 - \gamma\log(n/s^2)\big)_+\big]- E\big[\big(\xi^2 - \gamma\log(n/s^2)\big)_+\big] \Big) \\
    &= \sum_{i \in S}  E_{\theta_i}[X_i^2-1] + \eta_i \\
    &= \|\theta\|_2^2 + \sum_{i \in S} \eta_i.
\end{align*}
Thus, $\sup_{\theta \in \Theta(s)}|\Bias_\theta(\wh{\|\theta\|_2^2})|\leq \gamma s \log(n/s^2),$ proving \eqref{eq.soft_thresh_ub3}. 

Let $\eps \sim \Nc(0,1).$ Then, $\Var((\eps^2 - \gamma\log(n/s^2))_+)\leq  E[(\eps^2 - \gamma\log(n/s^2))_+^2].$ Let $u\geq 0.$ Substituting $y=(x^2-u)/2,$ and using that $dx=dy/\sqrt{2y+u}\leq dy/\sqrt{u}$ for $y\geq 0,$ we have
\begin{align*}
    \int_{\sqrt{u}}^\infty \big(x^2-u\big)^2 e^{-x^2/2} \, dx
    &\leq 
    \frac{4}{\sqrt{u}} e^{-u/2} \int_0^\infty y^2 e^{-y} \, dy
    = \frac{8}{\sqrt{u}} e^{-u/2},
\end{align*}
where the last step follows from the fact that the second moment of a standard exponential distribution is $2.$ Thus, with $u=\gamma \log(n/s^2),$
\begin{align}\label{eq.var_fctal_zero_components}
    \Var\Big(\big(\eps^2 - \gamma\log(n/s^2)\big)_+\Big)
    \leq \frac{8}{\sqrt{\gamma \log(n/s^2)}} \Big(\frac{s^2}{n}\Big)^{\gamma/2}.
\end{align}
Arguing as in \eqref{eq.min_max_decomp_var},
\begin{align}
\begin{split}
    \Var\Big(\big(X_i^2 - \gamma\log(n/s^2)\big)_+\Big)
    &\leq 4\Var(X_i^2)=4\Var\big(2\theta_i\eps_i+\eps_i^2\big) \\
    &\leq E\big[\big(2\theta_i\eps_i+\eps_i^2\big)^2\big] \\
    &=4\theta_i^2+3.
\end{split}\label{eq.var_fctal_nonzero_components}    
\end{align}
Recall that $S$ denotes the support of the vector $\theta.$ Combining \eqref{eq.var_fctal_zero_components} and \eqref{eq.var_fctal_nonzero_components}, we obtain 
\begin{align*}
    \Var_\theta\big(\wh{\|\theta\|_2^2}\big)
    &= \sum_{i\in S} \Var\Big(\big(X_i^2 - \gamma\log(n/s^2)\big)_+\Big)
    +\sum_{i\in S} \Var_0\Big(\big(\eps_i^2 - \gamma\log(n/s^2)\big)_+\Big) \\
    &\leq \|\theta\|_2^2+3s+
    \frac{8n}{\sqrt{\gamma \log(n/s^2)}} \Big(\frac{s^2}{n}\Big)^{\gamma/2}.
\end{align*}
This completes the proof for \eqref{eq.soft_thresh_ub5}. Finally \eqref{eq.soft_thresh_ub4} can be proved along the same lines using that the set $S$ is empty.
\end{proof}

\begin{proof}[Proof of Theorem \ref{thm.main_minimax_est_vector_hd}]
    Let $\wh \theta$ be an estimator attaining the minimax rate of convergence $s \log(n)$ with respect to the worst case risk $\sup_{\theta \in \Theta(s)} E_\theta[\|\wh \theta-\theta\|_2^2]$.
    Using decomposition \eqref{eq.BV_vectors}, we obtain
    \begin{align}
        \sup_{\theta\in \Theta(s)}\big\|E_\theta\big[\wh \theta\big] - \theta\big\|_2^2 = O \big(s \log(n) \big).
        \label{eq:bound_bias_gaussSeq}
    \end{align}
    Arguing by contradiction, assume that $\sup_{\theta\in \Theta(s)}\big\|E_\theta\big[\wh \theta\big] - \theta\big\|_2^2 \leq s \log(n/s^2)/40$.
    By Theorem~\ref{thm.sparsity_lb}, for $n$ large enough,
    $\sum_{i=1}^n \Var_0\big(\wh \theta_i\big)
    \gtrsim n^{9/10} s^{2/10} / \log(n/s^2),$ contradicting the assumption that $\wh \theta$ attains the rate $s \log(n)$ in the regime $s=o(\sqrt{n})$ (using decomposition \eqref{eq.BV_vectors}).
    This combined with \eqref{eq:bound_bias_gaussSeq} proves that $\sup_{\theta\in \Theta(s)}\big\|E_\theta\big[\wh \theta\big] - \theta\big\|_2^2 \asymp s\log(n)$.
    
    Arguing by contradiction, assume now that $\sup_{\theta\in \Theta(s)}\sum_{i=1}^n \Var_\theta\big(\wh \theta_i\big) < \frac s2$ for some values of $n$. Then, for the same values of $n$, by Theorem~\ref{thm.functional_GSM}, we have
    \begin{align*}
        \sup_{\theta \in \Theta(s)} \,
        \big\| E_\theta\big[\wh \theta\big]-\theta\big\|_2 = \infty,
    \end{align*}
    contradicting \eqref{eq:bound_bias_gaussSeq}. This shows the second part of Theorem \ref{thm.main_minimax_est_vector_hd}.
    
    To prove the last assertion of the theorem, recall that we additionally assume for this part $s\leq n^{1/2-\delta}$ for some $0<\delta<1/2$. Let $\gamma := (2 \delta + 1) / (2 \delta)$. For this choice of $\gamma$, the soft-thresholding estimator defined in Equation~\eqref{eq.soft_thresh_est} satisfies the required upper bound on the variance. Indeed, by Lemma~\ref{lem.soft_thresh_ub}, we have
    \begin{align*}
        \sup_{\theta \in \Theta(s)}
        \sum_{i=1}^n\Var_\theta\big(\wh \theta_i\big)
        \lesssim s + \frac{1}{\sqrt{\log^3(n/s^2)}}
        n \Big(\frac{s^2}{n}\Big)^{\frac{\gamma}{2}}
        \leq s + \frac{s}{\sqrt{\log^3(n/s^2)}}
        \lesssim s,
    \end{align*}
    where the second inequality results from the fact that $n(s^2/n)^{\gamma/2} \leq s$ if and only if $s \leq n^{(\gamma-2)/(2\gamma-2)}$, which is the case here since $(\gamma - 2) / (2 \gamma - 2) = 1/2 - \delta$ (by construction of $\gamma$) and because we assume $s\leq n^{1/2-\delta}$.
\end{proof}

\begin{proof}[Proof of Theorem~\ref{thm.main_minimax_est_fctal_hd}]
    \textit{Proof of (i):} It is proved in Appendix \ref{sec:proofs_highdimensional} that the minimax estimation rate is $s^2\log^2(n/s^2)$ in this framework where $s \ll \sqrt{n}$.
    
    Let $\wh{\|\theta\|_2^2}$ be an estimator attaining the minimax optimal estimation rate $s^2\log(n/s^2)$ in the regime $s \ll \sqrt{n}$. By Theorem~\ref{thm.sparsity_sq_functional}, if \begin{align*}
        \sup_{\theta \in \Theta_n^2(s)} \big|\Bias_\theta\big(\wh{\|\theta\|_2^2}\big)\big| \leq \frac{1}{20} s \log\Big(\frac{n}{s^2}\Big),
    \end{align*}
    then for $n$ large enough, $\Var_0 ( \wh{\|\theta\|_2^2} )\gtrsim n^{9/10} s^{2/10},$ contradicting the assumption that $\wh{\|\theta\|_2^2}$ attains the rate $s^2\log(n/s^2)$ in the regime $s \ll \sqrt{n}$.
    Therefore, $\wh{\|\theta\|_2^2}$ has to satisfy
    \begin{align*}
        \sup_{\theta\in \Theta_n^2(s)}\big(E_\theta\big[\wh{\|\theta\|_2^2}\big] -\|\theta\|_2^2\big)^2\asymp s^2\log^2\Big(\frac n{s^2}\Big).
    \end{align*}
    
    We now assume that $s\leq n^{1/2-\delta}$ for some $0<\delta<1/2.$ As an estimator for the functional $\|\theta\|_2^2$, we choose the estimator $\wh{\|\theta\|_2^2}$ defined in Equation~\eqref{eq.soft_thresh_est_func} with the choice $\gamma := (2 \delta + 1) / (2 \delta)$.
    By Lemma~\ref{lem.soft_thresh_ub}, we have
    \begin{align*}
        \sup_{\theta \in \Theta_n(s)}|\Bias_\theta(\wh{\|\theta\|_2^2})|\leq \gamma s \log(n/s^2),
    \end{align*}
    and 
    \begin{align*}
        \sup_{\theta\in \Theta_n^2(s)} \Var_\theta\big(\wh{\|\theta\|_2^2}\big)
        &\lesssim \|\theta\|_2^2 + s +
        \frac{1}{\sqrt{\log(n/s^2)}}
        n \Big(\frac{s^2}{n}\Big)^{\gamma/2} \\
        &\leq 2s\log\Big(1+\frac{\sqrt{n}}s\Big) + s +
        \frac{s}{\sqrt{\log(n/s^2)}} \\
        &\lesssim s\log(n/s^2),
    \end{align*}
    where the second inequality results from the fact that $n(s^2/n)^{\gamma/2} \leq s$ if and only if $s \leq n^{(\gamma-2)/(2\gamma-2)}$, which is the case here since $(\gamma - 2) / (2 \gamma - 2) = 1/2 - \delta$ (by construction of $\gamma$) and because we assume that $s\leq n^{1/2-\delta}$. Combining both bounds on worst-case squared bias and variance, we conclude that our estimator attains the minimax rate $s^2 \log^2(n/s^2)$, as claimed.

    \textit{Proof of (ii):} Consider the estimator  $\wh{\|\theta\|_2^2} := \sum_{i=1}^n X_i^2-1$. Then $E_\theta \wh{\|\theta\|_2^2}]= \sum_{i=1}^n E_\theta[X_i^2] - n= \sum_{i=1}^n \theta_i^2,$ and therefore $\wh{\|\theta\|_2^2}$ is unbiased.
    It remains to show that the variance of this estimator is of the order $n$.
    Note that $\wh{\|\theta\|_2^2}$ follows a non-central chi-square distribution with $n$ degrees of freedom and non-centrality parameter $\sum_{i=1}^n \theta_i^2$. This gives for the variance
    \begin{align*}
        \Var_\theta(\wh{\|\theta\|_2^2})
        = 2 \Big(n + 2 \sum_{i=1}^n \theta_i^2 \Big)
        \lesssim n + s \log \Big(1 + \frac{\sqrt{n}}s\Big)
        \lesssim n,
    \end{align*}
    uniformly over $\Theta_n$ in the regime $s \gtrsim \sqrt{n}$.
\end{proof}

\begin{proof}[Proof of Theorem \ref{thm.sparseReg_lb}]
The proof is a variation of the proof of Theorem \ref{thm.sparsity_lb} with $n$ replaced by $p.$ To comply with standard notation, the parameter vectors are denoted by $\beta$ and therefore all the symbols $\theta$ in the proof of Theorem \ref{thm.sparsity_lb} have to be replaced by $\beta.$ In particular the vectors $\theta_j$ are now denoted by $\beta_j.$ Because of the standardization of the diagonal entries in the Gram matrix, we need to choose the non-zero components of $\beta_j$ as $\sqrt{\alpha \log(p/s^2)/n}.$ Compared with the proof of Theorem \ref{thm.sparsity_lb}, the main difference is that the entries of the $\chi^2$-divergence matrix are bounded as follows
\begin{align*}
    \chi^2(P_0,\dots,P_M)_{j,k}
    &=\exp(\beta_j^\top X^\top X\beta_k)-1 \\
    &\leq \exp(n \beta_j^\top \beta_k +n s^2 \mc(X)\|\beta_j\|_\infty\|\beta_k\|_\infty) \\
    &\leq \exp(n \beta_j^\top \beta_k +\alpha) \\
    &\leq \exp(n \beta_j^\top \beta_k +1),    
\end{align*}
where the first inequality follows from separating the diagonal from the off-diagonal entries and exploiting that the vectors are $s$-sparse and the second inequality uses that the maximum entry norm $\|\cdot\|_\infty$ is bounded by construction of the vectors $\beta_j,\beta_k$ by $\sqrt{\alpha \log(p/s^2)/n}.$ Thus, following exactly the same steps as in the proof of Theorem \ref{thm.sparsity_lb}, we can derive that in analogy with  \eqref{eq.tech311}, 
\begin{align*}
        \sum_{j=1}^M \big(E_{P_{ji}}[\wh \beta_i]-E_{P_0}[\wh \beta_i]\big)^2 \leq M \frac{e}{1-(1/2)^{0.01}} \Big(\frac p{s^2}\Big)^{\alpha} \Var_0\big(\wh \beta_i\big).
\end{align*}
The remainder of the proof is also nearly the same as the one for Theorem \ref{thm.sparsity_lb}. The only real difference is that $\|\beta_j\|_2^2$ and the upper bound on the bias are smaller by a factor $1/n,$ which consequently also occurs in the lower bound on the variance. 
\end{proof}

\bigskip

% \bibliographystyle{abbrv}
% \bibliography{biblio}{}

% \bigskip

\end{document}